\DeclareMathOperator{\Coeff}{Coeff}
\DeclareMathOperator*{\Res}{Res}
\newcommand\note[1]{\mbox{}\marginpar{ \scriptsize\raggedright
\hspace{1pt}\color{red} #1}}
\numberwithin{equation}{section}
\numberwithin{equation}{subsection}
\theoremstyle{plain}
\newtheorem*{theorem*}{Theorem}
\newtheorem{theorem}[equation]{Theorem}
\newtheorem{lemma}[equation]{Lemma}
\newtheorem{proposition}[equation]{Proposition}
\newtheorem{corollary}[equation]{Corollary}
\newtheorem{thm}[equation]{Theorem}
\newtheorem{cor}[equation]{Corollary}
\theoremstyle{definition}
\newtheorem{example}[equation]{Example}
\newtheorem{remark}[equation]{Remark}
\newtheorem{definition}[equation]{Definition}
\newtheorem{defn}[equation]{Definition}
\DeclareMathOperator{\Supp}{Supp}
\def\C{\mathbb C}
\def\Q{\mathbb Q}
\def\R{\mathbb R}
\def\Z{\mathbb Z}
\def\im{{\rm Im}}
\def\ZZ{\mathbb{Z}}
\def\QQ{\mathbb{Q}}
\newcommand{\cale}{{\mathcal E}}
\newcommand{\calw}{{\mathcal W}}
\newcommand{\calv}{{\mathcal V}}
\newcommand{\calt}{{\mathcal T}}
\newcommand{\cali}{{\mathcal I}}
\newcommand{\calO}{{\mathcal O}}
\newcommand{\calS}{{\mathcal S}}
\newcommand{\calL}{\mathcal{L}}
\newcommand{\caln}{\mathcal{N}}
\newcommand{\s}{r}
\newcommand{\tX}{\widetilde{X}}
\newcommand{\cX}{{\mathcal X}}
\newcommand{\cO}{{\mathcal O}}
\newcommand{\bP}{{\mathbb P}}
\newcommand*{\linebundle}{\mathcal{L}}
\newcommand{\bC}{{\mathbb C}}
\newcommand{\cF}{{\mathcal F}}
\newcommand{\eca}{{\rm ECa}}
\newcommand{\pic}{{\rm Pic}}
\newcommand{\V}{\calv}
\newcommand{\m}{\mathfrak{m}}
\newcommand{\bt}{{\mathbf t}}
\newcommand{\bZ}{{\mathbb{Z}}}
\newcommand{\bQ}{{\mathbb{Q}}}
\author[T. L\'aszl\'o]{Tam\'as L\'aszl\'o}
\address{Babe\c{s}-Bolyai University, Faculty of Mathematics and Computer Science,\newline \hspace*{4mm} 
Str. Mihail Kog\u{a}lniceanu nr. 1, 400084 Cluj-Napoca, Romania}
\email{laszlo.tamas@math.ubbcluj.ro}
\author[J. Nagy]{J\'anos Nagy}
\address{Alfréd R\'enyi Institute of Mathematics,\newline \hspace*{4mm}
MTA-BME Lendulet Arithmetic Combinatorics Research Group}
\email{janomo4@gmail.com}
\title{Brill-Noether problem on splice quotient singularities and duality of topological Poincaré series}
\begin{document}

\keywords{normal surface singularities, links of singularities,
Brill-Noether theory, rational homology spheres, Seiberg--Witten invariant, Poincar\'e series, quasipolynomials, duality , periodic constant}

\subjclass[2010]{Primary. 32S05, 32S25, 32S50, 57M27
Secondary. 14Bxx, 14J80, 57R57}

\begin{abstract}
In this manuscript we investigate the analouge of the Brill-Noether problem for smooth curves in the case of normal surface singularities.
We determine the maximal possible value of $h^1$ of line bundles without fixed components in the Picard group $\pic^{l'}(\tX)$ in the following cases:
for some special Chern classes $l'$ if $\tX$ is a resolution of a splice quotient singularity $(X, 0)$ and for arbitrary  Chern classes in the case of weighted homogenous
singularities.
Motivated by this problem, we define the \emph{virtual cohomology numbers} $h^1_{virt}(l')$ for all Chern classes $l'$ such that $h^1_{virt}(0)$ is the canonical normalized Seiberg-Witten invariant and we generalize the duality formulae of Seiberg-Witten invariants obtained by the authors and A. Némethi in \cite{LNNdual}, for the virtual cohomology numbers.
\end{abstract}

\maketitle

\linespread{1.2}

%\date{}

\pagestyle{myheadings} \markboth{{\normalsize  T. L\'aszl\'o and J. Nagy}} {{\normalsize Brill-Noether problem and duality}}

%\setcounter{tocdepth}{1}
%\tableofcontents
\thanks{}

\section{Introduction}\label{s:intr}

\subsection{} One of the the major topics of the last decade in the theory of normal surface singularities was to compare the analytic invariants of the singularity with the topological invariants associated with the link of the singularity, or equivalently, with the dual 
resolution graph of a given resolution of the singularity. The main subject of this topic is to provide topological formulae,  or at least topological candidates, for several discrete analytic invariants.  
However, when we fix the topological type and vary the analytic structure, most of the analytic invariants can also change, so one can hope to find purely topological formulae just in the case of special analytical families. 

In the series of articles \cite{NNA1} and \cite{NNA2}, the second author and A. N\'emethi developed the theory of Abel maps for surface singularities which is an analouge of the classical theory of
Abel maps for smooth curves. Although several analytic invariants change when one varies the analytic structure, the authors  proved that if one considers a normal surface singularity $(X,0)$ with rational homology sphere link, one of its good resolution $\tX\to X$ with dual  resolution graph $\mathcal{T}$, then for any first Chern class $l' \in H^2(\tX,\Z)$ the cohomology number $h^1$ of the generic line bundle in $\pic^{l'}(\tX)$ is
independent of the analytic type of $(X,0)$ and it can be given by the topological formula $\chi(l') - \min_{0 \leq l} \chi(l' + l)$.

Since the cohomology numbers are semicontinous, the $h^1$ of the generic line bundle has the smallest possible value in $\pic^{l'}(\tX)$, which means that $r$ is the smallest 
number such that the Brill-Noether strata $ \{\calL \in \pic^{l'}(\tX) \ |\  h^1(\tX, \calL) = r\}$ is nonempty.

In the classical case of a genus $g$ smooth curve $C$, Brill-Noether theory investigates the structure of the Brill-Noether strata $W_d^r = \{\calL \in \pic^{d}(C) \ |\  h^0(C, \calL) \geq r+1\}$ 
among the generic curves or among other special families of algebraic curves. The classical Brill-Noether's theorem \cite{ACGH} states that for a genus $g$ generic curve $C$ and numbers $d \geq 1$ then $W_d^r$ is nonempty if and only if 
$g \geq (r+1)(g-d+r)$.  
This means that the maximal $h^0$ of a line bundle in $\pic^{d}(C)$ is $r+1$, where $r$ is the maximal integer such that $g \geq (r+1)(g-d+r)$.

In the case of surface singularities, the second author proved in \cite{pg} that if we fix a singularity  with rational homology sphere link, a good resolution $\tX$, then for a Chern class $l'$ 
the possible values of $h^1$ of line bundles in $\pic^{l'}(\tX)$ fits in an interval of type $[\chi(-l') - \min_{0 \leq l} \chi(-l' + l), M]$. Therefore,  analogously to the classical Brill-Noether's theorem
it is natural to ask the following: 
\begin{center}
\emph{what is the maximal value $M$ for different analytic types $\tX$ and Chern classes $l'$?}
\end{center}

\subsection{} In the sequel, we present the main ideas and results of the article.

First we note that, in fact for a Chern class $l'$ it is enough to determine the maximal value of $h^1$ of line bundles $\calL \in \pic^{l'}(\tX)$ in the case when $\calL$ has no fixed components (or equivalently, when $\calL$ is in the image of the Abel map). 
Indeed, assume that we know these maximal values and denote them by $M_{f, l'}$. Then, by \cite{NNA1} we know that  
\begin{equation*}
\max_{\calL \in \pic^{l'}(\tX)} h^1(\tX, \calL) = \max_{l \geq 0} \{M_{f, l'-l} + \chi(-l') - \chi(-l' + l)\}.
\end{equation*}

In this article, we investigate these numbers $M_{f, l'}$ in the case of weighted homogenous and splice quotient singularities, and we determine them in some special cases. In fact, we have the following results.

\begin{theorem*}[A]
Let $(X, 0)$ be a weighted homogeneous singularity with resolution $\tX$ and consider a cycle in the Lipman cone $l' \in \calS'$.  Then for every line bundle $\calL \in \pic^{-l'}(\tX)$ one has $$h^1(\tX, \calL) \leq h^1(\calO_{\tX}(-l')).$$
\end{theorem*}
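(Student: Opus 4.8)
The plan is to use the explicit analytic model of a weighted homogeneous singularity, namely that $\tX$ is the total space of a resolution whose exceptional divisor contains a distinguished central curve $E_0$ (for the "star-shaped" graph $\mathcal{T}$), and that line bundles on $\tX$ are controlled by the graded affine coordinate ring. Concretely, I would realize $(X,0)$ via a good resolution with star-shaped graph and use the fact (Pinkham, Neumann, Dolgachev) that the natural $\mathbb{C}^*$-action lifts to $\tX$, so that every line bundle in $\pic^{-l'}(\tX)$ is $\mathbb{C}^*$-equivariantly linearizable. Thus $H^1(\tX,\calL)$ carries a grading and it suffices to bound the dimension of each graded piece. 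For the canonical bundle $\calO_{\tX}(-l')$ with $l'\in\calS'$ in the Lipman cone the graded pieces of $H^1$ are computed by Pinkham's formula in terms of the Seifert invariants and the numbers $\lceil \cdot \rceil$ of the Dolgachev–Pinkham–Neumann presentation, so the target quantity $h^1(\calO_{\tX}(-l'))$ is an explicit sum.

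First I would reduce to the case where $\calL$ has no fixed components: if $\calL=\calO_{\tX}(-l'-D)$ for an effective exceptional $D$ supported on the part of $\calL$'s vanishing, then one peels off $D$ and compares via the exact sequence $0\to \calO_{\tX}(-l'-D)\to \calO_{\tX}(-l')\to\calO_D(-l')\to 0$, reducing the claim to Chern classes $-l'+l$ with $l\ge 0$ together with the (topological, already known) formula $\max_{\calL\in\pic^{l''}}h^1=\max_{l\ge0}\{M_{f,l''-l}+\chi(-l'')-\chi(-l''+l)\}$ quoted in the introduction; so the real content is bounding $M_{f,-l'}$, i.e. $h^1$ over line bundles without fixed components. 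Second, for such $\calL$ I would use the Abel map description from \cite{NNA1, NNA2}: $\calL$ is in the image of the Abel map associated with some effective cycle $Z$, and $h^1(\tX,\calL)=h^1(\calO_Z)-(\text{something nonnegative})$ controlled by whether the relevant "differential form"/linear system is nonspecial. Third, and this is the crux, I would show that among all $\mathbb{C}^*$-invariant line bundles of fixed Chern class $-l'$ the generic graded structure is dominated by the one of $\calO_{\tX}(-l')$: this is where one exploits that $l'\in\calS'$ (Lipman cone) guarantees $\calO_{\tX}(-l')$ is itself already "as special as possible" — its sections are exactly the weight-$\le$ something part of the graded ring, and any other equivariant bundle with the same $c_1$ differs by moving the divisor among the Seifert fibers, which can only decrease the number of conditions, hence decrease $h^1$ after using Riemann–Roch $\chi$ is fixed.

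The main obstacle I anticipate is the third step: making precise the claim that $\calO_{\tX}(-l')$ maximizes $h^1$ rather than, say, being generic. The natural approach is to pass to the graded pieces: write $\calL\cong\oplus$ of line bundles on the central curve $E_0$ (a $\mathbb{P}^1$) twisted by the Seifert data, so that $h^1(\tX,\calL)=\sum_{m}h^1\!\big(E_0,\,\mathcal{O}_{\mathbb{P}^1}(d_m(\calL))\big)$ where $d_m(\calL)$ is an integer depending on $m$, the Seifert invariants, and the "position" of $\calL$. For $\calL=\calO_{\tX}(-l')$ with $l'$ in the Lipman cone the $d_m$ take the extreme (most negative) admissible values because $l'$ effective and in $\calS'$ forces all the ceilings $\lceil\cdot\rceil$ to land on the boundary; any other $\calL$ with the same Chern class has some $d_m$ strictly larger, lowering $h^1(\mathbb{P}^1,\mathcal{O}(d_m))=\max(0,-d_m-1)$. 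So the bound follows termwise. The technical labor is bookkeeping the exact correspondence between Chern classes $l'$, the vertical homology classes on $E_0$, and the integers $d_m$ — i.e. correctly importing Pinkham's and Neumann's formulas — and verifying the ceiling-on-the-boundary claim uses precisely the Lipman-cone hypothesis $l'\in\calS'$; without it the conclusion can fail, which is why the theorem is stated only for $l'\in\calS'$.
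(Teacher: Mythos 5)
Your proposal has a genuine gap at its central step. You assert that every line bundle $\calL \in \pic^{-l'}(\tX)$ is $\C^*$-equivariantly linearizable, so that $H^1(\tX,\calL)$ carries a grading and can be compared termwise with the graded pieces of $H^1(\calO_{\tX}(-l'))$ via the Dolgachev--Pinkham--Neumann formula. This is false: an isomorphism class $\calL$ can only be equivariantized if it is fixed by the induced $\C^*$-action on $\pic^{-l'}(\tX)$, and (as one sees by computing the weights of the action on $\pic^0(\tX)\cong \C^{p_g}$, e.g.\ via Laufer duality on the explicit basis $\omega_{\ell,n}$ of forms, which have weight $-\ell$ with $\ell>0$) the action on $\pic^{-l'}(\tX)$ is linear with all weights positive once the natural bundle is taken as origin, so the \emph{only} fixed point is $\calO_{\tX}(-l')$ itself. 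For a generic $\calL$ in the $p_g$-dimensional affine space $\pic^{-l'}(\tX)$ there is no grading on $H^1(\tX,\calL)$, the decomposition $h^1(\tX,\calL)=\sum_m h^1(E_0,\calO_{\bP^1}(d_m(\calL)))$ does not exist, and your termwise comparison (your "crux" step three) collapses. Your preliminary reduction via the Abel map and fixed components is also unnecessary for this statement.

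The correct way to exploit the $\C^*$-action — and this is what the paper does — is dynamical rather than representation-theoretic: since $\calO_{\tX}(-l')$ is the unique fixed point and all weights of the linear action on $\pic^{-l'}(\tX)$ are strictly positive, the natural line bundle lies in the closure $\overline{\C^* * \calL}$ of every orbit; since $h^1(\tX, t*\calL)=h^1(\tX,\calL)$ for all $t\in\C^*$, upper semicontinuity of $h^1$ along this degeneration gives $h^1(\tX,\calL)\leq h^1(\calO_{\tX}(-l'))$. The ingredients you would need to supply are exactly the fixed-point property of the natural bundle and the positivity of the weights (the latter proved by the explicit chart computation of the forms $\omega_{\ell,n}$ and the pairing with tangent vectors of $\pic^0(\tX)$); your proposal never invokes semicontinuity or a degeneration, so as written it does not yield the inequality. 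Incidentally, the Lipman-cone hypothesis plays no role in this mechanism (the action argument works for any Chern class), so your closing claim that the ceiling bookkeeping "uses precisely" $l'\in\calS'$ and that the statement fails without it is not the right explanation either.
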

In other words, for weighted homogenous singularities the natural line bundle $\calO_{\tX}(-l')$ has largest $h^1$ value among all the line bundles in the Picard group $\pic^{l'}(\tX)$. It turns out that in the case of splice quotient singularities this is not true anymore for all the Chern classes as it is shown by a counterexample in section \ref{ss:cex}. Nevertheless, we can handle the following situation.

\begin{theorem*}[B]
Let $(X,0)$ be a splice quotient singularity with rational homology sphere link and let $\calt$ be its dual resolution graph which satisfies the monomial conditions and $\tX$ the corresponding resolution space with exceptional divisor $E=\{E_v\}_{v\in \calv}$. 

Let $l' = a_v E_v^*$ be a cycle for some vertex $v$ of $\calt$ such that $a_v > 0$ and $E_v^* \in S_{an}$.  \\
(a) \ If $a_v = 1$ one has $h^1(\tX, \calL) \leq h^1( \calO_{\tX}(-E_v^*))$ for every line bundle $\calL \in \im(c^{-E_v^*}(Z))$. \\
(b) \ If $a_v >1$ we also assume that the line bundle $\calO_{\tX}(- E_v^*)$ has no base point on the exceptional divisor $E_v$. In this case there are only finitely many points $p_1, \cdots, p_k \in E_v$ such that the line bundle $\calO_{\tX}(- E_v^*)$ has no section through $p_i$. Then, for any line bundle $\calL = \calO_{\tX}( \sum_{1 \leq k \leq a_v} D_k)$ given by disjoint transversal cuts $D_{ k},  1 \leq k \leq a_v$ at regular points of the exceptional
divisor $E_v$, such that none of the transversal cuts go through the critical points $ p_1, \cdots, p_k$, one has the following inequality:
$$h^1(\tX, \calL) \leq h^1(\tX, \calO(-l')).$$
\end{theorem*}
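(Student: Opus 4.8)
\emph{Sketch of proof.} The plan is to reduce, via the Abel map theory of \cite{NNA1,NNA2}, to a comparison of cohomological invariants of curvettas at $E_v$, and then to use the splice quotient structure — end curve functions, the monomial conditions, and Okuma's combinatorial description of the natural bundles $\calO_{\tX}(-jE_v^*)$ — to single out the natural line bundle as the extremal one. First, by \cite{NNA1} every line bundle in $\im(c^{-E_v^*}(Z))$, and every $\calL$ as in part~(b), is the line bundle associated to an effective Cartier divisor whose support meets $E$ only at regular points of $E_v$; since $(E_v^*,E_w)=-\delta_{vw}$, the class $a_vE_v^*$ forces this divisor to be a single transversal cut, respectively the given sum $D=\sum_{k=1}^{a_v}D_k$ of disjoint transversal cuts. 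Because $E_v^*\in S_{an}$, the natural line bundle $\calO_{\tX}(-E_v^*)$ itself lies in $\im(c^{-E_v^*}(Z))$, and hence is associated to a transversal cut $D_0$ cut out by one of its global sections; moreover $\calO_{\tX}(-a_vE_v^*)=\calO_{\tX}(-E_v^*)^{\otimes a_v}$, so the whole tower $\calO_{\tX}(-mE_v^*)$, $0\le m\le a_v$, is controlled by $\calO_{\tX}(-E_v^*)$. Thus the statement becomes a comparison of the $h^1$ of the line bundles attached to systems of transversal cuts at regular points of $E_v$ (those in (b) avoiding $p_1,\dots,p_k$).

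Next, for a disjoint union $D=\bigsqcup_kD_k$ of transversal cuts, the ideal sheaf sequence of $D$ in $\tX$, together with $H^1(D,\calO_D)=0$ (the $D_k$ are Stein disks) and $H^0(\tX,\calO_{\tX})=\calO_{X,0}$, expresses $h^1$ of the associated line bundle as $p_g$ corrected by a finite term of size $\le p_g$: the corank of the restriction of its global sections to a neighbourhood of $D$. Through the resolution $\pi$ this correction is controlled by the value semigroups $\Gamma_k=\{\mathrm{ord}(f|_{D_k}):f\in\calO_{X,0}\}$ of the branches $\bar D_k=\pi(D_k)\subset X$ — for a single cut it is $\delta(\bar D_k)$, the number of gaps of $\Gamma_k$ — plus a nonnegative term measuring the failure of $\calO_{X,0}$ to realise the jets along the distinct $D_k$ simultaneously. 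Hence the asserted inequality is equivalent to this correction being extremal exactly for the system of cuts attached to $\calO_{\tX}(-l')$; for $a_v=1$ it says that, among all transversal cuts $D_p$ at regular points of $E_v$, the semigroup $\Gamma_{D_p}$ is extremal precisely when $\calO_{\tX}(D_p)\cong\calO_{\tX}(-E_v^*)$.

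This last assertion is where the splice quotient hypothesis is used decisively. By the End Curve Theorem and the monomial conditions, for a splice quotient the graded pieces $H^0(\calO_{\tX}(-jE_v^*))/H^0(\calO_{\tX}(-(j+1)E_v^*))$ are spanned by the monomials of the appropriate weight in the universal abelian cover coordinates — that is, in the end curve functions — so that $\dim H^0$ of the natural bundles $\calO_{\tX}(-jE_v^*)$, and therefore $h^1(\calO_{\tX}(-jE_v^*))$ for all $j\ge0$, equals the combinatorial quantity read off (as a periodic constant) from the topological Poincar\'e series, following Okuma. This pins down the value semigroup of a section divisor $D_0$ of $\calO_{\tX}(-E_v^*)$ as the ``topological'' semigroup $\Gamma^{\mathrm{top}}$ generated by those weights; and for \emph{every} transversal cut $D_p$ the semigroup $\Gamma_{D_p}$ compares with $\Gamma^{\mathrm{top}}$ in the way dictated by the reduction above, because the order along $D_p$ of any function on $X$ is bounded below by the value predicted by its weight. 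With the previous paragraph this proves (a). It is precisely the monomial conditions that make $\Gamma^{\mathrm{top}}$ the genuine semigroup of $D_0$; without them the natural line bundle need not be extremal, as the counterexample of \S\ref{ss:cex} shows for other Chern classes.

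For $a_v>1$ one argues by induction on $m=1,\dots,a_v$, peeling off the cuts one at a time: the ideal sheaf sequence of $D_m$ relating the line bundles attached to $\sum_{k\le m}D_k$ and to $\sum_{k<m}D_k$ shows that, from stage $m-1$ to stage $m$, $h^1$ changes by the corank of the restriction to $D_m$ of the global sections vanishing on $D_1,\dots,D_{m-1}$; meanwhile the stagewise increments of the natural tower $\calO_{\tX}(-mE_v^*)$ are the combinatorial quantities of the preceding step. The hypothesis that $\calO_{\tX}(-E_v^*)$ is base point free on $E_v$ ensures that, for $j<a_v$, the bundle $\calO_{\tX}(-jE_v^*)$ has sections not vanishing at the points $D_k\cap E_v$ — which is exactly why the $D_k$ must avoid the critical points $p_1,\dots,p_k$ — and this is what bounds the stage-$m$ corank on the cut side by the stage-$m$ corank on the natural side; the inequality $h^1(\calO_{\tX}(-\sum_{k\le m}D_k))\le h^1(\calO_{\tX}(-mE_v^*))$ then propagates from $m=1$ (part (a)) to $m=a_v$. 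I expect the main obstacle to be exactly this stagewise estimate: one must control how passing to the sections vanishing on $D_1,\dots,D_{m-1}$ enlarges the defect along $D_m$ and bound it against the natural tower, which needs the combinatorial input of the preceding step together with the base point freeness and the genericity of the $D_k$ — precisely the place where the hypotheses of (b) genuinely enter.
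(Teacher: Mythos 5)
There is a genuine gap — in fact two. First, for part (a) the decisive step of your argument is the sentence asserting that the value semigroup of an arbitrary transversal cut ``compares with $\Gamma^{\mathrm{top}}$ in the way dictated by the reduction above, because the order along $D_p$ of any function on $X$ is bounded below by the value predicted by its weight.'' That lower bound on orders is true but does not by itself give the inequality you need: the whole content of the theorem is precisely that the space of form classes in $H^0(\calO_{\tX}(K+[Z_K]))/H^0(\calO_{\tX}(K))$ with a pole along an arbitrary cut is at least as large as for the cut representing $\calO_{\tX}(-E_v^*)$, and a priori a special point of $E_v$ could make function orders jump in a way that either enlarges or shrinks the relevant cokernel. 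Moreover your preliminary reduction, identifying $p_g-h^1(\calO_{\tX}(D))$ with a $\delta$-invariant of the branch $\pi(D)$ (plus a ``simultaneous jet'' term), is itself not established: the cokernel computed from the ideal-sheaf sequence also involves sections with genuine poles along $D$, so it is not simply the number of gaps of the value semigroup, and no argument is given that the natural cut is extremal for whatever the correct correction term is. Second, for part (b) you explicitly leave open the stagewise corank estimate (``I expect the main obstacle to be exactly this stagewise estimate''), which is exactly the hard point; base point freeness of $\calO_{\tX}(-E_v^*)$ alone does not bound the jump on the cut side by the jump on the natural side when one peels off $D_m$ after imposing vanishing along $D_1,\dots,D_{m-1}$.

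For comparison, the paper's proof does not argue cut-by-cut on $\tX$ at all: it blows up along the cuts $D_k$ (and, in a parallel model $\tX_{s,n}$, along cuts $N_k$ representing $-E_v^*$), and runs a downgoing induction on the total number of blow-ups, reducing everything to showing that whenever the natural side jumps by $1$ under one more blow-up, so does the cut side. Detecting the jump on the natural side is done combinatorially via the counting-function duality of Theorem (C) applied to the auxiliary splice quotient $\tX_{s,n}$; a finite-difference/binomial argument then produces a cycle $l''=\sum_u c_u E_u^*\in\calS'$ with $[l'']=[Z_K]$, support in nodes, ends and $v$, and $c_v\geq |M'|-1$; finally, because for a splice quotient $E_u^*\in\calS'_{an}$ at nodes and ends, one can realize $-l''$ by explicit cuts and thereby construct a differential form with pole of order $d+1$ along $E_v$ whose transversal arrows avoid the chosen point $q_t$ — this is what forces the jump on the cut side, and it is also where the hypotheses on base points and on avoiding $p_1,\dots,p_k$ enter. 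Your sketch contains no substitute for this detection mechanism, so as it stands it does not prove either part of the theorem.
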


In order to prove Theorem (B) one has to investigate the arithmetics of the cohomology numbers $h^1( \calO_{\tX}(-l'))$ of natural line bundles on splice quotient singularities.  Our starting point is a theorem of N\'emethi \cite{NCL} which says that in the case of splice quotient singularities the cohomology numbers $h^1( \calO_{\tX}(-l'))$ can be expressed by certain counting functions of the topological Poincaré series $Z(\mathbf{t})$ associated with the resolution graph $\calt$.

Notice that in the very particular case, when $l' = 0$, the cohomology number $h^1(\calO_{\tX})=p_g(\tX)$ is the geometric genus of $(X,0)$ which, by Laufer duality, can be expressed as the dimension of a space of differential forms $\dim\left( H^0(\tX\setminus E, \Omega^2_{\tX})/ H^0(\tX,\Omega_{\tX}^2)  \right)$. This equality motivates and provides a `dual' expression for  $p_g(\tX)$ using the $[Z_K]$-equivariant part of the topological Poincar\'e series $Z_{[Z_K]}(\bt)=\sum_{l'}z(l')\bt^{l'}$ as $p_g(\tX) = \sum_{[l'] = [Z_K], l' \ngeq Z_K} z(l')$, where $Z_K$ is the anti-canonical cycle (cf. section \ref{ss:analinv}).

From topological point of view, the authors in \cite{LNNdual} extended this idea to express for all  rational homology sphere plumbed 3-manifolds $M$ associated with a negative definite plumbing graph $\calt$ the normalized Seiberg-Witten invariants $\mathfrak{sw}_h^{norm}(M)$ of  $M$ as certain values of the counting functions, namely $\mathfrak{sw}_h^{norm}(M) = p_g(\tX) = \sum_{[l'] = [Z_K - r_h], l' \ngeq Z_K - r_h} z(l')$.

In this article, we aim to extend these ideas to two different directions. 

First, in the case of splice quotient singularities (with certain extra properties on their analytic semigroup) we give the connection of cohomology numbers $h^1( \calO_{\tX}(-l'))$ 
and certain spaces of differential forms, and using this approach we deduce the following result:

 \begin{theorem*}[C]
Let $\tX$ be a resolution space of a splice quotient singularity $(X, 0)$ so that its dual resolution graph $\calt$ satisfies the monomial conditions.  We consider a very large cycle $Z$ on it, a cycle  $l' = \sum_{v \in \calv} a_v E_v^* \in L'$ and a divisor $p \in (c^{l'}(Z))^{-1}(0) \subset \eca^{-l'}(Z)$. Thus $\calO_{\tX}(p) = \calO_{\tX}(-l')$ and  assume further that $p$ consists of disjoint transversal cuts. Let $S=\{p_{v, k}\}$ be the set of  intersection points of $p$ with the exceptional divisor $E \subset \tX$, and  for every point $x \in S$ we index by $v_x$ the corresponding component of $E$. 
Let's denote for an arbitrary subset $I \subset S$, $l'_{I} = \sum_{x \in I} E_{v_x}^*$, and let $|l'|^* = J \subset \calv$.

Furthermore we consider the following multivariable series $R(\mathbf{t}) = \prod_{ u\in \calv}(1-\mathbf{t}^{E_u^*})^{\delta_u -2 + a_u} = \sum_{l''} r(l'')  \bt^{l''}$. Then we claim the following identity:

\begin{equation}\label{duall}
\sum_{I \subset S} (-1)^{|I|} h^1(\calO_{\tX}( - l'_I)) = \sum_{l'' \in S',[ l''] = [Z_K], l''_J \leq (Z_K-E)_J} r(l'').
\end{equation}

\end{theorem*}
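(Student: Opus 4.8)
The route I would take is the one signalled in the introduction: pass through the differential-form description of $h^1$ of natural line bundles (established just before the theorem, under the monomial conditions and the extra hypothesis on the analytic semigroup) and then perform the alternating sum at the level of generating functions. Concretely, for each $I\subset S$ one expresses $h^1(\calO_{\tX}(-l'_I))$ as the dimension of a space of meromorphic $2$-forms on $\tX$ with growth along $E$ controlled by $l'_I$, modulo those extending holomorphically over $\tX$. Using the Neumann--Wahl splice equations, this space has a monomial basis indexed by lattice points $l''\in L'$ in a fixed cone, subject to the congruence $[l'']=[Z_K]$, the ``holomorphicity on $\tX$'' condition that $l''\ngeq Z_K$ on the relevant coordinates, and a staircase restriction coming from the twist by $l'_I$. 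Writing the $[Z_K]$-equivariant part of the topological Poincaré series as $Z_{[Z_K]}(\bt)=\sum z(l'')\bt^{l''}$, one then gets $h^1(\calO_{\tX}(-l'_I))=\sum_{l''\in\mathcal{R}(l'_I)}z(l'')$ for an explicit region $\mathcal{R}(l'_I)\subset L'$; for $l'_I=0$ this is exactly N\'emethi's formula $p_g=\sum_{[l'']=[Z_K],\,l''\ngeq Z_K}z(l'')$. This reduces \eqref{duall} to a purely arithmetic statement about $Z(\bt)$, $Z_K$, $E$ and the points of $S$.

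Second, observe that the operator $\calL\mapsto\sum_{I\subset S}(-1)^{|I|}(\text{shift by }l'_I)$ is, on generating functions, multiplication by $\prod_{x\in S}(1-\bt^{E_{v_x}^*})=\prod_{u\in\calv}(1-\bt^{E_u^*})^{a_u}$, since $a_u=\#\{x\in S:v_x=u\}$. Because the topological Poincaré series of the negative definite tree $\calt$ is $Z(\bt)=\prod_{u\in\calv}(1-\bt^{E_u^*})^{\delta_u-2}$, this multiplication sends $Z(\bt)$ to $R(\bt)=\prod_{u\in\calv}(1-\bt^{E_u^*})^{\delta_u-2+a_u}$, so $r(l'')=\sum_{I\subset S}(-1)^{|I|}z(l''-l'_I)$. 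The left side of \eqref{duall} is therefore $\sum_{I\subset S}(-1)^{|I|}\sum_{l''\in\mathcal{R}(l'_I)}z(l'')$, and after reindexing each inner sum by $l''\mapsto l''-l'_I$ it should collapse, via a multidimensional Abel summation carried out one exceptional component $v\in J$ at a time and repeated $a_v$ times, to $\sum_{l''}r(l'')$ over a surviving boundary region: the unbounded directions of the staircases $\mathcal{R}(l'_I)$ cancel in the alternating sum and only a finite ``box'' part remains. One then identifies this survivor with $\{l''\in S':[l'']=[Z_K],\ l''_J\le (Z_K-E)_J\}$, where $J=|l'|^*$. In this identification the shift of the threshold from $Z_K$ to $Z_K-E$ on the coordinates of $J$ is precisely the effect of replacing the half-space condition ``$l''\ngeq Z_K$'' by the strict inequalities produced by each difference operator, and the congruence $[l'']=[Z_K]$ is preserved throughout because every shift is by a full $E_{v_x}^*$; finiteness of the resulting sum follows from $l''\in S'$ together with the upper bounds $l''_J\le(Z_K-E)_J$, since each $E_u^*$ has strictly positive coefficients.

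The hard part will be the second step: tracking the regions of summation through the iterated finite differences without losing or double-counting terms. One must control simultaneously the ``$l''\ngeq Z_K$'' truncation, the role of the $*$-support $J=|l'|^*$, the choice of cone (that of $S'$) in which all series are expanded so that the sums converge, and the boundary terms generated at each telescoping step, and then verify that every contribution outside the box $l''_J\le(Z_K-E)_J$ cancels while the box itself is counted with the weights $r(l'')$. The analytic input of the first step — the twisted Laufer duality and the monomial basis, valid under the monomial conditions and the semigroup hypothesis — is needed only to license this reduction; once it is in place, the identity \eqref{duall} is combinatorics of lattice points.
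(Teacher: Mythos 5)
There is a genuine gap, and it sits exactly where you put the least weight: your step 1. You assert that for each $I\subset S$ the twisted cohomology $h^1(\calO_{\tX}(-l'_I))$ equals a plain sum $\sum_{l''\in\mathcal{R}(l'_I)}z(l'')$ over an explicit region of the \emph{original} lattice $L'$, coming from a ``monomial basis'' of the relevant space of $2$-forms. No such formula is established (or available) for an individual twisted natural line bundle: N\'emethi's $Z(\bt)=P(\bt)$ theorem gives $h^1(\calO_{\tX}(-l'))$ as a counting function of $Z(\bt)$ \emph{plus} a quadratic/Seiberg--Witten correction term (this is the formula the paper records right after the proof, and it is how $h^1_{virt}$ is defined), not as a bare coefficient sum over a region; and the dual expression $p_g=\sum_{[l'']=[Z_K],\,l''\ngeq Z_K}z(l'')$ does not extend to $l'_I\neq 0$ by inspection. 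The analytic crux is that ``no pole along the transversal cut $p$'' is a condition at specific points of $E_v$, not a lattice condition on exponents. The paper converts it into a lattice condition only by blowing up along each cut $s_{v,k}\gg 0$ times, showing (via the $\mathbb{Q}$-Gorenstein property of the blown-up splice quotient and $Z=P$ on the blown-up graph) that for $s_{v,k}$ large the pole condition along $p_{v,k,s}$ becomes a vanishing-order condition along the new exceptional curve; this yields the per-$I$ formula on the \emph{blown-up} lattice (equation (\ref{hegymaskepp})), which is what you would need in place of your unproven $\mathcal{R}(l'_I)$. Without that mechanism (or an equivalent one), your reduction ``to combinatorics of lattice points'' has not actually been licensed.

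Your step 2 bookkeeping is fine as far as it goes --- the alternating sum over $I$ is indeed multiplication by $\prod_{u}(1-\bt^{E_u^*})^{a_u}$, so $r(l'')=\sum_I(-1)^{|I|}z(l''-l'_I)$ --- but note that in the paper's route the ``hard part'' you anticipate (tracking staircase regions through iterated differences) essentially disappears: once each $h^1(\calO_{\tX}(-l'_I))$ is written as a sum over the blown-up lattice with constraints $l''\geq (Z_K)_vE_{v_{k,s}}$ exactly on the coordinates indexed by $I$, inclusion--exclusion immediately produces the strict bounds $l''_{v_{k,s}}<(Z_K)_v$, and a support argument (the coefficients of $E^*_{v_{k,s}}$ are huge) kills all contributions involving the new vertices, leaving precisely $\sum_{[l'']=[Z_K],\,l''_J\leq(Z_K-E)_J}r(l'')$ on $\calt$. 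If you prefer to avoid blow-ups altogether, the viable alternative is the paper's purely topological proof of the general identity (Theorem \ref{topdualcount}): compute the periodic constant of the $0$-part of $\bt^{-Z_K}\,Z(\bt)\prod_{v\in J}(1-\bt^{E_v^*})^{a_v}$ in two ways (surgery formula versus the fact that the projected series is a Laurent polynomial), rather than attempting a direct multidimensional Abel summation over unspecified regions.
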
 

Secondly, we define the topological candidate of $h^1( \calO_{\tX}(-l'))$ which is called the \emph{virtual cohomology number} $h^1_{virt}(l')$, and in the completely topological setting we extend the `duality' formula (\ref{duall}) to the general case, see section \ref{s:topdcf} and Theorem \ref{topdualcount}.

% \begin{theorem}\textbf{D}
% Let's have a resolution graph $\mathcal{T}$ which corresponds to a rational homology sphere link, and a Chern class $l' = \sum_{v \in \calv} a_v E_v^*  \in L'$ such that $|l'|_* = J$.
% Let's denote the following multivariable function $R(\mathbf{t}) = \prod_{ u\in \calv}(1-\mathbf{t}^{E_u^*})^{\delta_u -2 + a_u}$.
% 
%  We have the following identity
% \begin{equation}\label{dualitycounting2}
% \sum_{I^m \subset J^m_{l'}}  (-1)^{|I^m|} \cdot \Big(   \chi(l'(I^m)) - \chi(r_{[l'(I^m)]})+\mathfrak{sw}_{[ l'(I^m)]}^{norm} - \sum_{[l''] = [l'(I^m)], l'' \ngeq l'(I^m)} z(l'')  \Big) = \sum_{l'' \in \calS',[ l''] = [Z_K] \atop l''_J \leq (Z_K-E)_J} r(l'').
% \end{equation}
% \end{theorem}

\subsection{} The paper is structured as follows. 

In section \ref{s:prel} we recall the necessary background material regarding topological and analytic invariants of normal surface singularities, effective Cartier divisors and Abel maps, the theory of  topological Poincaré series and useful surgery properties, and splice quotient singularities.

Section \ref{s:motspq} serves as a motivation of this work, containing an interpretation of the cohomology numbers $h^1( \calO_{\tX}(-l'))$ in terms of diferential forms in the case of splice quotient singularities and we deduce Theorem (C). 

In section \ref{s:topdcf}, we introduce the virtual cohomology number and prove the topological generalization of Theorem (C) which holds for all rational homology sphere link of singulatities, even if they graphs do not satisfy the monomial conditions, or in other words they do not support splice quotient singularities. 

In section \ref{s:domspq} we consider the case of splice quotient singularities and prove Theorem (B) using Theorem (C) about the expression of the cohomology numbers $h^1( \calO_{\tX}(-l'))$ by counting functions. Furthermore, we construct a counterexample showing that Theorem (B) does not hold for all Chern classes unlike  weighted homogenous case. 
In other words we show that there are Chern classes $l' \in S'$ such that there is a line bundle $\calL \in \pic^{-l'}(\tX)$ such that $h^1(\tX, \calL) > h^1(\calO_{\tX}(-l'))$.

Section \ref{s:domwh} discusses the case of weighted homogenous singularities and we prove Theorem (A).

For the last two sections we turn back to the topology and concentrate on some properties of the topological Poincar\'e series. Namely, in section \ref{s:wildprop}, by constructing further examples we emphasize the failure of some important properties of the topological Poincar\'e series when the underlying graph does not satisfy the monomial conditions. An important part of this discussion is an example of a resolution graph 
for which the normalised Seiberg-Witten invariant  is negative (unlike the case of splice quotient singularities when it is certainly nonnegative). 

Finally, in section \ref{s:mctPs} we explore the ideas of the previous section and show how the monomial conditions plays an important role in certain arithmetic properties of the topological Poincaré series. In particular, 
we end our article with the following result:

\begin{theorem*}[D]
Let $\mathcal{T}$ be a resolution graph of a normal surface singularity with a rational homology sphere link, consider a vertex $v$ of $\calt$ and suppose that the monomial conditions holds for branches of nodes which
does not contain the vertex $v$.  Then the canonical normalized Seiberg-Witten invariant is nonnegative.
\end{theorem*}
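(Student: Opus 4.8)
The plan is to reduce the statement, using the surgery calculus for the topological Poincaré series developed in this section, to the nonnegativity of the geometric genus of splice quotient singularities, and then to run an induction on the number of vertices of $\calt$.

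First I would reformulate: since the canonical normalized Seiberg--Witten invariant is by construction the virtual cohomology number $h^1_{virt}(0)$, it suffices to prove $h^1_{virt}(0)\ge 0$. The base case of the induction is the situation where $\calt$ satisfies \emph{all} of the monomial conditions; then $\calt$ supports a splice quotient singularity $(X,0)$ with some resolution $\tX$, and combining N\'emethi's counting--function formula \cite{NCL} with the definition of the virtual cohomology numbers gives $h^1_{virt}(0)=h^1(\calO_{\tX})=p_g(\tX)\ge 0$. This already covers all graphs with no node (where in addition $h^1_{virt}(0)=0$) and all star--shaped graphs.

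For the inductive step I would assume that $\calt$ has a node and fails some monomial condition. By hypothesis every failing condition is attached to a branch of a node containing $v$; in particular $\calt$ is not the star shaped graph with unique node $v$, so there is a node $n_0\ne v$. As $n_0$ has valency $\ge 3$ it has a branch $\calt_1$ with $v\notin\calt_1$; set $\calt_1^+:=\calt_1\cup\{n_0\}$ and let $\calt'$ be obtained from $\calt$ by deleting the vertices of $\calt_1$ (keeping the Euler numbers of the remaining vertices). Both $\calt_1^+$ and $\calt'$ are principal subgraphs of $\calt$, hence negative definite. Since no branch of $\calt_1^+$ contains $v$, all monomial conditions attached to $\calt_1^+$ hold, so $\calt_1^+$ supports a splice quotient singularity with geometric genus $p_g(\calt_1^+)\ge 0$; and every branch of a node of $\calt'$ avoiding $v$ was already such a branch in $\calt$, so $\calt'$ again satisfies the hypothesis of the theorem and has strictly fewer vertices, whence $h^1_{virt,\calt'}(0)\ge 0$ by induction.

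It then remains to compare $h^1_{virt,\calt}(0)$ with $h^1_{virt,\calt'}(0)$ and $p_g(\calt_1^+)$. Cutting $\calt$ at $n_0$, the series $Z_{\calt}(\bt)$ factors --- up to a correction localized at $n_0$ --- into the series of $\calt'$ and of $\calt_1^+$, and because $\calt_1^+$ obeys the monomial conditions its factor is the genuine Poincaré series of the associated splice quotient. Passing to the $[Z_K]$--equivariant parts and the counting functions attached to them (as in Theorem (C) and its topological extension in section \ref{s:topdcf}) should yield an identity that bounds $h^1_{virt,\calt}(0)$ below by $h^1_{virt,\calt'}(0)+p_g(\calt_1^+)$, both terms being nonnegative; this closes the induction. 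The hard part will be precisely this last step: showing that excising the good branch $\calt_1$ changes $h^1_{virt}(0)$ by a nonnegative amount, i.e.\ that the surgery correction at $n_0$ never contributes negatively. This is exactly where the monomial conditions on the branches not containing $v$ are used, and the example of section \ref{s:wildprop}, where the normalized Seiberg--Witten invariant becomes negative once these conditions are dropped, shows that the hypothesis cannot be weakened.
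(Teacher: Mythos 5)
There is a genuine gap, in two places. First, your reduction to the subgraphs $\calt_1^+$ and $\calt'$ does not follow from the hypothesis. The assumption concerns branches of nodes \emph{of $\calt$} which do not contain $v$; when you pass to $\calt_1^+=\calt_1\cup\{n_0\}$, every node $n\in\calt_1$ acquires a branch pointing towards $n_0$, and this is the \emph{truncation} of the branch of $n$ in $\calt$ that contains $v$ --- exactly the branch about which nothing is assumed. So the claim ``no branch of $\calt_1^+$ contains $v$, hence all monomial conditions hold for $\calt_1^+$'' is unjustified, and in general false; $\calt_1^+$ need not be a splice quotient graph, so $p_g(\calt_1^+)\ge 0$ is not available. (What is true is that $\calt_1^+$ satisfies the hypothesis of the theorem with distinguished vertex $n_0$, so the induction hypothesis could be invoked instead.) Similarly, the claim that ``every branch of a node of $\calt'$ avoiding $v$ was already such a branch in $\calt$'' fails whenever some node of $\calt'$ lies strictly between $v$ and $n_0$: its branch towards $n_0$ avoids $v$ but is truncated by the removal of $\calt_1$, and the monomial condition is not inherited under truncation (the deleted part may carry the end-vertices needed for the monomial cycle). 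This could be repaired by choosing $n_0$ to be a node nearest to $v$, but as written both reductions are gaps.

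Second, and more seriously, the step you yourself flag as ``the hard part'' is precisely the entire content of the theorem and is not proved. Theorem \ref{surgery} only gives an \emph{equality} $\mathfrak{sw}_0^{norm}(\calt)=\sum_i\mathfrak{sw}_0^{norm}(\calt_i)+\mathrm{pc}$, and section \ref{s:wildprop} shows that without monomial conditions the correction term can be arbitrarily negative; so the inequality $h^1_{virt,\calt}(0)\ge h^1_{virt,\calt'}(0)+p_g(\calt_1^+)$ that you hope the factorization ``should yield'' is exactly what must be established, and your proposal offers no mechanism for it. The paper's proof handles this by cutting at the distinguished vertex $v$ itself (so each component of $\calt\setminus v$ inherits the hypothesis with respect to the neighbour of $v$, making the induction clean), converting the correction term $\mathrm{pc}(Z_{0,v}(t_v))$ via the duality Theorem \ref{dualcount} into the finite sum $\sum_{[l']=[Z_K],\ l'_v<(Z_K)_v}z^{\calt}(l')$, and then proving its nonnegativity by the purely combinatorial Proposition \ref{segedprop}, where the monomial cycles on the branches away from $v$ force $0\le\sum_{l\ge 0,\ l_v=0}z^{\calt}(l'+l)\le-(l',E_v)+1$. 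Your argument contains no substitute for this combinatorial lemma (and, relying on $p_g\ge 0$ for splice quotients, it would also be less elementary than the paper's topological proof even if completed), so as it stands the proof is incomplete.
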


\section{Preliminaries}\label{s:prel}

\subsection{The resolution}\label{ss:notation}
Let $(X,0)$ be the germ of a complex analytic normal surface singularity,
 and we fix  a good resolution  $\phi:\widetilde{X}\to X$ of $(X,0)$.  
We denote the exceptional curve $\phi^{-1}(0)$ by $E$, and let $\cup_{v\in\calv}E_v$ be
its irreducible components. Set also $E_I:=\sum_{v\in I}E_v$ for any subset $I\subset \calv$. 

\subsection{Topological invariants}\label{ss:topol}
Let $\calt$ be the dual resolution graph
associated with $\phi$;  it  is a connected graph.
Then $M:=\partial \widetilde{X}$ can be identified with the link of $(X,o)$, it is 
an oriented  plumbed 3--manifold associated with $\calt$.
%Then $\widetilde{X}$, as a smooth manifold,
%serves as the plumbing  4--manifold associated with $\calt$,
%and  is the plumbed
%3--manifold (and also the `link' of $(X,o)$).
%A resolution is minimal if there is no rational $E_v$ with $E_v^2=-1$.
We will assume that  \emph{$M$ is a rational homology sphere},
or, equivalently,  $\mathcal{T}$ is a tree and all genus
decorations of $\mathcal{T}$ are zero. We use the same
notation $\mathcal{V}$ for the set of vertices of $\calt$, and let $\delta_v$ be the valency of a vertex $v$.
%, and $\mathcal{N}$ for the set of nodes, i.e. vertices with $\delta_v\geq 3$.
%Let $\widetilde{X}$ be the plumbed 4--manifold associated with
%$\mathcal{T}$, hence $\partial \widetilde{X} = M$.

$L:=H_2(\widetilde{X},\mathbb{Z})$, endowed
with the negative definite intersection form  $I=(\,,\,)$, is a lattice. It is
freely generated by the classes of 2--spheres $\{E_v\}_{v\in\mathcal{V}}$. The elements $l=\sum n_vE_v \in L$ are called \emph{cycles} and we define their $E$-support by $|l|=\cup_{n_v\not=0}E_v$. 
 Then $L':= H^2(\widetilde{X},\mathbb{Z})$ is generated
by the (anti)dual classes $\{E^*_v\}_{v\in\mathcal{V}}$ defined
by $(E^{*}_{v},E_{w})=-\delta_{vw}$, the opposite of the Kronecker symbol.
The intersection form embeds $L$ into $L'$. Then $H_1(M,\mathbb{Z})\simeq L'/L$, abridged by $H$.
Usually one also identifies $L'$ with those rational cycles $l'\in L\otimes \Q$ for which
$(l',L)\in\Z$, or, $L'={\rm Hom}_\Z(L,\Z)$. 

For $l'_1,l'_2\in L\otimes \Q$ with $l'_i=\sum_v l'_{iv}E_v$ ($i=\{1,2\}$)
one considers a partial ordering $l'_1\geq l'_2$ defined coordinatewise by $l'_{1v}\geq l'_{2v}$
for all $v\in\calv$. In particular,
$l'$ is an effective rational cycle if $l'\geq 0$.
% We set also $\min\{\ell'_1,\ell'_2\}:= \sum_v\min\{l'_{1v},l'_{2v}\}E_v$ and
% analogously $\min\{F\}$ for a finite subset $F\subset L\otimes \Q$.

Each class $h\in H=L'/L$ has a unique representative $r_h=\sum_vr_vE_v\in L'$ in the semi-open cube
(i.e. each $r_v\in \bQ\cap [0,1)$), such that its class  $[r_h]$ is $h$.
 %and denote the class of $x\in L'$ in $H$ by $[x]$.

All the $E_v$--coordinates of any $E^*_u$ are strict positive.
We define the Lipman cone as $\calS':=\{l'\in L'\,:\, (l', E_v)\leq 0 \ \mbox{for all $v$}\}$.
It is generated over $\bZ_{\geq 0}$ by $\{E^*_v\}_v$. We will also introduce the notation $\calS:=\calS'\cap L$.

For more details regarding the above combinatorial package associated with the topology of normal surface singularities we refer to \cite{trieste,NCL,Nfive}.
%We also write $\calS:=\calS'\cap L$.

\subsection{Analytic invariants}\label{ss:analinv}
\subsubsection{} The group ${\rm Pic}(\widetilde{X})$
of  isomorphism classes of analytic line bundles on $\widetilde{X}$ appears in the (exponential) exact sequence
\begin{equation}\label{eq:PIC}
0\to {\rm Pic}^0(\widetilde{X})\to {\rm Pic}(\widetilde{X})\stackrel{c_1}
{\longrightarrow} L'\to 0, \end{equation}
where  $c_1$ denotes the first Chern class. Here
$ {\rm Pic}^0(\widetilde{X})=H^1(\widetilde{X},\calO_{\widetilde{X}})\simeq
\C^{p_g}$, where $p_g$ is the {\it geometric genus} of
$(X,0)$. $(X,0)$ is called {\it rational} if $p_g(X,0)=0$.
The works of Artin \cite{Artin62,Artin66} characterized rational singularities topologically
via the graphs; such graphs are called `rational'. By this criterion, $\calt$
is rational if and only if $\chi(l)\geq 1$ for any effective non--zero cycle $l\in L_{>0}$.
Here $\chi(l)=-(l,l-Z_K)/2$ is the Riemann-Roch function and $Z_K\in L'$ is the (anti)canonical cycle
identified by adjunction formulae
$(-Z_K+E_v,E_v)+2=0$ for all $v$.

The epimorphism
$c_1$ admits a unique group homomorphism section $l'\mapsto s(l')\in {\rm Pic}(\widetilde{X})$,
 which extends the natural
section $l\mapsto \calO_{\widetilde{X}}(l)$ valid for integral cycles $l\in L$, and
such that $c_1(s(l'))=l'$  \cite{trieste,OkumaRat}.
% We write  $\calO_{\widetilde{X}}(l')$ for $s(l')$, and
We call $s(l')$ the  {\it natural line bundles} on $\widetilde{X}$ and they will be denoted by $\calO_{\tX}(l')$. 
By  the very  definition, $\calL$ is natural if and only if some power $\calL^{\otimes n}$
of it has the form $\calO_{\tX}(l)$ for some $l\in L$.

\subsubsection{$\mathbf{{Pic}(Z)}$}
Similarly, if $Z\in L_{>0}$ is a non--zero effective integral cycle such that its support is $|Z| =E$,
and $\calO_Z^*$ denotes
the sheaf of units of $\calO_Z$, then ${\rm Pic}(Z)=H^1(Z,\calO_Z^*)$ is  the group of isomorphism classes
of invertible sheaves on $Z$. It appears in the exact sequence
  \begin{equation}\label{eq:PICZ}
0\to {\rm Pic}^0(Z)\to {\rm Pic}(Z)\stackrel{c_1}
{\longrightarrow} L'\to 0, \end{equation}
where ${\rm Pic}^0(Z)=H^1(Z,\calO_Z)$.
If $Z_2\geq Z_1$ then there are natural restriction maps,
${\rm Pic}(\widetilde{X})\to {\rm Pic}(Z_2)\to {\rm Pic}(Z_1)$.
Similar restrictions are defined at  ${\rm Pic}^0$ level too.
These restrictions are homomorphisms of the exact sequences  (\ref{eq:PIC}) and (\ref{eq:PICZ}).

Furthermore, we define a section of (\ref{eq:PICZ}) by
$s_Z(l'):=
%r(s(l'))=
{\mathcal O}_{\widetilde{X}}(l')|_{Z}$.
It also satisfies $c_1\circ s_Z={\rm id}_{L'}$. We write  ${\mathcal O}_{Z}(l')$ for $s_Z(l')$, and they are called 
 {\it natural line bundles } on $Z$.

We also use the notations ${\rm Pic}^{l'}(\widetilde{X}):=c_1^{-1}(l')
\subset {\rm Pic}(\widetilde{X})$ and
${\rm Pic}^{l'}(Z):=c_1^{-1}(l')\subset{\rm Pic}(Z)$
respectively. Multiplication by $\calO_{\widetilde{X}}(-l')$, or by
$\calO_Z(-l')$, provides natural affine--space isomorphisms
${\rm Pic}^{l'}(\widetilde{X})\to {\rm Pic}^0(\widetilde{X})$ and
${\rm Pic}^{l'}(Z)\to {\rm Pic}^0(Z)$.

\subsubsection{\bf The analytic semigroup} \label{bek:ansemgr}
By definition, the analytic semigroup (monoid)  associated with the resolution $\tX$ is
\begin{equation}\label{eq:ansemgr}
\calS'_{an}:= \{l'\in L' \,:\,\calO_{\tX}(-l')\ \mbox{has no  fixed components}\}.
\end{equation}
It is a subsemigroup of $\calS'$. One also sets $\calS_{an}:=\calS_{an}'\cap L$, a subsemigroup
of $\calS$. In fact, $\calS_{an}$
consists of the restrictions   ${\rm div}_E(f)$ of the divisors
${\rm div}(f\circ \phi)$ to $E$, where $f$ runs over $\calO_{X,0}$. Therefore, if $s_1, s_2\in \calS_{an}$, then
${\rm min}\{s_1,s_2\}\in \calS_{an}$ as well (take the generic linear combination of the corresponding functions).
In particular,  for any $l\in L$, there exists a {\it unique} minimal
$s\in \calS_{an}$ with $s\geq l$.

Similarly, for any $h\in H=L'/L$ set $\calS'_{an,h}:\{l'\in \calS_{an}\,:\, [l']=h\}$.
Then for any  $s'_1, s'_2\in \calS_{an,h}$ one has
${\rm min}\{s'_1,s'_2\}\in \calS_{an,h}$, and
for any $l'\in L'$   there exists a unique minimal
$s'\in \calS_{an,[l']}$ with $s'\geq l'$.

\subsubsection{\bf Special cycles}  We will write $Z_{min}\in L$ for the  {\it minimal} (or fundamental) cycle of Artin, which is
the minimal non--zero cycle of $\calS=\calS'\cap L$ \cite{Artin62,Artin66}. Yau's {\it maximal ideal cycle}
$Z_{max}\in L$ defines the  divisorial part of the pullback of the maximal ideal $\m_{X,o}\subset \calO_{X,o}$, i.e.
 $\phi^*{\m_{X,o}}\cdot \calO_{\widetilde{X}}=\calO_{\widetilde{X}}(-Z_{max})\cdot \cali$,
where $\cali$ is an ideal sheaf with 0--dimensional support \cite{Yau1}. In general $Z_{min}\leq Z_{max}$.

\subsection{Effective Cartier divisors and Abel maps}

  In this section we review some needed material from \cite{NNA1}.

We fix a good resolution $\phi:\tX\to X$ of a normal surface singularity,
whose link is a rational homology sphere. 

\subsubsection{} \label{ss:4.1}
Let us fix an effective integral cycle  $Z\in L$, $Z\geq E$. (The restriction $Z\geq E$ is imposed by the
easement of the presentation, everything can be adopted  for $Z>0$).

Let $\eca(Z)$  be the space of effective Cartier (zero dimensional) divisors supported on  $Z$.
Taking the class of a Cartier divisor provides  a map
$c:\eca(Z)\to \pic(Z)$.
Let  $\eca^{l'}(Z)$ be the set of effective Cartier divisors with
Chern class $l'\in L'$, that is,
$\eca^{l'}(Z):=c^{-1}(\pic^{l'}(Z))$.

We consider the restriction of $c$, $c^{l'}:\eca^{l'}(Z)
\to \pic^{l'}(Z)$ too, sometimes still denoted by $c$. 

For any $Z_2\geq Z_1>0$ one has the natural  commutative diagram
\begin{equation}\label{eq:diagr}
\begin{picture}(200,45)(0,0)
\put(50,37){\makebox(0,0)[l]{$
\eca^{l'}(Z_2)\,\longrightarrow \, \pic^{l'}(Z_2)$}}
\put(50,8){\makebox(0,0)[l]{$
\eca^{l'}(Z_1)\,\longrightarrow \, \pic^{l'}(Z_1)$}}
\put(70,22){\makebox(0,0){$\downarrow$}}
\put(135,22){\makebox(0,0){$\downarrow$}}
\end{picture}
\end{equation}

As usual, we say that $\calL\in \pic^{l'}(Z)$ has no fixed components if
\begin{equation}\label{eq:H_0}
H^0(Z,\calL)_{reg}:=H^0(Z,\calL)\setminus \bigcup_v H^0(Z-E_v, \calL(-E_v))
\end{equation}
is non--empty. 
Note that $H^0(Z,\calL)$ is a module over the algebra
$H^0(\calO_Z)$, hence one has a natural action of $H^0(\calO_Z^*)$ on
$H^0(Z, \calL)_{reg}$. This second action is algebraic and free.  Furthermore,
 $\calL\in \pic^{l'}(Z)$ is in the image of $c$ if and only if
$H^0(Z,\calL)_{reg}\not=\emptyset$. In this case, $c^{-1}(\calL)=H^0(Z,\calL)_{reg}/H^0(\calO_Z^*)$.

One verifies that $\eca^{l'}(Z)\not=\emptyset$ if and only if $-l'\in \calS'\setminus \{0\}$. Therefore, it is convenient to modify the definition of $\eca$ in the case $l'=0$: we (re)define $\eca^0(Z)=\{\emptyset\}$,
as the one--element set consisting of the `empty divisor'. We also take $c^0(\emptyset):=\calO_Z$, then we have
\begin{equation}\label{eq:empty}
\eca^{l'}(Z)\not =\emptyset \ \ \Leftrightarrow \ \ l'\in -\calS'.
\end{equation}
If $l'\in -\calS'$  then
  $\eca^{l'}(Z)$ is a smooth variety whose dimension equals with the intersection number $(l',Z)$. Moreover,
if $\calL\in \im (c^{l'}(Z))$ (the image of the map $c^{l'}$)
then  the fiber $c^{-1}(\calL)$
 is a smooth, irreducible quasiprojective variety of  dimension
 \begin{equation}\label{eq:dimfiber}
\dim(c^{-1}(\calL))= h^0(Z,\calL)-h^0(\calO_Z)=%\chi(Z,\calL)-\chi(Z)+h^1(Z,\calL)-h^1(\calO_Z)=
 (l',Z)+h^1(Z,\calL)-h^1(\calO_Z).
 \end{equation}

\bekezdes \label{bek:I}
Consider again  a Chern class (or cycle) $l'\in-\calS'$ as above.
The $E^*$--support $|l'|^* = J(l')\subset \calv$ of $l'$ is defined via the identity  $l'=\sum_{v\in J(l')}a_vE^*_v$ with all
$\{a_v\}_{v\in J}$ nonzero. Its role is the following.

Besides the Abel map $c^{l'}(Z)$ one can consider its `multiples' $\{c^{nl'}(Z)\}_{n\geq 1}$ as well. It turns out
(cf. \cite[\S 6]{NNA1}) that $n\mapsto \dim \im (c^{nl'}(Z))$
is a non-decreasing sequence, and  $\im (c^{nl'}(Z))$ is an affine subspace for $n\gg 1$, whose dimension $e_Z(l')$ is independent of $n$, and essentially it depends only
on $J(l')$.
We denote the linearization of this affine subspace by $V_Z(J) \subset H^1(\calO_Z)$ or if the cycle $Z \gg 0$, then $ V_{\tX}(J) \subset H^1(\calO_{\tX})$.

Moreover, by \cite[Theorem 6.1.9]{NNA1},
\begin{equation*}\label{eq:ezl}
e_Z(l')=h^1(\calO_Z)-h^1(\calO_{Z|_{\calv\setminus J(l')}}),
\end{equation*}
where $Z|_{\calv\setminus J(l')}$ is the restriction of the cycle $Z$ to its $\{E_v\}_{v\in \calv\setminus J(l')}$
coordinates.

If $Z\gg 0$ (i.e. all its $E_v$--coordinates are very large), then (\ref{eq:ezl}) reads as
\begin{equation*}\label{eq:ezlb}
e_Z(l')=h^1(\calO_{\tX})-h^1(\calO_{\tX(\calv\setminus J(l'))}),
\end{equation*}
where $\tX(\calv\setminus J(l'))$ is a convenient small tubular neighbourhood of $\cup_{v\in \calv\setminus J(l')}E_v$.

Let $\Omega _{\tX}(J)$ be the subspace of $H^0(\tX\setminus E, \Omega^2_{\tX})/ H^0(\tX,\Omega_{\tX}^2)$ generated by differential forms which have no poles along $E_J\setminus \cup_{v\not\in J}E_v$.
Then, cf. \cite[\S8]{NNA1},
\begin{equation*}\label{eq:ezlc}
h^1(\calO_{\tX(\calv\setminus J)})=\dim \Omega_{\tX}(J).
\end{equation*}

Similarly let $\Omega _{Z}()$ be the subspace of $H^0(\calO_{\tX}(K + Z))/ H^0(\calO_{\tX}(K))$ generated by differential forms which have no poles along $E_J\setminus \cup_{v\not\in J}E_v$.
Then, cf. \cite[\S8]{NNA1},
\begin{equation*}\label{eq:ezlc}
h^1(\calO_{Z_{(\calv\setminus J)}})=\dim \Omega_{Z}(J).
\end{equation*}

We have also the following duality from \cite{NNA1} supporting the equalities above:

\begin{theorem}\cite{NNA1}\label{th:DUALVO}
Via Laufer duality one has  $V_{\tX}(J)^*=\Omega_{\tX}(J)$ and $V_{Z}(J)^*=\Omega_Z(J)$.
\end{theorem}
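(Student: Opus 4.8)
The plan is to combine the Abel-map description of $V_Z(J)$ with Grothendieck--Serre duality (Laufer's pairing), so that the annihilator of $V_Z(J)$ becomes a concrete space of meromorphic $2$--forms which we then recognise as $\Omega_Z(J)$. I will carry this out at the finite level of a fixed $Z\ge E$; the statement for $\tX$ follows either by letting $Z\gg 0$, or by the same argument with $\omega_{\tX}$ and $H^0(\tX\setminus E,\Omega^2_{\tX})$ replacing $\omega_Z$ and $H^0(\calO_{\tX}(K+Z))$. Fix $l'\in-\calS'$ with $|l'|^*=J$, write $-l'=\sum_{v\in J}a_vE^*_v$ with $a_v>0$, and for $n\gg 0$ choose a generic divisor $D\in\eca^{nl'}(Z)$; for $n\gg 0$ this is a disjoint union of transversal cuts at general points of the curves $E_v$, $v\in J$ (general, in particular, so as to miss all intersection points $E_v\cap E_w$). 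By the properties of Abel maps recalled above (see \cite[\S6]{NNA1}), for $n\gg 0$ the image $\im(c^{nl'}(Z))$ is an affine subspace with direction $V_Z(J)$, and its tangent space at the generic point $c^{nl'}(D)$ is the image of the differential of $c^{nl'}(Z)$ at $D$; this differential is the connecting homomorphism $\delta\colon H^0(\calO_D(D))\to H^1(\calO_Z)$ of
\[
0\longrightarrow\calO_Z\xrightarrow{\ \cdot s\ }\calO_Z(D)\longrightarrow\calO_D(D)\longrightarrow 0,
\]
where $s$ is the tautological section of $\calO_Z(D)$ with zero divisor $D$. Hence $V_Z(J)=\im\delta=\Ker\!\big(H^1(\calO_Z)\xrightarrow{\,\cdot s\,}H^1(\calO_Z(D))\big)$.

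Next I would dualise. Serre duality on the Gorenstein curve $Z$ gives $H^1(\calO_Z)^*\cong H^0(\omega_Z)$ with $\omega_Z=\calO_{\tX}(K+Z)|_Z$, realised as $H^0(\calO_{\tX}(K+Z))/H^0(\calO_{\tX}(K))$; this is precisely Laufer's pairing. Multiplication $\cdot s\colon\calO_Z\to\calO_Z(D)$ is Serre--dual to multiplication $\cdot s\colon\omega_Z(-D)\hookrightarrow\omega_Z$, i.e.\ to the inclusion into $H^0(\omega_Z)$ of the subspace of $2$--forms (with poles bounded by $Z$) that vanish along $D$. Therefore the surjection $H^1(\calO_Z)\twoheadrightarrow H^1(\calO_Z(D))$ is dual to this inclusion, and consequently
\[
V_Z(J)^{\perp}=\{\,\omega\in H^0(\omega_Z)\ :\ \omega|_D=0\,\}\subset H^1(\calO_Z)^{*}.
\]

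It then remains to identify this space with $\Omega_Z(J)$, which is the main content. Suppose $\omega\in H^0(\omega_Z)$ does not vanish along some $E_v$, $v\in J$. Then $\omega$ is a regular section of $\omega_Z$ near the generic part of $E_v$, so its zero locus there meets $E_v$ in a fixed finite number of points; since a generic $D$ has $(nl',E_v)=na_v$ transversal cuts on $E_v$ with $a_v>0$, for $n\gg 0$ this number exceeds that bound and $\omega$ cannot vanish on all of $D$. Conversely a form vanishing along every $E_v$, $v\in J$, certainly vanishes on $D\subset\bigcup_{v\in J}E_v$. Thus $V_Z(J)^{\perp}$ is the space of $2$--forms with poles bounded by $Z$ that are holomorphic at a general point of $E_v$ for each $v\in J$; a local computation in coordinates at such a point shows that this is equivalent to having no pole along $E_v$. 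Since the order of a $2$--form along a prime divisor is a global quantity, having no pole along every $E_v$, $v\in J$, automatically forces holomorphy at the interior intersection points $E_v\cap E_{v'}$ ($v,v'\in J$) as well, whereas the generic $D$ imposes nothing at the points $E_v\cap E_w$ with $w\notin J$ — exactly matching the exclusion of $\bigcup_{w\notin J}E_w$ in the definition of $\Omega_Z(J)$. Therefore $V_Z(J)^{\perp}=\Omega_Z(J)$, and likewise $V_{\tX}(J)^{\perp}=\Omega_{\tX}(J)$; in other words Laufer duality identifies $\Omega(J)$ with the annihilator $V(J)^{*}$ of $V(J)$, which is the claim.

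The step I expect to be the main obstacle is this last identification, carried out over the non-reduced scheme $Z$: one has to make rigorous, in the thickened picture, both that a generic $D$ with $n\gg 0$ forces $\omega$ to vanish along the whole generic part of each $E_v$, $v\in J$ — this is exactly where the stabilisation of the Abel-map image (the affineness of $\im(c^{nl'}(Z))$ for $n\gg 0$) is used — and the local dictionary saying that ``no pole along $E_v$'' is captured precisely by divisibility, inside $\omega_Z$, by the equations of the general transversal cuts, with the points $E_v\cap E_w$ ($w\notin J$) genuinely left unconstrained. A lighter but still essential point is the identification of the Abel-map differential with the connecting homomorphism $\delta$, hence of $V_Z(J)$ with $\Ker(\cdot s|_{H^1})$; this is contained in \cite{NNA1}, but the passage to $n\gg 0$ must be invoked with care.
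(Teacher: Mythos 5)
This theorem is not proved in the paper — it is quoted from \cite{NNA1} — and your argument follows essentially the same route as the original one: the differential of the Abel map at a generic divisor $D$ of transversal cuts (a connecting homomorphism) identifies $V_Z(J)$ with $\ker\big(H^1(\calO_Z)\to H^1(\calO_Z(D))\big)$, Serre/Laufer duality converts its annihilator into the sections of $\omega_Z$ vanishing along $D$, and these are matched with the forms having no pole along $E_J$; note that your reading of $V(J)^*$ as the annihilator under the Laufer pairing is the correct one, being the only interpretation compatible with $\dim V_Z(J)+\dim\Omega_Z(J)=h^1(\calO_Z)$. The one step you should write out is the uniformity you implicitly invoke in the hard inclusion: for a form with a pole along $E_v$ the set of points of $E_v$ where it vanishes on a transversal cut to order $Z_v$ is bounded by the degree of a fixed line bundle on $E_v\cong\bP^1$ (depending only on $Z$, $K$ and the graph, not on the form), so that a single generic $D$ with $na_v$ cuts, $n$ large, works simultaneously for all $\omega$ — exactly the point you flag at the end.
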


\subsubsection{\bf $\dim \im (c^{l'}(Z))$ and $h^1$ of cycles}

We recall some theorems about the dimensions of images of Abel maps from \cite{NNAD}.

If $\tX$ is a resolution of a normal surface singularity $(X, 0)$, $Z$ is an effective integral cycle on $\tX$ and $l' \in -S'$,  we set the notation $d_{l', Z}:= \dim(\im(c^{l'}(Z)))$.

Then, following \cite{NNA1}, we recall the  interpretation of $d_{l', Z}$ using cohomology numbers.

\begin{theorem}\label{dimgen}\cite{NNA1}
Let us consider the same setup as before: a resolution $\tX$ of a singularity $(X, 0)$ with resolution graph $\mathcal{T}$, $l' \in -S'$ and $Z$ an arbitrary effective cycle. 
Then for an abitrary line bundle $\calL \in \im(c^{l'}(Z))$ we have the inequality $h^1(Z, \calL) \geq  h^1(\calO_Z) - d_{l', Z}$, which becomes an equality for the generic line bundles in $\im(c^{l'}(Z))$.
\end{theorem}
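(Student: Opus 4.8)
The plan is to deduce Theorem \ref{dimgen} from the structure of the Abel map $c^{l'}(Z): \eca^{l'}(Z) \to \pic^{l'}(Z)$ already recalled in \S\ref{ss:4.1}, together with the fiber dimension formula (\ref{eq:dimfiber}). The key observation is that (\ref{eq:dimfiber}) relates $h^1(Z,\calL)$ on a fiber of the Abel map to the dimension of that fiber, so controlling $\dim c^{-1}(\calL)$ is equivalent to controlling $h^1(Z,\calL)$. First I would fix $\calL \in \im(c^{l'}(Z))$; since $\calL$ lies in the image, its fiber $c^{-1}(\calL) = H^0(Z,\calL)_{reg}/H^0(\calO_Z^*)$ is a smooth irreducible quasiprojective variety of dimension $(l',Z) + h^1(Z,\calL) - h^1(\calO_Z)$ by (\ref{eq:dimfiber}). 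Rearranging this gives the identity
\begin{equation*}
h^1(Z,\calL) = h^1(\calO_Z) + \dim c^{-1}(\calL) - (l', Z),
\end{equation*}
valid for \emph{every} $\calL \in \im(c^{l'}(Z))$.

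Next I would bound $\dim c^{-1}(\calL)$ from above. The total space $\eca^{l'}(Z)$ is smooth of dimension exactly $(l',Z)$ (stated in \S\ref{ss:4.1}), and the Abel map $c^{l'}(Z)$ restricted to $\eca^{l'}(Z)$ surjects onto its image $\im(c^{l'}(Z))$, which has dimension $d_{l',Z}$ by definition. Since all fibers of the restricted map have the same dimension (the fibers are orbits of the free $H^0(\calO_Z^*)$-action up to the affine-space structure of $\pic$, and more precisely generic fiber dimension is achieved everywhere on the image by the irreducibility/homogeneity inherent in the Abel-map construction — or simply by upper semicontinuity of fiber dimension combined with the generic value), one gets $\dim c^{-1}(\calL) \leq \dim \eca^{l'}(Z) - d_{l',Z} = (l',Z) - d_{l',Z}$, with equality for generic $\calL$. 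Substituting into the displayed identity yields $h^1(Z,\calL) \geq h^1(\calO_Z) - d_{l',Z}$, with equality on the generic locus.

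The main obstacle is the equality case, i.e.\ justifying that $\dim c^{-1}(\calL) = (l',Z) - d_{l',Z}$ for generic $\calL$ rather than merely $\leq$. Here one should invoke that $\eca^{l'}(Z)$ is irreducible (which follows from its description as a smooth variety together with the explicit affine-bundle-type structure, cf.\ \cite{NNA1}), so a dominant morphism from an irreducible variety to $\im(c^{l'}(Z))$ has generic fiber of dimension exactly $\dim \eca^{l'}(Z) - d_{l',Z}$ by the theorem on dimension of fibers; then the inequality for special $\calL$ comes from upper semicontinuity of fiber dimension, and the lower bound $h^1(Z,\calL) \geq h^1(\calO_Z) - d_{l',Z}$ follows because the fiber dimension can only jump \emph{up} on special loci. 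I would also double-check the edge case $l' = 0$, where $\eca^0(Z) = \{\emptyset\}$, $d_{0,Z} = 0$, and the statement reduces to the tautology $h^1(\calO_Z) \geq h^1(\calO_Z)$. Finally I would remark that since the whole argument is internal to the Abel-map formalism of \cite{NNA1}, the cleanest exposition is just to cite the relevant fiber-dimension statements there and assemble them as above.
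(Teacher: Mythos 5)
Your overall strategy is the right one, and it is in fact the argument behind the cited result in \cite{NNA1} (the present paper only recalls the theorem, it does not reprove it): combine the fiber formula (\ref{eq:dimfiber}), rewritten as $h^1(Z,\calL)=h^1(\calO_Z)-(l',Z)+\dim c^{-1}(\calL)$, with dimension theory for the Abel map $c^{l'}(Z):\eca^{l'}(Z)\to\pic^{l'}(Z)$. However, your second paragraph as written is wrong in a way that breaks the deduction. You claim an \emph{upper} bound $\dim c^{-1}(\calL)\leq (l',Z)-d_{l',Z}$ and then "substitute" to get $h^1(Z,\calL)\geq h^1(\calO_Z)-d_{l',Z}$; this is a non sequitur, since an upper bound on the fiber dimension yields an \emph{upper} bound on $h^1(Z,\calL)$, the opposite of what the theorem asserts. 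Moreover, the parenthetical claim that "all fibers of the restricted map have the same dimension" is false in general and is incompatible with the statement itself: if it were true, the inequality of the theorem would always be an equality, whereas special bundles in the image can and do have larger $h^1$ (equivalently, larger fibers).

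What you actually need is the \emph{lower} bound on fiber dimension, and you state the correct mechanism only in your last paragraph: $\eca^{l'}(Z)$ is a smooth \emph{irreducible} variety of dimension $(l',Z)$, the map onto (the closure of) its image is dominant, so by the theorem on dimensions of fibers every nonempty fiber $c^{-1}(\calL)$, $\calL\in\im(c^{l'}(Z))$, has dimension at least $(l',Z)-d_{l',Z}$, with equality on a dense open subset of the image. Plugging this into $h^1(Z,\calL)=h^1(\calO_Z)-(l',Z)+\dim c^{-1}(\calL)$ gives $h^1(Z,\calL)\geq h^1(\calO_Z)-d_{l',Z}$ for all $\calL$ in the image and equality generically. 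So the proof is salvageable by deleting the erroneous "$\leq$/equidimensional fibers" claim in the middle paragraph and running the fiber-dimension theorem in the correct direction; irreducibility of $\eca^{l'}(Z)$ (from \cite{NNA1}) should be cited explicitly since the argument genuinely uses it.
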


Furthermore, we recall the following formulae for $d_{l', Z}$ from \cite {NNAD}.

\begin{theorem}\label{dimcomp}\cite{NNAD}
\begin{equation}
d_{l', Z} =  \min_{0 \leq Z_1 \leq Z}( (l', Z_1) + h^1(\calO_Z) - h^1(\calO_{Z_1})).
\end{equation}
\end{theorem}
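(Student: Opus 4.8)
The plan is to establish the formula
\[
d_{l',Z}=\min_{0\le Z_1\le Z}\bigl((l',Z_1)+h^1(\calO_Z)-h^1(\calO_{Z_1})\bigr)
\]
by combining Theorem \ref{dimgen} with an induction on $Z$ that peels off one exceptional component at a time, together with a careful analysis of how $\dim\im(c^{l'}(Z))$ behaves under the restriction maps in diagram (\ref{eq:diagr}). First I would recall that by Theorem \ref{dimgen} a generic $\calL\in\im(c^{l'}(Z))$ satisfies $h^1(Z,\calL)=h^1(\calO_Z)-d_{l',Z}$, so the claimed identity is equivalent to computing the minimum of $h^0(Z,\calL)-h^0(\calO_Z)=(l',Z)+h^1(Z,\calL)-h^1(\calO_Z)$ over $\calL$ in the image; in other words I must show that the generic $h^1$ of a line bundle in the image of the Abel map equals $\max_{0\le Z_1\le Z}\bigl(h^1(\calO_{Z_1})-(l',Z_1)\bigr)$ adjusted by $(l',Z)$. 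This reformulation is the one I would carry through.

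The lower bound direction — that $d_{l',Z}\le (l',Z_1)+h^1(\calO_Z)-h^1(\calO_{Z_1})$ for every $0\le Z_1\le Z$ — I would get from the exact sequence $0\to\calO_{Z-Z_1}(-Z_1)\otimes\calL\to\calL\to\calL|_{Z_1}\to 0$ for $\calL\in\pic^{l'}(Z)$: a divisor on $Z$ with support avoiding $|Z_1|$-directions restricts to a section-counting statement on $Z_1$, and comparing $h^1$ on $Z$ with $h^1$ on $Z_1$ together with $\chi$-additivity bounds how large the image can be. Concretely, for a generic $\calL$ in the image one has a surjection-type control forcing $h^1(Z,\calL)\ge h^1(Z_1,\calL|_{Z_1})-(l',Z_1)\ge (h^1(\calO_{Z_1})-d_{l',Z_1})-(l',Z_1)$, and then an induction on $Z$ (the base case $Z=E$, or $Z$ small, being direct) closes the estimate. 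For the reverse inequality I would exhibit, for the particular $Z_1$ achieving the minimum on the right-hand side, an explicit construction of an effective Cartier divisor — built from transversal cuts on the components in $|l'|^*$ arranged so that the associated line bundle has exactly the predicted $h^1$ — showing the image is at least that large; alternatively one argues that the semicontinuity of $h^1$ along $\eca^{l'}(Z)$ together with the already-known value of $e_Z(l')$ (the stable image dimension) pins down the generic value.

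The main obstacle, I expect, will be the reverse inequality: producing a line bundle in $\im(c^{l'}(Z))$ whose $h^1$ actually attains the maximum $\max_{Z_1}(h^1(\calO_{Z_1})-(l',Z_1))$ rather than merely being bounded by it. This requires understanding which effective Cartier divisors occur and showing the "bad locus" (where $h^1$ jumps) is nonempty inside the image — a positivity/nonemptiness statement about Brill–Noether-type strata of $\eca^{l'}(Z)$ rather than a cohomological bound. I anticipate handling this by the same inductive framework: restricting to the subcycle $Z_1$ realizing the min, using that on $Z_1$ the natural restriction $\eca^{l'}(Z)\to\eca^{l'}(Z_1)$ is dominant (from diagram (\ref{eq:diagr}) and the smoothness/irreducibility statements recalled in \S\ref{ss:4.1}), and invoking the inductive hypothesis there; the delicate point is controlling $H^1$ of the kernel sheaf $\calO_{Z-Z_1}(-Z_1)\otimes\calL$ for the generic $\calL$, which is where the cycle $Z$ being large (or at least $\ge E$) is used. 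Once both inequalities are in place, combining with Theorem \ref{dimgen} gives the stated equality.
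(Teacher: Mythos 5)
Your statement is quoted in the paper from \cite{NNAD}; the paper itself contains no proof, so I am judging your plan against what such a proof actually requires. The inequality $d_{l',Z}\le (l',Z_1)+h^1(\calO_Z)-h^1(\calO_{Z_1})$ (which you call the lower bound) is indeed the easy half, and your exact-sequence route works after a small repair: the sequence $0\to\calL\otimes\calO_{Z-Z_1}(-Z_1)\to\calL\to\calL|_{Z_1}\to 0$ gives $h^1(Z,\calL)\ge h^1(Z_1,\calL|_{Z_1})$ outright (the shift by $(l',Z_1)$ you insert there belongs to the next step, where Theorem \ref{dimgen} on $Z_1$ gives $h^1(Z_1,\calL|_{Z_1})\ge h^1(\calO_{Z_1})-d_{l',Z_1}\ge h^1(\calO_{Z_1})-(l',Z_1)$). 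In fact no induction and no sheaf sequence are needed: by diagram (\ref{eq:diagr}) the restriction $\pic^{l'}(Z)\to\pic^{l'}(Z_1)$ carries $\im(c^{l'}(Z))$ into $\im(c^{l'}(Z_1))$, its fibers are affine of dimension $h^1(\calO_Z)-h^1(\calO_{Z_1})$, and $\dim\im(c^{l'}(Z_1))\le\dim\eca^{l'}(Z_1)=(l',Z_1)$, which gives the bound for every $0\le Z_1\le Z$ at once.

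The genuine gap is the opposite inequality, $d_{l',Z}\ge\min_{Z_1}\bigl((l',Z_1)+h^1(\calO_Z)-h^1(\calO_{Z_1})\bigr)$, equivalently the existence of some $\calL\in\im(c^{l'}(Z))$ with $h^1(Z,\calL)\le\max_{0\le Z_1\le Z}\bigl(h^1(\calO_{Z_1})-(l',Z_1)\bigr)$, and neither of your two suggested routes closes it. The fallback via $e_Z(l')$ and semicontinuity cannot work: since $n\mapsto\dim\im(c^{nl'}(Z))$ is non-decreasing, $e_Z(l')$ is an \emph{upper} bound for $d_{l',Z}$, and semicontinuity only says the generic $h^1$ is the minimum over the image; it produces no divisor realizing a small $h^1$. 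The inductive route is circular: "the generic fibres of $\im(c^{l'}(Z))\to\im(c^{l'}(Z_1))$ have dimension $h^1(\calO_Z)-h^1(\calO_{Z_1})$" over the minimizing $Z_1$, together with generic injectivity of $c^{l'}(Z_1)$ there, is exactly the content of the corollary (the cycles $C_{min},C_{max}$) that is deduced from Theorem \ref{dimcomp}, not an input one may invoke; dominance of $\eca^{l'}(Z)\to\eca^{l'}(Z_1)$ says nothing about the fibres in $\pic$. What is actually needed — and what the cited proof supplies — is a mechanism, via Laufer duality as in Theorem \ref{thm:gat}, showing that generic transversal cuts through the components $E_v$, $v\in|l'|^*$, impose independent conditions on the forms of $H^0(\calO_{\tX}(K+Z))/H^0(\calO_{\tX}(K))$, stratified by their pole orders: the forms with pole order bounded by a subcycle $Z_1$ form a subspace of dimension $h^1(\calO_{Z_1})$, each generic cut removes one dimension from the forms still permitted a pole, and the point at which this process saturates is precisely $\max_{Z_1}\bigl(h^1(\calO_{Z_1})-(l',Z_1)\bigr)$. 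Without an argument of this kind (why independence holds for generic cuts, and why the count saturates exactly at that maximum), the reverse inequality — the heart of the theorem — remains unproved in your proposal.
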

Note also the following  interesting geometric interpretation of the formulae given above

\begin{cor}\cite{NNAD}
Given a resolution $\tX$ with resolution graph $\mathcal{T}$, $l' \in -S'$ and $Z$ an arbitrary effective cycle, there exists a minimal effective cycle $Z_1 \leq Z$ and a maximal effective cycle $Z_2 \leq Z$, such that the map $\eca^{l'}(Z_i) \to \pic^{l'}(Z_i) $ is birational and the generic fibres of the map $\im(c^{l'}(Z)) \to \im(c^{l'}(Z_i))$ have dimensions
$h^1(\calO_Z) - h^1(\calO_{Z_i})$, which is the largest possible. For a fixed choice of $Z, l'$, let us denote $Z_1$ by $C_{min}(Z, l')$ and $Z_2$ by $C_{max}(Z, l')$.
\end{cor}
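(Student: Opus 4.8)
The plan is to extract the cycles $Z_1$ and $Z_2$ directly from the combinatorial minimization in Theorem \ref{dimcomp}. Consider the set $\mathcal{M}$ of all effective cycles $0 \leq W \leq Z$ that realize the minimum in the formula $d_{l', Z} = \min_{0 \leq W \leq Z}\big((l', W) + h^1(\calO_Z) - h^1(\calO_W)\big)$. First I would check that $\mathcal{M}$ is closed under both $\min$ and $\max$: for $W_1, W_2 \in \mathcal{M}$, using the standard exact sequence relating $\calO_{W_1 + W_2}$, $\calO_{\min(W_1,W_2)}$ and the pieces supported on $W_1$ and $W_2$, together with the linearity of $(l', -)$, one gets $h^1(\calO_{W_1 \vee W_2}) + h^1(\calO_{W_1 \wedge W_2}) \geq h^1(\calO_{W_1}) + h^1(\calO_{W_2})$ and $(l', W_1 \vee W_2) + (l', W_1 \wedge W_2) = (l', W_1) + (l', W_2)$; since the minimand is superadditive in this sense and $W_1, W_2$ are both minimizers, both $W_1 \wedge W_2$ and $W_1 \vee W_2$ must also be minimizers. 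Hence $\mathcal{M}$ has a unique minimal element $Z_1 =: C_{min}(Z, l')$ and a unique maximal element $Z_2 =: C_{max}(Z, l')$.

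Next I would identify what it means for $W \in \mathcal{M}$ geometrically. Factor the Abel map as $\eca^{l'}(Z) \to \eca^{l'}(W) \to \pic^{l'}(W)$ composed with restriction, and use the description of fibre dimensions: by Theorem \ref{dimgen} and equation (\ref{eq:dimfiber}), a generic line bundle $\calL \in \im(c^{l'}(W))$ has $h^1(W, \calL) = h^1(\calO_W) - d_{l', W}$, while $\dim \im(c^{l'}(W)) = d_{l', W}$. The map $\eca^{l'}(W) \to \pic^{l'}(W)$ is birational onto its image precisely when the generic fibre is zero-dimensional, i.e. when $h^0(W, \calL) = h^0(\calO_W)$ for generic $\calL$, which by (\ref{eq:dimfiber}) translates to $(l', W) + h^1(W, \calL) - h^1(\calO_W) = 0$ for generic $\calL$, i.e. $d_{l',W} = (l', W) + h^1(\calO_W) - d_{l',W} \cdot 0$... more precisely to $(l', W) = d_{l', W} - h^1(\calO_W) + h^1(W,\calL_{gen})$. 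Unwinding, birationality of $\eca^{l'}(W) \to \pic^{l'}(W)$ is equivalent to $W$ lying in $\mathcal{M}$ (the minimizing set), because the defect of birationality is exactly the excess $(l', W) + h^1(\calO_Z) - h^1(\calO_W) - d_{l', Z}$; this is the computation I would carry out carefully using the monotonicity statements of \cite{NNA1}.

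Finally, for the fibre-dimension claim, I would analyze the map $\im(c^{l'}(Z)) \to \im(c^{l'}(W))$ for $W \in \{Z_1, Z_2\}$. Its generic fibre has dimension $\dim \im(c^{l'}(Z)) - \dim \im(c^{l'}(W)) = d_{l', Z} - d_{l', W}$; but since $W \in \mathcal{M}$ we have $d_{l', W} = (l', W) + h^1(\calO_W) - h^1(\calO_W) $... rather, $d_{l',Z} = (l', W) + h^1(\calO_Z) - h^1(\calO_W)$, so comparing with the obvious bound $d_{l', W} \geq (l', W)$ (take $W_1 = W$ in Theorem \ref{dimcomp} applied to $W$ itself) one obtains that the generic fibre dimension equals $h^1(\calO_Z) - h^1(\calO_W)$, and the extremality of $Z_1, Z_2$ in $\mathcal{M}$ forces this to be the largest value attainable among cycles through which the Abel map factors birationally.

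The main obstacle I anticipate is the second step: pinning down the precise equivalence between membership in the minimizing set $\mathcal{M}$ and birationality of $\eca^{l'}(W) \to \pic^{l'}(W)$, since this requires combining the fibre-dimension formula (\ref{eq:dimfiber}) with the semicontinuity/monotonicity results of \cite{NNA1} on how $\dim \im(c^{l'}(W))$ behaves as $W$ grows, and ruling out that a non-minimizing $W$ could still give a birational Abel map by a coincidental cancellation. The lattice-theoretic closure of $\mathcal{M}$ under $\wedge$ and $\vee$ is routine once the superadditivity inequality for $h^1(\calO_{\bullet})$ is in hand, and the final fibre-dimension computation is then bookkeeping with the established formulae.
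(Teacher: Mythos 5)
A preliminary remark: the paper does not prove this corollary at all — it is quoted from \cite{NNAD} — so your proposal can only be measured against that source. Your overall architecture (take $\mathcal{M}$ to be the set of minimizers in Theorem \ref{dimcomp}, show $\mathcal{M}$ is closed under coordinatewise $\wedge$ and $\vee$, and let $Z_1,Z_2$ be its extremal elements) is the natural one and is essentially how $C_{min}$ and $C_{max}$ arise in \cite{NNAD}. One technical point in your lattice step needs care: the sequence $0\to\calO_{W_1\vee W_2}\to\calO_{W_1}\oplus\calO_{W_2}\to\calO_{W_1\wedge W_2}\to 0$ is not exact as written; the correct third term is the structure sheaf of the scheme-theoretic intersection (ideal sum), which differs from $\calO_{W_1\wedge W_2}$ by a finite-length sheaf. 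Since finite-length pieces do not affect $h^1$, the supermodularity $h^1(\calO_{W_1\vee W_2})+h^1(\calO_{W_1\wedge W_2})\geq h^1(\calO_{W_1})+h^1(\calO_{W_2})$ you invoke is true, but the argument must be routed through the scheme-theoretic intersection.

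The genuine flaw is in your second step: birationality of $\eca^{l'}(W)\to\pic^{l'}(W)$ is \emph{not} equivalent to $W\in\mathcal{M}$, and the "coincidental cancellation" you worry about cannot be ruled out — a non-minimizing $W$ may well have birational Abel map. What is true is the implication you need: applying Theorem \ref{dimcomp} to $W$ itself, birationality means $d_{l',W}=(l',W)$, and a minimizer for $Z$ a fortiori minimizes $(l',\cdot)-h^1(\calO_{\cdot})$ over $[0,W]$, so $d_{l',W}=(l',W)$ holds for $W\in\mathcal{M}$ (zero-dimensional generic fibres are single points because the fibres $H^0(W,\calL)_{reg}/H^0(\calO_W^*)$ are irreducible, so generically finite does give birational). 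The correct characterization of the set in the corollary is the \emph{conjunction} of birationality with the fibre-dimension condition: the generic fibre of $\im(c^{l'}(Z))\to\im(c^{l'}(W))$ (dominant by the diagram (\ref{eq:diagr})) has dimension $d_{l',Z}-d_{l',W}$, which is always at most $h^1(\calO_Z)-h^1(\calO_W)$ because restricting the minimum in Theorem \ref{dimcomp} for $Z$ to subcycles of $W$ gives $d_{l',Z}\leq d_{l',W}+h^1(\calO_Z)-h^1(\calO_W)$; equality together with $d_{l',W}=(l',W)$ says precisely $d_{l',Z}=(l',W)+h^1(\calO_Z)-h^1(\calO_W)$, i.e. $W\in\mathcal{M}$. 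With this corrected equivalence your first and third steps go through and yield $Z_1=\min\mathcal{M}$ and $Z_2=\max\mathcal{M}$ as claimed; as written, however, the middle step asserts a false equivalence and leaves the key verification open.
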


Finally, as a closure of this section, let recall also the following propositions from \cite{NNAD} and \cite{NNA1} which will be useful later in our proofs.

\begin{proposition}\cite{NNAD}\label{0dim}
Consider a resolution $\tX$ of a singularity $(X, 0)$ with resolution graph $\mathcal{T}$, $l' \in -S'$ and $Z$ an arbitrary effective cycle. 
Assume that  $|l'|^* = J$ and no differential form in $H^0(\tX\setminus E, \Omega^2_{\tX})/ H^0(\tX,\Omega_{\tX}^2)$ has  a pole on an exceptional divisor $E_v, v \in J$, then 
$d_{l', Z} = 0$ and for the unique line bundle $\calL \in \im(c^{l'}(Z))$ one has $h^1(Z, \calL) = p_g$.
\end{proposition}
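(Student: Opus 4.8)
The plan is to combine the two structural results that immediately precede the statement: Theorem \ref{dimcomp}, which gives the closed formula $d_{l',Z}=\min_{0\le Z_1\le Z}\big((l',Z_1)+h^1(\calO_Z)-h^1(\calO_{Z_1})\big)$, and the duality of Theorem \ref{th:DUALVO} together with the identity $h^1(\calO_{Z_{(\calv\setminus J)}})=\dim\Omega_Z(J)$. First I would show $d_{l',Z}=0$. Note that $d_{l',Z}\ge 0$ trivially. For the reverse inequality, recall from \S\ref{bek:I} that for $n\gg 1$ the image $\im(c^{nl'}(Z))$ is an affine subspace whose linearization is $V_Z(J)$, and by Theorem \ref{th:DUALVO} one has $V_Z(J)^*=\Omega_Z(J)$, the space of forms (in $H^0(\calO_{\tX}(K+Z))/H^0(\calO_{\tX}(K))$) with no pole along $E_J\setminus\bigcup_{v\notin J}E_v$. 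The hypothesis that no differential form in $H^0(\tX\setminus E,\Omega^2_{\tX})/H^0(\tX,\Omega^2_{\tX})$ has a pole on any $E_v$, $v\in J$, forces $\Omega_Z(J)$ to coincide with the full space $H^1(\calO_Z)^*$, hence $\dim V_Z(J)=0$, and therefore $e_Z(l')=\dim\im(c^{nl'}(Z))=0$ for $n\gg1$. Since $d_{nl',Z}$ is non-decreasing in $n$ and bounded by $e_Z(l')=0$, and $d_{l',Z}\le d_{nl',Z}$, we conclude $d_{l',Z}=0$.

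Next, $d_{l',Z}=0$ means $\dim\im(c^{l'}(Z))=0$. But $\im(c^{l'}(Z))$ is irreducible (it is the image of the irreducible variety $\eca^{l'}(Z)$, which is nonempty since $l'\in-\calS'$; if $l'=0$ the image is the single point $\calO_Z$, and the statement is the tautology $h^1(\calO_Z)=p_g$ after passing to $Z\gg0$ — so assume $l'\ne0$). A nonempty irreducible variety of dimension $0$ is a single point, so there is a \emph{unique} $\calL\in\im(c^{l'}(Z))$. For this $\calL$, Theorem \ref{dimgen} gives $h^1(Z,\calL)\ge h^1(\calO_Z)-d_{l',Z}=h^1(\calO_Z)$, and since $\calL$ is simultaneously the generic line bundle in $\im(c^{l'}(Z))$ (the image being a point), the same theorem gives equality: $h^1(Z,\calL)=h^1(\calO_Z)$. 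Finally, since the hypothesis is stated for forms on $\tX$ (equivalently, it holds for all sufficiently large $Z$), one has $h^1(\calO_Z)=p_g$ for $Z\gg0$; more precisely the statement should be read with $Z$ large enough that $h^1(\calO_Z)=p_g$, which is automatic under the global hypothesis on $\Omega^2_{\tX}$. This yields $h^1(Z,\calL)=p_g$.

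I expect the main obstacle to be the bookkeeping around the stabilization step: carefully justifying that the pole-freeness hypothesis on $H^0(\tX\setminus E,\Omega^2_{\tX})/H^0(\tX,\Omega^2_{\tX})$ transfers to the finite-level space $\Omega_Z(J)$, so that $V_Z(J)=0$ rather than merely $V_{\tX}(J)=0$. The cleanest route is to use the restriction compatibility of the diagrams (\ref{eq:diagr}) — the map $\im(c^{nl'}(Z))\to\im(c^{nl'}(\tX))$ is dominant with the right fiber dimensions by the Corollary after Theorem \ref{dimcomp} — together with the identifications $h^1(\calO_{Z_{(\calv\setminus J)}})=\dim\Omega_Z(J)$ and $h^1(\calO_{\tX(\calv\setminus J)})=\dim\Omega_{\tX}(J)$ from \S\ref{bek:I}; the hypothesis says $\Omega_{\tX}(J)$ is everything, hence so is $\Omega_Z(J)$ after noting $h^1(\calO_{Z_{(\calv\setminus J)}})\le h^1(\calO_Z)$ with equality exactly when the forms involved have no poles on the omitted divisors. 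Everything else is a formal consequence of Theorems \ref{dimgen} and \ref{dimcomp}.
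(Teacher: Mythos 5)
You should first note that this paper contains no proof of Proposition \ref{0dim}: it is quoted verbatim from \cite{NNAD}, so there is no in-paper argument to compare against. Judged on its own, your plan is essentially correct and is the natural assembly of the facts recalled in Section \ref{s:prel}: the hypothesis makes every class in $H^0(\calO_{\tX}(K+Z))/H^0(\calO_{\tX}(K))$ (which sits inside $H^0(\tX\setminus E,\Omega^2_{\tX})/H^0(\tX,\Omega^2_{\tX})$) pole-free along $E_v$, $v\in J$, hence $\dim\Omega_Z(J)=h^1(\calO_{Z|_{\calv\setminus J}})=h^1(\calO_Z)$ and $e_Z(l')=0$; monotonicity of $n\mapsto\dim\im(c^{nl'}(Z))$ then gives $d_{l',Z}=0$; irreducibility of $\eca^{l'}(Z)$ makes the image a single point, so the unique bundle is the generic one and Theorem \ref{dimgen} yields $h^1(Z,\calL)=h^1(\calO_Z)$. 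In particular, the ``transfer to level $Z$'' you single out as the main obstacle is immediate (a form with pole divisor bounded by $K+Z$ is in particular a form on $\tX\setminus E$, so the $\tX$-level hypothesis applies to it); no dominance or fiber-dimension argument via the Corollary after Theorem \ref{dimcomp} is needed.

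Two caveats. First, the inference ``$V_Z(J)^*=\Omega_Z(J)$, the hypothesis makes $\Omega_Z(J)$ the full space, hence $\dim V_Z(J)=0$'' is a non sequitur if Theorem \ref{th:DUALVO} is read literally (it would give $\dim V_Z(J)=h^1(\calO_Z)$, not $0$); the duality must be used in the form that $\Omega_Z(J)$ is the annihilator of $V_Z(J)$ under the Laufer pairing, equivalently through the dimension identities $e_Z(l')=h^1(\calO_Z)-h^1(\calO_{Z|_{\calv\setminus J}})$ and $h^1(\calO_{Z|_{\calv\setminus J}})=\dim\Omega_Z(J)$ — which is exactly the ``cleanest route'' you describe at the end, so this is a wording slip rather than a gap. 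Second, your argument proves $h^1(Z,\calL)=h^1(\calO_Z)$ for arbitrary effective $Z$, and you are right that the printed conclusion $h^1(Z,\calL)=p_g$ only makes sense for $Z$ large enough (or at least with $h^1(\calO_Z)=p_g$): for instance $Z=E_v$ with $v\in J$ gives $h^1(Z,\calL)=0$ even under the hypothesis, so your parenthetical claim that $h^1(\calO_Z)=p_g$ is ``automatic under the global hypothesis'' is not correct. The right reading is the one you also offer — $Z\gg 0$ — and it is indeed how the proposition is used in this paper (in the proof of Theorem \ref{thm:domsplice}).
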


\begin{proposition}\cite{NNA1}\label{redcycle}
Let  $\tX$ be a resolution of a singularity $(X, 0)$ with resolution graph $\mathcal{T}$ and consider an $l' \in -S'$ and $Z_1 \leq Z_2$ effective cycles  such that $h^1(\calO_{Z_1}) = h^1(\calO_{Z_2})$.
Then we have $d_{l', Z_1} = d_{l', Z_2}$, and if $\calL \in \im(c^{l'}(Z_2))$ then $h^1(Z_2, \calL) = h^1(Z_1, \calL | Z_1)$.
\end{proposition}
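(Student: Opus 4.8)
\emph{Overview.} The plan is to treat the two assertions separately. The equality $d_{l',Z_1}=d_{l',Z_2}$ I would derive from the formula of Theorem~\ref{dimcomp} together with a comparison lemma describing how $h^1(\calO_W)$ behaves when $W$ is truncated below $Z_1$; the cohomological equality $h^1(Z_2,\calL)=h^1(Z_1,\calL|_{Z_1})$ I would obtain from a morphism of short exact sequences produced by multiplying the restriction sequence $0\to\calO_{\tX}(-Z_1)|_{Z_2-Z_1}\to\calO_{Z_2}\to\calO_{Z_1}\to 0$ by a regular section of $\calL$. Two standard vanishings are used throughout: $H^2$ of a coherent sheaf on a $\le 1$-dimensional scheme vanishes (so for $0\le W'\le W$ the surjection $\calO_W\to\calO_{W'}$ gives $H^1(\calO_W)\twoheadrightarrow H^1(\calO_{W'})$, i.e.\ $W\mapsto h^1(\calO_W)$ is monotone), and $H^1$ and $H^2$ of a finite-length sheaf vanish. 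I expect the main obstacle to be writing down the correct exact sequences: the naive equality $\calO_{\tX}(-W)+\calO_{\tX}(-Z_1)=\calO_{\tX}(-\min(W,Z_1))$ fails at the nodes of $E$, so one must carry a finite-length correction term, and in the second part everything rests on choosing a regular section and checking that all cokernels that appear are $0$-dimensional; once these structural points are in place, the conclusions are forced by $H^2$-vanishing on curves and elementary diagram chasing.

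\emph{Comparison lemma.} I claim that if $h^1(\calO_{Z_1})=h^1(\calO_{Z_2})$ and $0\le W\le Z_2$, then $h^1(\calO_W)=h^1(\calO_{\min(W,Z_1)})$. Since $Z_1\le\max(W,Z_1)\le Z_2$, monotonicity forces $h^1(\calO_{\max(W,Z_1)})=h^1(\calO_{Z_1})$. Now use the exact sequence
\[
0\to\calO_{\max(W,Z_1)}\to\calO_W\oplus\calO_{Z_1}\to\calQ\to 0,\qquad \calQ:=\calO_{\tX}/\bigl(\calO_{\tX}(-W)+\calO_{\tX}(-Z_1)\bigr),
\]
which is valid because the intersection of the two principal monomial ideals equals $\calO_{\tX}(-\max(W,Z_1))$. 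At every generic point of an $E_v$ one has $\calO_{\tX}(-W)+\calO_{\tX}(-Z_1)=\calO_{\tX}(-\min(W,Z_1))$, so the natural surjection $\calQ\to\calO_{\min(W,Z_1)}$ has $0$-dimensional kernel and hence induces $H^1(\calQ)\cong H^1(\calO_{\min(W,Z_1)})$. Taking cohomology of the displayed sequence, the kernel of $H^1(\calO_W)\oplus H^1(\calO_{Z_1})\twoheadrightarrow H^1(\calO_{\min(W,Z_1)})$ is the image of $H^1(\calO_{\max(W,Z_1)})$, of dimension $\le h^1(\calO_{\max(W,Z_1)})=h^1(\calO_{Z_1})$; this yields $h^1(\calO_W)\le h^1(\calO_{\min(W,Z_1)})$, and the reverse inequality is monotonicity.

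\emph{The equality of dimensions.} By Theorem~\ref{dimcomp}, $d_{l',Z}=\min_{0\le W\le Z}\bigl((l',W)+h^1(\calO_Z)-h^1(\calO_W)\bigr)$. Since the summands for $0\le W\le Z_1$ coincide in the two minima (using $h^1(\calO_{Z_1})=h^1(\calO_{Z_2})$), one gets $d_{l',Z_1}\ge d_{l',Z_2}$. For the reverse, fix $0\le W\le Z_2$, put $W':=\min(W,Z_1)\le Z_1$, and use the comparison lemma together with $l'\in-\calS'$ (so $(l',E_v)\ge 0$ for all $v$, hence $(l',W-W')\ge 0$):
\[
(l',W)+h^1(\calO_{Z_2})-h^1(\calO_W)=(l',W-W')+\Bigl((l',W')+h^1(\calO_{Z_1})-h^1(\calO_{W'})\Bigr)\ \ge\ d_{l',Z_1}.
\]
Minimising over $W$ gives $d_{l',Z_2}\ge d_{l',Z_1}$, hence equality.

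\emph{The cohomological equality.} Let $\calL\in\im(c^{l'}(Z_2))$ and pick $s\in H^0(Z_2,\calL)_{reg}$; then $s$ cuts out a $0$-dimensional effective Cartier divisor on $Z_2$, so it vanishes on no $E_v$. Since $Z_2$, $Z_1$ and $Z_2-Z_1$ are Cohen--Macaulay (hypersurfaces in the smooth $\tX$), multiplication by $s$ is injective on each of $\calO_{Z_2}$, $\calO_{Z_1}$ and $\calO_{\tX}(-Z_1)|_{Z_2-Z_1}$, with finite-length cokernels. Thus $\cdot s$ is a morphism from the restriction sequence $0\to\calO_{\tX}(-Z_1)|_{Z_2-Z_1}\to\calO_{Z_2}\to\calO_{Z_1}\to 0$ onto its $\calL$-twist $0\to\calL\otimes\calO_{\tX}(-Z_1)|_{Z_2-Z_1}\to\calL|_{Z_2}\to\calL|_{Z_1}\to 0$, and by the snake lemma the three cokernels form a short exact sequence of finite-length sheaves; in particular all three vertical maps on $H^1$ are surjective. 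On the other hand, $h^1(\calO_{Z_1})=h^1(\calO_{Z_2})$ together with the surjection $H^1(\calO_{Z_2})\twoheadrightarrow H^1(\calO_{Z_1})$ makes the latter an isomorphism, so $H^1\bigl(\calO_{\tX}(-Z_1)|_{Z_2-Z_1}\bigr)\to H^1(\calO_{Z_2})$ is the zero map; chasing the resulting commutative square and using surjectivity of the left vertical $H^1$-map shows $H^1\bigl(\calL\otimes\calO_{\tX}(-Z_1)|_{Z_2-Z_1}\bigr)\to H^1(\calL|_{Z_2})$ is zero as well. Hence $H^1(\calL|_{Z_2})\to H^1(\calL|_{Z_1})$ is injective; it is surjective because $H^2$ of a sheaf on the curve $Z_2-Z_1$ vanishes, so it is an isomorphism, which is exactly $h^1(Z_2,\calL)=h^1(Z_1,\calL|_{Z_1})$.
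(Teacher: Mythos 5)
Your argument is correct, but it cannot be compared line-by-line with anything in this paper: Proposition \ref{redcycle} is quoted here from \cite{NNA1} without proof. Measured against the original route in \cite{NNA1}, your proof is genuinely different. There, the key point is that $h^1(\calO_{Z_1})=h^1(\calO_{Z_2})$ makes the restriction $H^1(\calO_{Z_2})\to H^1(\calO_{Z_1})$ an isomorphism, hence identifies $\pic^{l'}(Z_2)$ with $\pic^{l'}(Z_1)$ compatibly with the diagram (\ref{eq:diagr}); the two claims are then read off from the behaviour of the Abel maps and the fiber-dimension formula (\ref{eq:dimfiber}), so both halves come from one Abel-map argument. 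You instead split the statement: for $h^1(Z_2,\calL)=h^1(Z_1,\calL|_{Z_1})$ you multiply the restriction sequence $0\to\calO_{\tX}(-Z_1)|_{Z_2-Z_1}\to\calO_{Z_2}\to\calO_{Z_1}\to 0$ by a regular section $s$ and chase the two long exact sequences, using only CM-ness of effective divisors on the smooth $\tX$ and the vanishing of $H^1,H^2$ for finite-length sheaves and of $H^2$ on curves — this half is self-contained, elementary, and does not use the Abel map at all. For $d_{l',Z_1}=d_{l',Z_2}$ you invoke the minimum formula of Theorem \ref{dimcomp} together with a nice Mayer--Vietoris comparison lemma ($h^1(\calO_W)=h^1(\calO_{\min(W,Z_1)})$ for all $W\le Z_2$), which I checked and which is correct, including the finite-length correction to $\calO_{\tX}(-W)+\calO_{\tX}(-Z_1)\neq\calO_{\tX}(-\min(W,Z_1))$ at the nodes.

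One caveat on the first half: Theorem \ref{dimcomp} is quoted from \cite{NNAD}, a paper that postdates and is built on the Abel-map machinery of \cite{NNA1}, where Proposition \ref{redcycle} originates; so as a reconstruction of the original proof your dimension argument is anachronistic and carries a risk of circularity that you cannot rule out from inside this paper (the proof of the minimum formula may well use exactly this reduction property). Within the toolkit this paper puts at your disposal the derivation is legitimate, and your second half would in any case survive unchanged; but if you want a proof in the spirit of \cite{NNA1}, replace the appeal to Theorem \ref{dimcomp} by the observation that $\pic^{l'}(Z_2)\cong\pic^{l'}(Z_1)$ and that the images of the two Abel maps correspond under this isomorphism, and then use (\ref{eq:dimfiber}) (or your own $h^1$-equality together with Theorem \ref{dimgen}) to compare the dimensions directly.
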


\subsection{The periodic constant of multivariable formal Laurent series}
\label{ss:set}
One of the main tool of this article is the theory and methods regarding topological Poincar\'e series (cf. section \ref{ss:tps}), extended and applied to different `relative' multivariable series constructed from the original topological Poincar\'e series. Therefore, in this subsection we will define the important concepts in a slightly more general setting, and in the next section we will specialize them and discuss results regarding the topological Poincar\'e series. 

\subsubsection{} Let $L$ be a lattice freely generated by base elements $\{E_v\}_{v\in \calv}$,
$L' $ is an overlattice of the same rank (not necessarily dual of $L$), and we set $H:=L'/L$, a finite abelian group of order $d$. The partial ordering is defined as in subsection~\ref{ss:topol}.
Let $\ZZ[[L']]$ be the $\ZZ$-module
consisting of the $\ZZ$-linear combinations of the monomials $\mathbf{t}^{l'}:=\prod_{v\in \V}t_v^{l'_v}$,
where $l'=\sum_{v}l'_v E_v\in L'$. It is a $\ZZ$-submodule of the formal power series in variables $t_v ^{1/d}, t_v^{-1/d}$, $v\in V$.

We consider a multivariable series $S(\bt)=\sum_{l'\in L'}a(l')\bt^{l'}\in \ZZ[[L']]$ and
let $\Supp S(\bt):=\{l'\in L' \mid a(l')\neq 0\}$ be the support of the
series and we assume the following finiteness condition: for any $x\in L'$
\begin{equation}\label{eq:finiteness}
\{l'\in \Supp S(\bt)\mid l'\not\geq x\} \ \ \mbox{is finite}.
\end{equation}
Note that this condition implies that $S(\bt)$ is automatically a formal Laurent series in the sense that the subset $\{l'\in L'_{\ngeq 0} \mid a(l')\neq 0\}$ of the support is finite.  

We will use multivariable series in $\ZZ[[L']]$ as well as in $\ZZ[[L'_I]]$ for any
$I\subset \calv$, where $L'_I={\rm pr}_I(L')$ is the projection of $L'$ via
${\rm pr}_I:L_{{\QQ}} \to \oplus_{v\in I} \QQ \langle E_v\rangle$. 
For example, if $S(\bt)\in \ZZ[[L']]$ then $S(\bt_{I}):=S(\bt)|_{t_v=1,v\notin I}$ is an element of $\ZZ[[L'_I]]$.
In the sequel we use the notations $\l'_I=l'|_I:= {\rm pr}_I(l')$ and $\bt^{l'}_I:=\bt^{l'}|_{t_v=1,v\notin I}$ for any $l'\in L'$.
Each coefficient $a_I(x)$ of $S(\bt_I)$ is obtained as a summation of certain coefficients $a(y)$ of $S(\bt)$, where $y$ runs over
$ \{\ell'\in \Supp S(\bt)\mid \ell'|_I =x \}$
(this is a finite sum by~\eqref{eq:finiteness}). Moreover, $S(\bt_I)$ satisfies a similar finiteness property as ~\eqref{eq:finiteness}
in the variables~$\bt_I$.

For any $S(\bt)\in \ZZ[[L']]$ one can consider its unique decomposition  $S(\bt)=\sum_h S_h(\bt)$,
where $S_h(\bt):=\sum_{[l']=h}a(l')\bt^{l'}$. $S_h(\bt)$ is called the
$h$-part of $S(\bt)$.  Note that
%the $H$-decomposition of the \rel series is not well defined.  That is,
 the restriction
$S_h(\bt)|_{t_v=1,v\notin I}$ of the $h$-part $S_h(\bt)$ cannot be recovered from $S(\bt_I)$ in general. 
% 
% since the class of $\ell'$ cannot be recovered from $\ell'|_I$.
% Hence, the notation $S_h(\bt_I)$, defined as $S_h(\bt)|_{t_v=1,v\notin I}$,  is not ambiguous, but requires certain caution.

\subsubsection{\bf Counting function}
\label{ss:countingfunctions}
Given a multivariable series $A(\bt_I)\in \ZZ[[L'_I]]$ for $\emptyset\neq I\subset \V$
(eg., $A(\bt_I)=S(\bt_I)$ or $A(\bt_I)=S_h(\bt_I)$ for $h\in H$) one considers the \textit{counting function} associated with the coefficients of $A(\bt_{I})$ as follows:
(cf. \cite{NPS}).
\begin{equation}\label{eq:count1}
Q{(A(\bt_I))}: L'_I\longrightarrow \ZZ, \ \ \ \
x_I\mapsto \sum_{l'_I\ngeq x_I\atop l'_I\nless 0} a(l'_I).
\end{equation}
% 
% The second function is called the \textit{modified counting function} and it is defined by
% \begin{equation}\label{eq:modcount1}
% q{(A(\bt_I))}: L'_I\longrightarrow \ZZ, \ \ \ \
% x_I\mapsto \sum_{\ell'_I \prec x_I} \ a(\ell'_I),
% \end{equation}
% where the order relation $\ell'_I \prec x_I$ means $\ell'_v<x_v$ for all $v\in I$.
% (A new symbol $x\prec y$ is introduced to avoid ambiguity with $x<y$, which means $x\leq y$ and $x\neq y$.)

Note that  $Q{(A(\bt_I))}$ is well defined whenever $A$ satisfies the finiteness condition~\eqref{eq:finiteness}.
If $A(\bt_I)=S_h(\bt_I)$ then
$Q{(A(\bt_I))}$ will also be denoted by~$Q^{S}_{h,I}$.
% In this case,
% $$
% Q{(S(\bt_I))}=%\sum_{h\in H} Q{(S_h(\bt_I))}=
% \sum_{h\in H} Q^{(S)}_{h,I},\ \ \ \ \
% q{(S(\bt_I))}=%\sum_{h\in H} Q{(S_h(\bt_I))}=
% \sum_{h\in H} q^{(S)}_{h,I}.
% $$
% The inclusion--exclusion principle connects the two counting functions, namely
% \begin{equation}\label{eq:InEx}
% Q^{(S)}_{h,I}(x)=\sum_{\emptyset \not=J\subset I} \ (-1)^{|J|+1} q^{(S)}_{h,J}(x).
% \end{equation}
% Both counting functions (and hence identity~\eqref{eq:InEx} as well) can be extended to $x\in L'$
% via the projection $L'\to L'_I$, as  $Q^{(S)}_{h,I}(x)=Q^{(S)}_{h,I}(x_I)$ and similar formulae for $ q^{(S)}_{h,I}$.

\subsubsection{\bf Periodic constants}\label{ss:pc}
Let $S(\bt) \in \Z[[L']]$ be a series satisfying the finiteness condition~\eqref{eq:finiteness}
and consider its $h$-part $S_h(\bt)$ for a fixed $h \in H$.
Let $\mathcal{K}\subset L'\otimes\mathbb{R}$ be a real closed cone whose affine closure is top dimensional.
Assume that there exist $l'_* \in \mathcal{K}$ and a finite index sublattice $\widetilde{L}$ of $ L$ and a
quasipolynomial $\mathfrak{Q}^{{\mathcal K},S}_{h,\calv}(x)$
defined on $\widetilde{L}$ such that
\begin{equation}
\label{eq:qpol}
\mathfrak{Q}_{h,\calv}^{{\mathcal K},S}(l) = Q_{h,\calv}^{S}(r_h+l)
\end{equation}
whenever $l\in (l'_* +\mathcal{K})\cap \widetilde{L}$ ($r_h\in L'$ is defined similarly as in~\ref{ss:topol}).
Then we say that the counting function $Q_{h,\calv}^{S}$ (or $S_h(\mathbf{t})$)
\textit{admits a quasipolynomial} in $\mathcal{K}$, namely $\widetilde{L}\ni l\mapsto
\mathfrak{Q}_{h,\calv}^{\mathcal{K},S}(l)$.
In this case, we define the \textit{periodic constant} of $S_h(\bt)$ associated with $\mathcal{K}$ by
\begin{equation}\label{eq:PCDEF}
\mathrm{pc}^{\mathcal{K}}(S_h(\bt)) := \mathfrak{Q}_{h,\calv}^{{\mathcal K},S}(0) \in \mathbb{Z}.
\end{equation}
The number $\mathrm{pc}^{\mathcal{K}}(S_h(\bt))$ is independent of the
choice of $l'_*$ and of the finite index sublattice $\widetilde L\subset L$.

In some cases we might drop the indices $\mathcal{K}$ or $S$ if there is no ambiguity regarding them.

Given any $I\subset \calv$ the natural group homomorphism ${\rm pr}_I:L'\to L'_I$ preserves the lattices $L\to L_I$
and hence it induces a homomorphism $H\to H_I:=L'_I/L_I$, denoted by $h\mapsto h_I$. However, note that even if
$L'$ is the dual of $L$ associated with a form $(\,,\,)$,
$L_I'$ usually is not a dual lattice of $L_I$, it is just an overlattice. This fact also motivates the general setup of the present section.
In this projected context one can also define 
the periodic constant associated with the series $S_h(\bt_I)$ from the previous paragraph exchanging
$\calv$ (resp. $\bt$, $r_h$) by $I$ (resp. $\bt_I$, $(r_{h})_I$).
\begin{remark}
\label{rem:pc}
\mbox{}
\begin{enumerate}
 \item\label{rem:pc1}
 The periodic constant of one-variable series was introduced in~\cite{Ok,NOk} as follows.
 For simplicity, we assume that $L=L'\simeq\ZZ$ and let $S(t)=\sum_{l\geq 0}c_l t^l \in \mathbb{Z}[[t]]$ be a
 formal power series in one variable. If for some $p\in \mathbb{Z}_{>0}$ the counting function
 $Q^{(p)}(n):=\sum_{l=0}^{pn-1}c_l$ is a polynomial $\mathfrak{Q}^{(p)}$ in $n$, then the constant term
 $\mathfrak{Q}^{(p)}(0)$ is independent of $p$ and it is the periodic constant $\mathrm{pc}(S)$ of the series~$S$.
(Here $\widetilde{L}=p\ZZ$ and the cone is automatically the `positive cone'.)
 
 \item\label{rem:pc2}
 If $S(\bt)$ is a Laurent polynomial in $\ZZ[[L']]$, that is, $\Supp(S(\bt))$ is finite, then
 its counting function $Q_h^{S}$ is constant for large enough values and this constant
 equals the sum of the coefficients associated with the nonnegative exponents. Hence, if we denote by $S_h(\bt)|_{\nless 0}=\sum_{l'\in \Supp(S(\bt)),  l'\nless 0, [l'] = h} s(l')\bt^{l'}$, then $\mathfrak{Q}_h^{\, {\mathcal K},S}$ eg. for ${\mathcal K}=(\mathbb{R}_{\geq 0})^{|\calv|}$
 is the constant map $(S_h)|_{\nless 0}(\bf 1)$ (i.e. one substitutes for each $t_v=1$) and thus its periodic constant exists and equals~$(S_h)|_{\nless 0}(\bf 1)$.
(The periodic constant for ${\mathcal K}=(\mathbb{R}_{\leq 0})^{|\calv|}$ also exists and it equals zero.)

\item\label{rem:pc3}
Let's have again a lattice $L$ and an overlattice $L'$ with $H \cong L' / L$ and assume that $S(\bt)=\sum_{l' \in {\mathcal K}}a^S(l')\bt^{l'}$ is a series in
variables $l' \in L'$ supported on the cone ${\mathcal K}=\R_{\geq 0}\langle v_j\rangle_j\subset L\otimes \R$, where all the entries of each $v_j$ are positive. 
Assume that its counting function $Q_h^S(l')=\sum_{\tilde{l}\not\geq l', [\tilde{l}] = h }a^S(\tilde{l})$ admits the quasipolynomial $\mathfrak{Q}_h^S(l')$, which satisfies
 $\mathfrak{Q}_h^S(l')=Q_h^S(l')$ in a shifted cone of type $l_*+{\mathcal K}$.
 Then, in a convenient shifted cone,  for any fixed $l_0\in L$ one has
\begin{equation*}%\label{eq:shiftedser}
 \mathfrak{Q}_h^S(l'+l_0)=\sum_{\tilde{l}\not\geq l'+l_0, [ \tilde{l}] = h}a^S(\tilde{l})=
\sum_{\tilde{l}\not\geq l', [ \tilde{l}] = h}a^{\bt^{-l_0}S}(\tilde{l}).
\end{equation*}
Let $\bt^{-l_0}S(\bt)|_{< 0}$ and $\bt^{-l_0}S(\bt)|_{\nless 0}$
be a decomposition of the Laurent series $\bt^{-l_0}S(\bt)$ according to its support.  

Then  $\bt^{-l_0}S(\bt)|_{\nless 0}$ is a Laurent series, while $\bt^{-l_0}S(\bt)|_{< 0}$ is a finite polynomial with negative exponents. Furthermore, by definition one has $Q_h^{\bt^{-l_0}S}(l')=Q_h^{\bt^{-l_0}S|_{\nless 0}}(l')$, and 
for $l$ with large  coefficients $\sum_{\tilde{l}\not\geq l', [ \tilde{l}] = h }a^{\bt^{-l_0}S|_{< 0}}(\tilde{l})=(\bt^{-l_0}(S_h)|_{< 0})({\bf 1})$. This shows that  for any $l_0\in L$ the series
$\bt^{-l_0}S_h(\bt)$ admits a quasipolinomial
and a periodic constant in the cone $\mathcal{K}$ and
$$\mathfrak{Q}_h^S(l_0)=(\bt^{-l_0}(S_h)|_{< 0})({\bf 1})+{\rm pc}^{\mathcal {K}}
(\bt^{-l_0}S_h).$$
 \end{enumerate}
\end{remark}

\subsection{Topological Poincar\'e series and Seiberg--Witten invariants}\label{ss:tps}
As we have promised previously, in this section we will define the topological Poincar\'e series associated with the topology of a complex normal surface singularity. We also present some important results which will be used throughout the article, regarding the intimate connection between its periodic constant and the Seiberg--Witten invariant of the link of the singularity.  Here, we consider the setting of section \ref{ss:topol}, thus the lattices $L\subset L'$ and the corresponding concepts are associated with a dual resolution graph $\calt$, or, equivalently, with the link $M$.

\subsubsection{\bf Definition of $Z(\bt)$}
The  \emph{multivariable topological Poincar\'e series} is the
Taylor expansion $Z(\mathbf{t})=\sum_{l'} z^\calt(l')\bt^{l'}
\in\bZ[[L']] $ at the  origin of the `zeta-function'
\begin{equation}\label{eq:1.1}
% f(\mathbf{t})=
\prod_{v\in \mathcal{V}} (1-\mathbf{t}^{E^*_v})^{\delta_v-2},
\end{equation}
where
$\bt^{l'}:=\prod_{v\in \mathcal{V}}t_v^{l'_v}$  for any $l'=\sum _{v\in \mathcal{V}}l'_vE_v\in L'$ ($l'_v\in\bQ$).

It decomposes as $Z(\mathbf{t})=\sum_{h\in H}Z_h(\mathbf{t})$, where $Z_h(\mathbf{t})=\sum_{[l']=h}z^\calt (l')\bt^{l'}$. The expression
(\ref{eq:1.1}) shows that  $Z(\mathbf{t})$ is supported in the \emph{Lipman cone} $\mathcal{S}':=\mathbb{Z}_{\geq0}\langle E^{*}_{v}\rangle_{v\in\mathcal{V}}$.
Since  the intersection form $I$ is negative definite, all the entries of $E_v^*$ are strict positive, hence $\calS'\subset
\{\sum_vl'_vE_v\,:\, l'_v>0\}\cup\{0\}$.
Therefore, $Z(\bt)$ satifies the finiteness condition (\ref{eq:finiteness}) (cf. \cite[(2.1.2)]{NJEMS}) and following section \ref{ss:countingfunctions} one can consider its counting functions 
\begin{equation}\label{eq:countintro}
Q_h: L'_h:=\{x \in L'\,:\, [x]=h\}\to \bZ, \ \ \ \
Q_{h}(x)=\sum_{l'\ngeq x,\, [l']=h} z(l').
\end{equation}
for any $h\in H$. In the sequel, the notation $Q_h$ (without marking the associated series in upper index) will always stand for the counting function of $Z(\bt)$.
% The appearance  of this type of  truncation $\{l'\ngeq x,\, [l']=h\}$ for the
% counting function
% in the above sum is motivated by the results  below, see
% e.g. \ref{eq:SUM} (or \cite{NCL}).

\subsubsection{\bf Seiberg--Witten invariants of $M$ as periodic constants of $Z(\bt)$}

We denote by $\mathrm{Spin}^c(\widetilde{X})$ the set of $spin^c$--structures on $\widetilde{X}$ and let $\widetilde{\sigma}_{can}$ be the {\it canonical
$spin^c$--structure on $\widetilde{X}$} identified by $c_1(\widetilde{\sigma}_{can})=Z_K$. $\mathrm{Spin}^c(\widetilde{X})$ is an $L'$-torsor and if we denote the action by $*$, then $c_1(l'*\widetilde{\sigma})=c_1(\widetilde{\sigma})+2l'$. All the $spin^c$--structures on $M$ are obtained by restrictions from $\widetilde{X}$. $\mathrm{Spin}^c(M)$ is an $H$--torsor, compatible with the projection $L'\to H$. The {\it canonical
$spin^c$--structure on $M$} is the restriction of $\widetilde{\sigma}_{can}$.

For further details regarding $spin^c$--structures the reader may consult with \cite[page 415]{GS}.

 We denote by $\mathfrak{sw}_{\sigma}(M)\in \bQ$ the
\emph{Seiberg--Witten invariant} of $M$ indexed by the $spin^c$--structures $\sigma\in {\rm Spin}^c(M)$ (cf. \cite{Lim, Nic04}, 
here we will use the sign convention of \cite{BN,NJEMS}.)

Then, an important combinatorial formula for $\{\mathfrak{sw}_\sigma(M)\}_\sigma$, developed  in \cite{NJEMS} using  the counting function of $Z(\mathbf{t})$, gives the following:

\begin{thm} \label{th:JEMS}\ \cite{NJEMS}
For any $l'\in Z_K+ \textnormal{int}(\mathcal{S}')$
\begin{equation}\label{eq:SUM} - Q_{[l']}(l')=
\frac{(-Z_K+2l')^2+|\mathcal{V}|}{8}+\mathfrak{sw}_{[-l']*\sigma_{can}}(M).
\end{equation}
\end{thm}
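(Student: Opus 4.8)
The plan is to prove \eqref{eq:SUM} following the strategy of \cite{NJEMS}: decompose the Seiberg--Witten invariant of $M$ into its Casson--Walker and Reidemeister--Turaev torsion parts, express each of them in terms of the plumbing graph $\calt$, and then match the sum against a finite Fourier decomposition, over $H$, of the counting function of $Z(\mathbf{t})$. As a first reduction, I would use the adjunction identity $(Z_K-E_v,E_v)=-2$ to rewrite $\tfrac{(-Z_K+2l')^2+|\mathcal V|}{8}=-\chi(l')+\tfrac{Z_K^2+|\mathcal V|}{8}$, so that \eqref{eq:SUM} becomes the assertion that, on all of $Z_K+\textnormal{int}(\mathcal S')$, the counting function $Q_{[l']}(l')$ equals the quasi-polynomial $\chi(l')-\tfrac{Z_K^2+|\mathcal V|}{8}-\mathfrak{sw}_{[-l']*\sigma_{can}}(M)$. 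The first genuine step is then to show $Q_h$ is eventually a quasi-polynomial whose quadratic part is $-(l',l'-Z_K)/2$; this is a Brion--Vergne / Ehrhart-type computation applied to the rational function $\prod_{v}(1-\mathbf t^{E_v^*})^{\delta_v-2}$, whose only poles lie on the hyperplanes $\{\mathbf t^{E_v^*}=1\}$ indexed by the leaves of $\calt$, and where the negative definiteness of $I$ (which forces $\mathcal S'\subset\{l'_v>0\}\cup\{0\}$, hence the finiteness condition \eqref{eq:finiteness}) both makes $Q_h$ well defined and pins down the stabilization region as exactly $Z_K+\textnormal{int}(\mathcal S')$.

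Next I would run the finite Fourier transform over $H$. Inserting the orthogonality relation $\tfrac1{|H|}\sum_{\chi\in\widehat H}\chi([\tilde l]-h)=\mathbf 1_{[\tilde l]=h}$ into $Q_h(l')=\sum_{\tilde l\ngeq l',\,[\tilde l]=h}z(\tilde l)$ splits it into a sum over characters $\chi$ of $H$ of truncated sums of the Taylor coefficients of $Z(\mathbf t)$ "evaluated at $\chi$". For $\chi\neq 1$, after the standard regularization at the bad leaves where $\chi([E_v^*])=1$, each such summand is a finite constant which, by Nicolaescu's plumbing formula for the sign-refined Reidemeister--Turaev torsion \cite{Nic04}, equals (in the sign conventions of \cite{BN,NJEMS}) minus the $[-l']*\sigma_{can}$-component of the torsion $\mathbb T_M$; these terms contribute no polynomial growth. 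The trivial character $\chi=1$ gives $\tfrac1{|H|}$ times the full truncated sum $\sum_{\tilde l\ngeq l'}z(\tilde l)$, which carries the quadratic part $-\chi(l')$ found above and whose residual constant is, by the Casson--Walker surgery formula for plumbed rational homology spheres, equal to $-\tfrac{Z_K^2+|\mathcal V|}{8}+\lambda(M)/|H|$, with $\lambda(M)$ the Casson--Walker invariant.

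To assemble, I would invoke the decomposition $\mathfrak{sw}_\sigma(M)=\mathbb T_{M,\sigma}-\lambda(M)/|H|$ valid for rational homology spheres \cite{Nic04} (again in the conventions of \cite{BN,NJEMS}). Adding the contributions of the two previous paragraphs, $Q_{[l']}(l')=\bigl(\chi(l')-\tfrac{Z_K^2+|\mathcal V|}{8}+\lambda(M)/|H|\bigr)-\mathbb T_{M,[-l']*\sigma_{can}}=\chi(l')-\tfrac{Z_K^2+|\mathcal V|}{8}-\mathfrak{sw}_{[-l']*\sigma_{can}}(M)$, so that $-Q_{[l']}(l')=\tfrac{(-Z_K+2l')^2+|\mathcal V|}{8}+\mathfrak{sw}_{[-l']*\sigma_{can}}(M)$, which is \eqref{eq:SUM}. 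Finally I would fix all normalizations by checking a base case, e.g.\ a lens space, where $\calt$ is a string, $M$ is a lens space, and every ingredient above collapses to classical Dedekind--Rademacher sums that can be computed by hand.

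The main obstacle is the analytic bookkeeping of the second step: proving that the truncated character sums stabilize to a quasi-polynomial on precisely $Z_K+\textnormal{int}(\mathcal S')$ (and not merely far inside the Lipman cone), carrying out the regularization at the bad characters so that it lines up with Nicolaescu's torsion formula, and reconciling all the $\pm$ signs and the $|\mathcal V|/8$ correction term with the Casson--Walker surgery formula. Throughout, the negative definiteness of the intersection form and the adjunction relation are the devices that control the growth of $Q_h$ and force the quadratic part to be the Riemann--Roch function $\chi$.
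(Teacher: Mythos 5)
First, a point of orientation: the paper you are working against does not prove this statement at all --- Theorem \ref{th:JEMS} is quoted verbatim from \cite{NJEMS} as background (together with its reformulation, Theorem \ref{th:NJEMSThm}), so there is no internal proof to compare your argument with. Judged against the literature, your route is the classical one: decompose $\mathfrak{sw}_\sigma(M)$ into Reidemeister--Turaev torsion minus Casson--Walker, use Nicolaescu's plumbing formula \cite{Nic04} for the torsion, split the counting function $Q_h$ by characters of $H$, and extract the quadratic part $-\chi(l')$ from an Ehrhart-type analysis of $\prod_v(1-\mathbf{t}^{E_v^*})^{\delta_v-2}$ (your algebraic reduction $\frac{(-Z_K+2l')^2+|\mathcal{V}|}{8}=-\chi(l')+\frac{Z_K^2+|\mathcal{V}|}{8}$ is correct, as is the observation that the poles come from the end-vertices). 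This is faithful to how this circle of results was originally obtained; in \cite{NJEMS} itself the identity is organized somewhat differently, as an induction over the graph driven by the surgery formula of \cite{BN}, with the torsion/Casson--Walker input absorbed into that surgery formula rather than appearing through a direct character-by-character computation.

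The gap is that the steps you defer are precisely the content of the theorem, so as written this is a plan rather than a proof. (i) Brion--Vergne/Ehrhart arguments give quasipolynomiality of $Q_h$ sufficiently deep in the cone, but they do not by themselves produce the sharp stabilization domain $Z_K+\textnormal{int}(\mathcal{S}')$; identifying that exact shift requires a separate argument (in the literature it comes out of the inductive/surgery structure, or from vanishing-type estimates on the tails of the series). (ii) Your claim that for $\chi\neq 1$ the truncated character sums are ``finite constants'' equal (after regularization at vertices with $\chi([E_v^*])=1$) to the corresponding torsion components is not a routine regularization remark: a priori these truncations are only eventually periodic in $l'$, and their constancy plus the identification with Nicolaescu's Fourier--Dedekind expression (including the $\chi(h)$-weights that tie the character sum to the correct $spin^c$-structure $[-l']*\sigma_{can}$) is itself a theorem of N\'emethi--Nicolaescu type that must be proved, not invoked as bookkeeping. (iii) The matching of the trivial-character constant with $-\frac{Z_K^2+|\mathcal{V}|}{8}+\lambda(M)/|H|$ and all sign conventions likewise carry real content. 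Since you explicitly flag these as ``the main obstacle,'' you have correctly located where the work lies, but none of it is carried out here.
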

\noindent If we fix $h\in H$ and we write  $l'=l+r_{h}$ with $l\in L$,
then the right hand side of (\ref{eq:SUM}) is
 a multivariable quadratic polynomial on $L$. Therefore,  Theorem \ref{th:JEMS} can also be read in the following way:
%  a fact which
 %with constant term $$\overline {\mathfrak{sw}}_{-h*\sigma_{can}}(M):=
 %\mathfrak{sw}_{-h*\sigma_{can}}(M)+\frac{(K+r_h)^2+|\mathcal V|}{8},$$
 %called the {\it normalized Seiberg--Witten invariant} of $M$.
 %This fact
%   will be exploited conceptually  next.
%\begin{equation}\label{eq:normSW}
%-\frac{(K+2r_h)^2+|\mathcal{V}|}{8}-\mathfrak{sw}_{-h*\sigma_{can}}(M),
%\end{equation}
%The expression (\ref{eq:normSW}) (with opposite sign) is
%called the {\em $r_h$--normalized Seiberg--Witten invariant} of $M$.
 % associated with $h\in H$.
 
%  Hence, Theorem \ref{th:JEMS} reads as follows.
\begin{thm}\label{th:NJEMSThm} \ \cite{NJEMS}
The counting function
 of $Z_h(\bt)$ in the cone $\calS'_{\mathbb{R}}:=\calS\otimes \mathbb{R}$
admits  the (quasi)polynomial
\begin{equation}\label{eq:SUMQP} \mathfrak{Q}_{h}(l)=
-\frac{(-Z_K+2r_h+2l)^2+|\mathcal{V}|}{8}-\mathfrak{sw}_{-h*\sigma_{can}}(M),
\end{equation}
whose periodic constant is
\begin{equation}\label{eq:SUMQP2}
\mathrm{pc}^{S'_{\mathbb{R}}}(Z_h(\mathbf{t}))=\mathfrak{Q}_h(0)=
%-\overline{\mathfrak{sw}}_{-h*\sigma_{can}}(M)=
-\mathfrak{sw}_{-h*\sigma_{can}}(M)-\frac{(-Z_K+2r_h)^2+|\mathcal{V}|}{8}.
\end{equation}
\end{thm}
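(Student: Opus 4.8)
The statement is a direct reformulation of Theorem \ref{th:JEMS}, so the plan is simply to perform the change of variables $l' = l + r_h$ and verify that the resulting identity fits the template of \eqref{eq:qpol}. Fix $h \in H$. Every $l' \in L'$ with $[l'] = h$ is uniquely of the form $l' = l + r_h$ with $l \in L$, and then, directly from the definition \eqref{eq:countintro} of the counting function, $Q_{[l']}(l') = Q_h(r_h + l)$. Moreover $-Z_K + 2l' = -Z_K + 2r_h + 2l$, and $[-l'] = -[r_h] = -h$ because $l \in L$, so $\mathfrak{sw}_{[-l']*\sigma_{can}}(M) = \mathfrak{sw}_{-h*\sigma_{can}}(M)$. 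Hence, for every such $l'$, Theorem \ref{th:JEMS} reads
\[
-Q_h(r_h + l) = \frac{(-Z_K + 2r_h + 2l)^2 + |\mathcal{V}|}{8} + \mathfrak{sw}_{-h*\sigma_{can}}(M),
\]
that is, $Q_h(r_h + l) = \mathfrak{Q}_h(l)$ with $\mathfrak{Q}_h$ exactly as in \eqref{eq:SUMQP}. For fixed $h$ the vector $r_h$ and the (negative definite) intersection form are constant, so $l \mapsto \mathfrak{Q}_h(l)$ is a genuine quadratic polynomial on $L$ (a fortiori a quasipolynomial in the sense of \S\ref{ss:pc}, with $\widetilde{L} = L$).

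Next I would check that the domain $Z_K + \mathrm{int}(\mathcal{S}')$ of validity of \eqref{eq:SUM} is large enough to meet the requirement in \eqref{eq:qpol}, namely that $Q_h(r_h + l) = \mathfrak{Q}_h(l)$ holds on a set of the form $(l'_* + \mathcal{K}) \cap \widetilde{L}$ with $\mathcal{K} = \mathcal{S}'_{\mathbb{R}} = \mathcal{S}' \otimes \mathbb{R}$. Take $\widetilde{L} = L$. The constraint $l' \in Z_K + \mathrm{int}(\mathcal{S}')$ becomes $l \in (Z_K - r_h) + \mathrm{int}(\mathcal{S}')$, an open shifted copy of the full-dimensional rational cone $\mathcal{S}'$. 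Since $\mathcal{S}'$ is generated over $\mathbb{Z}_{\geq 0}$ by the $E_v^*$, it contains lattice points arbitrarily deep inside $\mathrm{int}(\mathcal{S}')$, so one can pick $l'_* \in \mathcal{S}' \cap L$ with $l'_* + \mathcal{S}'_{\mathbb{R}} \subset (Z_K - r_h) + \mathrm{int}(\mathcal{S}')$; on $(l'_* + \mathcal{S}'_{\mathbb{R}}) \cap L$ the identity of the previous paragraph holds. (That $Q_h$ is well defined in the first place is precisely the finiteness property \eqref{eq:finiteness} of $Z(\bt)$ recalled above.) This is the only genuinely ``moving part'' of the argument, and it is elementary convex geometry of cones rather than a real obstacle.

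Finally, \eqref{eq:SUMQP2} follows by unwinding the definition \eqref{eq:PCDEF}: the periodic constant of $Z_h(\bt)$ in the cone $\mathcal{S}'_{\mathbb{R}}$ is by definition $\mathfrak{Q}_h(0)$, and substituting $l = 0$ in \eqref{eq:SUMQP} yields
\[
\mathrm{pc}^{\mathcal{S}'_{\mathbb{R}}}(Z_h(\bt)) = -\mathfrak{sw}_{-h*\sigma_{can}}(M) - \frac{(-Z_K + 2r_h)^2 + |\mathcal{V}|}{8}.
\]
In short, there is no hard step here: the whole content sits in Theorem \ref{th:JEMS}, and the work is the bookkeeping that recognizes the right-hand side of \eqref{eq:SUM} as the evaluation of a (quasi)polynomial in the $L$-variable and matches the two descriptions of the domain.
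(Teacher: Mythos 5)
Your proposal is correct and follows essentially the same route as the paper: the text preceding the statement derives it exactly by writing $l'=l+r_h$, observing that the right-hand side of \eqref{eq:SUM} becomes a quadratic polynomial in $l\in L$, and reading off the periodic constant as its value at $l=0$. Your additional verification that the validity region $Z_K+\mathrm{int}(\mathcal{S}')$ contains a shifted copy of the cone, as required by \eqref{eq:qpol}, is just the bookkeeping the paper leaves implicit.
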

The right hand side of (\ref{eq:SUMQP2}) 
%will be denoted by $\overline {\mathfrak{sw}}_{-h*\sigma_{can}}(M)$, and it is
 %\mathfrak{sw}_{-h*\sigma_{can}}(M)+\frac{(K+r_h)^2+|\mathcal V|}{8},$$
will be called the  {\it normalized Seiberg--Witten invariant} of $M$ and for simplicity, in the sequel we will denote it by  
$\mathfrak{sw}^{norm}_h(\calt)$.

\subsubsection{\bf `Projected' version and surgery formula} \label{ss:surgform}

For the projected topological Poincar\'e series one has the following result:

\begin{theorem}\label{depth}\cite{LSz}
For a fixed $h \in H$ and a subset $I\subset \calv$ there is a unique quasipolynomial $\mathfrak{Q}_{h,I}(l)$, such that if $l' = r_h + l \in \sum_{v \in \calv} (\delta_v - 2) E_v^* + int(\calS')$ then 
\begin{equation}
\mathfrak{Q}_{h,I}(l) = Q_{h, I}(l') = \sum_{[l''] = [l'], l''_{I} \ngeq l'_{I}} z(l').
\end{equation}
\end{theorem}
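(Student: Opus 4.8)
\textbf{Proof plan for Theorem \ref{depth}.}

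The plan is to reduce the statement to the already-established unprojected quasipolynomiality result, Theorem \ref{th:NJEMSThm}, together with the periodic-constant formalism of Section \ref{ss:pc}. First I would recall that the counting function $Q_{h,I}$ associated with $Z_h(\bt_I)$ is, by the definition in \eqref{eq:count1}, obtained by summing coefficients $z(l')$ over those $l'$ with $[l']=h$ whose projection $l'_I$ satisfies $l'_I\ngeq x_I$ and $l'_I\nless 0$; since $Z(\bt)$ is supported in the Lipman cone $\calS'$ (all entries of each $E_v^*$ strictly positive), the condition $l'_I\nless 0$ is automatic, and $Q_{h,I}(x)$ depends only on $x_I={\rm pr}_I(x)$. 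The key observation is that the restriction $Z_h(\bt_I)=Z_h(\bt)|_{t_v=1,\,v\notin I}$ is itself a multivariable Laurent series in $\ZZ[[(L_I)']]$ satisfying the finiteness condition \eqref{eq:finiteness} in the projected variables, because any infinite fibre $\{l'\in\calS'\ :\ l'_I=x_I\}$ would force, by negative definiteness of the intersection form, unbounded coefficients in the complementary directions while the original series has the finiteness property; so the projected series is a bona fide object to which the periodic-constant machinery of Section \ref{ss:pc} applies.

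Next I would produce the quasipolynomial directly from the unprojected one. By Theorem \ref{th:NJEMSThm} the full counting function $Q_h$ agrees with the explicit quadratic quasipolynomial $\mathfrak{Q}_h$ of \eqref{eq:SUMQP} on the shifted cone $Z_K+{\rm int}(\calS')$. The idea is that for $l'$ ranging in a suitable sub-cone one can recover $Q_{h,I}(l')$ as a finite alternating sum of values of $Q_h$: indeed, for $l'$ with $|l'|^*\subset I$ one has
\begin{equation}
Q_{h,I}(l') = \sum_{l'':\ [l'']=h,\ l''_I\ngeq l'_I} z(l''),
\end{equation}
and the difference between this and $Q_h(l')=\sum_{l''\ngeq l',[l'']=h}z(l'')$ counts exactly the $l''$ with $l''_I\geq l'_I$ but $l''_J< l'_J$ for some coordinate $J$ outside $I$; writing this complementary region via inclusion–exclusion over the coordinates in $\calv\setminus I$ and using that each individual piece is again a shifted counting function of $Z_h$ (shifted by a lattice element, which by Remark \ref{rem:pc}\eqref{rem:pc3} still admits a quasipolynomial), one expresses $Q_{h,I}$ as a $\ZZ$-linear combination of translates of $\mathfrak{Q}_h$. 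Since a finite combination of quasipolynomials is a quasipolynomial, this produces $\mathfrak{Q}_{h,I}$ and shows it agrees with $Q_{h,I}$ on the stated shifted cone $\sum_v(\delta_v-2)E_v^*+{\rm int}(\calS')$ — the extra shift by $\sum_v(\delta_v-2)E_v^*$ being precisely what is needed to push all the finitely many translates into the region where Theorem \ref{th:NJEMSThm} is valid. Uniqueness of $\mathfrak{Q}_{h,I}$ is then automatic, since two quasipolynomials agreeing on a full-dimensional cone coincide.

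The main obstacle I anticipate is controlling the inclusion–exclusion bookkeeping on the complementary coordinates: one must check that each intermediate sum over a region of the form $\{l''_I\geq l'_I,\ l''_{v}< l'_{v}\}$ is genuinely a well-defined (finite-at-each-point) counting function of an honest shifted sub-series of $Z_h$, rather than something merely formally manipulated, and that all the required shifts land inside the cone of validity of \eqref{eq:SUMQP} simultaneously — this is where the specific shift $\sum_v(\delta_v-2)E_v^*$ enters and must be justified quantitatively. A cleaner alternative, which I would pursue if the bookkeeping becomes unwieldy, is to invoke directly the surgery/projection formalism referenced in Section \ref{ss:surgform}: the projected series $Z_h(\bt_I)$ is, up to an explicitly computable correction term supported on finitely many exponents, the topological Poincaré series of a lower-rank lattice obtained by the combinatorial surgery along $\calv\setminus I$, and its quasipolynomiality is then inherited from Theorem \ref{th:NJEMSThm} applied to that smaller graph. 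Either route yields the claim; the first is self-contained given what is already in the excerpt, the second is shorter but leans on the external surgery machinery of \cite{LSz}.
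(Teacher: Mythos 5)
First, note that the paper you are writing into does not prove Theorem \ref{depth} at all: it is quoted verbatim from \cite{LSz}, so there is no internal proof to compare with, and your argument has to stand on its own. It does not, because of a concrete gap in the main route. After the inclusion–exclusion over the coordinates in $\calv\setminus I$, the pieces you obtain are, for $\emptyset\neq T\subset\calv\setminus I$, sums of $z(l'')$ over regions of the form $\{l''_I\geq l'_I \text{ and } l''_v<l'_v \text{ for all } v\in T\}$ (with no condition on the remaining coordinates). These are \emph{box-type} partial sums with mixed inequalities; they are not of the form $\sum_{l''\ngeq x}z(l'')$ for any translate $x$, and Remark \ref{rem:pc}(3) only covers multiplying the series by a fixed monomial $\bt^{-l_0}$, which again produces $\ngeq$-type counting functions. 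Hence Theorem \ref{th:NJEMSThm} gives no information about these pieces, and establishing their quasipolynomiality in an explicit shifted cone is precisely the nontrivial content of the cited theorem — your reduction is circular in disguise. (Your preliminary remarks — finiteness of the fibres of ${\rm pr}_I$ on $\calS'$, automatic validity of $l'_I\nless 0$ — are fine, but they are not where the difficulty lies.)

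There is also a structural reason the proposed reduction cannot work. If $Q_{h,I}$ were a finite $\ZZ$-linear combination of translates of $Q_h$ on a shifted cone, then by (\ref{eq:SUMQP}) it would agree there with an honest quadratic \emph{polynomial} in $l$ (each translate of $\mathfrak{Q}_h$ is a polynomial on $L$, the SW term being a constant). But the surgery formula, Theorem \ref{surgery}(a), shows that $\mathfrak{Q}_{h,I}(l)$ contains the terms $\mathfrak{sw}_{-[j_i^*(r_h+l)]*\sigma_{can}}$ of the subgraphs $\calt_i$, which depend on $l$ only through a finite quotient and are genuinely periodic, non-polynomial in general; so $Q_{h,I}$ cannot be such a combination. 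Your fallback — invoking the surgery/projection formalism of Section \ref{ss:surgform} — is also circular in this context, since Theorem \ref{surgery} is a statement \emph{about} the quasipolynomial $\mathfrak{Q}_{h,I}$ whose existence is exactly what Theorem \ref{depth} asserts, and the identification of the projected series with a series of a smaller graph "up to finitely many exponents" is not accurate. In \cite{LSz} the existence of $\mathfrak{Q}_{h,I}$ and the explicit shift $\sum_{v\in\calv}(\delta_v-2)E_v^*$ are obtained by a direct analysis of the rational function $Z(\bt)$ (a division-algorithm/partial-fraction type decomposition with Ehrhart-style lattice-point counting), not by manipulating the full counting function of Theorem \ref{th:NJEMSThm}; if you want a self-contained proof, that is the kind of argument you would need to reproduce.
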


In other words, this means that the projected topological Poincar\'e series $Z_{h, I}$ admits a unique periodic constant in the projected Lipman cone $\pi_{I}(S')$, which will be denoted by
$\mathrm{pc}_{h, \cali}$. Furthermore, a useful \emph{surgery formula} was developed in \cite{LNN} for this periodic constant, which will be presented in the following.

Consider a dual resolution graph $\calt$ and fix a subset $I\subset \calv$ of its vertices as above. 
The set of vertices $\calv \setminus I$ determines the connected full subgraphs $\{\calt_i\}_k$ with vertices
$\calv(\calt_i)$. We associate with any $\calt_i$ the lattices
$L(\calt_i)$ and $L'(\calt_i)$ as well, endowed with the corresponding intersection forms.

Then for each $i$ one considers the inclusion operator $j_i:L(\calt_i)\to L(\calt)$,
$E_v(\calt_i)\mapsto E_v(\calt)$, identifying naturally the corresponding $E$-base elements associated with the two
graphs. This preserves the intersection forms.

Let $j_{i}^*:L'(\calt)\to L'(\calt_i)$ be the dual (cohomological) operator, defined by
$j_{i}^*(E^*_{v}(\calt))=E^*_{v}(\calt_i)$ if $v\in\calv(\calt_i)$, and $j_{i}^*(E^*_{v}(\calt))=0$ otherwise.
Note that $j^*_{i}(E_v(\calt))=E_v(\calt_i)$ for any $v\in \calv(\calt_i)$.
Then we have the projection formula
$(j^*_{i}(l'), l)_{\calt_i}=(l',j_{i}(l))_{\calt}$ for any $l'\in L'(\calt)$ and $l\in L(\calt_i)$,
which also implies that
% \begin{equation}\label{eq:proj\ZK}
 $j^*_{i}(Z_K)=Z_{K}(\calt_i)$,
% \end{equation}
where $Z_K$ (resp. $Z_K(\calt_i)$) is the anti-canonical cycle associated with $\calt$ (resp. $\calt_i$).
% 
% 
% Similarly, the next formula holds  for the corresponding minimal cycles $s_h$:
% 
% \begin{lemma}[\cite{LSzPoincare}]\label{lem:projs_h}
% For any $h\in H$ one has $j^*_{k}(s_{h})=s_{[j^*_{k}(s_{h})]}\in L'(\Gamma_k)$.
% \end{lemma}
% 
% If $Q^\Gamma$ and $Q^{\Gamma_k}$ denote the counting functions associated with the topological Poincar\'e
% series of the corresponding graphs, then one has the following surgery formula.
% 
% \begin{thm}[{\cite[Theorem 3.2.2]{LNN}}]
% \label{thm:surgform}
% For any $\ell'=\sum_va_vE^*_v$ with $a_v\gg 0$ and with the notation $[\ell']=h$
%  one has the identity
% \begin{equation}\label{eq:surgform}
% Q^\Gamma_{h}\,(\ell')=Q^\Gamma_{h,\I}\,(\ell')+\sum_k \
% Q^{\Gamma_k}_{[j^*_k(\ell')]}(j^*_k(\ell')).
% \end{equation}
% \end{thm}
% More details regarding the above setting can be found in~\cite{LNN}.

Then, the  surgery formula on the quasipolynomial level is as follows: 

\begin{theorem}\label{surgery}\cite{LNN}
a)  If $l' = r_h + l , l \in L$ then 
\begin{align*}
\mathfrak{Q}_{h,\cali}(l) & = \left(-\mathfrak{sw}_{-h*\sigma_{can}}(M)-\frac{(-Z_K+2r_h + 2l)^2+|\mathcal{V}|}{8} \right) - \\ 
& -\sum_i \left(-\mathfrak{sw}_{-[j_i^*(r_h + l)]*\sigma_{can}}(M)-\frac{(-Z_K(\mathcal{T}_i)+2 j_i^*(r_h+ l))^2+|\mathcal{V}(\mathcal{T}_i)|}{8} \right).
\end{align*}

b) In particular, one expresses the projected periodic constant with the corresponding normalized Seiberg-Witten invariants as 
\begin{equation*}
\mathrm{pc}_{h, I} = \mathfrak{sw}_h^{norm}(\calt) - \sum_i \mathfrak{sw}_{[j_i^*(r_h)]}^{norm}(\calt_i).
\end{equation*}
\end{theorem}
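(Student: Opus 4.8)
The plan is to reduce part~(b) to part~(a), reduce (a) to a purely combinatorial identity among counting functions, and prove that identity by an induction that removes the vertices of $I$ one at a time.

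\emph{Reductions.} Part (b) is part (a) at $l=0$: indeed $\mathrm{pc}_{h,I}=\mathfrak{Q}_{h,\cali}(0)$ (Theorem~\ref{depth}), the first bracket on the right of (a) at $l=0$ is $\mathfrak{sw}_h^{norm}(\calt)$ by the definition of the normalised invariant, and, by Theorem~\ref{th:NJEMSThm} applied to $\calt_i$, the $i$-th bracket at $l=0$ equals $\mathfrak{sw}_{[j_i^*(r_h)]}^{norm}(\calt_i)$ (after writing $j_i^*(r_h)$ as a reduced representative plus a lattice element and using the translation behaviour of $\mathfrak{Q}$). So it suffices to prove (a). Here one applies Theorem~\ref{th:NJEMSThm} to $\calt$ and to each $\calt_i$; this is legitimate because $j_i^*$ carries $\mathrm{int}(\calS'(\calt))$ into $\mathrm{int}(\calS'(\calt_i))$ (as $j_i^*(E_v^*(\calt))=E_v^*(\calt_i)$ for $v\in\calv(\calt_i)$ and vanishes otherwise), so for $l'=r_h+l$ deep in $\calS'$ all the counting functions in sight coincide with their quasipolynomials, while $j_i^*(Z_K)=Z_K(\calt_i)$ together with $(j_i^*(l'),l)_{\calt_i}=(l',j_i(l))_{\calt}$ makes the quadratic terms fit. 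After these substitutions (a) becomes equivalent to the purely combinatorial identity
\begin{equation*}
Q_{h,I}(l')\;=\;Q_{[l'],\calv}(l')\;-\;\sum_i Q_{[j_i^*(l')],\,\calv(\calt_i)}\big(j_i^*(l')\big),\qquad l'=r_h+l\ \text{deep in }\calS'.
\end{equation*}

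\emph{The surgery identity.} I would prove this by induction on $|I|$. For $|I|=1$, say $I=\{v_0\}$, the components of $\calv\setminus\{v_0\}$ are the $\delta_{v_0}$ branches $\calt_1,\dots,\calt_{\delta_{v_0}}$ at $v_0$, none of which contains $v_0$, so the right side involves only the \emph{full} counting functions of the $Z(\calt_j)(\bt)$. Substituting $t_v=1$ for $v\neq v_0$ in $Z(\bt)=\prod_{v\in\calv}(1-\bt^{E_v^*})^{\delta_v-2}$ and using the classical expression for the $v_0$-coordinate of $E_v^*(\calt)$ through subtree determinants of the tree rooted at $v_0$ — for $v$ in a branch $\calt_j$ this coordinate is a fixed positive multiple (by a constant $c_j$ independent of $v$) of the coordinate of $E_v^*(\calt_j)$ at the vertex of $\calt_j$ adjacent to $v_0$ — one obtains, after a short computation, that the restricted series is the product over $j$ of a cone-compatible monomial reparametrisation of $Z(\calt_j)(\bt)$ times an explicit binomial \emph{correction factor} $\prod_j(1-t_{v_0}^{\star_j})\cdot(1-t_{v_0}^{\star_0})^{\delta_{v_0}-2}$, repairing the unit drop in valency at the neighbours of $v_0$ and the factor at $v_0$ itself. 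Taking periodic constants then uses three facts: the counting function of a product is the convolution of counting functions; periodic constants are invariant under cone-compatible reparametrisations; and the binomial factors act as finite-difference operators on the quasipolynomial. Feeding in the closed formula of Theorem~\ref{th:NJEMSThm} for each $\calt_j$ and of Theorem~\ref{depth} for $(\calt,\{v_0\})$ yields the base case. For $|I|\geq2$ I would pick $v_0\in I$ and apply the base case; $Q_{\cdot,\{v_0\}}$ is then expressed through the branches $\calt^{(0)}_1,\dots$ at $v_0$, exactly one of which, say $\calt^{(0)}_1$, meets $I\setminus\{v_0\}$ (the tree being connected), the others being already components of $\calv\setminus I$, and to $\calt^{(0)}_1$ one applies the inductive hypothesis with the vertex set $I\cap\calv(\calt^{(0)}_1)$. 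The branches telescope precisely to the components of $\calv\setminus I$, and the quadratic and Seiberg--Witten corrections add up by iterating $j_i^*(Z_K)=Z_K(\calt_i)$ and the projection formula.

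\emph{Main obstacle.} The hard part is the base case: the substitution $t_v\to1$ ($v\neq v_0$) does not literally split $Z(\bt)$ into subgraph zeta-functions, so one must pin down the binomial correction factor near $v_0$, verify that the induced reparametrisations are cone-compatible (so that periodic constants transport), and track how the finite-difference operators coming from the correction act on the quadratic quasipolynomials — this is precisely where the arithmetic of $E_v^*$ on a rooted tree is used in an essential way. The remaining work is bookkeeping: matching the $H$-classes $h\mapsto[j_i^*(\cdot)]$ against their reduced representatives, and checking that $\tfrac18\big((-Z_K+2r_h+2l)^2+|\calv|\big)$ decomposes additively over the $\calt_i$; both follow formally from $j_i^*(Z_K)=Z_K(\calt_i)$ and the projection formula, but must be carried out carefully to land exactly on the stated formula.
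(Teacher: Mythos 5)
First, a point of reference: the paper does not prove Theorem \ref{surgery} at all --- it is quoted from \cite{LNN} --- so there is no internal proof to compare against, and your proposal has to stand on its own.

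Your reductions are sound. Part (b) is essentially part (a) at $l=0$ (modulo the point, which you flag only in a parenthesis, that $j_i^*(r_h)$ need not be the reduced representative of its class in $L'(\calt_i)$, so matching the $i$-th bracket with $\mathfrak{sw}^{norm}_{[j_i^*(r_h)]}(\calt_i)$ still requires comparing the quadratic terms at $j_i^*(r_h)$ and at $r_{[j_i^*(r_h)]}$). Part (a) is indeed equivalent, for $l'=r_h+l$ deep inside $\calS'$, to the identity $Q_{h}(l')-Q_{h,I}(l')=\sum_i Q^{\calt_i}_{[j_i^*(l')]}(j_i^*(l'))$, since there all counting functions agree with their quasipolynomials and a quasipolynomial is determined by its values on a shifted full-dimensional cone. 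Your determinant observation is also correct: $(E_v^*(\calt))_{v_0}=c\,(E_v^*(\calt_j))_{u_j}$ with $c$ independent of the branch $j$ and of $v$, which is what makes the one-variable reduction of $Z(\bt)$ at $v_0$ factor into reparametrised branch series up to binomial corrections.

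The gaps are in the part you yourself defer. (1) The base case $|I|=1$ is where the entire content of the theorem lives, and the route you sketch runs into the equivariance problem the paper explicitly warns about in Section 2.5: the $h$-part $Z_h(\bt)|_{t_v=1,\,v\neq v_0}$ cannot be recovered from the reduced series $Z(\bt)|_{t_v=1,\,v\neq v_0}$. Your factorisation is a statement about the non-equivariant reduction, whereas $\mathfrak{Q}_{h,I}$ and both Seiberg--Witten brackets are indexed by classes in $H$ (resp.\ $H(\calt_i)$); to run the argument one must carry the full $L'$-grading (or $H$-characters) through the factorisation and through the finite-difference manipulations, and that is precisely the nontrivial computation, not bookkeeping. (2) The induction step contains a false claim: for $|I|\geq 2$ it is not true that exactly one branch of $v_0$ meets $I\setminus\{v_0\}$ --- take a star-shaped graph with centre $v_0\in I$ and one further element of $I$ on each of two legs. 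Connectedness of the tree does not confine $I\setminus\{v_0\}$ to a single branch. Moreover, the base case controls the projection to the single variable $t_{v_0}$, while the inductive target is the projection to $\bt_I$; passing from one to the other ("the branches telescope precisely") is itself a transitivity statement of the same nature and difficulty as the theorem, and is asserted rather than proved. As it stands the proposal is a correct reformulation plus a plausible plan, not a proof.
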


Finally, we present a `duality' type result from \cite{LNNdual} which computes the periodic constant from certain values of the counting functions.

\begin{theorem}\label{dualcount}\cite{LNNdual}
Using the same notations as above we have the following formula:
\begin{equation}
\mathrm{pc}_{h, I} = Q_{[Z_K]-h, I}(Z_K-r_h)  = \sum_{[l'] = [Z_K]- h , l'_{\cali} \ngeq (Z_K - r_h)_{\cali}} z(l').
\end{equation}
\end{theorem}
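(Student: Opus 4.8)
The second equality in the statement is immediate: by the definition~\eqref{eq:count1} of the counting function, $Q_{[Z_K]-h,I}(Z_K-r_h)$ is the sum of the coefficients of $Z_{[Z_K]-h}(\bt_I)$ over the exponents $l'_I$ with $l'_I\ngeq (Z_K-r_h)_I$ and $l'_I\nless 0$; since $Z(\bt)$, hence $Z(\bt_I)$, is supported in the (projected) Lipman cone, the second constraint is vacuous, and the coefficient at $l'_I$ equals $\sum_{[l']=[Z_K]-h,\ \mathrm{pr}_I(l')=l'_I}z(l')$. So the content of the theorem is the identity $\mathrm{pc}_{h,I}=Q_{[Z_K]-h,I}(Z_K-r_h)$, which I would prove in two steps.

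\textbf{Step 1: a reflection symmetry of the quasipolynomial.} By Theorem~\ref{depth} the left-hand side is $\mathfrak{Q}_{h,I}(r_h)$, the value of the projected quasipolynomial at $l'=r_h$. By Theorem~\ref{surgery}(a), regarded as a function of $l'=r_h+l\in L'$ this quasipolynomial is a fixed linear combination of the quadratic functions $l'\mapsto\big((-Z_K(\calt')+2j^{*}_{\calt'}l')^{2}+|\calv(\calt')|\big)/8$ and the Seiberg--Witten terms $\mathfrak{sw}_{-[j^{*}_{\calt'}l']*\sigma_{can}}$, where $\calt'$ runs over $\calt$ and the branches $\{\calt_i\}$ cut out by $\calv\setminus I$. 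Using the projection identity $j^{*}_{\calt'}(Z_K)=Z_K(\calt')$, each quadratic term is invariant under $l'\mapsto Z_K-l'$, since $(-Z_K(\calt')+2j^{*}_{\calt'}(Z_K-l'))^{2}=(Z_K(\calt')-2j^{*}_{\calt'}l')^{2}=(-Z_K(\calt')+2j^{*}_{\calt'}l')^{2}$; and by the conjugation symmetry of the Seiberg--Witten invariant --- which in terms of the $H$-action reads $\mathfrak{sw}_{-g*\sigma_{can}}=\mathfrak{sw}_{-([Z_K(\calt')]-g)*\sigma_{can}}$ --- each Seiberg--Witten term is invariant under replacing $h$ by $[Z_K]-h$, because $[j^{*}_{\calt'}(Z_K-l')]=[Z_K(\calt')]-[j^{*}_{\calt'}l']$. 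Hence $\mathfrak{Q}_{h,I}(l')=\mathfrak{Q}_{[Z_K]-h,I}(Z_K-l')$ as functions on the corresponding cosets, and evaluating at $l'=r_h$ gives $\mathrm{pc}_{h,I}=\mathfrak{Q}_{[Z_K]-h,I}(Z_K-r_h)$.

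\textbf{Step 2: the counting function attains its quasipolynomial at the reflected point.} It remains to prove $Q_{[Z_K]-h,I}(Z_K-r_h)=\mathfrak{Q}_{[Z_K]-h,I}(Z_K-r_h)$. This is the main obstacle, because $Z_K-r_h$ generally lies \emph{outside} the region $\sum_v(\delta_v-2)E^{*}_v+\mathrm{int}(\calS')$ in which Theorem~\ref{depth} guarantees that the counting function has already stabilized to its quasipolynomial; the sum $Q_{[Z_K]-h,I}(Z_K-r_h)$ is finite by~\eqref{eq:finiteness}, so this is a bookkeeping of finitely many coefficients, but one must show that \emph{no} transitional correction survives at this point. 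The mechanism I would exploit is the functional equation of the topological Poincar\'e series, $Z(\bt^{-1})=\bt^{-\Gamma}Z(\bt)$ with $\Gamma:=\sum_v(\delta_v-2)E^{*}_v$ --- valid since $\sum_v(\delta_v-2)=-2$ on a tree --- which confronts the Lipman-cone support of $Z$ with the shifted cone $-\Gamma+\calS'$ precisely along the wall on which $Z_K-r_h$ lies. Concretely I would first reduce to the case $I=\calv$ via the surgery identity of~\cite{LNN} for the counting function, which exhibits $Q_{[Z_K]-h,I}(Z_K-r_h)$ as $Q^{\calt}_{[Z_K]-h}(Z_K-r_h)-\sum_i Q^{\calt_i}_{[Z_K(\calt_i)]-[j^{*}_i r_h]}\big(Z_K(\calt_i)-r_{[j^{*}_i r_h]}\big)$, i.e.\ the same linear combination appearing in Theorem~\ref{surgery}(b); and then prove the $I=\calv$ statement $Q^{\calt}_{[Z_K]-h}(Z_K-r_h)=\mathfrak{sw}^{norm}_h(\calt)$ by induction on $|\calv|$. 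The base case $|\calv|=1$ is a direct computation in which both sides vanish. For the inductive step one deletes a leaf $v_0$, uses the surgery relation of~\cite{NJEMS} between $Z^{\calt}$ and $Z^{\calt\setminus v_0}$ to rewrite the discrepancy $Q^{\calt}-\mathfrak{Q}^{\calt}$ at $Z_K-r_h$ through the corresponding discrepancy for $\calt\setminus v_0$ plus a boundary term, and shows that this boundary term vanishes as a consequence of the functional equation above; the $h=0$ instance is the familiar identity $\mathfrak{sw}^{norm}_0(\calt)=\sum_{[l']=[Z_K],\ l'\ngeq Z_K}z(l')$ recalled in the introduction, and the general $h$ is obtained by running the same argument on the $h$-part $Z_h(\bt)$ with the shift $r_h$.

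Step~1 is purely formal once Theorem~\ref{surgery} and the conjugation symmetry are granted; the real difficulty is concentrated in Step~2, namely controlling the counting function in the transitional range before it has stabilized to its quasipolynomial and verifying that the stabilization is already complete at $l'=Z_K-r_h$. An alternative to the leaf-by-leaf induction would be to carry out the reduction-theorem analysis of~\cite{NJEMS} --- which computes exactly these transitional discrepancies in terms of the sub-graphs obtained by deleting vertices --- directly at the reflected point $Z_K-r_h$, or, bypassing the reduction to $I=\calv$ altogether, to run the same induction for the projected series $Z_h(\bt_I)$ using the projected functional equation $Z(\bt_I^{-1})=\bt_I^{-\mathrm{pr}_I(\Gamma)}Z(\bt_I)$.
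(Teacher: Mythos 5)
A preliminary remark: the paper does not prove Theorem \ref{dualcount} here at all --- it is quoted from \cite{LNNdual} --- so your attempt can only be measured against the method of that reference and against the closely related argument the paper does carry out for its generalization, Theorem \ref{topdualcount}. Your Step 1 is correct and is a genuine reduction: the surgery formula of Theorem \ref{surgery}(a), together with $j_i^*(Z_K)=Z_K(\calt_i)$ and the conjugation symmetry of the Seiberg--Witten invariant, does show that the quasipolynomial $\mathfrak{Q}_{h,I}$, viewed as a function of $l'=r_h+l$, is invariant under $(h,l')\mapsto([Z_K]-h,\,Z_K-l')$, whence $\mathrm{pc}_{h,I}=\mathfrak{Q}_{[Z_K]-h,I}(Z_K-r_h)$. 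The second equality in the displayed formula is also correctly disposed of.

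The genuine gap is in Step 2, which is where all the content of the theorem sits and which you only outline. Two concrete problems. (i) Your reduction to $I=\calv$ invokes a ``surgery identity of \cite{LNN} for the counting function'' evaluated at $Z_K-r_h$. No such pointwise identity exists: Theorem \ref{surgery} is an identity of \emph{quasipolynomials}, equivalently of counting functions only inside the stabilization cone of Theorem \ref{depth}, and $Z_K-r_h$ lies outside that cone. The claim that $Q_{[Z_K]-h,I}(Z_K-r_h)$ decomposes as $Q^{\calt}_{[Z_K]-h}(Z_K-r_h)-\sum_i Q^{\calt_i}(\cdots)$ is therefore itself another instance of ``counting function equals quasipolynomial at the reflected point'', i.e.\ of the very statement to be proven. (ii) In the leaf-deletion induction the decisive step --- that the boundary term vanishes ``as a consequence of the functional equation'' --- is asserted, not argued, and the $h=0$ identity $\mathfrak{sw}^{norm}_0(\calt)=\sum_{[l']=[Z_K],\,l'\ngeq Z_K}z(l')$ that you call familiar is exactly the $I=\calv$, $h=0$ case of the theorem, so invoking it is circular. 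The device that actually closes this gap in \cite{LNNdual}, and which the present paper reuses to prove Theorem \ref{topdualcount}, is different: one multiplies $Z(\bt)$ by suitable factors $\prod_v(1-\bt^{E_v^*})$ so that the relevant projected $0$-part becomes a Laurent polynomial (Lemma \ref{redlaurent}, Corollary \ref{laurent}); for a Laurent polynomial the periodic constant is an honest finite sum of coefficients, computable both from the expansion at the origin and, via the symmetry $T^{\infty}(Z(\bt))=\bt^{Z_K-E}Z(\bt^{-1})$, from the expansion at infinity, and comparing the two evaluations is what identifies $\mathrm{pc}_{h,I}$ with a counting-function value at $Z_K-r_h$. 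Your instinct to use the functional equation is the right one, but without the Laurent-polynomial device (or the Ehrhart--Macdonald type reciprocity it encodes) the transitional discrepancy between $Q_{[Z_K]-h,I}$ and $\mathfrak{Q}_{[Z_K]-h,I}$ at $Z_K-r_h$ is not controlled, and that discrepancy is the whole theorem.
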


\subsection{Splice quotient singularities}\label{ss:spq}

\subsubsection{} Splice quotient singularities were introduced by Neumann and Wahl in \cite{NWsq}. Associated with a resolution graph $\calt$ satisfying a semigroup and congruence condition they contructed an equisingular family of singularities whose analytic type is determined by these special properties. In the sequel, we use another approach to introduce splice quotient singularities which was developed by Okuma in \cite{Ok}. This is based on a \emph{monomial conditions} imposed on $\calt$ and which is equivalent with the  conditions of Neumann and Wahl. 

In the following, we fix a resolution graph $\calt$ and we distinguish the following subsets of vertices: a vertex $v$ is called \emph{node} if $\delta_v\geq 3$ and the set of nodes will be denoted by $\caln$; $v$ is an \emph{end-vertex} if $\delta_v=1$ and we will use $\cale$ for the set of end-vertices. For every node $v$, the connected components of $\calt\setminus v$ are called \emph{branches} of $v$. The set of end-vertices contained in a branch $\calt'$ is denoted by $\cale_{\calt'}$.

\begin{definition}{(\cite{Ok})} A cycle of the form $Z(a)=\sum_{v\in \cale}a_v E^*_v\in L'$ with $a_v\in \mathbb{Z}_{\geq 0}$ is called monomial cycle. We say that $\calt$ satisfies the \emph{monomial conditions} if for any node $v\in \caln$ and any branch $\calt'$ of $v$, there exists a monomial cycle $Z(a)$ such that $Z(a)-E^*_v$ is an effective integral cycle supported on $\calt'$. In paricular, one has $a_v=0$ for $v\notin \cale_{\calt'}$.  
\end{definition}
Associated with the resolution graph $\calt$ satisfying the monomial conditions one can construct the splice diagram equations, which define an isolated complete intersection singularity in $(\mathbb{C}^{|\cale|},0)$ on which the group $H$ acts freely off the origin. The singularity given by the factor is called \emph{splice quotient singularity} and has a good resolution with dual graph $\calt$. 

We remark that splice quotient singularities include eg. the rational singularities (with their arbitrary resolution graph) \cite{OkumaRat}, minimally elliptic singularities (with resolution graphs where the support of the minimally elliptic cycle is not proper) \cite{OkumaUAC} and weighted-homogeneous singularities (with their minimal good resolutions) \cite{neumannAC}. More details regarding the constructions and properties of splice quotient singularities can be found in \cite{NWsq,Ok,NCL}.

There is a third, more analytic condition which characterizes splice quotient singularities. This is the \emph{end-curve condition} which requires for a good resolution $\phi$ of $(X,0)$ with dual graph $\calt$ the existence of end-curve function for each component $E_v$, $v\in \cale$. This allows us to make the following definition for splice quotient singularities, cf. \cite{NCL, OECTh, NWECTh}.

\begin{defn}
A singularity $(X,0)$ with a good resolution $\phi:\widetilde{X}\to X$ and dual graph $\calt$ is called spliced quotient if for every end-vertex $v \in \cale$ one has $H^0(\calO_{\tX}( -E_v^*))_{reg} \neq \emptyset$.
\end{defn}

\subsubsection{\bf Analytic vs. topological invariants for splice quotient singularities} \label{ss:sqat}

First of all,  by \cite{NOk} we know that if $(X,0)$ is a splice quotient singularity with good resolution $\phi:\widetilde{X}\to X$ and dual graph $\calt$ , then the normalized Seiberg-Witten invariants are expressed with cohomology numbers as follows: 
$$ \mathfrak{sw}_h^{norm}(\calt) = h^1(\calO_{\tX}(-r_h)).$$ 
For $h=0$, this gives the equivalence of the geometric genus of $(X,0)$ with the canonical normalized Seiberg-Witten invariant, or, equivalently, with the periodic constant of $Z_0(\bt)$. In particular, this also implies for splice quotient singularities  the positivity of the canonical Seiberg-Witten invariant $\mathfrak{sw}^{norm}_0(\calt)\geq 0$.

% The following formula (cf. \cite[pg. 27]{NN1}) will be used in the sequel:
% \begin{equation*}\label{ZK}
% Z_K-E+\sum_{v\in \mathcal{E}}E^*_v=\sum_{v\in\mathcal{N}}(\delta_v-2)E^*_v, 
% \end{equation*}
% where $E:=\sum_{v\in\mathcal{V}}E_v$.
% 
% Let  $\mathcal{N}$ and $\mathcal{E}$ be  the set of
% nodes and  end--vertices. We assume that $\mathcal{N}\not=\emptyset$.
% If  $\delta_n$ denotes the valency of a node $n$, then
% \begin{equation*}\label{ends}
% |\mathcal{E}|=
% 2+\sum_{n\in \mathcal{N}}( \delta_n-2). 
% \end{equation*}

% \subsection{The series $P(\mathbf{t})$.}

The previous identification can be generalized to the level of series as well. Namely,  for a normal surface singularity $(X, 0)$ with a fixed resolution and resolution graph $\mathcal{T}$ one can associate with it the analytic Poincaré series $ P(\mathbf{t}) = \sum_{l'} p(l')\bt^{l'} \in\bZ[[L']] $ defined by Campillo, Delgado and Gusein-Zade \cite{CDGPs,CDGEq}.  The original definition is based on $H$-equivariant $L'$-indexed divisorial filtration on the local ring $\calO_Y$, where $Y$ is the universal abelian covering of $(X,0)$. However, one can interpret its  coefficients using the formula $p(l') = \sum_{I \subset \calv} (-1)^{|I|} h^0(\calO_{Z-E_I}(-l' - E_I)$ where $Z$ is an enough large integral cycle. 

Then the main result from \cite{NCL}, valid for splice quotient singularities gives us the following identification:
$$Z(\mathbf{t}) = P(\mathbf{t}).$$

Finally, as a particular consequence of the above equality, we mention the following formula which will be used later in the article. Namely, if we consider a resolution graph $\mathcal{T}$ satisfying the monomial conditions, its associated  splice quotient singularity and a fixed cycle $l'\in L'$, then one has the identity:
\begin{equation}\label{eq:sqcount}
\sum_{l \in L, l \geq 0, l_v = 0}z^{\mathcal{T}}(l' + l) = \dim\left(\frac{H^0(\tX, \calO_{\tX}( -l'))}{H^0(\tX, \calO_{\tX}( -l'-E_v))}\right).
\end{equation}
Therefore we get that
$0 \leq \sum_{l \in L, l \geq 0, l_v = 0}z^{\mathcal{T}}(l' + l)  \leq -(l', E_v) + 1$ by estimating the right hand side of equation\ref{eq:sqcount} with the standard exact sequence.

\section{Motivation: computations in the case of splice quotient singularities}\label{s:motspq}

First of all, we recall the following result from \cite{NNA1}.

\begin{theorem}\cite{NNA1}\label{thm:gat}
Let us fix a resolution $\tX$ of a normal surface singularity $(X, 0)$ and a very large cycle $Z$ on it. Consider a cycle $l' \in L'$ and a line bundle $\calL \in \pic^{l'}(Z)$ 
and an arbitrary point $p \in (c^{l'}(Z))^{-1}(\calL) \subset \eca^{l'}(Z)$. We denote by $T_p$ the tangent space of the smooth variety $\eca^{l'}(Z)$ at the point $p$  and by $F_p$ the tangent space of the smooth fiber $(c^{l'}(Z))^{-1}(\calL)$ at the same point. Then  the tangent map $T_{c^{l'}(Z)} : T_p / F_p \to \bC^{p_g}$ is injective and the image has  codimension $h^1(\tX, \calL)$.

Assume further that $p$ consists of disjoint transversal cuts. Then we have $h^1(\tX , \calL) = p_g(\tX) - \dim( T_{c^{l'}}(T_p))$. In particular $h^1(\tX , \calL)$ equals the dimension of the differential forms in $\frac{H^0(\calO_{\tX}(K + [Z_K] ))}{H^0(\calO_{\tX}(K))}$
having no pole on the divisor $p$.
\end{theorem}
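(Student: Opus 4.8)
The statement to prove is Theorem~\ref{thm:gat}, which has two parts: an injectivity/codimension statement about the tangent map of the Abel map, and—under the transversality hypothesis—an identification of $h^1(\tX,\calL)$ with a space of differential forms with prescribed pole behaviour along the divisor $p$.

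\medskip

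The plan is to work on a large enough cycle $Z$ so that $H^1(\calO_{\tX})=H^1(\calO_Z)$ and every relevant cohomology of line bundles twisted by things supported on $E$ has stabilized; this lets me pass freely between the surface $\tX$ and the cycle $Z$, as in \cite{NNA1}. First I would recall the construction of the Abel map $c^{l'}(Z)\colon \eca^{l'}(Z)\to\pic^{l'}(Z)$ and its linearization. For a Cartier divisor $p\in\eca^{l'}(Z)$ defining the line bundle $\calL=\calO_Z(-D_p)$ (with $D_p$ the ideal), the tangent space $T_p$ of $\eca^{l'}(Z)$ at $p$ is $H^0(\calO_{D_p}(D_p))$-type data, i.e. first-order deformations of the divisor; the tangent space $F_p$ of the fibre is the subspace coming from deformations that keep the isomorphism class of $\calL$ fixed, which is the image of $H^0(\calL)_{reg}$ up to the $H^0(\calO_Z^*)$-action. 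The tangent map of $c^{l'}(Z)$ to $\pic^{l'}(Z)\cong H^1(\calO_Z)\cong \bC^{p_g}$ is then the connecting homomorphism in the long exact sequence of
\[
0\to \calO_Z \to \calO_Z(D_p)\to \calO_{D_p}(D_p)\to 0,
\]
or rather a relative/normalized version of it. The key cohomological input is the identification $\mathrm{coker}(H^0(\calO_Z(D_p))\to H^0(\calO_{D_p}(D_p)))\hookrightarrow H^1(\calO_Z)$, whose image is exactly the kernel of $H^1(\calO_Z)\to H^1(\calO_Z(D_p))=H^1(\calL^{-1})$; by Serre/Laufer-type duality on $Z$ this kernel has codimension $h^1(Z,\calL)=h^1(\tX,\calL)$. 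That gives the injectivity of $T_{c^{l'}(Z)}\colon T_p/F_p\to\bC^{p_g}$ and the codimension-$h^1(\tX,\calL)$ claim for the image. I would be careful to quotient correctly by $F_p$: $F_p$ is precisely the part of $T_p$ that maps to zero, i.e. deformations of $p$ realizable by moving within $H^0(\calL)_{reg}$, so the induced map on $T_p/F_p$ is injective essentially by definition once the cokernel identification above is in place.

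\medskip

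For the second part, assume $p$ consists of disjoint transversal cuts at regular points of $E$. Then $\eca^{l'}(Z)$ is smooth at $p$ of dimension $(l',Z)$ and $T_p$ is canonically $\bigoplus$ of local contributions, one per intersection point, so $\dim T_p=(l',Z)$ splits as a sum over the cuts. Now apply Laufer duality: $H^1(\calO_{\tX})^\vee\cong H^0(\tX\setminus E,\Omega^2_{\tX})/H^0(\tX,\Omega^2_{\tX})$, and under this identification the image of the tangent map $T_{c^{l'}}(T_p)\subset\bC^{p_g}$ is the annihilator of the subspace of those global $2$-forms (with poles along $E$) that acquire \emph{no} pole along the divisor $p$ — equivalently, the residue/Poincaré-pairing of such a form against each transversal cut direction vanishes iff the form is regular along that cut. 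Concretely, a form $\omega\in H^0(\calO_{\tX}(K+Z_K))/H^0(\calO_{\tX}(K))$ pairs to zero against the tangent vector at the cut through $x$ precisely when $\omega$ has no pole at $x$ along $p$. Therefore $T_{c^{l'}}(T_p)^\perp = \Omega(p) :=$ the space of such forms with no pole on $p$, whence $\dim T_{c^{l'}}(T_p) = p_g - \dim\Omega(p)$. Combining with $h^1(\tX,\calL)=p_g-\dim(T_{c^{l'}}(T_p))$ (which follows from part one together with the fact that $T_p\to T_p/F_p$ is surjective and the restriction of the tangent map to $F_p$ is zero), we get $h^1(\tX,\calL)=\dim\Omega(p)$, i.e. the asserted differential-forms description.

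\medskip

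The main obstacle, I expect, is making the residue/pole-matching pairing precise and checking it is the honest dual pairing: one must verify that under Laufer duality the connecting homomorphism $T_p\to H^1(\calO_Z)$ is adjoint to the "take poles along $p$" map on $2$-forms, and that transversality of the cuts is exactly what makes the local residue pairing at each point nondegenerate (so that a form pairs trivially with the whole $T_p$ iff it is regular along $p$). This is where the hypothesis that $p$ avoids singular points of $E$ and consists of transversal cuts is essential — without it the local models degenerate and the clean "no pole along $p$" description fails. A secondary technical point is the reduction to a cycle $Z$ large enough that $h^1(Z,\calL)=h^1(\tX,\calL)$ and $H^1(\calO_Z)=H^1(\calO_{\tX})$, which is standard (Proposition~\ref{redcycle} and the surrounding discussion) but should be invoked explicitly so that all the cohomological computations can be carried out at the finite level of $Z$ while the conclusion is stated on $\tX$.
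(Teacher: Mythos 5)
The paper you were given does not prove Theorem \ref{thm:gat} at all: it is quoted verbatim from \cite{NNA1}, so the only comparison possible is with that source, and your sketch reproduces its route essentially exactly — identify the tangent map of the Abel map with the connecting homomorphism of $0\to\calO_Z\to\calO_Z(D_p)\to\calO_{D_p}(D_p)\to 0$ (so that $T_p/F_p$ injects into $H^1(\calO_Z)\cong\bC^{p_g}$ for $Z\gg 0$, with image $\ker\bigl(H^1(\calO_Z)\to H^1(\calO_Z(D_p))\bigr)$), and then use Laufer duality together with the integration/residue formula, where the disjoint-transversal-cut hypothesis is exactly what makes the image the annihilator of the forms with no pole on $p$. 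Your identification of $F_p$ with the image of $H^0(\calO_Z(D_p))\to H^0(\calO_{D_p}(D_p))$ and your localization of where transversality enters are both the right points.

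One step, as literally written, is wrong and should be repaired: you set $\calL=\calO_Z(-D_p)$ and then invoke ``Serre/Laufer-type duality'' to convert the codimension of $\ker\bigl(H^1(\calO_Z)\to H^1(\calO_Z(D_p))\bigr)$, which is $h^1(\calL^{-1})$ in your convention, into $h^1(Z,\calL)$; there is no such duality, since $h^1(\calL^{-1})$ and $h^1(\calL)$ differ in general. The repair is immediate and removes any appeal to duality at this stage: with the paper's conventions the Abel map sends the divisor $p$ to $\calO_Z(D_p)$ itself, i.e.\ $\calL=\calO_Z(D_p)$ with $c_1(\calL)=l'\in-\calS'$. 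Then, since $D_p$ is zero-dimensional, $H^1(\calO_{D_p}(D_p))=0$, so $H^1(\calO_Z)\to H^1(\calO_Z(D_p))$ is onto and the image of the connecting map has codimension exactly $h^1(Z,\calO_Z(D_p))=h^1(Z,\calL)$, which equals $h^1(\tX,\calL)$ once $Z$ is large (your reduction via Proposition \ref{redcycle} is the right way to say this). With that sign convention fixed, the rest of your argument, including the deduction $h^1(\tX,\calL)=p_g-\dim\bigl(T_{c^{l'}}(T_p)\bigr)$ from part one and the annihilator description of $T_{c^{l'}}(T_p)$ under Laufer duality, is the argument of \cite{NNA1}.
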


Now we consider a resolution graph $\mathcal{T}$ which satisfies the monomial conditions, and let $\tX$ be the  splice quotient analytic type on it. 
Consider a cycle $l' \in L'$ such that for all vertices $v \in |l'|^*$ we have $E_v^* \in S'_{an}$ (for example all vertices $v \in |l'|^*$ are ends or nodes).
Suppose futhermore that we have a large cycle $Z$ and a divisor $p \in (c^{l'}(Z))^{-1}(0) \subset \eca^{l'}(Z)$ such that $p$ consists of disjoint transversal cuts, all of them defining natural line bundles.

In the following we investigate what the Theorem \ref{thm:gat} gives combinatorially in the above special situation.
% and then we try to extend the given results to the abstract combinatorial level of counting functions of topological zeta functions for arbitrary graphs.

Now let $l' = \sum_{v \in \calv} a_v E_v^*$ and $s = \{s_{v, k} \geq 0| 1 \leq k \leq a_v \}$ be the set of very large positive integers. 

We blow up the divisor $E_v$  sequentially in $ a_v$ points, $s_{v, k}$ times in each point, such that we always blow up at an intersection point of the strict transform of the divisor $p$ and the new exceptional divisor. Let us denote the new end-vertices of the new resolution $\tX_{s}$ by $v_k, 1 \leq k \leq a_v$ and the strict transform of $p$ by $p_s$. (We also distinguish by putting the lower index $s$ for all the objects associated with $\tX_s$.)

Let's denote the connected components of the divisor $p_s$ by $p_{v, k, s}$.

One can easily identify the differential forms in $\frac{H^0(\calO_{\tX}(K + [Z_K] ))}{H^0(\calO_{\tX}(K))}$ which has no pole on the divisor $p$ with the differential forms in $\frac{H^0(\calO_{\tX_s}(K_s + [Z_{K, s}] ))}{ H^0(\calO_{\tX}(K_s))}$ vanishing of order at least $s_{v, k}$ on the divisor  $p_{v, k, s}$.

The vanishing criteria is well-defined, since every differential form in $H^0(\calO_{\tX}(K_s))$ vanish at least of order $s_{v, k}$ on the divisor $p_{v, k, s}$, and the identification is straightforward.

We know that the singularity we get after all the blow ups is splice qoutient, in particular it is $\mathbb{Q}$-Gorenstein. 
This means that $\frac{H^0(\calO_{\tX_s}(K_s + [Z_{K, s}] ))}{ H^0(\calO_{\tX}(K_s))}$ can be identified with $\frac{H^0(\calO_{\tX_s}(-Z_{K, s}+ [Z_{K, s}] ))}{H^0(\calO_{\tX}(-Z_{K, s}))}$.

Notice that if the numbers $s_{v, k}$ are large enough and some $w \in \frac{H^0(\calO_{\tX_s}(-Z_{K, s}+ [Z_{K, s}] ))}{H^0(\calO_{\tX}(-Z_{K, s}))}$ vanishes of order $s_{v, k}$ on the divisor $p_{v, k, s}$ but does not vanish on the exceptional divisor $E_{v_{k, s}}$ of order $s_{v, k}$, then there is a cut of the differential form going through the intersection point of  $p_{v, k, s}$ and $E_{v_{k, s}}$.         

Indeed let's take a representative of $w$, say $\omega \in  H^0(\calO_{\tX_s}(-Z_{K, s}+ [Z_{K, s}] ))$ and consider its divisor $div(\omega) = l_{\omega} + C$, where $l_{\omega}$ is supported on $E_{s}$ and $C$ is containing the cuts of $\omega$.

Let's denote by $c_1(C)$ the Chern class of the line bundle $\calO_{\tX_s}(C)$.

Notice that  $l_{\omega} + c_1(C) = -Z_{K, s}+ [Z_{K, s}]$ and $c_1(C)_{v_{k, s}} \leq (-E_{v_{k, s}}^*)_{v_{k, s}} = s_{v, k} + (-E_{v}^*)_v$ and $(-Z_{K, s})_{v_{k, s}}
= (-Z_K)_v - s_{v, k}$. Therefore if $s_{v, k}$ is enough large then $(l_{\omega})_{v_{k, s}}> s_{v, k} + [Z_{K, s}]$. Thus, in fact $w$ vanishes of order $s_{v, k}$ along the exceptional divisor $E_{v_{k, s}}$ which is a contradiction.

This means that if the number $s_{v, k}$ are large enough then a differential form $w \in \frac{H^0(\calO_{\tX_s}(-Z_{K, s}+ [Z_{K, s}] ))}{H^0(\calO_{\tX}(-Z_{K, s}))}$ vanishes of order $s_{v, k}$ on the divisor $p_{v, k, s}$ if and only if it vanishes along the exceptional divisor $E_{v_{k, s}}$ of order $s_{v, k}$.

Hence, we have the following equality:

\begin{equation}
h^1(\calO_{\tX}(-l')) = \dim \left( \frac{H^0\Big(\calO_{\tX_{s}}( \sum_{u \neq v_{k,s}}(- Z_{K, s} +[Z_{K, s}])_u E_u - \sum_{v \in \calv, 1 \leq k \leq a_v}  -(Z_K)_v  E_{v_{k, s}}  \Big)}{ H^0(\calO_{\tX_{s}}( -Z_{K, s}) ))} \right).
\end{equation}

Since the singularity $\tX_s$ is splice quotient, hence $\bQ$-Gorenstein, we can rewrite this formula using the topological Poincar\'e series $Z^{\mathcal{T}_s}(\bt) = \sum_{l'' \in S'_s}z^{\mathcal{T}_s}(l'') \bt^{l''}$associated with the newly created resolution graph $\calt_s$ as follows.

\begin{equation}\label{hegymaskepp}
h^1(\calO_{\tX}(-l'))  = \sum_{[l''] = [Z_{K, s}], l'' \geq  \sum_{v \in \calv, 1 \leq k \leq a_v}  (Z_K)_v  E_{v_{k, s}} , l'' \ngeq \pi_s^*(Z_K) }  z^{\mathcal{T}_s}(l'') . 
\end{equation}

In the following, let $S=\{p_{v, k}\}$be the set of the intersection points of the divisor $p$ with the exceptional divisor $E=\cup_v E_v \subset \tX$  and for every element $x \in S$ denote by $v_x$ the corresponding intersected component $E_v$ .

With an arbitrary subset $I \subset S$ we associate the cycle $l'(I) = \sum_{x \in I} E_{v_x}^*$ and let's denote $|l'|^* = J$.  
Furthermore, consider the following multivariable series $$R(\mathbf{t}) = \prod_{ u\in \calv}(1-\mathbf{t}^{E_u^*})^{\delta_u -2 + a_u}=\sum_{l''}r(l'')\bt^{l''}.$$

Then, as it is anounced in the introduction as Theorem (C), we claim the following formula.

\begin{theorem}\label{dualityspq}
\begin{equation*}
\sum_{I \subset S} (-1)^{|I|} h^1(\calO_{\tX}( - l'(I))) = \sum_{l'' \in S',[ l''] = [Z_K], l''_J \leq (Z_K-E)_J} r(l'').
\end{equation*}
\end{theorem}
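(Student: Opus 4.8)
\textbf{Proof strategy for Theorem \ref{dualityspq}.}

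The plan is to combine the formula \eqref{hegymaskepp} for each individual $h^1(\calO_{\tX}(-l'(I)))$ with an inclusion--exclusion identity for the newly created resolution graphs, and then to collapse the resulting alternating sum of counting functions into the single counting function of $R(\bt)$. First I would fix, once and for all, the blow-up data: for the \emph{maximal} cycle $l' = \sum_{v} a_v E_v^*$ (corresponding to the full set $S$) perform the sequential blow-ups as in the discussion preceding the theorem, producing the graph $\calt_s$ with new end-vertices $v_{k,s}$, one for each $x = p_{v,k} \in S$. For a subset $I \subset S$, blowing up only at the points of $I$ gives a graph $\calt_{s,I}$ that is obtained from $\calt_s$ by \emph{not} creating the end-chains attached at the points $x \notin I$; equivalently, $\calt_{s,I}$ is a subgraph of $\calt_s$, and one has a projection operator $\pi_{s,I}^*$ in the sense of section \ref{ss:surgform}. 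The key observation is that \eqref{hegymaskepp}, applied to $l'(I)$ and its blow-up $\calt_{s,I}$, expresses $h^1(\calO_{\tX}(-l'(I)))$ as a counting function of $Z^{\calt_{s,I}}(\bt)$; I would then re-express this, via the surgery/restriction compatibility of the topological Poincaré series, as a counting function of $Z^{\calt_s}(\bt)$ restricted to the coordinates indexed by the vertices of $\calt_{s,I}$, shifted appropriately by $(Z_K)_v$ along the end-chains that \emph{are} present.

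The second step is the inclusion--exclusion. Using the product formula $Z^{\calt_s}(\bt) = \prod_{u}(1-\bt^{E_u^*})^{\delta_u - 2}$ over the vertices of $\calt_s$, and noting that each new end-vertex $v_{k,s}$ contributes a factor $(1 - \bt^{E_{v_{k,s}}^*})^{-1}$ (valence one), while passing from $\calt_s$ to $\calt_{s,I}$ changes the valence of the old vertex $v$ carrying the point and removes the new end-chain, I would show that the alternating sum $\sum_{I \subset S}(-1)^{|I|} Z^{\calt_{s,I}}(\bt)$ — taken after the appropriate substitutions $t_{v_{k,s}} \mapsto 1$ and shift by $(Z_K)_v$ — telescopes. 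Concretely, each point $x \in S$ sitting on $E_v$ contributes, upon the substitution that kills its new variable and accounts for the vanishing order, exactly one extra factor $(1 - \bt^{E_{v_x}^*})$ (the $+1$ in the exponent $\delta_v - 2 + a_v$ coming precisely from the $a_v$ points on $E_v$), so that summing $(-1)^{|I|}$ over all $I \subset S$ produces $\prod_{u}(1-\bt^{E_u^*})^{\delta_u - 2 + a_u} = R(\bt)$. The shift by $(Z_K)_v$ along each present end-chain, combined with the $\bQ$-Gorenstein rewriting that turned $K_s$ into $-Z_{K,s}$, is what matches the support/truncation condition $[l''] = [Z_K]$ and $l''_J \leq (Z_K - E)_J$ in the statement: the inequality $l'' \ngeq \pi_s^*(Z_K)$ in \eqref{hegymaskepp} becomes, after the alternating cancellation, the two-sided condition on the $J$-coordinates.

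The third step is bookkeeping: one must check that all the shifts are by \emph{integral} cycles (so that the periodic constants and the $[Z_K]$-equivariant parts are well defined and the identity \eqref{eq:sqcount} and Remark \ref{rem:pc}\eqref{rem:pc3} apply), and that the truncation ``$\ngeq$'' on the right-hand side of \eqref{hegymaskepp}, after summing, only constrains the coordinates in $J = |l'|^*$ — this is because for $v \notin J$ the factor $(1-\bt^{E_v^*})$ appears to a nonnegative power so those coordinates are automatically controlled, matching the phrasing $l''_J \leq (Z_K - E)_J$ with no condition off $J$. I expect the \textbf{main obstacle} to be precisely this last matching: showing that the alternating sum of the \emph{truncated} counting functions $Q(Z^{\calt_{s,I}})$ equals the \emph{truncated} counting function $Q(R)$ rather than just the equality of the underlying series. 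This requires controlling how the truncation locus (the ``$\ngeq \pi_s^*(Z_K)$'' and ``$\geq \sum (Z_K)_v E_{v_{k,s}}$'' conditions) behaves under the substitutions $t_{v_{k,s}} = 1$ and under the alternating sum; one has to verify that the finiteness condition \eqref{eq:finiteness} is preserved throughout and that no ``boundary'' coefficients are miscounted when an end-chain is contracted. I would handle this by introducing the variables one end-chain at a time and checking the two-term identity $Q^{\calt_{s,I \cup \{x\}}} - Q^{\calt_{s,I}}$ collapses to multiplication of the generating series by $-(1-\bt^{E_{v_x}^*})$ at the level of counting functions, then iterating; the Gorenstein symmetry of $Z^{\calt_s}$ around $Z_{K,s}$ is what guarantees the shifted truncation is the ``right'' one at each stage.
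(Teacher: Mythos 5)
Your overall skeleton (apply \eqref{hegymaskepp} to each subset, then inclusion--exclusion, then identify the outcome with the coefficients of $R(\bt)$) is the right one, but the middle step as you set it up would fail, and it is exactly where the paper's proof has a different, simpler mechanism. Your graph $\calt_{s,I}$ (blow up only at the points of $I$) is \emph{not} a full subgraph of $\calt_s$: the two graphs are related by blowing down the chains over the points of $S\setminus I$, so the old vertices carrying those points have different self-intersections and valences in the two graphs, and the dual classes $E^*$ change under blow-up ($E^*_{new}=\pi^*E^*_{v}+E_{new}$). Consequently there is no ``surgery/restriction compatibility'' that rewrites the truncated counting function of $Z^{\calt_{s,I}}$ appearing in \eqref{hegymaskepp} as a coordinate restriction of a counting function of $Z^{\calt_s}$: the projected-series results of the paper only control conditions of the form $l''_{I}\ngeq x_I$, whereas \eqref{hegymaskepp} imposes a mixture of lower bounds $l''\geq\sum (Z_K)_vE_{v_{k,s}}$ at the new ends and a global $l''\ngeq\pi_s^*(Z_K)$, and these do not commute with setting variables equal to $1$. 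Your proposed telescoping of $\sum_I(-1)^{|I|}Z^{\calt_{s,I}}$ therefore rests on an identification that is not available, and you yourself flag (correctly) that matching the truncations is the sticking point -- but the two-term collapse you sketch is not justified.

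The paper sidesteps all of this with one observation you are missing: since $h^1$ is unchanged under pulling a line bundle back along a blow-up, one has $h^1(\calO_{\tX}(-l'(I)))=h^1(\calO_{\tX_{S\setminus I}}(\sum_{x\in I}C_x))$, where $\tX_{S\setminus I}$ is $\tX$ blown up along the cuts \emph{not} in $I$. Applying \eqref{hegymaskepp} to the splice quotient $\tX_{S\setminus I}$ and the divisor $\sum_{x\in I}C_x$ then lands, for \emph{every} $I$, on the same maximal blow-up $\tX_s$ with the same series $z^{\calt_s}$; only the support condition depends on $I$ (namely $l''\geq\sum_{x\in I}(Z_K)_{v}E_{v_{k,s}}$, together with $l''\ngeq\pi_s^*(Z_K)$). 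The inclusion--exclusion is then a trivial indicator-function manipulation on a fixed set of lattice points, converting ``$\geq$ on the coordinates indexed by $I$'' into ``$l''_{v_{k,s}}<(Z_K)_v$ for all new ends.'' Finally -- a step absent from your plan -- since the $s_{v,k}$ are very large, $(E^*_{v_{k,s}})_{v_{k,s}}$ is huge, so the bound $l''_{v_{k,s}}<(Z_K)_v$ forces every surviving $l''$ to have no $E^*_{v_{k,s}}$-component; only this reduction lets one read the remaining sum as the coefficient sum $\sum_{[l'']=[Z_K],\,l''_J\leq(Z_K-E)_J}r(l'')$ of $R(\bt)=\prod_u(1-\bt^{E_u^*})^{\delta_u-2+a_u}$ on the original graph (the exponent shift $+a_u$ coming from the increased valences in $\calt_s$). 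Without the common-graph reduction and this last support argument, your argument does not close.
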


\begin{proof}
For an element $x \in S$ we denote by $C_x$ the corresponding cut and by by $C'_x$ its strict transform along the blow up $\pi_s$.

For a fixed subset $I \subset S$ we blow up $\tX$ at the cut $C_x$ $s_x$-many times if $x \in I$, and we denote the blow up map by $\pi_I : \tX_I \to \tX$. We know that $\tX_s$ is a further blow up of $\tX_I$, and the corresponding blow up map will be $\pi_s: \tX_s \to \tX_I$.  
% by $\pi_s$ for every $I \subset S$
If $x \notin I$ we use the same notation $C_x$ for the strict transform of $C_x$. 

Then we know that $ h^1(\calO_{\tX}( - l'(I))) = h^1(\calO_{\tX_{S \setminus I}}( - \pi_{S \setminus I} ^*(l'(I)))) = h^1(\calO_{\tX_{S \setminus I}}( \sum_{x \in I} C_x))$.

Now, we can use equation (\ref{hegymaskepp}) for the splice quotient singularity $\tX_{S \setminus I}$ and the line bundle $\calO_{\tX_{S \setminus I}}( - \pi_{S \setminus I} ^*(l'(I)) = \calO_{\tX_{S \setminus I}}( \sum_{x \in I} C_x)$ in order to get

\begin{equation*}
h^1(\calO_{\tX}( - l'(I))) =  \sum_{[l''] = [Z_{K, s}], l'' \geq  \sum_{v \in \calv, 1 \leq k \leq a_v,  p_{v, k} \in I}  (Z_K)_v  E_{v_{k, s}} , l'' \ngeq \pi_s^*(Z_K) }  z^{\mathcal{T}_s}(l'').
\end{equation*} 

It means that we have:

\begin{equation}\label{h1}
\sum_{I \subset S} (-1)^{|I|} h^1(\calO_{\tX}( - l'(I))) =  \sum_{[l''] = [Z_{K, s}], l''_{v_{k, s}} < (Z_K)_v , l'' \ngeq \pi_s^*(Z_K) }  z^{\mathcal{T}_s}(l'').
\end{equation} 

Notice that if $ z^{\mathcal{T}_s}(l'') \neq 0$, then $|l''|^* \subset \calv \cup_{v, 1 \leq k \leq a_v} v_{k, s}$.  However if there is a vertex $v$ and number $1 \leq k \leq a_v$
such that $v_{k, s} \in |l''|^*$ then we would have $l''_{v_{k, s}} > (Z_K)_v$ since $(E_{v_{k, s}}^*)_{v_{k, s}}$ is very large.

Hence, this induces the desired formula

\begin{equation}\label{dualityline}
\sum_{I \subset S} (-1)^{|I|} h^1(\calO_{\tX}( - l'(I))) = \sum_{l' \in S',[ l''] = [Z_K], l''_J \leq (Z_K-E)_J} r(l'').
\end{equation}

\end{proof}

Note that by setting the notation $h_I = [l'(I)]$ we can express the cohomology number as 

\begin{equation*}
h^1(\calO_{\tX}( - l'(I))) =  -\sum_{l'' \ngeq l'(I),[ l'] = h_I}  z(l') + \chi(l'(I)) + h^1(\calO(-r_{h_I})) -\chi(r_{h_I}).
\end{equation*}
Therefore, (\ref{dualityline}) can also be write in the following way

\begin{equation}\label{dualitycounting}
\sum_{I \subset S} (-1)^{|I|} \left(   -\sum_{l'' \ngeq l'(I),[ l'] = h_I}  z(l') + \chi(l'(I)) + h^1(\calO(-r_{h_I})) -\chi(r_{h_I}) \right) = \sum_{l'' \in S',[ l''] = [Z_K], l''_J \leq (Z_K-E)_J} r(l'').
\end{equation}

\section{Virtual cohomology number and topological duality for counting functions} \label{s:topdcf}
In this section we discuss and prove the identity (\ref{dualityline}) generally, in the topological setting. As a starting point, motivated by the equation (\ref{h1}) we define a topological canditate for the cohomology numbers which will also help us to present the formulae in a convenient way.

\subsection{Virtual cohomology number}

Let $\calt$ be a good resolution graph of a normal surface singularity whose link is a rational homology sphere and consider its topological Poincare series $Z(\mathbf{t}) = \prod_{v \in \calv} (1-\mathbf{t}^{E_v^*})^{\delta_v -2}$, cf. section \ref{ss:tps}. Then one considers the following definition:

\begin{defn}
For any cycle $l'\in L'$ we define the \emph{virtual cohomology number} as 
\begin{equation}
h^1_{virt}(l') = - Q_h(l')  + \chi(l') - \chi(r_h) + \mathfrak{sw}_{h}^{norm}(\calt),
\end{equation}
where we recall that $h = [l']$, $Q_h(l')=\sum_{[l''] = h, l'' \ngeq l'} z(l'')$ is the counting function of $Z(\bt)$ and $\mathfrak{sw}_{h}^{norm}(\calt)$ is the normalized Seiberg-Witten invariant. 
\end{defn}

\begin{remark}\label{rem:h1virt}
(1) \ In particular, if $\calt$ satisfies the monomial conditions and admits a splice quotient singularity, then  $h^1_{virt}(l')$ equals with the standard cohomology number $h^1(\calO_{\tX}(-l'))$.

(2) \ If $l'\in Z_K+int(\calS')$ then by \cite{NJEMS} one knows that $Q_h(l')=\chi(l') - \chi(r_h) + \mathfrak{sw}_{h}^{norm}(\calt)$. Hence $h^1_{virt}(l')=0$, which corresponds to the Grauert-Riemenschneider vanishing (cf. \cite{GrRie}) in the analytic setting. 

Therefore, in general the virtual cohomology number measures exactly the difference between the counting function and its quasipolynomial associated with the cone $\calS'$.

(3) \ By substituting $l'=r_h$ for some $h\in H$ yields $h^1_{virt}(r_h)=\mathfrak{sw}^{norm}_h(\calt)$, since $Q_h(r_h)=0$ by the definition of the counting function.  

\end{remark}

\subsection{The `duality'} We fix a cycle $l'\in L'$ and express it in the $E^*$-basis as $l'=\sum_{v\in \calv}a_v E^*_v$. For the ease of notations, if there is no ambiguity we will write simply $J:=\{v\in \calv : a_v\neq 0\}$ for the $E^*$-support of $l'$, $|l'|^*$.

% Furthermore, let us write this set as $J_{l'}=\{v_1,\dots,v_{|J_{l'}|}\}$. 
Then consider the multiset $J^m:=\{v^{a_{v}} : v\in J\}$, where in $J^m$ each vertex $v\in J$ has a multiplicity $a_v$. Let $I^m\subset J^m$ be a non-trivial multisubset, $I^m=\{v_{i_1}^{k_{i_1}},\dots, v_{i_r}^{k_{i_r}}\}$, where $\{v_{i_1},\dots,v_{i_r}\}=I\subset J$  are different vertices from  $J$ with multiplicity $0<k_{i_j}\leq a_{v_{i_j}}$ for any $j$, and $0<r\leq |J|$  (Notice that the numbers $r$, $k_{i_j}, 1 \leq j \leq r$ do not determine the multisubset, it is also important which $k_{i_j}$ elements we pick out from $v_{i_j}^{a_{v_{i_j}}}$).

 For such a sub-multiset $I^m$ we define $|I^m|:=\sum_j k_{i_j}$ and the cycle
$$l'(I^m):=\sum_{j} k_{i_j} E^*_{v_{i_j}}.$$ 
Furthermore, associated with a cycle $l'$ we define a `relative' type  series as the Taylor expansion at the origin $\sum_{l''\in \calS'}r(l'')$ of the multivariable function 
$$R(\mathbf{t}) = Z(\mathbf{t}) \cdot \prod_{ v\in J}(1-\mathbf{t}^{E_v^*})^{a_v}.$$  
Then, the main result of this section is as follows:

\begin{theorem}\label{topdualcount} 
If $\calt$ is a good resolution graph of a normal surface singularity whose link is a rational homology sphere, then for any cycle $l'\in L'$ we have the following identity
\begin{equation}\label{dualitycounting2}
\sum_{I^m \subset J^m}  (-1)^{|I^m|} \cdot h^1_{virt}( l'(I^m)) = \sum_{l'' \in \calS',[ l''] = [Z_K] \atop l''_J \leq (Z_K-E)_J} r(l'').
\end{equation}
\end{theorem}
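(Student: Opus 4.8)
The plan is to reduce the topological identity \eqref{dualitycounting2} to the already-proven analytic identity \eqref{dualityline} of Theorem~\ref{dualityspq} — but \emph{not} by assuming a splice quotient model exists, since $\calt$ need not satisfy the monomial conditions. Instead, I would observe that both sides of \eqref{dualitycounting2} are built entirely from the topological Poincar\'e series $Z(\bt)$, its counting function $Q_h$, the normalized Seiberg--Witten invariants $\mathfrak{sw}^{norm}_h(\calt)$, and purely combinatorial Riemann--Roch quantities $\chi(\cdot)$; so the identity is a statement about $Z(\bt)$ alone. The key reformulation step is to rewrite the left-hand side using the definition of $h^1_{virt}$: expand
\[
\sum_{I^m\subset J^m}(-1)^{|I^m|} h^1_{virt}(l'(I^m))
= -\sum_{I^m\subset J^m}(-1)^{|I^m|} Q_{[l'(I^m)]}(l'(I^m))
+ \sum_{I^m\subset J^m}(-1)^{|I^m|}\bigl(\chi(l'(I^m)) - \chi(r_{[l'(I^m)]}) + \mathfrak{sw}^{norm}_{[l'(I^m)]}(\calt)\bigr).
\]
The first sum is the genuinely series-theoretic part; the second sum I expect to collapse, by an inclusion--exclusion argument over the multiset $J^m$, into exactly the ``correction'' terms that appear when one compares $Q_h$ to its quasipolynomial $\mathfrak{Q}_h$ in the Lipman cone (cf.\ Theorem~\ref{th:NJEMSThm} and Remark~\ref{rem:h1virt}(2)).

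\textbf{Key steps in order.} First I would make precise the generating-function bookkeeping: since $R(\bt) = Z(\bt)\cdot\prod_{v\in J}(1-\bt^{E_v^*})^{a_v}$, expanding the product gives $R(\bt) = \sum_{I^m\subset J^m \cup \{\emptyset\}}(-1)^{|I^m|}\bt^{l'(I^m)}Z(\bt)$, so that $r(l'') = \sum_{I^m}(-1)^{|I^m|}z(l''-l'(I^m))$. This lets me rewrite the right-hand side of \eqref{dualitycounting2} as an alternating sum over $I^m$ of partial sums of coefficients of $Z(\bt)$; comparing term by term against the alternating sum of counting functions $Q_{[l'(I^m)]}(l'(I^m))$ is then a matter of matching supports and congruence classes, exactly as in the passage from \eqref{h1} to \eqref{dualityline} in the proof of Theorem~\ref{dualityspq}. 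Second, I would handle the quasipolynomial correction terms: using that $h^1_{virt}(l') = 0$ whenever $l'\in Z_K + \mathrm{int}(\calS')$ (Remark~\ref{rem:h1virt}(2)), one sees that $h^1_{virt}$ is, by construction, the exact obstruction to the counting function agreeing with its quasipolynomial, so the alternating sum of the $\chi$- and $\mathfrak{sw}^{norm}$-terms is forced to equal the alternating sum of the corresponding quasipolynomial values $\mathfrak{Q}_{[l'(I^m)]}(l'(I^m))$, and these in turn are governed by the duality Theorem~\ref{dualcount} (and its refinement via the projected series / surgery formula Theorem~\ref{surgery}). Third, I would assemble: the series part plus the corrected quasipolynomial part reproduces the right-hand side $\sum_{[l'']=[Z_K],\ l''_J\leq (Z_K-E)_J} r(l'')$.

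\textbf{Where the difficulty lies.} I expect the main obstacle to be the combinatorics of the \emph{multiset} $J^m$ rather than the set $J$: in Theorem~\ref{dualityspq} the set $S$ of transversal cuts is already ``resolved'' (each cut is a distinct point), so the blow-up argument there treats distinct objects, whereas here one must show that the raw alternating sum $\sum_{I^m\subset J^m}(-1)^{|I^m|}(\cdots)$ — where, as the text carefully warns, a sub-multiset of given cardinalities is not determined by those cardinalities alone — reorganizes correctly. The cleanest route is probably to \emph{reduce to the resolved case}: pick an auxiliary resolution graph $\widetilde{\calt}$ obtained from $\calt$ by performing, for each $v\in J$, a sequence of $a_v$ successive blow-ups (at generic points, producing $a_v$ new end-vertices $v_1,\dots,v_{a_v}$), so that $\pi^*(l') = \sum_{v\in J}\sum_{k=1}^{a_v} E_{v_k}^*$ has $E^*$-support a genuine \emph{set}; then show that $h^1_{virt}$ is invariant under such blow-ups (this should follow from $Z(\bt)$ being the pullback-compatible object and from the behaviour of $\mathfrak{sw}^{norm}$ under blow-up, via Theorem~\ref{th:NJEMSThm}), and that $R(\bt)$ corresponds to $Z^{\widetilde{\calt}}(\bt)$ with the appropriate sign-twists. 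Once everything is lifted to $\widetilde{\calt}$, the multiset $J^m$ becomes an honest set, the alternating sum becomes a standard inclusion--exclusion over subsets, and the identity \eqref{dualitycounting2} becomes a literal instance of the computation already carried out for \eqref{dualityline}, now justified purely topologically using Theorems~\ref{th:NJEMSThm}, \ref{surgery} and \ref{dualcount} in place of the $\bQ$-Gorenstein/splice-quotient input. The secondary technical point to check carefully is that the congruence-class bookkeeping $[l''] = [Z_K]$ and the support constraint $l''_J\leq (Z_K-E)_J$ survive the pull-back and push-forward between $\calt$ and $\widetilde{\calt}$ unchanged.
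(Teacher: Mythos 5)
Your opening reduction is fine: unwinding $h^1_{virt}$ and expanding $r(l'')=\sum_{I^m}(-1)^{|I^m|}z(l''-l'(I^m))$ does bring the theorem to the statement that the alternating sum of $\chi(l'(I^m))-\chi(r_{[l'(I^m)]})+\mathfrak{sw}^{norm}_{[l'(I^m)]}-Q_{[l'(I^m)]}(l'(I^m))$ equals the alternating sum of the truncated coefficient sums $\sum_{[l'']=[Z_K-l'(I^m)],\, l''_J\le (Z_K-l'(I^m)-E)_J}z(l'')$ — this is exactly the paper's identity (\ref{eq:finaleq1}). But you never prove that identity, and it is not "a matter of matching supports and congruence classes": it is false term by term. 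For instance the $I^m=\emptyset$ term alone would assert $\mathfrak{sw}^{norm}_0(\calt)=\sum_{[l'']=[Z_K],\,l''_J\le (Z_K-E)_J}z(l'')$, whereas by Theorem \ref{dualcount} combined with the surgery formula of Theorem \ref{surgery} such truncated sums compute $\mathrm{pc}$-type quantities that differ from $\mathfrak{sw}^{norm}_0(\calt)$ by the Seiberg–Witten invariants of the complementary subgraphs; only the full alternating sum cancels these discrepancies. The paper's mechanism for producing exactly this cancellation is absent from your outline: one forms $R'(\bt)=Z(\bt)\cdot\prod_{v\in J}(1-\bt^{E_v^*})^{a_v}\cdot\bt^{-Z_K}$, takes its $h=0$ part, projects to the variables $\bt_J$, and evaluates the periodic constant $\mathrm{pc}^{\pi_J(S'_{\mathbb{R}})}(R'_{0,J})$ in two ways — once through the quasipolynomials of the counting functions, Theorem \ref{surgery} and the $Z_K$-symmetries (giving (\ref{periodic1})), and once using the crucial fact (Corollary \ref{laurent}, resting on Lemma \ref{redlaurent}) that $R'_{0,J}(\bt_J)$ is a Laurent \emph{polynomial}, so its periodic constant is a finite coefficient sum which the expansion-at-infinity symmetry $T^{\infty}(Z(\bt))=\bt^{Z_K-E}Z(\bt^{-1})$ converts into counting-function values (giving (\ref{periodic2})). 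Nothing in your proposal replaces this Laurent-polynomiality / two-evaluations argument, and it is the core of the proof.

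Your fallback — blow up to turn $J^m$ into a set and then reuse "literally" the computation proving (\ref{dualityline}) — does not close the gap. The proof of Theorem \ref{dualityspq} is analytic: it needs the monomial conditions and the splice quotient structure (natural line bundles, $Z(\bt)=P(\bt)$, the $\bQ$-Gorenstein identification of differential forms) already to obtain (\ref{hegymaskepp}), and a general graph, before or after your blow-ups, carries none of that; saying that Theorems \ref{th:NJEMSThm}, \ref{surgery} and \ref{dualcount} would "stand in" for that input is precisely the statement to be proven, not a justification. Two further technical problems: the pullback of $E_v^*$ under blowing up a point of $E_v$ is $E_{new}^*-E_{new}$ (cf. (\ref{eq:blowup})), not $E_{new}^*$, so $\pi^*(l')$ does not acquire $E^*$-support equal to a genuine set of new end-vertices, and your Chern-class bookkeeping between $\calt$ and $\widetilde{\calt}$ (and the claimed matching of $R(\bt)$ with $Z^{\widetilde{\calt}}(\bt)$) breaks down as stated; and the blow-up invariance of $h^1_{virt}$ for pulled-back classes is asserted, not proved — it requires checking separately how $Q_h$, $\chi$, $r_h$ and $\mathfrak{sw}^{norm}$ transform. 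So the proposal finds the correct reformulation but leaves the actual proof of the identity unestablished.
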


% \begin{defn}
% Let's have an arbitrary resolution graph $\mathcal{T}$ not necessarily satisfiing the monomial conditions and let's have an arbitrary chern class $l' \in L'$, and the topological Poincare series $Z(\mathbf{t}) = \prod_{v \in \calv} (1-\mathbf{t}^{E_v^*})^{\delta_v -2}$, then:
% \begin{equation}
% h^1_{virt}(\calO_{\tX}(-l')) = - \sum_{[l''] = [l'], l'' \ngeq l'} z(l'')  + \chi(l') - \chi(r_h) + \overline{ \mathfrak{sw}_{-h*\sigma_{can}}},
% \end{equation}
% where $h = [l']$ and $ \overline{ \mathfrak{sw}_{-h*\sigma_{can}}}$ is the normalised Seiberg-Witten invariant.
% In other words if there were a splice quotient structure on the resolution graph, then $h^1_{virt}(\calO_{\tX}(-l'))$ would be the standard cohomology number $h^1(\calO_{\tX}(-l'))$, however this analytic structure may not exist (if the graph doesn't satisfy the monomial conditions), and in this case it is just a virtual cohomology number.
% \end{defn}

\subsection{}
Before we proceed to the proof, we need the following result from \cite{LSz} and one of its natural consequence.
\begin{lemma}\cite{LSz}\label{redlaurent}
Let $\overline{v' v''}$ be an edge of a tree $\mathcal{T}$ with vertex set $\calv$. Decompose $\mathcal{T} \setminus \overline{v' v''}$ into disjoint union of trees
$\mathcal{T}_{\calv'}$ and $\mathcal{T}_{\calv''}$ with vertex sets $v' \in \calv'$ and $v'' \in \calv''$ respectively. Then for a non-empty vertex set $J \subset \calv''$ we have:
\begin{equation*}
(1- \bt_J^{E_{v''}^*}) \cdot \prod_{v \in \calv'} (1-\bt_J^{E_{v}^*})^{\delta_{v}-2}  \in \bZ[\bt_J].
\end{equation*}
So the fraction is a Laurent polynomial, supported on $\pi_{J}(\calS')$.
\end{lemma}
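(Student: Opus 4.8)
The plan is to collapse the multivariable identity to a statement in a single variable by exploiting the ``bottleneck'' structure of the edge $\overline{v'v''}$, and then to prove the resulting one-variable divisibility by induction on $|\calv'|$. Write $\mathcal{T}'':=\mathcal{T}_{\calv''}$ for the subtree carried by $\calv''$ with the decorations inherited from $\mathcal{T}$; note that deleting the edge $\overline{v'v''}$ changes no self-intersection and that $v''$ is the unique vertex of $\calv''$ adjacent to a vertex outside $\calv''$. The key linear-algebra input is: for every $v\in\calv'\cup\{v''\}$ the restriction $E^*_v(\mathcal{T})|_{\calv''}$ is a positive rational multiple of $E^*_{v''}(\mathcal{T}'')$. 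Indeed, for $w\in\calv''\setminus\{v''\}$ no vertex of $\calv'$ is adjacent to $w$, hence $\bigl(E^*_v(\mathcal{T})|_{\calv''},E_w\bigr)_{\mathcal{T}''}=\bigl(E^*_v(\mathcal{T}),E_w\bigr)_{\mathcal{T}}=-\delta_{vw}=0$; thus this vector lies in the $\QQ$-line $\{x:(x,E_w)_{\mathcal{T}''}=0\ \text{for all}\ w\neq v''\}$, which is one-dimensional by negative definiteness and is spanned by $E^*_{v''}(\mathcal{T}'')$, and positivity of the entries forces the multiple to be positive.

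Consequently, setting $u:=\bt_J^{\,\pi_J(E^*_{v''}(\mathcal{T}))}$ (a single monomial in $\bt_J$ with positive exponents), every monomial $\bt_J^{\,E^*_v(\mathcal{T})}$ with $v\in\calv'\cup\{v''\}$ is a rational power of $u$; a short determinantal computation --- using the tree formula $(E^*_a)_b=\det(-I|_{\mathcal{T}\setminus\mathrm{path}(a,b)})/\det(-I|_{\mathcal{T}})$ --- identifies these powers as $\bt_J^{\,E^*_{v''}(\mathcal{T})}=u$ and $\bt_J^{\,E^*_v(\mathcal{T})}=u^{\,c_v}$ for $v\in\calv'$, where $c_v:=(E^*_v(\mathcal{T}_{\calv'}))_{v'}>0$. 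Therefore the left-hand side of the lemma equals the one-variable expression
\[
(1-u)\cdot\prod_{v\in\calv'}\bigl(1-u^{\,c_v}\bigr)^{\delta_v-2},
\]
and it remains to show this is a Laurent polynomial in $u$; tracing its exponents back through the identifications above to the cone generators $E^*_v(\mathcal{T})$ then shows the support lies in $\pi_J(\calS')$.

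The one-variable claim I would establish by induction on $|\calv'|$. For the base case $\calv'=\{v'\}$ the vertex $v'$ is a leaf of $\mathcal{T}$, one computes $c_{v'}=(-E_{v'}^2)^{-1}$, and $(1-u)\bigl(1-u^{\,1/(-E_{v'}^2)}\bigr)^{-1}=\sum_{j=0}^{-E_{v'}^2-1}u^{\,j/(-E_{v'}^2)}$ is a genuine polynomial precisely because $-E_{v'}^2\in\ZZ_{>0}$ by negative definiteness. For the inductive step, with $|\calv'|\ge 2$, pick a leaf $\ell\neq v'$ of the subtree $\mathcal{T}_{\calv'}$ with unique neighbour $w\in\calv'$ and put $\mathcal{T}_1:=\mathcal{T}\setminus\{\ell\}$. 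The vertex $\ell$ contributes a single denominator factor $\bigl(1-u^{\,c_\ell}\bigr)^{-1}$; applying the one-vertex computation to the sub-bottleneck $\overline{w\ell}$ inside $\mathcal{T}_{\calv'}$ lets one absorb this factor, together with the drop $\delta_w(\mathcal{T})\mapsto\delta_w(\mathcal{T})-1$ caused by removing $\ell$, into a genuine Laurent polynomial, and what remains is exactly the one-variable expression attached to the datum $(\mathcal{T}_1;v',v'',\calv'\setminus\{\ell\})$ --- a Laurent polynomial by the inductive hypothesis. The bottleneck lemma, applied now at $\overline{w\ell}$, is what lets one match $E^*_v(\mathcal{T})$ with $E^*_v(\mathcal{T}_1)$ under the corresponding renormalisation of $u$.

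The main obstacle is precisely this exponent bookkeeping in the inductive step: the dual cycles $E^*_v$ genuinely change when $\ell$ is deleted, so ``$u$'' must be renormalised at each stage, and one must verify that the factor peeled off with $\ell$ is an honest Laurent polynomial --- which reduces to a positive-integrality statement for a ratio of principal minors of $-I$, whose source is again $-E_\ell^2\in\ZZ$ via a cofactor expansion. As a bookkeeping-free reassurance that no hidden obstruction appears, one can note that $Z^{\mathcal{T}}(\bt)=\prod_{v\in\calv}(1-\bt^{E^*_v})^{\delta_v-2}$ is an honest element of $\bZ[[L']]$, being supported in $\calS'$ and satisfying the finiteness condition~\eqref{eq:finiteness}; hence in the factorisation $Z^{\mathcal{T}}(\bt_J)=\bigl[\text{LHS of the lemma}\bigr]\cdot(1-\bt_J^{E^*_{v''}})^{-1}\cdot\prod_{v\in\calv''}(1-\bt_J^{E^*_v})^{\delta_v-2}$ the left-hand factor is forced to be the quotient of two formal series whose only possible poles cancel --- but turning this into a proof still requires the rank-one analysis above and is no shorter.
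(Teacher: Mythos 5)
The paper does not actually prove this lemma --- it quotes it from \cite{LSz} --- so your argument has to stand on its own. The reduction to one variable is correct and nicely done: the rank-one computation showing that $\pi_J(E^*_v(\mathcal{T}))$ is a positive rational multiple of $\pi_J(E^*_{v''}(\mathcal{T}))$ for every $v\in\calv'\cup\{v''\}$, the identification $c_v=(E^*_v(\mathcal{T}_{\calv'}))_{v'}$ via the minor formula, and the base case are all fine. The gap is in the inductive step: deleting the leaf $\ell$ does \emph{not} rescale all the remaining exponents by a common factor, so after you peel off the factor $(1-u^{c_w})/(1-u^{c_\ell})$ what remains is \emph{not} the expression attached to $(\mathcal{T}\setminus\ell;v',v'',\calv'\setminus\{\ell\})$, and in fact need not be a Laurent polynomial at all. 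Concretely, the ratio $\det(-I|_{\mathcal{T}\setminus\mathrm{path}(v,w_0)})/\det(-I|_{(\mathcal{T}\setminus\ell)\setminus\mathrm{path}(v,w_0)})$ equals $-E_\ell^2$ only when $\ell$ becomes an isolated point of the complement, i.e.\ when $w\in\mathrm{path}(v,w_0)$; for $v$ on the $v'$-side of $w$ (including the normalizing vertex $v''$) the ratio is different, so no single substitution $u\mapsto u_1$ matches the residual expression with the inductive hypothesis.

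An explicit counterexample: let $\mathcal{T}$ be the string $\ell - w - v' - v''$ with Euler numbers $-2,-2,-3,-2$, $\calv'=\{\ell,w,v'\}$, $J=\{v''\}$, $t:=t_{v''}$. Here $\Delta:=\det(-I)=11$, and $(E^*_{v''})_{v''}=7/11$, $(E^*_w)_{v''}=2/11$, $(E^*_\ell)_{v''}=1/11$. The left-hand side of the lemma is $(1-t^{7/11})/(1-t^{1/11})=\sum_{j=0}^{6}t^{j/11}$, a polynomial as claimed. Your peeled factor is $(1-t^{2/11})/(1-t^{1/11})=1+t^{1/11}$, also a polynomial, but the residual is $(1-t^{7/11})/(1-t^{2/11})$, which is not a polynomial (it would require $2\mid 7$), and is in any case different from the expression attached to the string $w-v'-v''$, namely $(1-t^{5/7})/(1-t^{1/7})$. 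So the identity ``remaining factor $=$ expression for $\mathcal{T}_1$'' fails and the induction does not close. A correct argument has to control all the cyclotomic factors of numerator and denominator simultaneously --- for instance, writing $s=t^{1/\Delta}$ and checking for every $d\geq 1$ that the number of numerator factors $1-s^{N}$ with $d\mid N$ dominates the number of such denominator factors --- or follow the variable-reduction procedure of \cite{LSz}; the leaf-by-leaf peeling as you set it up cannot be repaired by a renormalisation of $u$ alone.
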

Then, one can prove the following corrolary of this lemma.

\begin{cor}\label{laurent}
Let $\mathcal{T}$ be a tree with vertex set $\calv$ and consider a non-empty subset $\emptyset\neq J \subset \calv$. Then we have 
\begin{equation*}
\prod_{v \in J}(1- \bt_J^{E_{v}^*}) \cdot \prod_{v \in \calv} (1- \bt_J^{E_{v}^*})^{\delta_{v}-2}  \in \bZ[\bt_J].
\end{equation*}
So the fraction is a Laurent polynomial, supported on $\pi_{J}(\calS')$.
\end{cor}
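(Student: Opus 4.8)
The plan is to deduce Corollary~\ref{laurent} from Lemma~\ref{redlaurent} by an induction on the size of $J$, adding one vertex of $J$ at a time and each time peeling off a single factor $(1-\bt_J^{E_v^*})$. The base case $|J|=1$, say $J=\{v''\}$, is almost immediate: pick any edge $\overline{v'v''}$ incident to $v''$ (such an edge exists since $\calt$ is a tree with at least two vertices; if $\calt$ has a single vertex the statement is trivial), let $\calt_{\calv''}$ be the component of $\calt\setminus\overline{v'v''}$ containing $v''$, and apply Lemma~\ref{redlaurent} with $J=\{v''\}\subset\calv''$. One must then note that the factors $\prod_{v\in\calv''}(1-\bt_J^{E_v^*})^{\delta_v-2}$ over the other component, together with the correction needed because valencies in $\calt_{\calv''}$ differ from those in $\calt$ only at $v''$, combine into a genuine polynomial; this is exactly the bookkeeping that Lemma~\ref{redlaurent} already packages, so the statement follows after a short manipulation of the exponents $\delta_v-2$.

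For the inductive step, suppose the corollary holds for all proper subsets and fix $J$ with $|J|\geq 2$. Choose a vertex $v''\in J$ which is a leaf of the minimal subtree of $\calt$ spanned by $J$ (equivalently, an ``extremal'' element of $J$), so that removing $v''$ leaves $J':=J\setminus\{v''\}$ sitting inside one of the components of $\calt\setminus\overline{v'v''}$ for the edge $\overline{v'v''}$ on the path from $v''$ toward $J'$. Write $\calt\setminus\overline{v'v''}=\calt_{\calv'}\sqcup\calt_{\calv''}$ with $v''\in\calv''$ and $J'\subset\calv'$. The idea is to split the product $\prod_{v\in J}(1-\bt_J^{E_v^*})\cdot\prod_{v\in\calv}(1-\bt_J^{E_v^*})^{\delta_v-2}$ as the factor $(1-\bt_J^{E_{v''}^*})\cdot\prod_{v\in\calv''}(1-\bt_J^{E_v^*})^{\delta_v-2}$ — which lies in $\bZ[\bt_J]$ by Lemma~\ref{redlaurent} (note $v''\in\calv''$ plays the role of the marked vertex there, with the valency discrepancy at $v'',v'$ absorbed as in the base case) — times $\prod_{v\in J'}(1-\bt_J^{E_v^*})\cdot\prod_{v\in\calv'}(1-\bt_J^{E_v^*})^{\delta_v-2}$, and then to recognize the latter, after restricting variables, as the expression governed by the inductive hypothesis applied to the tree $\calt_{\calv'}$ with the subset $J'$.

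The main obstacle, and the point requiring genuine care rather than routine computation, is the compatibility of the two restrictions: Lemma~\ref{redlaurent} and the inductive hypothesis are naturally stated in terms of the dual basis elements $E_v^*(\calt_{\calv'})$, $E_v^*(\calt_{\calv''})$ of the \emph{subtrees}, whereas Corollary~\ref{laurent} is stated in terms of $E_v^*(\calt)$ of the \emph{ambient} tree, and the projection $\bt_J$ does not commute naively with passing to a subgraph. One must check that $\pi_J(E_v^*(\calt))=\pi_J(E_v^*(\calt_{\calv'}))$ for $v\in\calv'$ once one works modulo the variables outside $J$, using the standard surgery identity $j^*(E_v^*(\calt))=E_v^*(\calt_{\calv'})$ for $v$ in that component together with the fact that $J\subset\calv'$. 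Once this identification is in place, the induction closes: a product of two Laurent polynomials in $\bZ[\bt_J]$ is again in $\bZ[\bt_J]$, and the support statement follows because every factor $(1-\bt_J^{E_v^*})^{\pm1}$ is supported on $\pi_J(\calS')$, which is a sub-semigroup (all entries of each $E_v^*$ being strictly positive, so the projected cone is honestly a cone with apex at the origin and the cancellations producing the polynomial cannot leave it).
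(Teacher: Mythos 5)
Your strategy (induction on $|J|$, peeling one extremal vertex of $J$ at a time) is genuinely different from the paper's, which does not induct at all: the paper takes the minimal subtree $\calt'$ spanned by $J$, writes the ambient product as the $\calt'$-part taken with the valencies $\delta_{v,\calv'}$ of $\calt'$ times one bracket $(1-\bt_J^{E_{v_i}^*})\cdot\prod_{v\in\calv_i}(1-\bt_J^{E_v^*})^{\delta_v-2}$ for each connected component $\calt_i$ of $\calt\setminus\calv'$, disposes of each bracket by a single application of Lemma~\ref{redlaurent}, and observes that $\prod_{v\in J}(1-\bt_J^{E_v^*})$ times the $\calt'$-part has only nonnegative exponents because every end-vertex of $\calt'$ lies in $J$. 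Crucially, everything stays in the ambient lattice: only $E_v^*(\calt)$ and ambient or $\calt'$-valencies ever appear, so no compatibility between a tree and its subtrees has to be checked.

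Your inductive scheme has gaps that are not mere bookkeeping. First, the identification you rely on, $\pi_J(E_v^*(\calt))=\pi_J(E_v^*(\calt_{\calv'}))$ for $v\in\calv'$, is false: the operator with $j^*(E_v^*(\calt))=E_v^*(\calt_{\calv'})$ is the adjoint of the inclusion $L(\calt_{\calv'})\hookrightarrow L(\calt)$, not the coordinate projection (already for the chain $a-b-c$ with all $(-2)$'s one has $(E_a^*(\calt))_a=3/4$, while for the subchain on $\{a,b\}$ the corresponding coefficient is $2/3$). Hence the leftover factor $\prod_{v\in J'}(1-\bt_J^{E_v^*})\prod_{v\in\calv'}(1-\bt_J^{E_v^*})^{\delta_v-2}$ — which carries ambient duals, ambient valencies, and still involves the variable $t_{v''}$, so it is a series in $\bt_J$ rather than $\bt_{J'}$ — is simply not the series covered by the inductive hypothesis for $(\calt_{\calv'},J')$. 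Second, your first bracket $(1-\bt_J^{E_{v''}^*})\prod_{v\in\calv''}(1-\bt_J^{E_v^*})^{\delta_v-2}$ is not of the shape of Lemma~\ref{redlaurent}: in the lemma the factor $(1-\bt^{E_{v''}^*})$ and the product sit on opposite sides of the removed edge and $J$ lies on the side of $v''$, whereas in your bracket both sit on the same side; it can be repaired by applying the lemma once for each branch of $\calt_{\calv''}$ at $v''$ (the exponent $1+\delta_{v''}-2=\delta_{v''}-1$ is exactly the number of such branches), but that is precisely the global branch-by-branch bookkeeping the paper performs and you defer. Third, the base case $|J|=1$ cannot be settled by ``a short manipulation of the exponents'': a single factor $(1-\bt^{E_{v''}^*})$ cannot cancel the poles contributed by all the branches at $v''$ — already for the chain of two $(-2)$-vertices with $J$ a single leaf the product equals $(1-t_a^{1/3})^{-1}$, an infinite series. (This is also where the statement is delicate in the paper itself: its argument uses that the end-vertices of $\calt'$ lie in $J$, which forces nonnegative exponents only when $\calt'$ has at least one edge, i.e. $|J|\geq 2$.) An induction anchored at $|J|=1$ therefore cannot close as designed.
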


\begin{proof}
Let $\mathcal{T}'$ be the smallest subtree of $\mathcal{T}$ such that its set of vertices $\calv'$ contains $J$. The valency of a vertex $v\in \calv'$ in the subgraph $\calt'$ will be denoted by $\delta_{v,\calv'}$. Then we denote the connected components of $\mathcal{T} \setminus \calv'$ by $\mathcal{T}_1, \cdots, \mathcal{T}_n$, and their set of vertices by $\calv_1,\dots,\calv_n$, respectively.

Since $\mathcal{T}$ is a tree, each component $\mathcal{T}_i$ ($1 \leq i \leq n$) is connected to a unique vertex in $\calv'$, which will be denoted by $v_i$. Then one can write
\begin{equation*}
 \prod_{v \in \calv} (1-\bt_J^{E_{v}^*})^{\delta_{v}-2}  = \prod_{v \in \calv'} (1- \bt_J^{E_{v}^*})^{\delta_{v, \calv'}-2} \cdot \prod_{1 \leq i \leq n} \left( \prod_{v \in \calv_i} (1- \bt_J^{E_{v}^*})^{\delta_{v}-2} \cdot (1- \bt_J^{E_{v_i}^*}) \right) .
\end{equation*} 

Notice that by Lemma \ref{redlaurent} for each $1 \leq i \leq n$ we have $\prod_{v \in \calv_i} (1-\bt_J^{E_{v}^*})^{\delta_{v}-2} \cdot (1- \bt_J^{E_{v_i}^*}) \in \bZ[\bt_J]$. On the other hand, one also has $\prod_{v \in J}(1- \bt_J^{E_{v}^*}) \cdot \prod_{v \in \calv'} (1- \bt_J^{E_{v}^*})^{\delta_{v, \calv'}-2} \in \bZ[\bt_J]$ because the end-vertices of the subgraph $\mathcal{T}'$ are in $J$. These prove the statement completely.

\end{proof}

\subsection{Proof of Theorem \ref{topdualcount}}

We consider the following `twisted' relative Poincar\'e series associated with $l'$ as the Taylor expansion at the origin of the rational function 
$$R'(\mathbf{t}) = Z(\mathbf{t}) \cdot \prod_{ u\in J}(1-\mathbf{t}^{E_u^*})^{a_u} \cdot \mathbf{t}^{-Z_K}.$$ 
For simplicity, we denote both the rational function and its Taylor expansion at the origin with the same symbol. We look at the $h=0$-part of $R'(\mathbf{t})$, which can be expressed as  
\begin{equation}\label{eq:R'0}
R'_0(\mathbf{t}) = \sum_{I^m \subset J^m}  (-1)^{|I^m|} \mathbf{t}^{-(Z_K - l'(I^m))} \cdot Z_{[(Z_K - l'(I^m))]}(\mathbf{t}),
\end{equation}
and consider the projected series $\pi_{J}(R'_0) = R'_{0, J}$. Then the idea is to compute the periodic constant $\mathrm{pc}^{\pi_{J}(S'_{\mathbb{R}})}(R'_{0, J}(\mathbf{t}_{J}))$ of $R'_{0, J}$ in two different ways as follows.
\vspace{0.2cm}

I. \ First of all, by (\ref{eq:R'0}) and the linearity of the periodic constant 
we have that 
$$\mathrm{pc}^{\pi_J(S'_{\mathbb{R}})}(R'_{0, J}(\mathbf{t}_J)) = \sum_{I^m \subset J^m_{l'}}  (-1)^{|I^m|} \mathrm{pc}^{\pi_J(S'_{\mathbb{R}})}( \mathbf{t}_J^{-(Z_K - l'(I^m))} \cdot Z_{[(Z_K - l'_I)], J}(\mathbf{t}_J)).$$ Then  the periodic constants appearing in the summation on the right hand side of the previous identity can be expressed in the following way.

Let $Q_{[(Z_K - l'(I^m))],J}(l'')$ be the counting function of $Z_{[(Z_K - l'(I^m))], J}(\mathbf{t}_J)$, and let $Q'_{0,J}(l)$ be the counting function of the Laurent series $ \mathbf{t}_J^{-(Z_K - l'(I^m))} \cdot Z_{[(Z_K - l'(I^m))], J}(\mathbf{t}_J)$. Then,  by definitions from section \ref{ss:pc} we have: 

\begin{equation*}
Q'_{0, J}(l)  =  Q_{[Z_K - l'(I^m)],J}(l + Z_K -l'(I^m)) -  \sum_{l''_J \leq (Z_K - l'(I^m)-E)_J \atop [l''] = [(Z_K - l'(I^m))]} z(l'') .
\end{equation*}
This implies the same identity on the level of quasipolynomials, that is $$\mathfrak{Q}'_{0, J}(l)  =  \mathfrak{Q}_{[Z_K - l'(I^m)],J}(l + Z_K -l'(I^m)) -  \sum_{l''_J \leq (Z_K - l'(I^m)-E)_J \atop [l''] = [(Z_K - l'(I^m))]} z(l'') ,$$ where $\mathfrak{Q}'_{0, J}$ is the quasipolynomial associated with $Q'_{0, J}$. 
Since $\mathrm{pc}^{\pi_J(S'_{\mathbb{R}})}( \mathbf{t}_J^{-(Z_K - l'(I^m))} \cdot Z_{[(Z_K - l'(I^m))], J}(\mathbf{t}_J))$ equals the constant term of $\mathfrak{Q}'_{0,J}(l)$, one gets 

\begin{equation*}
\mathrm{pc}^{\pi_J(S'_{\mathbb{R}})}( \mathbf{t}_J^{-(Z_K - l'(I^m))} \cdot Z_{[(Z_K - l'_I)], J}(\mathbf{t}_J)) =    \mathfrak{Q}_{[Z_K - l'(I^m)],J}(Z_K -l'(I^m)) -  \sum_{l''_J \leq (Z_K - l'(I^m)-E)_J \atop [l''] = [(Z_K - l'(I^m))]} z(l'') .
\end{equation*}

Let us denote the connected components of the graph $\calt \setminus J$ by $\mathcal{T}_1,\dots,\calt_k$ and consider the dual operator $j_i^*:L'\to L'(\calt_i)$ as in section \ref{ss:surgform}.  We write $Z_K^{\calt_i}$ for the corresponding  anti-canonical cycles. Then the surgery formula from Theorem \ref{surgery} gives
\begin{equation}\label{eq:surgspec}
\mathfrak{Q}_{ [Z_K - l'(I^m)],J}(Z_K - l'(I^m)) + \sum_{i=1}^k \mathfrak{Q}^{\mathcal{T}_i}_{[Z_K^{\calt_i}]}(Z_K^{\calt_i}) = \mathfrak{Q}_{[Z_K - l'(I^m)]}(Z_K - l'(I^m)).
\end{equation}
On the other hand, according to Remark \ref{rem:h1virt}(3) one can write $\mathfrak{Q}_{[Z_K - l'(I^m)]}(Z_K - l'(I^m)) =  \chi(Z_K - l'(I^m)) + h^1_{virt}(r_{[Z_K - l'(I^m)]}) - \chi(r_{[Z_K - l'(I^m)]})$ which, together with (\ref{eq:surgspec}) yields the following formula:
\begin{align*}
\mathrm{pc}^{\pi_J(S'_{\mathbb{R}})}( \mathbf{t}_J^{-(Z_K - l'(I^m))} \cdot Z_{[(Z_K - l'_I)], J}(\mathbf{t}_J))  &=  \chi(Z_K - l'(I^m)) +  h^1_{virt}(r_{[Z_K - l'(I^m)]}) - \chi(r_{[Z_K - l'(I^m)]}) \\
&- \sum_{i=1}^k \mathfrak{Q}^{\mathcal{T}_i}_{[Z_K^{\calt_i}]}(Z_K^{\calt_i}) - \sum_{l''_J \leq (Z_K - l'(I^m)-E)_J \atop [l''] = [(Z_K - l'(I^m))]} z(l'').
\end{align*}

Note that by Remark \ref{rem:h1virt}(3), $h^1_{virt}(r_{[Z_K - l'(I^m)]})$ is known and it is the normalized Seiberg-Witten invariant $\mathfrak{sw}_{[Z_K - l'(I^m)]}^{norm}(\calt)$. 
Therefore, one can use the $Z_K$-symmetrical identitites  $ \chi(Z_K -l'(I^m)) =   \chi(l'(I^m))$ and the identification $$ \mathfrak{sw}_{[Z_K - l'(I^m)]}^{norm}(\calt) - \chi(r_{[Z_K - l'(I^m)]}) =  \mathfrak{sw}_{[ l'(I^m)]}^{norm}(\calt) - \chi(r_{[ l'(I^m)]}) $$ (cf. \cite{LNNdual}) in order to get 

\begin{align*}
\mathrm{pc}^{\pi_J(S'_{\mathbb{R}})}( \mathbf{t}_J^{-(Z_K - l'(I^m))} \cdot Z_{[(Z_K - l'_I)], J}(\mathbf{t}_J))  &=  \chi(Z_K - l'(I^m)) +  \mathfrak{sw}_{[ l'(I^m)]}^{norm}(\calt) - \chi(r_{[l'(I^m)]}) \\
&- \sum_{i=1}^k \mathfrak{Q}^{\mathcal{T}_i}_{[Z_K^{\calt_i}]}(Z_K^{\calt_i}) - \sum_{l''_J \leq (Z_K - l'(I^m)-E)_J \atop [l''] = [(Z_K - l'(I^m))]} z(l'').
\end{align*}

Therefore, the first way of our periodic constant computation deduces the following:

\begin{equation}\label{periodic1}
\mathrm{pc}^{\pi_J(S'_{\mathbb{R}})}(R'_{0, J}(\mathbf{t}_J)) =  \sum_{I^m \subset J^m_{l'}}  (-1)^{|I^m|} \cdot \Big(   \chi(l'(I^m)) +   \mathfrak{sw}_{[ l'(I^m)]}^{norm}(\calt) - \chi(r_{[l'(I^m)]}) - \sum_{l''_J \leq (Z_K - l'(I^m)-E)_J \atop [l''] = [(Z_K - l'(I^m))]} z(l'') \Big)
\end{equation}
\vspace{0.2cm}

II. The second approach uses the fact that $R'_{0, J}(\mathbf{t}_J)$ is a Laurent polynomial, implied by Corollary \ref{laurent}. Hence one gets immediately that $\mathrm{pc}^{\pi_J(S'_{\mathbb{R}})}(R'_{0, J}(\mathbf{t}_J)) = \sum_{ [l] = 0, l_J \nleq -E_J} r'(l)$. 

We know that $R'(\mathbf{t})= \sum_{I^m \subset J^m_{l'}}  (-1)^{|I^m|} \mathbf{t}^{-(Z_K - l'(I^m))} \cdot Z(\mathbf{t})$. Let $$T^{\infty}(\mathbf{t}^{-(Z_K - l'(I^m))} \cdot Z(\mathbf{t}))=\sum_{l''} r'_{I^m,\infty}(l'')$$ be the Taylor expansion at infinity of the corresponding function, hence $T^{\infty}(R'(\mathbf{t}))= \sum_{I^m \subset J^m_{l'}}  (-1)^{|I^m|} \cdot T^{\infty}(\mathbf{t}^{-(Z_K - l'(I^m))} \cdot Z(\mathbf{t}))$. Since  $R'_{0, J}(\mathbf{t}_J)$ is a Laurent polynomial, we can also  express its periodic constant as  
\begin{equation}\label{eq:pcneg}
\mathrm{pc}^{\pi_J(S'_{\mathbb{R}})}(R'_{0, J}(\mathbf{t}_J)) = \sum_{I^m \subset J^m_{l'}}  (-1)^{|I^m|} \cdot   \sum_{ [l] = 0, l_J \nleq (-E)_J} r'_{I^m,\infty}(l).
\end{equation}

Furthermore, one has the following symmetry  
$$T^{\infty}(\mathbf{t}^{-(Z_K - l'(I^m))} \cdot Z(\mathbf{t}))=\mathbf{t}^{-(Z_K - l'(I^m))}\cdot T^{\infty}(Z(\mathbf{t})))=\mathbf{t}^{-(Z_K - l'(I^m))}\cdot \mathbf{t}^{Z_K - E} \cdot Z(\mathbf{t}^{-1}), $$
 see eg. \cite[(4.4.2)]{LNNdual}, which gives the equalities 
 $$\sum_{ [l] = 0, l_J \nleq (-E)_J}r'_{I^m,\infty}(l)  = \sum_{[l''] = l'(I^m)], l''_J \ngeq (l'(I^m))_J} z(l'')=\sum_{[l''] = l'(I^m)], l'' \ngeq (l'(I^m))} z(l''),$$ 
 where the second follows from the fact that if $l'' \in S'$ and $l'' \ngeq l'(I^m)$ then $ l''_J \ngeq  l'(I^m)_J$. These transform (\ref{eq:pcneg}) into 
 \begin{equation}\label{periodic2}
\mathrm{pc}^{\pi_J(S'_{\mathbb{R}})}(R'_{0, J}(\mathbf{t}_J))  = \sum_{I^m \subset J^m_{l'}}  (-1)^{|I^m|} \cdot \Bigg( \sum_{[l''] = [l'(I^m)], l'' \ngeq l'(I^m)} z(l'') \Bigg).
\end{equation}

Finally, by combining the two formulae (\ref{periodic1}) and (\ref{periodic2}), and arranging the terms in a convenient way we deduce the following equality:

\begin{align}\label{eq:finaleq1}
\sum_{I^m \subset J^m_{l'}}  (-1)^{|I^m|} \cdot \Big(   \chi(l'(I^m)) - \chi(r_{[l'(I^m)]})+\mathfrak{sw}_{[ l'(I^m)]}^{norm} - \sum_{[l''] = [l'(I^m)], l'' \ngeq l'(I^m)} z(l'')  \Big)\\ 
=\sum_{I^m \subset J^m_{l'}}  (-1)^{|I^m|} \cdot \Big(   \sum_{l''_J \leq (Z_K - l'(I^m)-E)_J \atop [l''] = [(Z_K - l'(I^m))]} z(l'')  \Big).
\end{align}

Now, if we consider the relative series $R(\mathbf{t}) = Z(\mathbf{t}) \cdot \prod_{ v\in J}(1-\mathbf{t}^{E_v^*})^{a_v}$, one can observe that the right hand side of (\ref{eq:finaleq1}) equals $\sum_{l'' \in S',[ l''] = [Z_K], l''_J \leq (Z_K-E)_J} r(l'')$, while the term appearing in the sum on the left is $h^1_{virt}(l'(I^m))$. This finishes the proof of  (\ref{dualitycounting2}).

\subsection{Another incarnation of Theorem \ref{topdualcount}}

In this section we present yet another form of the main formula (\ref{dualitycounting2}). 

The idea behind is to use instead of $R'(\bt)$ the function $R''(\mathbf{t}) = Z(\mathbf{t}) \cdot \prod_{ v\in J_{l'}}(1-\mathbf{t}^{-E_v^*})^{a_v}$, calculate the periodic constant of $R''_{0,J_{l'}}$ in two ways and compare the identic results. The calculations are similar to the case of  (\ref{dualitycounting2}) in the previous section, and their details are left to the reader. The new form which we get by performing the above described idea is as follows:

\begin{theorem}
Let's have an arbitrary resolution graph $\mathcal{T}$ which corresponds to a rational homology sphere link, then with the notations above we have:
\begin{align*}
\sum_{I^m \subset J^m}  (-1)^{|I^m|} \cdot \left( \chi(l'(I^m)) + \mathfrak{sw}_{[l'(I^m)]}^{norm} - \chi(r_{[l'(I^m)]}) - \sum_{l''_J  \leq (l'(I^m)-E)_J, [l''] = [ l'(I^m)]} z(l'')  \right) \\ 
\vspace{0.3cm}
= \sum_{l'' \in S',[ l''] = [Z_K], l''_J \ngeq (Z_K)_J} r(l'').
\end{align*}
\end{theorem}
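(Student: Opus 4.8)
The plan is to mirror the proof of Theorem \ref{topdualcount} verbatim, replacing the auxiliary series $R'(\mathbf{t})$ by
$$R''(\mathbf{t}) = Z(\mathbf{t}) \cdot \prod_{v\in J}(1-\mathbf{t}^{-E_v^*})^{a_v} \cdot \mathbf{t}^{-Z_K},$$
so that the extra factor is built from $\mathbf{t}^{-E_v^*}$ rather than $\mathbf{t}^{E_v^*}$. Expanding the product, the $h=0$ part becomes
$$R''_0(\mathbf{t}) = \sum_{I^m \subset J^m} (-1)^{|I^m|}\, \mathbf{t}^{-(Z_K + l'(I^m))} \cdot Z_{[Z_K + l'(I^m)]}(\mathbf{t}),$$
and one computes the periodic constant $\mathrm{pc}^{\pi_J(\calS'_{\mathbb{R}})}(R''_{0,J}(\mathbf{t}_J))$ in two ways. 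For the first (quasipolynomial) computation I would apply, term by term, the same shift-and-surgery argument: the counting function of $\mathbf{t}_J^{-(Z_K+l'(I^m))}\cdot Z_{[Z_K+l'(I^m)],J}$ equals $Q_{[Z_K+l'(I^m)],J}(\,\cdot\, + Z_K + l'(I^m))$ minus the correction sum $\sum_{l''_J\leq (Z_K+l'(I^m)-E)_J,\,[l'']=[Z_K+l'(I^m)]} z(l'')$; then Theorem \ref{surgery} converts $\mathfrak{Q}_{[Z_K+l'(I^m)],J}(Z_K+l'(I^m))$ into $\mathfrak{Q}_{[Z_K+l'(I^m)]}(Z_K+l'(I^m))$ minus the subgraph contributions $\sum_i \mathfrak{Q}^{\calt_i}_{[Z_K^{\calt_i}]}(Z_K^{\calt_i})$, and Remark \ref{rem:h1virt}(3) together with the $Z_K$-symmetry ($\chi(Z_K+l'(I^m))$ vs.\ $\chi(-l'(I^m))$, and the normalized Seiberg--Witten symmetry from \cite{LNNdual}) rewrites the whole thing in terms of $h^1_{virt}(l'(I^m))$-type data. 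The subgraph terms cancel in the alternating sum exactly as before.

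For the second computation I would invoke Corollary \ref{laurent} — with the sign of the exponent flipped, the factor $\prod_{v\in J}(1-\mathbf{t}_J^{-E_v^*})$ times $\prod_{v\in\calv}(1-\mathbf{t}_J^{E_v^*})^{\delta_v-2}$ is still a Laurent polynomial supported on $\pi_J(\calS')$ (one multiplies and divides by the appropriate monomial, or equally well notes that $(1-\mathbf{t}^{-E_v^*}) = -\mathbf{t}^{-E_v^*}(1-\mathbf{t}^{E_v^*})$, so the difference from Corollary \ref{laurent} is only a monomial shift and a sign). Hence its periodic constant equals the sum of its coefficients in the half-space $l_J \nleq (-E)_J$, and using the Taylor-expansion-at-infinity symmetry $T^\infty(\mathbf{t}^{a}Z(\mathbf{t})) = \mathbf{t}^{a}\mathbf{t}^{Z_K-E}Z(\mathbf{t}^{-1})$ (as in \cite[(4.4.2)]{LNNdual}) one identifies each term with $\sum_{[l'']=[l'(I^m)],\,l''_J\ngeq (l'(I^m))_J} z(l'')$. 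The key arithmetic point is that here the monomial twist is $\mathbf{t}^{-(Z_K+l'(I^m))}$ instead of $\mathbf{t}^{-(Z_K-l'(I^m))}$, which is precisely what produces the inequality $l''_J \ngeq (Z_K)_J$ on the right-hand side and the correction sum over $l''_J \leq (l'(I^m)-E)_J$ on the left — the asymmetry between the two sides of the claimed identity comes entirely from this choice of twist.

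Equating the two expressions for $\mathrm{pc}^{\pi_J(\calS'_{\mathbb{R}})}(R''_{0,J}(\mathbf{t}_J))$ and rearranging, exactly as in passing from \eqref{eq:finaleq1} to \eqref{dualitycounting2}, yields the stated formula once one recognizes that the right-hand side is $\sum_{l''\in\calS',\,[l'']=[Z_K],\,l''_J\ngeq (Z_K)_J} r(l'')$ for the relative series $R(\mathbf{t}) = Z(\mathbf{t})\cdot\prod_{v\in J}(1-\mathbf{t}^{E_v^*})^{a_v}$ — note that this is the \emph{same} $R(\mathbf{t})$ as in Theorem \ref{topdualcount}, because the sign flip only affects the intermediate twisted series $R''$, not the ``positive'' relative series $R$ appearing in the conclusion. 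The main obstacle, such as it is, will be bookkeeping: making sure the congruence classes $[Z_K+l'(I^m)]$ versus $[Z_K-l'(I^m)]$ and the direction of each inequality ($\leq$ vs.\ $\ngeq$, and on which coordinate block) are tracked correctly through the surgery formula and the expansion-at-infinity symmetry, since a single sign error there changes the shape of the final identity. Everything else is a routine transcription of the proof already given for Theorem \ref{topdualcount}.
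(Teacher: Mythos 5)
There is a genuine gap, and it is exactly at the point you flag as "bookkeeping": your auxiliary series is the wrong one. The paper's second incarnation is obtained from $R''(\mathbf{t})=Z(\mathbf{t})\cdot\prod_{v\in J}(1-\mathbf{t}^{-E_v^*})^{a_v}$ \emph{without} the extra factor $\mathbf{t}^{-Z_K}$; you keep the $\mathbf{t}^{-Z_K}$ twist from the proof of Theorem \ref{topdualcount} while also flipping the sign in the binomial factors, and with that choice neither periodic-constant computation produces the ingredients of the stated identity. Indeed, with your $R''$ the $h=0$ part is $\sum_{I^m}(-1)^{|I^m|}\mathbf{t}^{-(Z_K+l'(I^m))}Z_{[Z_K+l'(I^m)]}(\mathbf{t})$, so in the quasipolynomial computation the shift is $Z_K+l'(I^m)$: you get $\chi(Z_K+l'(I^m))=\chi(-l'(I^m))$ (not $\chi(l'(I^m))$), classes $[Z_K+l'(I^m)]$, which the symmetry of \cite{LNNdual} converts to data indexed by $[-l'(I^m)]$, not $[l'(I^m)]$, and a correction sum bounded by $(Z_K+l'(I^m)-E)_J$ — exactly as you yourself write at first; your later claim that this twist "produces the correction sum over $l''_J\leq(l'(I^m)-E)_J$" contradicts your own formula. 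The second computation fails more seriously: for your series $T^{\infty}(R'')=\mathbf{t}^{-E}\prod_{v\in J}(1-\mathbf{t}^{-E_v^*})^{a_v}Z(\mathbf{t}^{-1})$, whose monomials have exponents $-E-l'(I^m)-l''$ with $l''\in\mathcal{S}'$; their $J$-coordinates are always $\leq -E_J$, so the coefficient sum over the region $l_J\nleq -E_J$ is $0$, not $\sum_{[l'']=[l'(I^m)],\,l''_J\ngeq l'(I^m)_J}z(l'')$ (that identification belongs to the original $R'$ with the $+E_v^*$ factors). So, carried out correctly, your route equates a $[Z_K+l'(I^m)]$-indexed alternating sum with $0$ — a different identity — and does not yield the theorem.

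The repair is to drop $\mathbf{t}^{-Z_K}$, which is what the paper intends. Then $R''_0(\mathbf{t})=\sum_{I^m}(-1)^{|I^m|}\mathbf{t}^{-l'(I^m)}Z_{[l'(I^m)]}(\mathbf{t})$, and the first computation is a shift by $l_0=l'(I^m)$: it gives $\mathfrak{Q}_{[l'(I^m)],J}(l'(I^m))$ minus the correction $\sum_{l''_J\leq(l'(I^m)-E)_J,\,[l'']=[l'(I^m)]}z(l'')$; since $j_i^*(l'(I^m))=0$, the surgery formula (Theorem \ref{surgery}) produces subgraph contributions $\mathfrak{sw}^{norm}_0(\calt_i)$ that are independent of $I^m$ and cancel in the alternating sum, so one obtains the left-hand side of the statement directly, with no $Z_K$-symmetry needed at all. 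The second computation, via $T^{\infty}(Z(\mathbf{t}))=\mathbf{t}^{Z_K-E}Z(\mathbf{t}^{-1})$, gives $\sum_{I^m}(-1)^{|I^m|}\sum_{[l'']=[Z_K-l'(I^m)],\,l''_J\ngeq(Z_K-l'(I^m))_J}z(l'')$, which after the substitution $l''\mapsto l''+l'(I^m)$ is precisely $\sum_{l''\in\mathcal{S}',\,[l'']=[Z_K],\,l''_J\ngeq(Z_K)_J}r(l'')$. In other words, the $Z_K$ on the right-hand side comes from the expansion-at-infinity symmetry, not from twisting by $\mathbf{t}^{-Z_K}$; your proposed series double-counts that twist and thereby changes both sides of the identity.
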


\subsection{The special case $l' = E_v^*$ using lattice weights}

\subsubsection{} For an important  special case, when $l' = E_v^*$ for some $v \in \calv$, the main formula (\ref{dualitycounting2}) reads as follows:

\begin{equation}\label{eq:dualityspec}
h^1_{virt}(E_v^*)  =  \mathfrak{sw}^{norm}_0(\calt)  - \sum_{l'' \in S',[ l''] = [Z_K], l''_v \leq (Z_K-E)_v} r(l'').                                           
\end{equation}

% In the sequel, we would like to show another approach which yields the proof of the above formula. However the general case could also be treated in this way, for simplicity we concentrate on this special case.

 If we set the notation $h_v:=[E^*_v]$, then by the definition of $h^1_{virt}$, (\ref{eq:dualityspec}) is equivalent with the equation
\begin{equation*}
\mathfrak{sw}_0^{norm}(\calt)-  \chi(E_v^*) +\chi(r_{h_v})-\mathfrak{sw}_{h_v}^{norm}(\calt) + Q_{h_v}(E_v^*) =     \sum_{l'' \in S',[ l''] = [Z_K], l''_v \leq (Z_K-E)_v} r(l'').                                           
\end{equation*}
The relative Poincar\'e series associated with $E^*_v$ can be written as $R(\bt)=Z(\bt)-\bt^{E^*_v} Z(\bt)$, therefore we have 
 \begin{equation*}
 \sum_{l'' \in S',[ l''] = [Z_K], l''_v \leq (Z_K-E)_v} r(l'') =  \sum_{l'' \in S',[ l''] = [Z_K], l''_v < (Z_K)_v} z(l'') - \sum_{l'' \in S',[ l''] = [Z_K- E_v^*], l''_v < (Z_K-E_v^*)_v} z(l'').
\end{equation*}
Furthermore, the right hand side of the above identity is nothing else in this case than $Q_{[Z_K],v}(Z_K)-Q_{[Z_K]-h_v,v}(Z_K- E_v^*)$, which equals $\mathrm{pc}(Z_{0,v}(t_v))-Q_{[Z_K]-h_v,v}(Z_K- E_v^*)$ according to Theorem \ref{dualcount}. Notice also the fact that if $l'' \ngeq E_v^* $ and $l'' \in S'$ then we have $l''_v  < (E_v^*)_v$, hence $Q_{h_v}(E_v^*)=Q_{h_v,v}(E_v^*)$. Moreover, we have $\mathfrak{sw}_0^{norm}(\calt)=\mathrm{pc}(Z_{0,v}(t_v)) + \mathfrak{sw}_0^{norm}(\calt\setminus v)$ followed by the surgery formula from Theorem \ref{surgery}. 

Now, all these facts together imply that (\ref{eq:dualityspec}) is equivalent with

\begin{equation}\label{eq:equiveq}
\chi(E_v^*) -\chi(r_{h_v})+\mathfrak{sw}_{h_v}^{norm}(\calt) = \mathfrak{sw}_{0}^{norm}(\calt\setminus v)  +  Q_{[Z_K]-h_v,v}(Z_K- E_v^*) +  Q_{h_v,v}(E_v^*).                               
\end{equation}

\subsubsection{\bf A proof using the lattice weight function} In the following, we give the main ideas of the proof for equation (\ref{eq:equiveq}) using lattice weight function methods which play a key role in the definition of lattice cohomology, see \cite{Nlat}.
 For a comprehensive explanation we need to recall some terminology and results from \cite{NJEMS}. 
\vspace{0.2cm}

One can consider the cubical decomposition of $\mathbb{R}^{|\calv|}=L'\otimes \mathbb{R}$: for any $l'\in L'$ and $I \subset \calv$ arbitrary subset of vertices with $|I|=q$ we define the $q$-cube $(l', I)$ whose vertices are the cycles $\{l' + E_F | F \subset I\}$. Then the weight function on the set of all $q$-cubes is defined by 
$$w((l', I)) = \max_{F \subset I} \chi(l' + E_F).$$ 
In particular, it gives $w(l')=\chi(l')$ for any $l'\in L'$. 

The first result which we recall is the interpretation of the coefficients of the topological Poincar\'e series using the weights of the corresponding cubes, namely we have: 

\begin{theorem}{\cite[Theorem 2.3.10]{NJEMS}}
$z(l') = \sum_{I \subset \calv} (-1)^{|I| + 1} w((l', I))$.
\end{theorem}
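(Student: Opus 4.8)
The plan is to reduce the identity to a self‑contained combinatorial statement about the tree $\calt$ and then prove it by induction on $|\calv|$, removing a leaf. First note that for fixed $l'$ the right‑hand side is a finite sum over the $2^{|\calv|}$ subsets of $\calv$, so the claim is an equality of integers and there is no convergence issue to worry about.

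The first step is to make both sides explicit. Expanding $\prod_v(1-\mathbf{t}^{E_v^*})^{\delta_v-2}$ and using that $\{E_v^*\}_v$ is a $\mathbb{Z}$-basis of $L'$, the coefficient is $z(l')=\prod_{v\in\calv}(-1)^{k_v}\binom{\delta_v-2}{k_v}$, where $k_v:=-(l',E_v)$ and we use generalized binomial coefficients (so the product vanishes unless all $k_v\geq 0$, consistently with $\Supp Z(\mathbf{t})\subset\calS'$). On the other side, from $\chi(l'+E_F)=\chi(l')-(l',E_F)+\chi(E_F)$ and $\chi(E_F)=|F|-e_\calt(F)$, with $e_\calt(F)$ the number of edges of $\calt$ having both endpoints in $F$, one gets $w((l',I))=\chi(l')+\max_{F\subseteq I}\big(\sum_{v\in F}(1+k_v)-e_\calt(F)\big)$, and since $\sum_{I\subseteq\calv}(-1)^{|I|+1}=0$ the additive constant $\chi(l')$ drops out of the alternating sum. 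Thus the theorem becomes equivalent to
\[
\prod_{v\in\calv}(-1)^{k_v}\binom{\delta_v-2}{k_v}\ =\ \sum_{I\subseteq\calv}(-1)^{|I|+1}\,\max_{F\subseteq I}\Big(\sum_{v\in F}(1+k_v)-e_\calt(F)\Big)
\]
for arbitrary integers $k_v$ — one must allow negative $k_v$ because the recursion below lowers coordinates.

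The second step is the induction. The base case $\calv=\{v\}$ is immediate: both sides equal $\max(0,k_v+1)$. For the step, choose a leaf $v$ attached to $u$ and set $\calt'=\calt\setminus v$. On the left, Pascal's rule applied at $u$ (whose valency drops by one in $\calt'$) gives $\mathrm{LHS}_\calt(k)=\mathrm{LHS}_{\calt'}(k')-\mathrm{LHS}_{\calt'}(k'')$, where $k'$ is the restriction of $k$ to $\calv\setminus\{v\}$ and $k''$ agrees with $k'$ except that its $u$-coordinate is lowered by one. On the right, split the $I$-sum according to whether $v\in I$; since $v$ is a leaf one has $e_\calt(\{v\}\cup F')=e_{\calt'}(F')+[\,u\in F'\,]$, so the cube‑weight $W$ satisfies $W_\calt(k,I'\cup\{v\})=\max\big(W_{\calt'}(k',I'),\,(1+k_v)+W_{\calt'}(k'',I')\big)$. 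The elementary lemma that makes everything work is that lowering one coordinate by one changes $\max_{F\subseteq I'}(\cdots)$ by $0$ or $1$; hence for $k_v\geq 0$ this maximum is realized by its second argument, the surviving terms telescope (the leftover $(1+k_v)\sum_{I'}(-1)^{|I'|+1}$ vanishes because $\calv\setminus\{v\}\neq\emptyset$), and one gets $\mathrm{RHS}_\calt(k)=\mathrm{RHS}_{\calt'}(k')-\mathrm{RHS}_{\calt'}(k'')$; for $k_v<0$ the same lemma forces the bracket to be $\leq 0$, so $W_\calt(k,I')=W_\calt(k,I'\cup\{v\})$ and $\mathrm{RHS}_\calt(k)=0$, matching the left side. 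Comparing the two recursions and invoking the inductive hypothesis finishes the proof.

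I expect the only real subtlety to be the cube‑splitting computation for the weight function, together with the observation that a unit change in a single lattice coordinate perturbs $\max_{F\subseteq I}\chi(l'+E_F)$ by at most one: this is precisely what removes the inconvenient maximum and lets the two sides obey the same leaf‑removal recursion. The one piece of bookkeeping worth flagging is that this recursion produces tuples with a negative coordinate, so the combinatorial identity has to be proved over all of $\mathbb{Z}^{\calv}$ from the outset; the base case and the sign of $(1+k_v)+W_{\calt'}(k'',I')-W_{\calt'}(k',I')$ when $k_v<0$ dispose of that uniformly.
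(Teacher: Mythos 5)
Your argument is correct as far as I can check it. A caveat on the comparison you ask for: this paper does not prove the statement at all — it quotes it verbatim from \cite[Theorem 2.3.10]{NJEMS} — so there is no in-paper proof to measure you against; what you have produced is an independent, self-contained combinatorial proof. Your reduction is sound: $z(l')=\prod_v(-1)^{k_v}\binom{\delta_v-2}{k_v}$ with $k_v=-(l',E_v)$ follows from expanding each factor with generalized binomials and the fact that $\{E_v^*\}_v$ is a basis of $L'$; the weight computation $w((l',I))=\chi(l')+\max_{F\subseteq I}\bigl(\sum_{v\in F}(1+k_v)-e_\calt(F)\bigr)$ uses only $\chi(E_v)=1$ (genus zero) and $(E_v,E_w)\in\{0,1\}$, and the constant $\chi(l')$ indeed cancels because the alternating sum runs over all $I\subseteq\calv$ including $I=\emptyset$ (which is also the convention needed for the theorem to be true, as your one-vertex check confirms). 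The leaf-removal recursion on both sides, the Pascal identity at $u$ (valid for negative upper index), the splitting $W_\calt(k,I'\cup\{v\})=\max\bigl(W_{\calt'}(k',I'),(1+k_v)+W_{\calt'}(k'',I')\bigr)$, and the lemma that lowering one coordinate changes the maximum by $0$ or $1$ all check out, and the telescoping uses $\calv\setminus\{v\}\neq\emptyset$ exactly where it should.

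One small imprecision worth fixing in the write-up: the left-hand recursion $\mathrm{LHS}_\calt(k)=\mathrm{LHS}_{\calt'}(k')-\mathrm{LHS}_{\calt'}(k'')$ is stated unconditionally, but it only holds when $k_v\geq 0$ (the $v$-factor $(-1)^{k_v}\binom{-1}{k_v}$ equals $1$ there and $0$ for $k_v<0$); since your right-hand analysis splits into the same two cases and you match $\mathrm{RHS}_\calt(k)=0$ against the vanishing of the product when $k_v<0$, the argument goes through, but the recursion should be stated with that hypothesis. Compared with N\'emethi's original treatment, which derives the identity within the machinery of weighted cubes and counting functions of \cite{NJEMS}, your route is more elementary: it isolates a purely graph-theoretic identity over $\mathbb{Z}^{\calv}$ (valid for any tree, with no use of negative definiteness beyond the setup) and proves it by leaf induction, which makes the mechanism behind the cancellation of the maxima quite transparent.
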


For any $a \geq b, [a] = [b]\in H$ we define the rectangle $R(a, b)$ as the cubical complex containing all the cubes $(l', I)$, for which $a \leq l'$,  $l' + E_I \leq b$ and $[l']=[a]=[b]$. One can associate with such a rectangle the weighted Euler characteristic as follows
$$\chi_w(R(a,b))=\sum_{(l',I)\subset R(a,b)}(-1)^{|I| + 1} w((l', I)).$$
Then the following theorem expresses the normalized Seiberg--Witten invariant from this weighted Euler characteristic:

\begin{theorem}{\cite[Theorem 2.3.7]{NJEMS}}
If $h\in H$ an arbitrary class, $a \in L'$ with very low coordinates, $b \in L'$ which very large coordinates such that $[a]=[b] = h$, then we have the following:
\begin{equation}\label{eq:swchiw}
\mathfrak{sw}_h^{norm} =  \chi(r_h)-\chi_w(R(a, b)).
\end{equation}
\end{theorem}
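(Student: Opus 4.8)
The plan is to prove the equivalent statement $h^1_{virt}(0)=\mathfrak{sw}_0^{norm}(\calt)\ge 0$; these two quantities agree by Remark \ref{rem:h1virt}(3) applied to $h=0$, since $r_0=0$. I would argue by induction on $|\calv|$. For the base of the induction, suppose $\calt$ satisfies the monomial conditions globally — this covers in particular the case where $\calt$ has no node, e.g. the chain of a cyclic quotient, for which the condition is vacuous. Then $\calt$ supports a splice quotient singularity $(X,0)$ and, as recalled in \ref{ss:sqat}, $\mathfrak{sw}_0^{norm}(\calt)=p_g(X,0)=h^1(\calO_{\tX})\ge 0$. So assume the monomial conditions fail; by hypothesis they fail only on branches containing $v$. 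Since $\calt$ then has a node, it has at least one good branch, and I would choose a node $u$ together with a branch $\calt'$ of $u$ (a connected component of $\calt\setminus u$) that does not contain $v$ and is maximal with the property that $\calt'$, regarded as a graph on its own, satisfies the monomial conditions — such a $\calt'$ supports a splice quotient, hence $\mathfrak{sw}_0^{norm}(\calt')\ge 0$. Write $\calt'':=\calt\setminus\calt'$ for the complementary full subgraph; it is connected, still contains $u$ and $v$, has strictly fewer vertices, and one checks that it again satisfies the hypothesis relative to $v$.

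Next I would feed this decomposition into the surgery formula. Applying Theorem \ref{surgery}(b) to $\calt$ with $I=\{u\}$, and then regrouping the components of $\calt\setminus u$ into $\calt'$ on one side and the components of $\calt''\setminus u$ on the other, one obtains an identity of the shape $\mathfrak{sw}_0^{norm}(\calt)=\mathfrak{sw}_0^{norm}(\calt'')+\mathfrak{sw}_0^{norm}(\calt')+\Delta$, where $\Delta$ is the discrepancy between the two ways of extracting the "$u$-local" periodic constant of the topological Poincaré series — the one read off from $\calt$ and the one read off from $\calt''$ after deleting the good branch $\calt'$ (the valencies and the dual-cycle coefficients at $u$ change). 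By the base case $\mathfrak{sw}_0^{norm}(\calt')\ge 0$ and by induction $\mathfrak{sw}_0^{norm}(\calt'')\ge 0$, so the whole statement reduces to proving $\Delta\ge 0$.

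To handle $\Delta$ I would use the duality machinery of Sections \ref{s:motspq}–\ref{s:topdcf}. The term $\Delta$ can be rewritten, via Theorem \ref{dualcount} and the special-case identity \eqref{eq:dualityspec} (applied with $l'=E_e^*$, $e$ ranging over the end-vertices $\cale_{\calt'}$, together with its multivariable refinement \eqref{dualitycounting2}), as a sum $\sum_{l''\in\calS',\,[l'']=[Z_K],\,\dots}r(l'')$ of coefficients of a relative Poincaré series $R(\bt)=Z(\bt)\cdot\prod_{e\in\cale_{\calt'}}(1-\bt^{E_e^*})^{a_e}$ attached to $\calt'$. Here the monomial conditions on $\calt'$ enter decisively: by Némethi's identification $Z=P$ recalled in \ref{ss:sqat} and by \eqref{eq:sqcount}, the partial sums of the coefficients of such a relative series compute dimensions of honest spaces of sections of natural line bundles on the splice quotient over $\calt'$ — precisely the differences of $h^0$'s that appear in the blow-up computation of Section \ref{s:motspq}. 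Being dimensions, these partial sums are nonnegative, which forces $\Delta\ge 0$ and closes the induction. (Alternatively, $\Delta\ge 0$ can be recast via \eqref{eq:swchiw} as a statement about the weighted Euler characteristic $\chi_w$ of a rectangle associated with $\calt'$, reducing it to a property of the lattice weight function.)

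The delicate points — and the real content — sit in the last two paragraphs. First, the surgery formula on its own produces periodic-constant corrections with no a priori sign; it is only the monomial condition on the piece $\calt'$, funneled through $Z=P$ and the dimension interpretation of the counting functions, that forces $\Delta\ge 0$. In effect one is proving that excising a splice-quotient branch cannot raise $\mathfrak{sw}_0^{norm}$ beyond what the smaller graph together with that branch's geometric genus already account for. Second, one must verify that the monomial condition for a pair (node, branch) is intrinsic enough to be inherited when passing between $\calt$, $\calt'$ and $\calt''$ — i.e. that a "maximal good branch" really does satisfy the monomial conditions internally and that $\calt''$ inherits the hypothesis relative to $v$. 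This is a routine but careful check based on the multiplicativity of the dual-cycle coefficients $(E_u^*)_w$ along paths in a tree (the same phenomenon underlying Lemma \ref{redlaurent} and Corollary \ref{laurent}); I expect this bookkeeping, rather than the surgery or duality steps, to be the main obstacle.
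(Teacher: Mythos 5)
Your proposal does not address the statement you were asked to prove. The statement is the identity $\mathfrak{sw}_h^{norm}=\chi(r_h)-\chi_w(R(a,b))$, i.e.\ the expression of the normalized Seiberg--Witten invariant as the weighted Euler characteristic of a large rectangle $R(a,b)$ in the lattice, with respect to the weight function $w((l',I))=\max_{F\subset I}\chi(l'+E_F)$. This is a result recalled from \cite[Theorem 2.3.7]{NJEMS} (the paper itself does not reprove it; it is used as input for the lattice-weight proof of the identity (\ref{eq:equiveq})). What you have written is instead an inductive argument for the nonnegativity statement $\mathfrak{sw}_0^{norm}(\calt)\geq 0$ under partial monomial conditions --- that is, Theorem (D) / the Corollary closing Section \ref{s:mctPs} --- which is an entirely different claim. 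Worse, your final paragraph explicitly invokes (\ref{eq:swchiw}) as a tool ("$\Delta\ge 0$ can be recast via (\ref{eq:swchiw})\dots"), so even if your argument were complete it would be circular as a proof of the statement at hand.

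If you want to actually prove the stated identity, the route is combinatorial and has nothing to do with surgery formulas or splice quotients: one combines the coefficient identity $z(l')=\sum_{I\subset\calv}(-1)^{|I|+1}w((l',I))$ (recalled in the paper as \cite[Theorem 2.3.10]{NJEMS}) with the definition $\chi_w(R(a,b))=\sum_{(l',I)\subset R(a,b)}(-1)^{|I|+1}w((l',I))$; summing the coefficient identity over the lattice points of $R(a,b)$ and keeping track of the boundary cubes, one relates $\chi_w(R(a,b))$ to the counting function $Q_h(b)$ (the contribution from the face at $a$ is trivial because $a$ has very low coordinates and $Z(\bt)$ is supported in $\calS'$). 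Evaluating $Q_h(b)$ for $b$ with very large coordinates via the quasipolynomial of Theorem \ref{th:NJEMSThm}, i.e.\ $Q_h(b)=\chi(b)-\chi(r_h)+\mathfrak{sw}_h^{norm}$, and cancelling the $\chi(b)$ term against the top-face contribution of the rectangle yields the claimed formula. None of this appears in your proposal.
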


Now we apply these theorems in our context as follows: let $a=\sum_{v}a_vE_v\in L'$ be a very small and $b=\sum_v b_v E_v\in L'$ a very large cycle such that $[a]= [b] = [E_{v}^*]=h_v$, and set the notation $d_v:=  (E_v^*)_v$ for simplicity. Then, using some easy computation which we omit from here, one gets the following identitites:

\begin{equation*}
\chi_w(R(a, b- b_v E_v + d_v E_v)) - \chi_w(R(a - a_v E_v + d_v E_v, b- b_v E_v + d_v E_v)) =  -  \sum_{l'_v  < d_v ,[ l'] = h_v}  z(l'). 
\end{equation*}

\begin{equation*}
\chi_w(R(a - a_v E_v + d_v E_v, b)) - \chi_w(R(a - a_v E_v + d_v E_v, b- b_v E_v + d_v E_v)) =   - \sum_{ l''_v < (Z_K-E_v^*)_v\atop [ l''] = [Z_K- E_v^*],} z(l''). 
\end{equation*}

\begin{equation*}
 \chi(R(a - a_v E_v + d_v E_v, b- b_v E_v + d_v E_v) =  -\mathfrak{sw}_{0}^{norm}(\calt\setminus v) + \chi(E_v^*).
\end{equation*}
Finally, using additivity of the weighted Euler characteristic, the equation (\ref{eq:equiveq}) follows from the above identities and (\ref{eq:swchiw}).

\section{Dominance property of natural line bundles on splice quotient singularities}\label{s:domspq}

\subsection{}In this section we investigate the dominance property of line bundles in the case of splice quotient singularities.

We will prove, using formulae presented in section \ref{s:motspq} and \ref{s:topdcf}, that in some cases a natural line bundle often has higher $h^1$ value than many line bundles in the image of the Abel map. However, this phenomena is not true in the highest generality and there might exist line bundles in the Picard group which have higher $h^1$ value than the natural line bundles. These unfavorable situations will be supported by examples given at the end of the section. 

First of all, on the positive side we prove the following result:

\begin{theorem}\label{thm:domsplice}
Let $(X,0)$ be a splice quotient singularity with rational homology sphere link and let $\calt$ be its dual resolution graph which satisfies the monomial conditions and $\tX$ the corresponding resolution space with exceptional divisor $E= \cup_{v \in \calv} E_v $. 

Let $l' = a_v E_v^*$ be a cycle for some vertex $v \in \calv$, such that $a_v > 0$ and $E_v^* \in S_{an}$.  \\
(a) \ If $a_v = 1$ one has $h^1(\tX, \calL) \leq h^1( \calO_{\tX}(-E_v^*))$ for every line bundle $\calL \in \im(c^{-E_v^*}(Z))$. \\
(b) \ If $a_v >1$ we also assume that the line bundle $\calO_{\tX}(- E_v^*)$ has no base point on the exceptional divisor $E_v$. In this case there are only finitely many points $p_1, \cdots, p_k \in E_v$ such that the line bundle $\calO_{\tX}(- E_v^*)$ has no section through $p_i$. Then, for any line bundle $\calL = \calO_{\tX}( \sum_{1 \leq k \leq a_v} D_k)$ given by disjoint transversal cuts $D_{ k},  1 \leq k \leq a_v$ at regular points of the exceptional
divisor $E_v$, such that none of the transversal cuts go through the critical points $ p_1, \cdots, p_k$, one has the following inequality:
$$h^1(\tX, \calL) \leq h^1(\tX, \calO(-l')).$$
\end{theorem}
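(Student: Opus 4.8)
The natural strategy is to compare both sides of the desired inequality against counting-function expressions coming from Theorem \ref{dualityspq} (equivalently, its topological avatar Theorem \ref{topdualcount}), and to reduce everything to statements about the coefficients $z^{\calt}(\cdot)$ of the topological Poincar\'e series and the numbers $r(\cdot)$ of the relative series. The key structural input is that for a splice quotient singularity $h^1(\calO_{\tX}(-l'))=h^1_{virt}(l')$ (Remark \ref{rem:h1virt}(1)), so the left-hand side $h^1(\calO_{\tX}(-l'))$ is purely combinatorial; and on the other hand the line bundles $\calL$ appearing in (a) and (b), being represented by disjoint transversal cuts at regular points, are exactly the divisors $p$ to which Theorem \ref{thm:gat} and the blow-up computation of section \ref{s:motspq} apply. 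So the plan is: express $h^1(\tX,\calL)$ via the same blow-up procedure that produced equation (\ref{hegymaskepp}), and then compare the resulting sum of $z^{\calt_s}(l'')$'s with the sum computing $h^1(\calO_{\tX}(-l'))$.

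\textbf{Case (a).} Here $l'=E_v^*$ and, since $E_v^*\in S_{an}$, $\calO_{\tX}(-E_v^*)$ has no fixed components, so a generic $\calL\in\im(c^{-E_v^*}(Z))$ is represented by a single transversal cut $D$ through a regular point of $E_v$; special $\calL$'s (cuts through special points, or the natural bundle itself) are handled by semicontinuity from the generic member, since $h^1$ of natural bundle is among the values attained. I would first treat the generic $\calL$: by Theorem \ref{thm:gat} its $h^1$ equals the dimension of forms in $H^0(\calO_{\tX}(K+\lceil Z_K\rceil))/H^0(\calO_{\tX}(K))$ with no pole on $D$, while $h^1(\calO_{\tX}(-E_v^*))$ counts forms vanishing in the appropriate sense along $E_v$. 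The blow-up picture of section \ref{s:motspq} shows that imposing vanishing of high order along the new end $E_{v_{1,s}}$ is a \emph{stronger} condition than passing through one regular point on $E_v$ (a single cut through a regular point lifts to a single cut through the intersection of $E_{v_{1,s}}$ with the strict transform, not to vanishing along all of $E_{v_{1,s}}$). Formally: $d_{-E_v^*,Z}\ge 1$ is the generic drop, whereas the natural bundle sits at a point of the Brill-Noether stratum where the drop can only be smaller; equivalently, using Theorem \ref{dimgen} the generic $h^1$ is $h^1(\calO_Z)-d_{-E_v^*,Z}$, and one must show $h^1(\calO_{\tX}(-E_v^*))\ge h^1(\calO_{\tX})-d_{-E_v^*,\tX}$, i.e. that the natural bundle's $h^1$ is \emph{at least} as large as the generic one. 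This last inequality is exactly semicontinuity of $h^1$ on $\pic^{-E_v^*}$ — the generic value is the minimum — once one knows $\calO_{\tX}(-E_v^*)\in\im(c^{-E_v^*})$, which holds because $E_v^*\in S_{an}$. So case (a) essentially reduces to: the natural bundle lies in the image of the Abel map and $h^1$ is lower-semicontinuous, hence minimal at the generic point. I would write this cleanly using Theorem \ref{dimgen} and equation (\ref{eq:sqcount}).

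\textbf{Case (b).} This is the substantive case. Now $\calL=\calO_{\tX}(\sum_{k}D_k)$ with $a_v$ disjoint transversal cuts $D_k$ through regular points of $E_v$ avoiding the critical points $p_1,\dots,p_k$, and $\calO_{\tX}(-E_v^*)$ is assumed base-point-free on $E_v$. The plan is to run the blow-up construction of section \ref{s:motspq} \emph{for this particular $p=\sum D_k$} (which has $E^*$-Chern class $l'=a_vE_v^*$), obtaining the formula analogous to (\ref{hegymaskepp}) that expresses $h^1(\tX,\calL)$ — wait, more carefully: (\ref{hegymaskepp}) computes $h^1(\calO_{\tX}(-l'))$ assuming $p$ is a preimage of $0$ under $c^{l'}(Z)$, i.e. $\calO_{\tX}(p)\cong\calO_{\tX}(-l')$; for a \emph{general} cut configuration we do not have that, and this is the whole point. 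So instead I would argue as follows. By the blow-up reduction, $h^1(\tX,\calL)$ equals $h^1(\calO_{\tX_{S\setminus\text{(full }S)}}\cdots)$ — concretely, $h^1(\tX,\calL)$ equals the dimension of forms in $H^0(\calO_{\tX}(K+\lceil Z_K\rceil))/H^0(\calO_{\tX}(K))$ with no pole along $p$, by Theorem \ref{thm:gat}. For the natural bundle, $h^1(\calO_{\tX}(-l'))$ counts forms with a pole controlled by $-l'=-a_vE_v^*$. The hypothesis that $\calO_{\tX}(-E_v^*)$ is base-point-free on $E_v$ and the cuts avoid the $p_i$ means: for each cut $D_k$ there is a section of $\calO_{\tX}(-E_v^*)$ through the point $D_k\cap E_v$; multiplying $a_v$ such sections (for the $a_v$ distinct points, which are generic enough to be independent) produces a section of $\calO_{\tX}(-a_vE_v^*)=\calO_{\tX}(-l')$ vanishing along all of $p$, which identifies $\calL$ with a \emph{sub}bundle situation of $\calO_{\tX}(-l')$. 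Precisely, a form with no pole along $p$ need not have no pole along the divisor cut out by $\calO_{\tX}(-l')$, but conversely the space of forms relevant to $h^1(\calO_{\tX}(-l'))$ injects (after the multiplication-of-sections identification) into the space relevant to $h^1(\tX,\calL)$ — giving the \emph{wrong} direction. So I would instead compare the two via the counting-function identity (\ref{hegymaskepp})/(\ref{h1}): $h^1(\calO_{\tX}(-l'))=\sum z^{\calt_s}(l'')$ over a region $R_{nat}$, and $h^1(\tX,\calL)=\sum z^{\calt_{s'}}(l'')$ over a region $R_{\calL}$ in a \emph{different} blow-up $\tX_{s'}$ (blowing up at the $a_v$ distinct regular points of the $D_k$ rather than at the one point forced by the natural bundle); the base-point-freeness hypothesis guarantees that $R_{\calL}$ is contained in (the pullback of) $R_{nat}$ after identifying the relevant coefficients — because base-point-freeness says the critical set where sections fail is smaller, so more of the support of $Z^{\calt_s}$ contributes on the natural side.

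\textbf{Main obstacle.} The hard part is exactly this last comparison in case (b): making precise the inclusion of index sets (or of form spaces) and checking it is coefficient-wise, since the two sides live over genuinely different modifications $\tX_s$ and $\tX_{s'}$ and the identification of $z^{\calt_s}$ with $z^{\calt_{s'}}$ coefficients is only valid on the common subgraph. I expect one must argue that blowing up at a regular point that is \emph{not} a base point of $\calO_{\tX}(-E_v^*)$ does not change the relevant count (because there is a section through it, so the new end-vertex condition is automatically "as satisfied as possible"), whereas blowing up at the point forced by the natural bundle, or at a critical point $p_i$, is where a genuine drop could occur — and the hypothesis in (b) rules out the cuts hitting the $p_i$, so $h^1(\tX,\calL)$ cannot exceed the value at the natural bundle. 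I would formalize "does not change the count" using equation (\ref{eq:sqcount}) and the estimate $0\le\sum z^{\calt}(l'+l)\le -(l',E_v)+1$ that follows it: base-point-freeness forces the upper estimate $-(l',E_v)+1$ to be attained, which is precisely the maximal contribution, and this maximality is what yields the inequality $h^1(\tX,\calL)\le h^1(\calO_{\tX}(-l'))$. Everything else (the reduction to transversal cuts via Theorem \ref{thm:gat}, the $\bQ$-Gorenstein rewriting, the decomposition over $I^m\subset J^m$) is bookkeeping already carried out in section \ref{s:motspq}.
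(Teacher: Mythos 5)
There are genuine gaps in both parts. For part (a), your reduction to semicontinuity proves the wrong inequality: upper semicontinuity of $h^1$ only says that the \emph{generic} bundle in (a component of) $\im(c^{-E_v^*}(Z))$ realizes the \emph{minimal} value, so it gives $h^1(\calO_{\tX}(-E_v^*))\geq h^1(\calL_{gen})$; it says nothing about the non-generic members of the image (cuts through special points of $E_v$), which are exactly the bundles that can jump above the generic value and which the theorem claims are still dominated by the natural bundle. ``Handling special $\calL$'s by semicontinuity from the generic member'' goes in the forbidden direction, and the counterexample of section \ref{ss:cex} shows dominance of the natural bundle is not an automatic feature, so this step cannot be waved through. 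For part (b), the comparison you yourself flag as the main obstacle is indeed where the proof lives, and the mechanism you propose does not work as stated: for a non-natural $\calL=\calO_{\tX}(\sum_k D_k)$ there is no identity of the type $h^1(\tX_{s'},\calL_{s'})=\sum z^{\calt_{s'}}(l'')$ over some region, because the counting formula (\ref{hegymaskepp}) is only valid for natural line bundles on a splice quotient structure (this is precisely why $\calO_{\tX}(p)\cong\calO_{\tX}(-l')$ was assumed there), so a ``region containment'' between two counting expressions over different modifications has nothing to compare on the cut side.

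The paper's argument closes exactly these gaps by a different device. One blows up simultaneously along the given cuts $D_k$ (getting $\tX_s$, carrying $\calL_s$) and along auxiliary cuts $N_k$ with $\calO_{\tX}(N_k)=\calO_{\tX}(-E_v^*)$ (getting $\tX_{s,n}$, which is again splice quotient by the end-curve criterion and where $\calO_{\tX_{s,n}}(\sum_k N_{s,k})$ \emph{is} natural), the two spaces having the same graph $\calt_s$. Then one proves $h^1(\tX_s,\calL_s)\leq h^1(\calO_{\tX_{s,n}}(-l'_s))$ by downward induction on $|s|$ over admissible $s$: for $|s|\gg 0$ both sides equal $p_g$ by Proposition \ref{0dim}, and at each step both sides change by $b,b_n\in\{0,1\}$ according to base points, so the only problem is $b_n=1$, $b=0$. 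To exclude it, the jump $b_n=1$ is fed into the duality formula of Theorem \ref{dualityspq} (applied to the natural side only), and an alternating-sum/binomial computation produces a cycle $l''=\sum_u c_uE_u^*\in\calS'$ with $[l'']=[Z_K]$, $l''_v=(Z_K-E)_v-d$ and $c_v\geq |M'|-1$; from this cycle one \emph{constructs} a differential form with pole of order $d+1$ along $E_v$ whose cuts pass through the points of $M'\setminus q_t$ but not through $q_t$ --- this is where base-point-freeness of $\calO_{\tX}(-E_v^*)$ and the avoidance of the critical points $p_1,\dots,p_k$ are used --- and this form forces $b=1$. Your proposal contains neither the two-sided blow-up bookkeeping (which is what lets one control $h^1$ of the non-natural bundle without any counting formula for it), nor the detection of $l''$, nor the form construction, so the central step remains unproved.
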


\begin{remark}
% (1) \ In the simplest case $a_v = 1$ we get that if $v \in \calv$ is a vertex such that $E_v^* \in S_{an}$ (this condition can be determined topologically if $(X,o)$ is splice quotient and it holds if $v$ is an end-vertex or a node), then $h^1(\tX, \calL) \leq h^1( \calO_{\tX}(-E_v^*))$ for every line bundle $\calL \in \im(c^{-E_v^*}(Z))$.\\
Notice that, for $a_v>1$, the base point freeness assumption is  a combinatorial property computable from the resolution graph.
\end{remark}

% \begin{theorem}
% Let's have a resolution graph $\mathcal{T}$ with rational homology sphere link which satisfies the monomial conditions and a splice quotient singularity $\tX$ corresponding to it.
%  Let's have furthermore a Chern class $l' \in S'$ with support $l' = a_v E_v^*$ for some vertex $v \in \calv$, such that $a_v > 0$ and $E_v^* \in S_{an}$.
% 
% If $a_v >1$, then also assume that the line bundle $\calO(- E_v^*)$ hasn't got a base point on the exceptional divisor $E_v$ (this is a combinatorial property computable from the resolution graph), in this case there are only finitely many points $p_1, \cdots, p_k \in E_v$ such that the line bundle $\calO(- E_v^*)$ hasn't got a section through $p_i$.
% 
% Now let's have a line bundle $\calL = \calO_{\tX}( \sum_{1 \leq k \leq a_v} D_k)$ given by disjoint transversal cuts $D_{ k},  1 \leq k \leq a_v$ at regular points of the exceptional
% divisor $E_v$, in the case $a_v > 1$ assume that none of the transeversal cuts go through the critical points $ p_1, \cdots, p_k$.
% We have under these conditions that $h^1(\tX, \calL) \leq h^1(\tX, \calO(-l'))$.
% \end{theorem}
% 
% \begin{remark}
% Notice, that in the simplest case $a_v = 1$ we get that if $v \in \calv$ is a vertex such that $E_v^* \in S_{an}$ (this condition can be determined topologically if $\tX$ is splice quotient and it holds if $v$ is an end vertex or a node), then $h^1(\tX, \calL) \leq h^1( \calO_{\tX}(-E_v^*))$ for every line bundle $\calL \in \im(c^{-E_v^*}(Z))$.
% \end{remark}

\subsection{Proof of Theorem \ref{thm:domsplice}}

Let us denote the  intersection points $D_{k} \cap E_v$ by $q_{ k}$ for any $1 \leq k \leq a_v$. We  fix disjoint transversal cuts $N_{ k}, 1 \leq k \leq a_v$, such that $\calO_{\tX}(N_{ k}) = \calO_{\tX}(- E_v^*)$. Denote the intersection points by $w_{ k} = N_k \cap E_v$  and assume that they are distinct from the intersection points $q_{k}$ if $a_v >1$.

We fix a set of nonnegative integers $s = \{s_{ k} \ : \ 1 \leq k \leq a_v\}$ and blow up $\tX$ at the points $q_{k}$ sequentially along the $D_{k}$ divisors  $s_{k}$-many times. We denote the resulting space by $\tX_s$. 

Similarly, one can blow up $\tX$ at the points $w_{k}$ sequentially along the $N_{k}$ divisors  $s_{k}$-many times, and denote the result by $\tX_{s, n}$. 

Notice that the analytical type $\tX_{s, n}$ is splice quotient, because it certainly satisfies the end-curve condition, cf. \ref{ss:spq}.
Moreover, the resolutions $\tX_{s, n}$ and $\tX_{s}$ have the same resolution graph, which will be denoted by $\mathcal{T}_s$, and its set of vertices by $\calv_s$. 
For any $k$, let us set the notation  $v_{s, k}$ for the end-vertex of $\mathcal{T}_s$ appearing on the newly created $k$-th leg connected to $v$. If $s_{k} = 0$ then we set $v_{s, k} = v$.

We write  $D_{s, k}$ for the strict transform of the divisor $D_{k}$ on $\tX_s$, and $N_{s, k}$ for the strict transform of the divisor $N_{k}$ on $\tX_{s, n}$. Then we define the cycle $l'_s = \sum_{1 \leq k \leq a_v} E_{v_{s, k}}^*$  with support  $I_s:=|l'_s|^*$, and the line bundle  $\calL_s = \calO_{\tX_{s}}(\sum_{ 1 \leq k \leq a_v }D_{s, k})$. Notice that, since $\calO_{\tX_{s, n}}(N_{s, k}) = \calO_{\tX_{s, n}}(- E_{v_{s, k}}^*) $, we have $\calO_{\tX_{s, n}}(\sum_{ 1 \leq k \leq a_v }N_{s,  k}) = \calO_{\tX_{s, n}}(- l'_s)$.

We say that a set of nonnegative integers $s = \{s_{ k} \ : \ 1 \leq k \leq a_v\}$ is \emph{admissible} if for every $1 \leq k_1, k_2 \leq a_{v}$ one has $|s_{ k_1} - s_{k_2}| \leq 1$. For such an admissible $s$ we define $|s| = \sum_{1 \leq k \leq a_v} s_{k}$. Then we will prove the following:
\vspace{0.1cm}

{\bf Claim:} \ \emph{For an admissible $s$ one has $h^1(\tX_s, \calL_s) \leq h^1(\calO_{\tX_{s, n}}(- l'_s))$.}
\vspace{0.1cm}

The proof will be given by downgoing induction on $|s|$.

First of all, if $|s|$ is very large, then $h^1(\tX_s, \calL_s) = h^1(\calO_{\tX_{s, n}}(- l'_s)) = p_g(X,o)$ by Proposition \ref{0dim} and the statement is trivial. 

Now, consider an arbitrary admissible $s$ and assume that $\min_{1 \leq k \leq a_v} s_{k} = d$. We choose an index $1 \leq t \leq a_v$  for which this minimum is realized, ie.  $s_{t} = d$. Let us define the set of integers $s^{t}=\{s^{t}_k \ : \ 1 \leq k \leq a_v\}$ such that $s^{t}_{k} = s_{k} + 1$ if $k = t$ and $s^{ t}_{k} = s_{k}$ otherwise.  Since $s_{t} = d$ it follows that $s^t$ is also admissible. Moreover, $|s^{ t}| = |s| + 1$, so by the induction hypothesis we know that 
$$h^1(\tX_{s^{t}}, \calL_{s^{ t}}) \leq h^1(\calO_{\tX_{s^{t}, n}}(- l'_{s^{t}})).$$

Furthermore, notice that $h^1(\calO_{\tX_{s^{t}, n}}(- l'_{s^{t}})) = h^1(\calO_{\tX_{s, n}}(- l'_s)) + b_n$, where $b_n= 0$ if the line bundle $\calO_{\tX_{s, n}}(- l'_s)$ has no base point at the intersection point $N_{s, t} \cap E_{v_{s, t}}$, or $b_n= 1$ if  $\calO_{\tX_{s, n}}(- l'_s)$ has  a base point at  $N_{s, t} \cap E_{v_{s, t}}$.

 Similarly we have $h^1(\tX_{s^{ t}}, \calL_{s^{t}}) = h^1(\tX_s, \calL_s) + b$, where $b= 0$ if the line bundle $ \calL_s$ has no base point at $D_{s, t} \cap E_{v_{s, t}}$, or $b= 1$ if $ \calL_s$ has  a base point at $D_{s, t} \cap E_{v_{s, t}}$.
By using the induction hypothesis the case $b_n=0$ follows automatically, hence it is enough to prove the inequality in the case when $b_n = 1$.

Therefore, in the sequel we assume that $h^1(\calO_{\tX_{s^{t}, n}}(- l'_{s^{t}})) = h^1(\calO_{\tX_{s, n}}(- l'_s)) + 1$.

We denote the set of arrows in $l'_s$ by $J_s$. For a subset $A \subset J_s$ and a vertex $w \in \calv_s$ we denote by $c_w(A)$ the number of arrows in $A$ supported on $E_w$, and define the cycle $l'_A = \sum_{w \in \calv_s} - c_w(A) E_w^*$.

We also introduce  the support $J(A) = |l'_A|^*$ of a subset $A \subset J_s$.  

Consider the relative topological Poincar\'e series associated with $\calt_s$ and the cycle $-l'_A$:

\begin{equation*}
R_{s, A}(\mathbf{t}) = \prod_{w \in \calv_s}(1-\mathbf{t}^{E_w^*})^{\delta_w -2} \cdot \prod_{w \in \calv_s}(1-\mathbf{t}^{E_w^*})^{c_w(A)}=\sum_{l''} r_{s,A}(l'')\bt^{l''}.
\end{equation*}

Then, by Theorem \ref{dualityspq} we have the following identity

\begin{equation}\label{formula1}
\sum_{B \subset A} (-1)^{|A|} h^1(\calO_{\tX_s}( - l'_B)) = \sum_{l'' \in S',[ l''] = [(Z_K)_s], l''_{K(A)} \leq ((Z_K)_s-E)_{J(A)}} r_{s, A}(l'').
\end{equation}

Let us rewrite this identity as follows. Let $M$ be the set of the intersection points  $q_{ k}$ for $1 \leq k \leq a_v$, and for a subset $A \subset M$ fix the notation $s_{A} = \max_{q_{k} \in A} s_{k}$. Then, we consider the multivariable series
\begin{equation*}
R^*_{s, A}(\mathbf{t}) = \prod_{w \in \calv}(1-\mathbf{t}^{E_w^*})^{\delta_w -2} \cdot (1-\mathbf{t}^{E_v^*})^{|A|}=\sum_{l''} r^*_{s,A}(l'')\bt^{l''},
\end{equation*}
where $|A|$ is the cardinality of $A \subset M$. 

Obviously we can identify the set of arrows $J_s$ with $M$ which induces the following equality:

\begin{equation*}
\sum_{B \subset A} (-1)^{|A|} h^1(\calO_{\tX_s}( - l'_B)) = \sum_{l'' \in S',[ l''] = [Z_K], l''_v \leq (Z_K -E)_v - s_{A} }  r^*_{s, A}(l'').
\end{equation*}
Indeed, we get this formula if we blow up the the points $q_k \in A$ $s_{ k}$ many times and apply (\ref{formula1}) in that situation. 

From this one can deduce the equality
\begin{equation*}
 p_g - h^1(\calO_{\tX_s}(- l'_s)) =  \sum_{\emptyset \neq A \subset J_s} (-1)^{|A| + 1}\left( \sum_{l'' \in S',[ l''] = [Z_K], l''_v \leq (Z_K -E)_v - s_{A} }  r^*_{s, A}(l'') \right).
\end{equation*}

Similarly, one gets the identity for $s^{ t}$ as well, namely

\begin{equation*}
 p_g - h^1(\calO_{\tX_{s^{t}}}( - l'_{s^{t}})) =  \sum_{\emptyset \neq A \subset J_{s^{t}}} (-1)^{|A| + 1}\left(   \sum_{l'' \in S',[ l''] = [Z_K], l''_v \leq (Z_K -E)_v - s^t_{A} }  r^*_{s^t, A}(l'')                                   \right).
\end{equation*}

We can identify $J_s$ with $J_{s^{t}}$ by identifying the $t$-th cut with its blow up.
Notice also that $R^*_{s, A}(\mathbf{t}) = R^*_{s^t, A}(\mathbf{t})$ and $h^1(\calO_{\tX_{s^{t}, n}}(- l'_{s^{t}})) = h^1(\calO_{\tX_{s, n}}(- l'_s)) + 1$, from which we get    

\begin{equation*}
 1 =  \sum_{\emptyset\neq \in A \subset J_{s}} (-1)^{|A| + 1}\left( \sum_{l'' \in S', [ l''] = [Z_K], (Z_K -E)_v - s^t_{A} < l''_v \leq (Z_K -E)_v - s_{A} }  r^*_{s^{t}, A}(l'') \right).
\end{equation*}
One can see that the conditions $ (Z_K -E)_v - s^t_{A} < l''_v \leq (Z_K -E)_v - s_{A}$ can only hold if $s_t = s_{A}$ and in this case $ l''_{v} =  (Z_K -E)_v - s_{ t} = (Z_K -E)_v - s_{A}$.

In the following let us denote  the subset $M' = \{q_k \ : \ 1 \leq k \leq a_v, s_k = d\} \subset M$, where we have $q_t \in M'$.

From these we get that there is an $l'' \in S'$, $[ l''] = [Z_K]$, such that $l''_{v} =  (Z_K -E)_u - d$ and  

\begin{equation*}
 \sum_{q_{t} \in A \subset M'}  (-1)^{|A| + 1} \cdot  r^*_{s, A}(l'') \neq 0.
\end{equation*}

Let us consider $l'' = \sum_{u \in \calv} c_u E_u^*$. We know that if $c_u \neq 0$, then $u$ is a node or an end-vertex, or $u = v$, otherwise $ r^*_{s, A}(l'')  = 0$ for all subsets $q_{t} \in A \subset M'$. 

Then, first we claim  that $-(l'', E_v) = c_v \geq |M'| -1$. Indeed, let us denote $|M'| = r \geq 1$. Then using the expression of the Taylor expansion we get the following identitites:

\begin{equation*}
 \sum_{q_{ t} \in A \subset M'}   (-1)^{|A| + 1} \cdot r^*_{s, A}(l'') =  \sum_{q_{ t} \in A \subset M'}  (-1)^{|A| + 1} \cdot \prod_{u \neq v} \binom{\delta_u - 2}{c_u} \cdot \binom{\delta_v - 2 + |A|}{c_v};
\end{equation*}

\begin{equation*}
0 \neq \sum_{q_{ t} \in A \subset M'}  (-1)^{|A| + 1} \cdot  r_{s, A}(l'') =   \prod_{u \neq v} \binom{\delta_u - 2}{c_u} \cdot \sum_{0 \leq i \leq r-1}(-1)^i \cdot \binom{r-1}{i} \cdot \binom{\delta_v - 1 + i }{c_v};
\end{equation*}

\begin{equation*}
0 \neq \sum_{0 \leq i \leq r-1}(-1)^i \cdot \binom{r-1}{i} \cdot \binom{\delta_v - 1 + i }{c_v}.
\end{equation*}

It is well known that if $p(x)$ is a polinomial of degree at most $n \geq 0$ and $x$ is an arbitrary real number, then $\sum_{0 \leq i \leq n+1}(-1)^i \cdot \binom{n+1}{i} \cdot p(x + i) = 0$, which in our case yields that $c_v \geq r -1$.

In summary,  we have proved that there is a cycle $l'' = \sum_{u \in \calv} c_u E_u^* \in S'$ such that $[l''] = [Z_K]$, $l''_{v} =  (Z_K -E)_u - d$ and $|l''|^* \subset \mathcal{E}  \cup \caln \cup v$ with 
 $c_v \geq |M'| -1$.
\vspace{0.2cm}
 
\underline{Now, assume first that $a_v >1$.}
\vspace{0.2cm}

Since  every $u \neq v, u \in |l''|^*$ is an end-vertex or a node, let us fix a cut $C_u$, such that $\calO_{\tX}(C_u) = \calO_{\tX}(-E_u^*)$. If $c_v \neq 0$ (which is the case if $|M'| >1$) then we also fix cuts $C_1, C_2, \cdots, C_{r}$ such that $\calO_{\tX}(C_i) = \calO_{\tX}(-E_v^*)$ and $C_{1}, \cdots, C_{r-1}$ go through points from $M' \setminus q_t$. Note that the existence of these cuts is guaranteed by the assumptions of the theorem, since $M' \setminus q_t$ is disjoint from $p_1, \cdots, p_k$.

Furthermore, $\calO_{\tX}(-l'') = \calO_{\tX}(\sum_{u \neq v}c_u \cdot C_u + (c_v - r+1) \cdot C_r + \sum_{1 \leq i \leq r-1} C_i)$, which means that there is a section in $H^0(\calO_{\tX}(K + (Z_K - l'')))_{reg}$ vanishing at the points in $M' \setminus q_t$.

Therefore, there is a differential form $\omega$ which has a pole on the exceptional divisor $E_v$ of order $d+ 1$, and  $\omega$ has arrows at the points from $M' \setminus q_t$, while it has no arrow at the point $q_t$. This means, that $\omega$ gives a differential form on $\tX_s$ which has a pole along the divisor $\sum_{ 1 \leq k \leq a_v }D_{s, k}$, but it has no pole along the divisor $\sum_{ 1 \leq k \leq a_v }D_{s^t, k}$. This yields that $h^1(\tX_{s^{ t}}, \calL_{s^{t}}) = h^1(\tX_s, \calL_s) + 1$, so $b=1$ and the result follows in this case.
\vspace{0.2cm}

\underline{Assume in the following that $a_v = 1$.}
\vspace{0.2cm}

Notice that when the points $q_1, w_1$ are different, then the very same proof works, since at the last step we can fix a cut $C_1$ such that $\calO_{\tX}(C_1) = \calO_{\tX}(-E_v^*)$ and        
$C_1$ does not go through the point $q_1$. 
In this case again one constructs a differential form $\omega$ with a pole on the exceptional divisor $E_v$ of order $d+ 1$ and with no arrow at the point $q_1$,  which finishes the proof exactly in the same way.

On the other hand, if the points $q_1, w_1$ can not be choosen to be different, then the line bundles $\calO_{\tX}(N_1), \calO_{\tX}(D_1)$ have common base point at $q_1$. 
However, if we blow up these two base points until one of them stops having a base point, then the same proof can be repeated. This finishes the proof also in the case $a_v = 1$.

\subsection{Counterexample for the general case}\label{ss:cex}

In this section we illustrate by an example that the assumption in Theorem \ref{thm:domsplice} is essential. Thus, in general there might exists  a splice quotient singularity $(X,0)$ with its resolution $\tX$ together with a cycle $l' \in \calS'$ and a line bundle $\calL \in \pic^{-l'}(\tX)$, such that $h^1(\tX, \calO_{\tX}(-l')) < h^1(\tX, \calL)$.

First of all, consider the star-shaped rational homology sphere graph $\mathcal{T}_1$ with vertex set  $\calv_1$ as shown by the left side of the following picture. 
\begin{figure}[h!] \label{fig:tt1}
\begin{center}
\begin{tikzpicture}[scale=.75]
\node (v7) at (8.5,0) {};
\node (v5) at (7,0) {};
\node (v11) at (5.5,-1) {};
\node (v10) at (5.5,1) {};
\node (v12) at (5.5,0.3) {};
\node (v13) at (5.5,-0.3) {};
\draw[fill] (8.5,0) circle (0.1);
\draw[fill] (7,0) circle (0.1);
\draw[fill] (5.5,-1) circle (0.1);
\draw[fill] (5.5,1) circle (0.1);
\draw[fill] (5.5,0.3) circle (0.1);
\draw[fill] (5.5,-0.3) circle (0.1);
\draw  (v7) edge (v5);
\draw  (v5) edge (v10);
\draw  (v5) edge (v11);
\draw  (v5) edge (v12);
\draw  (v5) edge (v13);
% \node at (5.5,0.5) {\small $-3$};
\node at (7,0.5) {\small $-3$};
\node at (8.5,0.5) {\small $-1$};
\node at (4.8,1) {\small $-N$};
\node at (4.8,0.3) {\small $-N$};
\node at (4.8,-0.3) {\small $-N$};
\node at (4.8,-1) {\small $-N$};
% \node at (9,0) {\small $-2$};
% \node at (9,-0.8) {\small $-2$};
\node at (5.5,1.3) {\tiny $v_1$};
\node at (5.5,0.6) {\tiny $v_{2}$};
\node at (5.5,-0.6) {\tiny $v_{3}$};
\node at (5.5,-1.3) {\tiny $v_{4}$};
\node at (8.9,0) {\tiny $v_{5}$};
\node at (7,-0.4) {\tiny $n$};
\node at (7,-1.7) {$\calt_1$};
\end{tikzpicture}
\hspace{2.5cm}
\begin{tikzpicture}[scale=.75]
\node (v7) at (8.5,0) {};
\node (v5) at (7,0) {};
\node (v11) at (5.5,-1) {};
\node (v10) at (5.5,1) {};
\node (v12) at (5.5,0.3) {};
\node (v13) at (5.5,-0.3) {};
\node (v14) at (10,0) {};
\node (v15) at (11.5,1) {};
\node (v16) at (11.5,0.3) {};
\node (v17) at (11.5,-0.3) {};
\node (v18) at (11.5,-1) {};
\draw[fill] (8.5,0) circle (0.1);
\draw[fill] (7,0) circle (0.1);
\draw[fill] (5.5,-1) circle (0.1);
\draw[fill] (5.5,1) circle (0.1);
\draw[fill] (5.5,0.3) circle (0.1);
\draw[fill] (5.5,-0.3) circle (0.1);
\draw[fill] (10,0) circle (0.1);
\draw[fill] (11.5,-1) circle (0.1);
\draw[fill] (11.5,1) circle (0.1);
\draw[fill] (11.5,0.3) circle (0.1);
\draw[fill] (11.5,-0.3) circle (0.1);
\draw  (v7) edge (v5);
\draw  (v5) edge (v10);
\draw  (v5) edge (v11);
\draw  (v5) edge (v12);
\draw  (v5) edge (v13);
\draw  (v7) edge (v14);
\draw  (v14) edge (v15);
\draw  (v14) edge (v16);
\draw  (v14) edge (v17);
\draw  (v14) edge (v18);
% \node at (5.5,0.5) {\small $-3$};
\node at (7,0.5) {\small $-3$};
\node at (8.5,0.5) {\small $-M$};
\node at (4.8,1) {\small $-N$};
\node at (4.8,0.3) {\small $-N$};
\node at (4.8,-0.3) {\small $-N$};
\node at (4.8,-1) {\small $-N$};
\node at (10,0.5) {\small $-3$};
\node at (12.2,1) {\small $-N$};
\node at (12.2,0.3) {\small $-N$};
\node at (12.2,-0.3) {\small $-N$};
\node at (12.2,-1) {\small $-N$};
\node at (5.5,1.3) {\tiny $v_{11}$};
\node at (5.5,0.6) {\tiny $v_{12}$};
\node at (5.5,-0.6) {\tiny $v_{13}$};
\node at (5.5,-1.3) {\tiny $v_{14}$};
\node at (11.5,1.3) {\tiny $v_{21}$};
\node at (11.5,0.6) {\tiny $v_{22}$};
\node at (11.5,-0.6) {\tiny $v_{23}$};
\node at (11.5,-1.3) {\tiny $v_{24}$};
\node at (8.5,-0.4) {\tiny $v$};
\node at (7,-0.4) {\tiny $n_1$};
\node at (10,-0.4) {\tiny $n_2$};
\node at (8.5,-1.7) {$\calt$};
\end{tikzpicture}
\end{center}
\caption{The graphs $\calt_1$ and $\calt$}
\end{figure}

$\calt_1$ admits a splice quotient analytic structure (weighted homogenous in fact). We consider its topological Poincar\'e series $ Z^{\mathcal{T}_1}(\mathbf{t}) = \prod_{v \in \calv_1} (1-\mathbf{t}^{E_v^*})^{\delta_v -2}=\sum_{l''}z^{\calt_1}(l'')\bt^{l''}$.
 Then one can check that 
\begin{equation}\label{eq:e1}
\sum_{[ l''] = 0, l'' \geq 0, l''_{v_5} = l''_{n} = 0} z^{\mathcal{T}_1}(l'') = 1,
\end{equation}
since the only element $l'' \in \calS'(\calt_1)$ such that $ l''_{v_5} = l''_{n} = 0$ is $0$. 

Now, if we consider the cycle $l'_2 = 3 E_n^* - E_{v_1}^* - E_{v_2}^* - E_{v_3}^* - E_{v_4}^*$ then we claim that

\begin{equation}\label{eq:e2}
\sum_{[ l''] =[ l'_2] , l'' \geq l'_2, l''_{v_5} = (l'_2)_{v_5},  l''_{n} = (l'_2)_{n}} z^{\mathcal{T}_1}(l'') = 0.
\end{equation}

Indeed if there is an $l'' \in \calS'(\calt_1)$ such that $[ l''] =[ l'_2] , l'' \geq l'_2$ then we must have $(l'')_{v_i} > (l'_2)_{v_i}, 1 \leq i \leq 4$. However, we know that $l''_{v_5} = (l'_2)_{v_5},  l''_{n} = (l'_2)_{n}$
which yields that $(l'' , E_n) > 0$, which is a contradiction.

As a second step, we construct the graph $\mathcal{T}$ by gluing together two  pieces of  $\mathcal{T}_1$ along their $v_5$ vertices. This common vertex will be denoted by $v$ and has self-intersection number $-M$, where $M$ is a very large integer number so that we get a negative definite graph, see the right side of the above picture.

One can check that $M$ can be chosen in a way that $\mathcal{T}$ satisfies the monomial conditions too. Indeed, there are only two nodes  $n_1$ and $n_2$,  and they are in symmetric situation. So it is enough to see that the monomial conditions holds for the node $n_1$. For the branches containing $v_{1, i}$ for $1 \leq i \leq 4$ the monomial conditions trivially holds and for the branch containing the node $n_2$ it holds if  $-M$ is very low compared to the other self-intersection numbers.

Let $\tX$ be the resolution space with resolution graph $\mathcal{T}$ of the splice quotient singularity with rational homology sphere link $M$,  and consider the cycle $l'_c = E_{v}^*$. 

Then we claim the following proposition:
\begin{proposition}
 If $M$ is enough large, then for a generic line bundle $\calL \in \im(c^{-l'_c}(Z))$ one has $h^1(\tX, \calL) > h^1(\tX, \calO_{\tX}(-l'_c))$.
\end{proposition}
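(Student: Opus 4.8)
The plan is to bound the two sides by combinatorial quantities read off from the topological Poincar\'e series and compare them; the strict inequality will come precisely from the contrast between \eqref{eq:e1} (which equals $1$) and \eqref{eq:e2} (which equals $0$).

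\emph{Reduction.} Fix $Z$ so large that $h^1(Z,-)=h^1(\tX,-)$ on all line bundles with Chern class $-l'_c=-E_v^*$ occurring below. For a generic $\calL\in\im(c^{-E_v^*}(Z))$, Theorem~\ref{dimgen} gives $h^1(\tX,\calL)=p_g-d_{-E_v^*,Z}$, and Theorem~\ref{dimcomp} evaluates $d_{-E_v^*,Z}=\min_{0\le Z_1\le Z}\bigl((Z_1)_v+p_g-h^1(\calO_{Z_1})\bigr)$. For the natural line bundle, since $\calt$ satisfies the monomial conditions, Remark~\ref{rem:h1virt}(1) gives $h^1(\calO_{\tX}(-E_v^*))=h^1_{virt}(E_v^*)$, and by the identification $Z=P$ together with \eqref{eq:sqcount} (equivalently Theorem~\ref{dualityspq}) this number is an alternating sum of coefficients $z^{\calt}$. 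It therefore suffices to prove the strict inequality $d_{-E_v^*,Z}<p_g-h^1(\calO_{\tX}(-E_v^*))$.

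\emph{The generic side.} The vertex $v$ has $\delta_v=2$ and $E_v^2=-M$ with $M\gg0$, so $v$ is a bottleneck and $\calt\setminus v$ is the disjoint union of the two star-shaped halves $\calt'$ (containing $n_1$) and $\calt''$ (containing $n_2$). I would first show that the cohomological cycle $\widetilde Z$ of $\tX$ satisfies $(\widetilde Z)_v\le1$: if $(\widetilde Z)_v\ge 2$, then for $M$ large the line bundle $\calO_{E_v}(-(\widetilde Z-E_v))$ has nonnegative degree on $E_v\cong\bP^1$, so the exact sequence $0\to\calO_{E_v}(-(\widetilde Z-E_v))\to\calO_{\widetilde Z}\to\calO_{\widetilde Z-E_v}\to 0$ forces $h^1(\calO_{\widetilde Z-E_v})=h^1(\calO_{\widetilde Z})=p_g$, contradicting minimality of $\widetilde Z$. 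Taking $Z_1=Z-((Z)_v-1)E_v\ge\widetilde Z$ then gives $h^1(\calO_{Z_1})=p_g$ with $(Z_1)_v=1$, hence $d_{-E_v^*,Z}\le1$ and $h^1(\tX,\calL)\ge p_g-1$. (Moreover $d_{-E_v^*,Z}\ge1$ unless $p_g=p_g(\tX')+p_g(\tX'')$, since $\max_{(Z_1)_v=0}h^1(\calO_{Z_1})=h^1(\calO_{\tX(\calv\setminus v)})=p_g(\tX')+p_g(\tX'')$.) The exact value of $d_{-E_v^*,Z}$ is pinned down by Proposition~\ref{0dim} together with \eqref{eq:e2}: the vanishing asserted there is exactly the statement that there is no differential form in $H^0(\tX\setminus E,\Omega^2_{\tX})/H^0(\tX,\Omega^2_{\tX})$ with a pole on $E_v$ beyond those supported on the two halves, so that $\im(c^{-E_v^*}(Z))$ acquires no extra dimension.

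\emph{The natural side.} Here $v$ is an end-vertex of each half-graph $\calt_a=\calt'\cup\{v\}$ and $\calt_b=\calt''\cup\{v\}$, whose associated (weighted homogeneous, hence splice quotient) singularities $\tX_a,\tX_b$ satisfy $E_v^*\in\calS_{an}$. The difference $p_g-h^1(\calO_{\tX}(-E_v^*))$ I would compute by applying the surgery formula of Theorem~\ref{surgery} at the cut $\{v\}$ to the counting functions computing $h^1_{virt}(0)=p_g$ and $h^1_{virt}(E_v^*)=h^1(\calO_{\tX}(-E_v^*))$, which reduces it to the sum of the two differences $h^1(\calO_{Z_a})-h^1(\calO_{Z_a}(-E_v^*))$ and $h^1(\calO_{Z_b})-h^1(\calO_{Z_b}(-E_v^*))$ plus a nonnegative Mayer--Vietoris cokernel correction along $E_v$ (coming from the sequence $0\to\calO_Z(-l')\to\calO_{Z_a}(-l')\oplus\calO_{Z_b}(-l')\to\calO_{(Z)_vE_v}(-l')\to 0$, whose right-hand term has vanishing $H^1$ for $l'\in\{0,E_v^*\}$ since the filtration quotients on $E_v\cong\bP^1$ have nonnegative degree). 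By \eqref{eq:sqcount} applied to the splice quotient $\tX_a$ with the end-vertex $v$, the first difference equals $\sum_{l\ge0,\,l_v=0}z^{\calt_a}(E_v^*+l)$, and after transporting the Poincar\'e series of $\calt_a$ to that of $\calt_1$ by the surgery splitting used in Section~\ref{s:motspq} this equals $\sum_{l''\ge0,\,l''_{v_5}=l''_{n}=0}z^{\calt_1}(l'')=1$ by \eqref{eq:e1}; likewise for $\tX_b$. Hence $p_g-h^1(\calO_{\tX}(-E_v^*))\ge 1+1=2$, i.e. $h^1(\calO_{\tX}(-E_v^*))\le p_g-2$.

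\emph{Conclusion and main difficulty.} Combining the two estimates yields $h^1(\tX,\calL)\ge p_g-1>p_g-2\ge h^1(\calO_{\tX}(-E_v^*))$ for generic $\calL\in\im(c^{-E_v^*}(Z))$, which is the proposition. The technically delicate step is the last part of the previous paragraph: transporting the identities \eqref{eq:e1} and \eqref{eq:e2}, which are statements about the fixed graph $\calt_1$, to the glued graph $\calt$ (and, where the argument of Section~\ref{s:domspq} is imitated, to its blow-ups). This requires the surgery formula at $\{v\}$ together with careful bookkeeping of which coefficients of $Z^{\calt}$ survive the relevant partial sums — in the spirit of the computations of Section~\ref{s:motspq} and of the proof of Theorem~\ref{thm:domsplice} — and a control of the sign of the Mayer--Vietoris cokernel term; it is here that the value $1$ of \eqref{eq:e1} versus the value $0$ of \eqref{eq:e2} produces the strict gap.
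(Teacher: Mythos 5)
Your reduction and your ``generic side'' are fine and essentially coincide with the paper: the paper also gets $h^1(\tX,\calL)\geq p_g-1$ from the fact that the cohomological cycle has $E_v$--coefficient $1$ for $M\gg 0$ (via Proposition \ref{redcycle} and Theorem \ref{dimgen}); your detour through Theorem \ref{dimcomp} and the aside about Proposition \ref{0dim} and \eqref{eq:e2} is harmless but not needed. The genuine problems are on the ``natural side'', i.e.\ in the claim $h^1(\calO_{\tX}(-E_v^*))\leq p_g-2$, which is where all the content of the proposition sits. First, in your Mayer--Vietoris decomposition $0\to\calO_Z(-l')\to\calO_{Z_a}(-l')\oplus\calO_{Z_b}(-l')\to\calO_{(Z)_vE_v}(-l')\to 0$ the correction term is the difference of the two $H^0$--cokernels (untwisted minus twisted), and you simply assert that it is nonnegative. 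There is no monotonicity of these cokernels under twisting by $\calO_{\tX}(-E_v^*)$, and since the paper's exact answer is $p_g-h^1(\calO_{\tX}(-E_v^*))=2$ while your two half--contributions are claimed to be $1+1$, proving that this correction is $\geq 0$ is exactly as hard as the statement itself; nothing in the proposal addresses it.

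Second, the evaluation of the half--contributions is not justified. Equation \eqref{eq:sqcount} computes $\dim\bigl(H^0(\calO_{\tX_a}(-l'))/H^0(\calO_{\tX_a}(-l'-E_v))\bigr)$, an $h^0$--jump under twisting by the \emph{integral} cycle $E_v$; it is not the difference $h^1(\calO_{Z_a})-h^1(\calO_{Z_a}(-E_v^*))$ under twisting by the rational cycle $E_v^*$, and moreover the restriction of $\calO_{\tX}(-E_v^*(\calt))$ to $Z_a\subset\tX$ is not the natural line bundle of the half--graph. Also the halves of $\calt$ carry $v$ with self--intersection $-M$, whereas \eqref{eq:e1} and \eqref{eq:e2} are statements about $\calt_1$ (where $v_5$ has self--intersection $-1$); transporting them is precisely the nontrivial bookkeeping, and you defer it. Note that in your count the vanishing \eqref{eq:e2} plays no role at all, which is a symptom that the mechanism is missed: in the paper's argument one blows up at $v$, evaluates the quasipolynomial $\mathfrak{Q}^{\calt_{new}}_{h,new}(E_{new})$ in two ways (by the surgery formula of Theorem \ref{surgery}, giving $\mathfrak{sw}^{norm}_h(\calt)-\mathfrak{sw}^{norm}_0(\calt')+1$, and by Theorem \ref{depth} together with the explicit decomposition $C_{1,0}C_{2,1}+C_{1,1}C_{2,0}-C_{1,0}C_{2,0}=-1$, where $C_{i,0}=1$ comes from \eqref{eq:e1} and $C_{i,1}=0$ from \eqref{eq:e2}), and shows $p_g=\mathfrak{sw}^{norm}_0(\calt')$ via Theorem \ref{dualcount}; the combination yields the equality $h^1(\calO_{\tX}(-l'_c))=\mathfrak{sw}^{norm}_h(\calt)=p_g-2$. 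Your ``$1+1$'' reconstruction is numerically consistent with this but is not backed by a valid identity, so as it stands the proof has a gap at its decisive step.
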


\begin{proof}
Notice first that, using the calculation of coefficients of the dual cycles from eg. \cite[7.1]{LN}, as the value of $M$ is increased, all the coeficcients of $ E_v^*$ decrease. This means that $M$ can be chosen such a way that $E_v^* = r_h \in L'$, where $h = [E_v^*]$.

We show first the inequality $h^1(\tX, \calL) \geq p_g - 1$.  
Indeed, we know from Proposition \ref{redcycle} that $\dim( \im(c^{-l'_c}(Z_{coh}))) = \dim( \im(c^{-l'_c}(Z)))$, where $Z_{coh}$ is the cohomological cycle and $Z$ is large. On the other hand $(Z_{coh})_v = 1$ since  $-M$ is very low, so we have $\dim( \im(c^{-l'_c}(Z_{coh}))) \leq -(Z_{coh}, E_v^*) = 1$. This implies $h^1(\tX, \calL) \geq p_g - 1$ by Theorem \ref{dimgen}. 

Secondly, we will prove in the following that
\begin{equation}\label{eq:claim2}
 h^1(\tX, \calO_{\tX}(-l'_c)) = p_g - 2.
\end{equation} 
The next computation will also show the essence of this counterexample.

First, we observe that by the above discussion one has $h^1(\tX, \calO_{\tX}(-l'_c))  = h^1(\tX, \calO_{\tX}(-r_h)) = \mathfrak{sw}_{h}^{norm}(\calt)$. 

We blow up $\mathcal{T}$ at the vertex $v$ and let us denote the new resolution graph by $\mathcal{T}_{new}$ with the newly created vertex $v_{new}$ and exceptional divisor $E_{new}$. We will use notations $L(\calt_{new}), L'(\calt_{new})$ for the corresponding objects associated with $\calt_{new}$.

Note that we also have $r_h = E_v^*\in L'(\calt_{new})$, where $h=[E_v^*]\in L'(\calt_{new})/L(\calt_{new})$ and one has 
\begin{equation}\label{eq:blowup}
E^*_v + E_{new} = E_{new}^*.
\end{equation}
Consider the following counting function defined in (\ref{eq:countintro}):

 \begin{equation*}\label{eq:countintro2}  
Q^{\calt_{new}}_{h,new}: L'_{h}(\calt_{new})\to \bZ, \ \ \ \ Q^{\calt_{new}}_{h,new}(l'_0):=\sum_{l'_{new} \ngeq l'_{0,new},   
\, [l']=[l'_0]} z^{\calt_{new}}(l'), 
\end{equation*}
where $l'_{new}:=l'|_{E_{new}}$. 
Note that $Q^{\calt_{new}}_{h,new}$ is the counting function of the reduced Poincar\'e series $Z^{\calt_{new}}_{h,new}(t_{new})$, where $t_{new}$ is set for the variable associated with the vertex $v_{new}$.

Let us denote the subgraph of $\calt_{new}$ with vertex set $\calv_{new} \setminus v_{new}$ by $\mathcal{T}'$, and let $M'$ be the negative definite plumbed 3-manifold associated with $\calt'$.

Assume that $l'_0=\sum_{u\in\calv} a_uE^*_u\in L'$ and all $a_u$ are sufficiently large. Then if we write $l'_0=r_h+l$ where $l\in L_{new}$ and $h=[l'_0]$, by Theorem \ref{th:NJEMSThm} $Q^{\calt_{new}}_{h,new}(l'_0)$ equals
 a quasipolynomial  $\mathfrak{Q}^{\calt_{new}}_{h,new}(l)$  defined on $L_{new}$. Moreover, the surgery formula Theorem \ref{surgery} can also be applied and it gives 
 
\begin{eqnarray*}\label{eq:QPrestr}
\mathfrak{Q}^{\calt_{new}}_{h,new}(l):=-
\mathfrak{sw}_{-h*\sigma_{can}}(M)-\frac{(-Z_K+2r_h+2l)^2+|\mathcal{V}|}{8}
+ \mathfrak{sw}_{-[j^*(r_h+l)]*\sigma_{can'}}(M') \\ +\frac{(-Z_K^{\calt'} +
 2j^*(r_h+l))^2+|\mathcal{V}(\calt')|}{8}.
 \end{eqnarray*} 

We apply this surgery formula for the case $l'_0=r_h+l$, where $r_h=E^*_v$ as above and $l = E_{new} + t \cdot x$, for some  $t$ very large and divisible by $|H|$. Then taking the constant terms in $x$ on both sides we get 

\begin{equation*}
 \mathfrak{Q}^{\calt_{new}}_{h,new}(E_{new}) = \mathfrak{sw}^{norm}_h(\calt) + \chi(E_{new}^*) - \chi(E_v^*) -\mathfrak{sw}^{norm}_{0}(\calt'),
\end{equation*}
hence by (\ref{eq:blowup}) one gets
\begin{equation}
\mathfrak{Q}^{\calt_{new}}_{h,new}(E_{new}) = \mathfrak{sw}^{norm}_h(\calt)  -\mathfrak{sw}^{norm}_{0}(\calt')+1.
\end{equation}

Finally, we claim that $p_g = \mathfrak{sw}^{norm}_{0}(\calt')$ and $\mathfrak{Q}^{\calt_{new}}_{h,new}(E_{new}) = -1$, which would imply the desired equation (\ref{eq:claim2}).

For the first, we use the following identity given by Theorem \ref{surgery}:
\begin{equation*}
p_g = \mathfrak{sw}^{norm}_{0}(M) = \mathfrak{sw}^{norm}_{0}(M') +  \textnormal{pc}(Z^{\calt_{new}}_{0}(t_{new})).
\end{equation*}
So it is enough to prove $\textnormal{pc}(Z^{\calt_{new}}_{0}(t_{new}))=0$.

By Theorem \ref{dualcount} we can express the periodic constant of the $(h=0)$-part of the topological Poincar\'e series as a finite sum of coefficients of the $(h=Z_K)$-part of the topological Poincar\'e series in the following way

\begin{equation*}
\textnormal{pc}(Z^{\calt_{new}}_{0,new}(t_{new})) = \sum_{[l'] = [Z_K^{\mathcal{T}_{new}}], l'_{new} < (Z_K^{\mathcal{T}_{new}})_{new}} z^{\mathcal{T}_{new}}(l').
\end{equation*}
Then  $\textnormal{pc}(Z^{\calt_{new}}_{0,new}(t_{new})) =0$ follows, since $(Z_K^{\mathcal{T}_{new}})_{new} = (Z_K - E)_v$  is very small if the self-intersection $-M$ is low enough.

Now we prove the equality $\mathfrak{Q}^{\calt_{new}}_{h,new}(E_{new}) = -1$.

By Theorem \ref{depth} we know that if $l' = r_h + l \in \sum_{v \in \calv} (\delta_v - 2) E_v^* + int(S')$, then we have

\begin{equation*}
\mathfrak{Q}^{\calt_{new}}_{h, new}(l) = Q^{\calt_{new}}_{h, new}(l') = \sum_{[l''] = [l'], l''_{new} \ngeq l'_{new}} z^{\mathcal{T}_{new}}(l'').
\end{equation*}

Consider the cycle $l' = E_{new}^* + E - E_{new} + 3 E_{n_1} + 3 E_{n_2}$. 
If the selfintersection numbers $-N, -M$ are low enough, then $l'$ definitely satisfies the condition of Theorem \ref{depth}, hence we get

\begin{equation}
\mathfrak{Q}^{\calt_{new}}_{h,new}(E_{new}) = Q_{h, new}(l') = \sum_{l \geq 0, l_{new} = 0} z^{\mathcal{T}_{new}}(E_v^* + l).
\end{equation}
Thus,  we have to prove that $\sum_{l \geq 0, l_{new} = 0} z^{\mathcal{T}_{new}}(E_v^* + l) =  \sum_{l \geq 0, l_{new} = 0, l_v = 0} z^{\mathcal{T}_{new}}(E_v^* + l) = -1$.  

Indeed, one can see the decomposition (for a more precise explanation of this decomposition see section 8):

\begin{equation}
\sum_{l \geq 0, l_{new} = 0, l_v = 0} z^{\mathcal{T}_{new}}(E_v^* + l) = C_{1, 0} \cdot C_{2, 1} + C_{1, 1} \cdot C_{2, 0} - C_{1, 0} \cdot C_{2, 0},
\end{equation}
where we have $$C_{i, 0} = \sum_{[ l''] = 0, l'' \geq 0, l''_{v_{i, 5}} = l''_{i, n} = 0} z^{\mathcal{T}_{i}}(l'') = 1$$ and $$C_{i, 1} = \sum_{[ l''] =[ l'_{i , 2}] , l'' \geq l'_{i, 2}, l''_{v_{i, 5}} = (l'_{i, 2})_{v_{i, 5}},  l''_{n_i} = (l'_2)_{n_i}} z^{\mathcal{T}_{i}}(l'')  = 0,$$ implied by the equations 
(\ref{eq:e1}) and (\ref{eq:e2}).
\end{proof}

\section{Dominance property of natural line bundles on weighted homogenous singularities}\label{s:domwh}

In this section we show that, unlike the more general case of splice quotient singularities,  if we have a weighted homogenous singularity $(X, 0)$ with resolution $\tX$ and a cycle $l' \in S'$, then one has the inequality $$h^1(\tX, \calL) \leq h^1(\calO_{\tX}(-l'))$$ for any line bundle $\calL \in \pic^{-l'}(\tX)$. 

In the following, first we give a brief introduction to the resolution of weighted homogenous singularities.

\subsection{Preliminaries}\label{ss:WHprel} Let $(X,0)$ be a normal weighted homogeneous surface singularity. It is the germ at the origin of an affine variety $X$ with good $\mathbb{C}^*$-action, which means that its affine coordinate ring is $\mathbb{Z}_{\geq 0}$--graded. The dual graph $\calt$ of the  minimal good resolution is star--shaped, and the $\mathbb{C}^*$-action
of the singularity induces an $S^1$--Seifert action on the link of the singularity. In particular, 
the link of $(X,0)$ is a negative definite Seifert 3--manifold characterized by its
normalized Seifert invariants $Sf=(-b_0,g;(\alpha_j,\omega_j)_{j=1}^\nu)$, see eg. \cite{neumann}. Furthermore, similarly as before we assume that the link is  a rational homology sphere, hence $g=0$.

More precisely, if we write $v_0$ for the central vertex, then $\Gamma\setminus v_0$ consists of $\nu$ legs, each leg consisting of $s_j$ vertices. Here we assume that $\nu\geq 3$. Then $-b_0$ is the  Euler number of $v_0$, and the Euler numbers of the vertices  $v_{ji}$ of the $j^{th}$ leg $(1\leq j\leq \nu)$, denoted by 
$-b_{j1},\ldots, -b_{js_j}$ with $b_{ji}\geq 2$, can be 
determined by the Hirzebruch–Jung negative continued fraction
$\alpha_j/\omega_j=[b_{j1}, \dots, b_{js_j}]$,  where
$\gcd(\alpha_j,\omega_j)=1, \ 0<\omega_j<\alpha_j$. On each leg $v_{j1}$ is connected to $v_0$ by an edge. For any $j$, we also introduce $0<\omega_j'<\alpha_j$ such that
$\omega_j\omega_j'-1= \alpha_j\tau_j$ for some $\tau_j$.

We denote by $E_0$ and $E_{ji}$ the irreducible exceptional curves indexed by vertices $v_0$ and $v_{ji}$ . Let $P_j$ ($1\leq j\leq \nu$) be
$E_{v_0}\cap  E_{j1}$. Then it is known the following classification result, cf. \cite{Dolg,Pink}.
\begin{theorem}[\bf Analytic Classification
Theorem]\label{th:ACTh} 
The analytic isomorphism type of a normal surface weighted
homogeneous singularity (with rational homology sphere link)
with fixed Seifert invariants
is determined by the analytic type of
$(E_0,\{P_j\}_j)$
 modulo an action of ${\rm Aut}(E_0,\{P_j\}_j)$.
 (This is the same as the analytic classification of  Seifert
 line bundles over the projective line.)
 \end{theorem}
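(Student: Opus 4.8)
\emph{Plan of proof.} The strategy is the classical Dolgachev--Pinkham description of $\mathbb{C}^*$-surfaces via orbifold data on the central curve. The first step is to reconstruct $(X,0)$ from such data. Since $(X,0)$ carries a good $\mathbb{C}^*$-action, it is the germ of the affine variety $X=\mathrm{Spec}(R)$ for a finitely generated normal $\mathbb{Z}_{\geq 0}$-graded domain $R=\bigoplus_{k\geq 0}R_k$ of dimension two with $R_0=\mathbb{C}$. By the Demazure--Pinkham construction, $\mathrm{Proj}(R)$ is a smooth projective curve $C$, which is forced to be $C\cong E_0\cong \mathbb{P}^1$ because the link is a rational homology sphere (so $g=0$), and there is a $\mathbb{Q}$-divisor $D$ on $E_0$ supported on $P_1,\dots,P_\nu$ together with one auxiliary point $Q$, such that $R_k=H^0\big(E_0,\mathcal{O}_{E_0}(\lfloor kD\rfloor)\big)$ for all $k\geq 0$. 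Writing $D=-b_0[Q]+\sum_{j=1}^{\nu}(\omega_j/\alpha_j)[P_j]$, the integer $b_0$, the denominators $\alpha_j$ and the residues $\omega_j$ are exactly the normalized Seifert invariants, while Pinkham's resolution computation shows that the minimal good resolution of $\mathrm{Spec}(R)$ has precisely the star-shaped graph attached to $Sf$. Moreover the choice of $Q$ is immaterial, since on $\mathbb{P}^1$ any two divisors of the same degree are linearly equivalent and hence yield the same spaces $H^0$. Thus, for fixed $Sf$, the only remaining parameter in $R$ is the analytic type of the configuration $(E_0,\{P_j\}_j)$ (the assignment of the weights $\omega_j$ to the points being already part of the fixed Seifert data).

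The second step promotes this to a classification up to isomorphism. The ``if'' direction is immediate: an element of $\mathrm{Aut}(E_0,\{P_j\}_j)$ compatible with the weights carries $D$ to a divisor with the same fractional parts at corresponding points, hence induces a graded isomorphism $R(D)\xrightarrow{\sim}R(D')$ and therefore an analytic isomorphism of germs. For the converse I would use that the good $\mathbb{C}^*$-action on a weighted homogeneous normal surface singularity is unique up to conjugation by an analytic automorphism of $(X,0)$ -- equivalently, that $\mathbb{C}^*$ is, up to conjugacy, the unique maximal torus in $\mathrm{Aut}(X,0)$. Granting this, any analytic isomorphism $(X,0)\to(X',0)$ can be composed with an automorphism so as to become $\mathbb{C}^*$-equivariant, hence grading-preserving, hence it descends to an isomorphism $\mathrm{Proj}(R)\to\mathrm{Proj}(R')$ matching $\mathcal{O}(\lfloor kD\rfloor)$ with $\mathcal{O}(\lfloor kD'\rfloor)$ for all $k$; reading off the fractional parts at the orbifold points, this is exactly an isomorphism $E_0\to E_0'$ taking the weighted configuration $\{(P_j,\omega_j)\}$ to $\{(P_j',\omega_j')\}$. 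So the analytic isomorphism classes with fixed $Sf$ are precisely the orbits of the configurations of the $P_j$ on $\mathbb{P}^1$ under $\mathrm{Aut}(E_0,\{P_j\}_j)$.

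For the last claim, the identification with the classification of Seifert line bundles over $\mathbb{P}^1$ follows from the same bookkeeping: fixing the orbifold structure $(\mathbb{P}^1,\{(\alpha_j,\omega_j)\})$ and the Euler number $-b_0$, a Seifert $\mathbb{C}^*$-bundle over this orbifold is recorded by exactly the divisor $D$ above modulo the same automorphisms, and $X\setminus\{0\}$ is the total space of the associated Seifert bundle; thus the two moduli problems literally coincide. I expect the genuine obstacle to be the uniqueness-of-the-$\mathbb{C}^*$-action step -- i.e.\ ruling out that two non-conjugate gradings produce isomorphic germs. This is handled either by the rigidity of the star-shaped resolution graph (the central curve $E_0$ is intrinsically distinguished as the unique exceptional component of valency $\nu\geq 3$, and every analytic automorphism of the germ lifts to the minimal good resolution, permutes the exceptional components preserving the graph, hence fixes $E_0$ and the set $\{P_j\}$), or by the structure theory of automorphism groups of normal graded surface singularities. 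Once this input is secured, the remaining steps are routine manipulations with divisors on $\mathbb{P}^1$.
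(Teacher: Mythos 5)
Your argument is, in substance, the classical Demazure--Dolgachev--Pinkham proof, which is exactly what the paper relies on: Theorem \ref{th:ACTh} is not proved in the paper but quoted from \cite{Dolg,Pink}, and your route (reconstruction of the germ from the graded ring $R_k=H^0\big(E_0,\mathcal{O}(\lfloor kD\rfloor)\big)$ with $D$ determined by the Seifert data and the points $P_j$, plus lifting any isomorphism of germs to the minimal good resolution, where the central curve and the points $E_0\cap E_{j1}$ are intrinsically distinguished since $\nu\geq 3$, which settles the equivariance/uniqueness-of-torus issue you flag) is that same argument. The only slip is the sign convention: with the normalized Seifert invariants $(-b_0;(\alpha_j,\omega_j)_j)$ and negative orbifold Euler number one must take $D=b_0[Q]-\sum_j(\omega_j/\alpha_j)[P_j]$ (so $\deg D>0$); with your $D=-b_0[Q]+\sum_j(\omega_j/\alpha_j)[P_j]$ all graded pieces $R_k$, $k>0$, would vanish, but this does not affect the structure of the argument.
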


Next, we recall from \cite{NNA1} how to construct for any weighted homogeneous singularity $(X,0)$ the minimal good resolution
$\tX$ by an `analytic plumbing'.

Corresponding to the legs we fix distinct complex numbers $p_j\in \C$ as the affine coordinates of the
points $P_j$. Each leg,  with divisors $\{E_{ji}\}_{i=1}^{s_j}$,
$1\leq j\leq \nu$,
will be covered by open sets $\{U_{j,i}\}_{i=0}^{s_j}$, copies of $\C^2$
with coordinates $(u_{j,i},v_{j,i})$.
For each $1\leq i\leq s_j$ we glue $U_{j,i-1}\setminus \{u_{j,i-1}=0\}$ with
$U_{j,i}\setminus \{v_{j,i}=0\}$.
The gluing maps are
$v_{j,i}=u^{-1}_{j,i-1}$ $(1\leq i\leq s_j)$ and
$u_{j,i}$ equals $u^{b_{ji}}_{j,i-1}v_{j,i-1}$ for $2\leq i\leq s_j$ and
$u^{b_{j1}}_{j,0}(v_{j,0}-p_j)$ for $i=1$.

Furthermore, all $U_{j,0}$ charts will be identified to each other:
$u_{j,0}=u_{k,0},\ v_{j,0}=v_{k,0}$; denoted simply by $U_0$, with coordinates
$(u_0,v_0)$. Till now, the curve $E_{v_0}$ appears only in $U_0$, it has equation $u_0=0$.
In order to cover $E_{v_0}$ completely we need another copy $U_{-1}$ of $\C^2$ with coordinates
$(u_{-1},v_{-1})$ as well; the gluing of $U_0\setminus \{v_0=0\}$ with $U_{-1}
\setminus \{u_{-1}=0\}$ is  $v_0=u_{-1}^{-1}$, $u_0=u_{-1}^{b_0}v_{-1}$.

The curve $E_{j, i}$ appears on the $U_{j, i}$ chart as $u_{j, i} = 0$ and on the $U_{j, i-1}$ chart as $v_{j, i-1} = 0$ if $i >1$ and $v_{j, 0} = p_j$ if $i = 1$.

Let us denote the output space by $\widetilde{X^a}$. If we contract (analytically) 
$E=E_{v_0}\cup (\cup_{j,i}E_{ji})$ we get a germ of a
normal surface singularity $(X,0)$. In this context, $\tX$
(as a subset of $\widetilde{X^a}$) is  the pullback of a small Stein neighbourhood of $0$.
The following statement is proved in \cite{NBOOK}, basically it
follows from Theorem \ref{th:ACTh} and
from the fact that if we blow down the legs the obtained space carries naturally a Seifert
line bundle structure over the projective line.

\begin{proposition}[\cite{NBOOK}]\label{lem:SpPl}
 The analytic structure on $(X,0)$ carries  a weighted homogeneous
 structure.
 Moreover, the minimal good resolution of any
weighted homogeneous singularity with
Seifert invariants  $(b_0,g=0;\{(\alpha_j,\omega_j)\}_j)$
admits such an analytic plumbing representation for
certain constants $\{p_j\}_j$.
By Theorem \ref{th:ACTh} we can even assume that each $p_j$ is non--zero
(what we will assume below).
\end{proposition}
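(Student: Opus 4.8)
The statement has three ingredients: (i) the germ $(X,0)$ produced by the plumbing construction of~\ref{ss:WHprel} carries a good $\C^*$-action; (ii) conversely every weighted homogeneous germ with the prescribed Seifert data is realised by such a plumbing for suitable constants $\{p_j\}_j$; (iii) the $p_j$ may be taken nonzero. I would treat them in this order.

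For (i), the plan is not to argue directly on $\widetilde{X^a}$ but to contract only the $\nu$ legs first. Each leg is a Hirzebruch--Jung string with continued fraction $\alpha_j/\omega_j=[b_{j1},\dots,b_{js_j}]$ and $b_{ji}\geq 2$, so it contracts to a cyclic quotient singularity at the point $P_j$ on the remaining smooth rational curve $E_{v_0}\cong\P^1$; the transversal type at $P_j$ is the one encoded by the pair $(\alpha_j,\omega_j)$ (its inverse $\omega_j'$ modulo $\alpha_j$ being the quantity that enters the orbifold structure below). Call the resulting normal surface $Y$. I would then recognise $Y$, in a neighbourhood of the image of $E_{v_0}$, as a neighbourhood of the zero section of a Seifert $\C^*$-bundle over the orbifold $\P^1$ with cone points of orders $\alpha_j$ at the $P_j$ and with Euler number prescribed by $-b_0$; the combinatorics of the chart gluings in~\ref{ss:WHprel} are precisely the standard cocycle description of such a Seifert bundle. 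The fibrewise $\C^*$-action on $Y$ is equivariant for the contraction $Y\to X$, with $0$ as its only fixed point, so it descends to a good $\C^*$-action on $(X,0)$; concretely, in the central chart $U_0$ one writes the action as $t\cdot(u_0,v_0)=(t^{a}u_0,v_0)$ for the appropriate weight $a$ (read off from how the equation of $E_{v_0}$ sits, i.e.\ from $\operatorname{lcm}(\alpha_j)$ and the $\alpha_j$), and one checks that this extends holomorphically across $U_{-1}$ via $v_0=u_{-1}^{-1}$, $u_0=u_{-1}^{b_0}v_{-1}$ and across each $U_{j,i}$ via the given transitions. This yields the $\mathbb{Z}_{\geq 0}$-grading of $\mathcal{O}_{X,0}$, i.e.\ the weighted homogeneous structure.

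For (ii), I would invoke the Analytic Classification Theorem~\ref{th:ACTh}: with the Seifert invariants fixed, the analytic isomorphism type is determined by the pair $(E_0,\{P_j\}_j)$ up to $\mathrm{Aut}(E_0,\{P_j\}_j)$, equivalently by a configuration of $\nu$ distinct points on $\P^1$ up to $\mathrm{PGL}_2(\C)$. The plumbing of~\ref{ss:WHprel} run with affine coordinates $\{p_j\}_j$ realises exactly the Seifert $\C^*$-bundle whose base orbifold carries its cone points at $v_0=p_j$; letting the $p_j$ range over all tuples of distinct complex numbers sweeps out every configuration up to $\mathrm{PGL}_2$, hence by~\ref{th:ACTh} every analytic type with the given invariants. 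For (iii): since $\nu\geq 3$ and $\mathrm{PGL}_2(\C)$ is $3$-transitive on $\P^1$, a generic $g\in\mathrm{PGL}_2(\C)$ moves the finite configuration $\{P_j\}$ off both $0$ and $\infty$ (the Möbius maps carrying some $P_j$ to $0$ or to $\infty$ form a proper closed subset of $\mathrm{PGL}_2$); replacing $\{p_j\}$ by $\{g(p_j)\}$ changes nothing up to isomorphism by~\ref{th:ACTh}, so all $p_j$ may be assumed nonzero.

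The part I expect to require the most care is (i): matching the purely combinatorial Seifert invariants $(-b_0,0;(\alpha_j,\omega_j)_j)$ to the analytic $\C^*$-geometry of $Y$ — verifying that the leg contractions produce the cyclic quotient singularities with the correct transversal parameters and that the glued bundle has Euler number exactly $-b_0$ — and then checking that the fibrewise action genuinely extends as a biholomorphic $\C^*$-action across the chart $U_{-1}$ covering the point at infinity of $E_{v_0}$. Everything in (ii) and (iii) is then formal, given~\ref{th:ACTh}.
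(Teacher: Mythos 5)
Your proposal is correct and follows essentially the same route the paper indicates: the paper does not prove this statement itself but cites \cite{NBOOK}, remarking only that it follows from Theorem \ref{th:ACTh} together with the observation that blowing down the legs yields a space carrying a natural Seifert line bundle structure over the projective line — exactly the two ingredients you develop (the fibrewise $\C^*$-action on the contracted Seifert bundle descending to $(X,0)$, and the classification theorem handling realization and the normalization $p_j\neq 0$ via $\mathrm{Aut}(E_0,\{P_j\}_j)$). Your elaboration of the leg contractions and the extension of the action across $U_{-1}$ is a reasonable filling-in of the details the paper delegates to the reference.
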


The $\C^*$-orbits lifted to $\widetilde{X^a}$ and closed are as follows:
the generic ones, which intersect $E_{0}$ sit in $U_0\cup U_{-1}$ and
are given by $\{v_0=c\}$, where 
$c\in(\C\setminus \{\cup_j\{p_j\}\})\cup \infty$. The special Seifert orbit for each $j$ in $U_{j,s_j}$ is given by $\{v_{j,s_j}=0\}$.

On the $U_0$ chart for generic $v_0$ the $\C ^*$-action can be described as $t* (u_0,v_0) = (t \cdot u_0, v_0)$.

\subsubsection{\bf Description of a basis for $H^0(\calO_{\tX}( K + [Z_K]))/H^0(\calO_{\tX}(K))$. } \label{ss:whbasis}

In the case of weighted homogenous singularities a basis for $H^0(\calO_{\tX}( K + [Z_K]))/H^0(\calO_{\tX}(K)) = H^1(\calO_{\tX})^*$ can be explicitely described. In the sequel we preent this description following \cite[12.2]{NNA1}.

First of all, by Laufer duality from \cite{Laufer72} one can identify the dual space $H^1(\calO_{\tX})^*$ with the space of global 
holomorphic 2-forms on $\widetilde{X}\setminus E$ modulo the subspace of those forms which can be extended holomorphically over $X$. Therefore, we will describe a basis for $H^0(\widetilde{X}\setminus E,\Omega_{\widetilde{X}}^2)/H^0(\widetilde{X},\Omega_{\widetilde{X}}^2)$.

For $\ell,\, \{m_j\}_j\in \Z$, $n\in\Z_{\geq 0}$, we consider the form  $\omega_{\ell,n}^{0}:= u_0^{-\ell-1}\prod_j
(v_0-p_j)^{-m_j}v_0^ndv_0\wedge du_0$ viewed as a section of
$\calO_{\tX}(K)$ over $U_0$, with possible poles over $E\cap U_0$. 
This, under the transformation $v_0=u_{-1}^{-1}$, $u_0=u_{-1}^{b_0}v_{-1}$, transforms
into the following form on $U_{-1}$:
$$\pm\,u_{-1}^{-b_0\ell+\sum m_j-n-2}
v_{-1}^{-\ell-1}\textstyle{\prod_j} (1-u_{-1}p_j)^{-m_j}du_{-1}
\wedge dv_{-1}.$$
Furthermore, the regularity over $\tX\setminus E$ 
implies  that the exponent of $u_{-1}$ is non-negative, hence 
\begin{equation}\label{eq:whforms1}
n\leq - b_0\ell-2 +\textstyle{\sum_j}m_j.
\end{equation}

If we fix one of the legs, say $j$, by induction using substrings of the legs and the corresponding continued
fraction identities (facts used intensively in cyclic quotient
invariants computations) one gets that the transformation between chart $U_0$ and
$U_{j,s_j}$ is $u_0=u_{j,s_j}^{-\tau_j} v_{j,s_j}^{-\omega_j}$, $v_0=u_{j,s_j}^{\omega_j'}v_{j,s_j}^{\alpha_j}$. 
Then, $\omega_{\ell,n}^{0}$ in the chart $U_{j,s_j}$ under this  transformation becomes
$$
u_{j,s_j}^{\tau_j\ell -\omega_j'm_j+\omega_j'-1}
v_{j,s_j}^{\omega_j\ell -\alpha_jm_j+\alpha_j-1}
(u_{j,s_j}^{\omega_j'} v_{j,s_j}^{\alpha_j}+p_j)^n \cdot
\textstyle{\prod _{j'\not=j}}
(u_{j,s_j}^{\omega_j'} v_{j,\s_j}^{\alpha_j}+p_{j'}-p_j)^{-m_j}\, dv_{j,s_j}
\wedge du_{j,s_j}.
$$
Again, by the regularity over $\tX\setminus E$, the exponent of
$v_{j,s_j}$ should be non-negative, hence one gets the inequality 
$$\omega_j\ell -\alpha_jm_j+\alpha_j-1\geq 0,$$
for which the largest solution for $m_j$ is 
\begin{equation}
m_j=   \left \lceil{\omega_j\ell/\alpha_j}\right \rceil.
\end{equation}

In the sequel let us fix this maximal value for $m_j$. Thus, the form $\omega_{\ell,n}^{0}$ extends to a form $\omega_{\ell,n}$
on $\tX$ regular on $\tX\setminus E$, if for $m_j:=   \left \lceil{\omega_j\ell/\alpha_j}\right \rceil$ (for all $j$)
the inequality (\ref{eq:whforms1}) holds.

If $\ell< 0$ then $m_j=  \left \lceil{\omega_j\ell/\alpha_j}\right \rceil \leq 0$, hence
the form $\omega_{\ell,n}$ is regular on  $\tX$, hence in
$H^0(\calO_{\tX}( K + [Z_K]))/H^0(\calO_{\tX}(K))$ it is zero. Therefore, we consider only the values $\ell \geq 0$ and associated with them we set:

$$n_{\ell}:= - b_0\ell-2 +\textstyle{\sum_j}\left \lceil{\omega_j\ell/\alpha_j}\right \rceil.$$
If $n_{\ell}<0$ then there is no such form with pole $\ell+1$
along $E_{0}$, cf. (\ref{eq:whforms1}). Hence, we consider the set $\calw:=\{\ell\geq 0\,:\, n_{\ell}\geq 0\}$. Notice that if $\ell = 0$, then $n_{\ell} = -2$, which means that $\calw$ contains only positive integers. Then, one can prove the following result:

\begin{lemma}[\cite{NBOOK,NNA1}]\label{lem:whforms}
The forms $\omega_{\ell,n}$ associated with any $\ell\in\calw$ and $0\leq n\leq n_{\ell}$ 
form a basis of $H^0(\calO_{\tX}( K + [Z_K]))/H^0(\calO_{\tX}(K))$.
\end{lemma}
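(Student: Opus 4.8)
The plan is to combine Laufer duality with the grading coming from the $\C^*$-action induced by the weighted homogeneous structure. By Laufer's theorem (cf. \cite{Laufer72}, as recalled in \S\ref{ss:whbasis}) the quotient $H^0(\calO_{\tX}(K+[Z_K]))/H^0(\calO_{\tX}(K))$ is identified with $H^0(\tX\setminus E,\Omega^2_{\tX})/H^0(\tX,\Omega^2_{\tX})\cong H^1(\calO_{\tX})^*$, so it is finite dimensional of dimension $p_g$; and since the $\C^*$-action on $X$ lifts to $\tX$, it is graded with homogeneous pieces the $\C^*$-eigenspaces. In the chart $U_0$, where $t*(u_0,v_0)=(tu_0,v_0)$, the coordinate $u_0$ and the form $du_0$ have weight $1$ while $v_0,dv_0$ have weight $0$, so $\omega_{\ell,n}^{0}$ has weight $-\ell$, \emph{independently of $n$}. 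Thus all the forms attached to a fixed $\ell$ lie in the weight-$(-\ell)$ eigenspace, and forms attached to different $\ell$ lie in different eigenspaces.

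\emph{Linear independence.} Because forms with distinct $\ell$ lie in distinct $\C^*$-weight spaces, it is enough to check, for each fixed $\ell\in\calw$, that $\omega_{\ell,0},\dots,\omega_{\ell,n_\ell}$ are independent modulo $H^0(\tX,\Omega^2_{\tX})$. A nontrivial combination equals $u_0^{-\ell-1}\prod_j(v_0-p_j)^{-m_j}g(v_0)\,dv_0\wedge du_0$ on $U_0$ with $g\neq 0$ a polynomial of degree $\le n_\ell$ (the powers $v_0^n$, $0\le n\le n_\ell$, being linearly independent), hence it has pole order exactly $\ell+1\ge 2$ along $E_0$. A holomorphic $2$-form on $\tX$ has no pole along $E_0$ (near a smooth point of $E_0$ the sheaf $\Omega^2_{\tX}$ is generated by $du_0\wedge dv_0$), so such a combination is nonzero in the quotient, and global linear independence follows.

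\emph{Spanning.} Decomposing an arbitrary class into $\C^*$-homogeneous pieces, it suffices to show that a homogeneous class of weight $-\ell$ is a combination of the $\omega_{\ell,n}$. Represent it by $\omega$ regular on $\tX\setminus E$; on the generic part $U_0\cup U_{-1}$, which covers $E_0$, the only exceptional curves are $E_0$ and the $\{v_0=p_j\}$, so homogeneity together with the shape of $\Omega^2_{\tX}$ forces $\omega|_{U_0}=u_0^{-\ell-1}\prod_j(v_0-p_j)^{-m_j}h(v_0)\,dv_0\wedge du_0$ with $h$ entire and $m_j\ge 0$ (if $\ell<0$ then $\lceil\omega_j\ell/\alpha_j\rceil\le 0$ and $\omega$ is already regular on $\tX$, hence zero in the quotient, so we may take $\ell\ge 0$). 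Regularity over the end $U_{j,s_j}$ of the $j$-th leg, through the transformation $u_0=u_{j,s_j}^{-\tau_j}v_{j,s_j}^{-\omega_j}$, $v_0=u_{j,s_j}^{\omega_j'}v_{j,s_j}^{\alpha_j}$ recorded above, forces $m_j\le\lceil\omega_j\ell/\alpha_j\rceil$, and regularity over $U_{-1}$, through $v_0=u_{-1}^{-1}$, $u_0=u_{-1}^{b_0}v_{-1}$, forces $h$ to be a polynomial of degree $\le -b_0\ell-2+\sum_j m_j$. Writing $\prod_j(v_0-p_j)^{-m_j}h(v_0)=\prod_j(v_0-p_j)^{-M_j}\cdot\big(\prod_j(v_0-p_j)^{M_j-m_j}h(v_0)\big)$ with $M_j=\lceil\omega_j\ell/\alpha_j\rceil$ exhibits $\omega$ as a $\C$-linear combination of $\omega_{\ell,0},\dots,\omega_{\ell,n_\ell}$ (the new polynomial has degree $\le -b_0\ell-2+\sum_j M_j=n_\ell$), and in particular forces $n_\ell\ge 0$, i.e. $\ell\in\calw$. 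Hence the $\omega_{\ell,n}$ span, so they are a basis; as a byproduct one recovers the known formula $p_g=\sum_{\ell\in\calw}(n_\ell+1)$ for weighted homogeneous singularities.

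\emph{Main obstacle.} The delicate point is the regularity analysis on the leg charts: one has to verify that a homogeneous $2$-form regular on $\tX\setminus E$ admits no pole behaviour other than the one above, and then extract the \emph{sharp} bound $m_j\le\lceil\omega_j\ell/\alpha_j\rceil$ from the continued-fraction identities $\alpha_j/\omega_j=[b_{j1},\dots,b_{js_j}]$ governing the transition maps between $U_0$ and $U_{j,s_j}$. Once those coordinate transformations (already set up in the excerpt) are pinned down, the independence statement and the degree bookkeeping are routine.
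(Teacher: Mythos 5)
Your route is consistent with how this lemma is meant to be obtained: the paper itself does not prove it but recalls it from \cite{NBOOK,NNA1}, and the subsection preceding the statement already contains exactly the chart computations you reuse (the $U_{-1}$ condition (\ref{eq:whforms1}), the $U_{j,s_j}$ condition giving $m_j=\lceil\omega_j\ell/\alpha_j\rceil$), while the weight computation $t*\omega_{\ell,n}=t^{-\ell}\omega_{\ell,n}$ you rely on is the one the paper performs later in Section \ref{s:domwh}. Your independence argument (distinct $\C^*$-weights for distinct $\ell$, and for fixed $\ell$ the exact pole order $\ell+1$ along $E_0$ coming from a nonzero polynomial $g$) is correct, and your necessity analysis on the leg is also sound: since $\{v_{j,s_j}=0\}$ and $\{u_{-1}=0\}$ are \emph{not} exceptional, holomorphy of the form at their generic points forces both the meromorphy of $g$ at $p_j$ and at $\infty$ and the sharp bounds $m_j\le\lceil\omega_j\ell/\alpha_j\rceil$, $\deg h\le n_\ell$ (only the end chart is needed for necessity).

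The one step you should make explicit is the very first move in the spanning part: \emph{"decompose an arbitrary class into homogeneous pieces and represent a weight $(-\ell)$ piece by a homogeneous form"}. The quotient is finite dimensional and the induced $\C^*$-action is diagonalizable, so the class decomposes into eigenvector classes; but an eigenvector class does not automatically come with a homogeneous representative in $H^0(\tX\setminus E,\Omega^2_{\tX})$, and an arbitrary representative can have an infinite Laurent tail (even an essential singularity) along $E$, so its weight expansion is an infinite series and your normal form $u_0^{-\ell-1}\prod_j(v_0-p_j)^{-m_j}h(v_0)\,dv_0\wedge du_0$ is not yet justified. The standard repair: choose the representative in $H^0(\calO_{\tX}(K+Z))$ for a fixed large (automatically $S^1$-invariant) cycle $Z$, which is possible by Laufer's identification of the quotient, and then project onto the weight $-\ell$ part by averaging $t^{\ell}f_t^*\omega$ over the circle $S^1\subset\C^*$; this produces a homogeneous representative with bounded pole order along $E$ and the same class, after which your chart analysis goes through verbatim. (Alternatively, one can bypass the spanning argument altogether: your independence statement plus the Dolgachev--Pinkham count $p_g=\sum_{\ell\in\calw}(n_\ell+1)$ gives the basis property by comparing dimensions, which is closer to how the cited sources conclude.) With either completion the proof is correct.
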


\subsection{The dominance} We note that one has a $\C^*$-action on the resolution space $\tX$, which induces a linear action on $\pic^{l'}(\tX)$ for all $l' \in -\calS'$ by $t^* \calL = f_t^*(\calL)$, where $f_t$ is the action map of $t \in \C^*$ on the resolution $\tX$. 
This $\C^*$-action maps the union of exceptional divisors $E$ into itself, so if $l \in L$ then $t^*(\calO_{\tX}(-l)) = \calO_{\tX}(-l)$. Hence, by linearity of the action $t^*$ follows
that $t^*(\calO_{\tX}(-l')) = \calO_{\tX}(-l')$. In other words, the natural line bundle $\calO_{\tX}(-l')$ is a fix point of the $\C^*$-action on $\pic^{-l'}(\tX)$.

On the other hand, there is a similar $\C^*$-action on the dual space $\pic^0(\tX)^* = H^0(\calO_{\tX}( K + [Z_K]))/H^0(\calO_{\tX}(K))$, which is given by $t^* [\omega] = [f_t^*(\omega)]$.

Recall that the duality between $H^0(\calO_{\tX}( K + [Z_K]))/H^0(\calO_{\tX}(K))$ and $H^1(\calO_{\tX})$ is given by Laufer duality, and consider the following situation. 

Let $D$ be a transversal cut along an exceptional divisor $E_u$, and $Z\gg 0$ be a very big cycle.
Let $(x, y)$ be local coordinates of $\tX$ near the intersection point $E_u \cap D$ such that $E_u = \{x = 0\}$ and $D = \{y = 0\}$. 
Let us realise a tangent vector $\mathrm{v} \in T_0(\pic^0(\tX)) \cong \pic^0(\tX)$ by an aproppriate deformation of the divisor $D$ given by the form $g(s) = [y + s \cdot \sum_{0 \leq k \leq Z_u -1} a_k \cdot x^k]$, and use the notation $g(s) = D_s$.

We can express a representative of  an element $w \in H^0(\calO_{\tX}( K + [Z_K]))/H^0(\calO_{\tX}(K))$ by a differential form $\omega$ in local cordinates as $\omega = (\sum_{1 \leq i, -Z_u \leq j} a_{i, j} y^i x^j) dx \wedge dy$, so by Laufer integration formula we get:

\begin{equation*}
\omega(v) = \frac{d}{ds}\left(  \int_{|x|=\epsilon, \atop |y|=\epsilon}  \log \left(1+ s  \cdot \frac{ \sum_{0 \leq k \leq Z_u -1} a_k \cdot x^k}{y} \right) \left(\sum_{1 \leq i, -Z_u \leq j} a_{i, j} y^i x^j \right) dx\wedge dy  \right).
\end{equation*}

On the other hand, we can represent $t^*(v)$ as the tangent vector $t^*(v) \in T_0(\pic^0(\tX)) \cong \pic^0(\tX)$ of the deformation $f_t^*(D_s)$ of the divisor $f_t^*(D)$  and $t^*(w)$ by $[f_t^*(\omega)]$. However, notice that in the coordinates $x' = f_t \circ x$ and $y' = f_t \circ y$ we can write $f_t^*(D_s) = [y' + s \cdot \sum_{0 \leq k \leq Z_u -1} a_k \cdot x'^k]$ and $f_t^*(\omega) = (\sum_{1 \leq i, -Z_u \leq j} a_{i, j} y'^i x'^j) dx' \wedge dy'$, which means that $t^*(w)(t^*(v)) = w(v)$.
Since these kind of tangent vectors generate $T_0(\pic^0(\tX))$, we deduce that the two $\mathbb{C}^*$-actions on $H^0(\calO_{\tX}( K + [Z_K]))/H^0(\calO_{\tX}(K))$ and $\pic^0(\tX)$
preserve the pairing between the two vector spaces.

We can write this $\mathbb{C}^*$-action on the vector space $H^0(\calO_{\tX}( K + [Z_K]))/H^0(\calO_{\tX}(K))$ in the basis $\omega_{\ell,n}$ as follows:

\begin{equation}
t * \omega_{\ell,n} =  t^{-\ell} \cdot u_0^{-\ell-1}\prod_j (v_0-p_j)^{-m_j}v_0^n   dv_0\wedge du_0 = t^{-\ell} \cdot \omega_{\ell,n}.
\end{equation}

Now, we know that if $\ell\in\calw$ then $\ell > 0$, which implies that the linear action of $\C^*$ on $\pic^0(\tX)^*$ has only negative weights, so the linear action of $\C^*$ on $\pic^0(\tX)$ has only positive weights.

After all these preparations, we can state and prove the main theorem of this section:

\begin{theorem}
Let $(X, 0)$ be a weighted homogeneous singularity with resolution $\tX$ and consider a cycle $l' \in \calS'$.  Then for every line bundle $\calL \in \pic^{-l'}(\tX)$ one has $$h^1(\tX, \calL) \leq h^1(\calO_{\tX}(-l')).$$
\end{theorem}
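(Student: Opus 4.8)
The plan is to combine the $\C^*$-action on $\tX$ with the semicontinuity of the cohomology numbers, using the natural line bundle $\calO_{\tX}(-l')$ as the \emph{attracting} fixed point of the action. Fix $l'\in\calS'$ and pick a large cycle $Z\gg 0$, $|Z|=E$, large enough so that $h^1(\tX,\calL)=h^1(Z,\calL|_Z)$ for every $\calL\in\pic^{-l'}(\tX)$ (such a $Z$ can be chosen uniformly in $\calL$ since $\pic^{-l'}(\tX)$ is of finite type); from now on we work on the proper scheme $Z$. As explained above, the action $\calL\mapsto f_t^*\calL$ on $\pic^{-l'}(\tX)$ fixes $\calO_{\tX}(-l')$ and $f_t^*\calO_{\tX}(l')=\calO_{\tX}(l')$, so tensoring by $\calO_{\tX}(l')$ identifies $\pic^{-l'}(\tX)$ $\C^*$-equivariantly with $\pic^0(\tX)=H^1(\calO_{\tX})$, carrying $f_t^*$ to a \emph{linear} action all of whose weights are \emph{positive}; this is exactly the content of the weight computation performed above for the basis $\{\omega_{\ell,n}\}$ of Lemma \ref{lem:whforms}, together with $\calw\subset\Z_{>0}$. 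The same statements hold verbatim on $Z$ for $Z$ large. Since a linear $\C^*$-action with positive weights on a finite dimensional vector space contracts every point to the origin as $t\to 0$, translating back we get $\lim_{t\to 0}f_t^*\calL=\calO_{\tX}(-l')$ for every $\calL$; equivalently $\calO_Z(-l')$ lies in the closure of the $\C^*$-orbit of $\calL|_Z$ in $\pic^{-l'}(Z)$.

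To finish, I would invoke semicontinuity. For $t\in\C^*$ the map $f_t$ is an automorphism of $\tX$ preserving $E$, hence induces an automorphism of the scheme $Z$, so $h^1(Z,f_t^*\calL|_Z)=h^1(Z,\calL|_Z)$; thus the function $\calM\mapsto h^1(Z,\calM)$ is constant along $\C^*$-orbits in $\pic^{-l'}(Z)$. On the other hand it is upper semicontinuous on $\pic^{-l'}(Z)$, i.e.\ the Brill--Noether strata $\{\calM\mid h^1(Z,\calM)\ge r\}$ are Zariski closed (cf.\ the Introduction). Applying this to the orbit of $\calL|_Z$, whose closure contains the fixed point $\calO_Z(-l')$, we obtain
\[
h^1\big(\tX,\calO_{\tX}(-l')\big)=h^1\big(Z,\calO_Z(-l')\big)\ \ge\ h^1\big(Z,\calL|_Z\big)=h^1(\tX,\calL),
\]
which is the desired inequality.

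The bulk of the real work has already been carried out above, in the explicit description of the basis $\{\omega_{\ell,n}\}$ and the positivity $\calw\subset\Z_{>0}$, which together force all $\C^*$-weights on $\pic^0(\tX)$ to be positive; the remaining argument is formal. The one point I expect to require some care is upgrading the set-theoretic limit ``$f_t^*\calL\to\calO_{\tX}(-l')$ as $t\to 0$'' to an honest flat family over $\mathbb{A}^1$, so that Grauert's semicontinuity theorem literally applies. This is precisely where positivity of \emph{all} the weights is used: it guarantees that the morphism $\C^*\to\pic^{-l'}(Z)$, $t\mapsto f_t^*(\calL|_Z)$, extends over $t=0$, and pulling back a Poincar\'e sheaf on $\pic^{-l'}(Z)\times Z$ along this extension produces the flat family to which semicontinuity is applied.
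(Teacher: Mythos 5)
Your proposal is correct and follows essentially the same route as the paper: the natural line bundle is the fixed point of the $\C^*$-action on $\pic^{-l'}(\tX)$, the explicit basis $\{\omega_{\ell,n}\}$ with $\calw\subset\Z_{>0}$ forces all weights on $\pic^0(\tX)$ to be positive, $h^1$ is constant along orbits, and semicontinuity applied to the orbit closure (which contains the fixed point) gives the inequality. Your extra care in passing to a large cycle $Z$ and building an honest flat family over $\mathbb{A}^1$ is a useful technical refinement of the step the paper dispatches simply as ``by semicontinuity of $h^1$,'' but it is not a different argument.
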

\begin{proof}

By the previous discussions we know that the natural line bundle $\calO_{\tX}(-l')$ is a fixed point of the $\C^*$-action on $\pic^{-l'}(\tX)$.

It means that the action of $\C^*$ on $\pic^0(\tX)$ is the same as the action of $\C^*$ on $\pic^{-l'}(\tX)$ if we look at the line bundle $\calO_{\tX}(-l')$ as the origin of the affine complex space $\pic^{-l'}(\tX)$.

Notice that if we have a line bundle $\calL \in \pic^{-l'}(\tX)$ and an arbitrary $t \in \C^*$, then trivially $h^1(\tX, t * \calL) = h^1(\tX, \calL)$.  
Furthermore, the linear action of $\C^*$ on $\pic^{-l'}(\tX)$ has only positive weights, which means that the line bundle $\calO_{\tX}(-l')$ is in the closure of all orbits  $\overline{\C^* * \calL}$.  Then, by semicontinuity of $h^1$,  we indeed get that $h^1(\tX, \calL) \leq h^1(\calO_{\tX}(-l'))$. 

\end{proof}

\section{Examples for wild properties of counting functions for the topological Poincar\'e series in the non-splice quotient case} \label{s:wildprop}

\subsection{Upper bounds in the splice quotient case} Recall  that if a resolution graph $\mathcal{T}$ satisfies the monomial conditions, then there exists a splice quotient analytical structure on $\mathcal{T}$, and for each splice quotient analytical structure $\tX$ one has $h^1_{virt}(l') = h^1(\calO_{\tX}(-l'))$ for every $l' \in \calS'$. 

In this case, we consider the computation sequence  $x_0 = 0$, $x_{i+1} = x_i + E_{v_i}$ for some $v_i \in \calv$, $1 \leq i \leq N-1$ and $x_N = \lfloor Z_K\rfloor$.  Then we have the following exact sequences $H^1(\calO_{E_{v_i}}( -l'- x_i)) \to  H^1(\calO_{x_{i+1}}( -l')) \to H^1(\calO_{x_{i}}( -l')) \to 0$. This implies that  $ h^1(\calO_{x_{i+1}}( -l')) \leq h^1(\calO_{E_{v_i}}( -l'- x_i)) + h^1(\calO_{x_{i}}( -l')) =  h^1(\calO_{x_{i}}( -l')) +\max (0, -1 +( l' + x_i, E_{v_i}))$.

Furthermore,  the exact sequence $H^1(\calO_{\tX}( -l' - x_N)) \to  H^1(\calO_{\tX}( -l')) \to H^1(\calO_{x_{N}}( -l')) \to 0$ together with the vanishing  $h^1(\calO_{\tX}( -l' - x_N))=0$ given by the Grauert-Riedemschnieder theorem, imply  that $ h^1(\calO_{\tX}( -l')) =  h^1(\calO_{x_{N}}( -l'))$. This gives the following inequality in the splice quotient case
\begin{equation*}\label{eq:upb}
h^1_{virt}(l') = h^1(\calO_{\tX}(-l')) \leq \sum_{1 \leq i \leq N-1}  \max(0, -1 +( l' + x_i, E_{v_i})).
\end{equation*}

It implies that $h^1_{virt}(l') \leq N \cdot (1 + \sum_{v \in \calv}(Z_K)_v) \leq (\sum_{v \in \calv}(Z_K)_v + |\calv|) \cdot (1 + \sum_{v \in \calv}(Z_K)_v)$.

We would like to emphasize that this idea gives rise to the path-lattice cohomological upper bound for the geometric genus, developed by N\'emethi in \cite{NJEMS}.

Nevertheless, in the sequel we will show that this type of bounds are very far from being true in the general case, and this is really a specialty of the graphs satisfying the monomomial condition. Philosopically speaking, this will show that while in many  calculations the toplogical Poincar\'e series and its combinatorial model behaves as nice as the analytic Poincar\'e series in the splice quotient case, there are still some novelties coming up.

\subsection{} We first construct a resolution graph $\mathcal{T}''$ with a vertex $v$ and a cycle $l' \in \calS'$, such that $(l', E_v) = 0$ and $\sum_{l \in L, l \geq 0, l_v = 0} z^{\mathcal{T}''}(l' + l) \notin \{-1, 0, 1\}$.

Recall that in section \ref{ss:cex} we have constructed the graph $\mathcal{T}_{new}$ as the blown up at vertex $v$ of the graph $\calt$ shown on the left hand side of Figure \ref{fig:tt1}. We take two pieces of it, say $\mathcal{T}_{new, 1}$ and $\mathcal{T}_{new, 2}$, and glue together at their corresponding vertices $v_{new,1}$ and $v_{new,2}$.  Let us denote this vertex by $w$ and associate with it a self-intersection number $-K$, where $K$ is very large. We also denote the neighbouring vertices of $w$ by $v_1$ and $v_2$.

Then $\calt''$ is created by blowing up the above constructed graph at the vertex $w$ and denote the newly created vertex by $w'$. For the cycle $l' = E_{w}^* + E_{v_1}^* + E_{v_2}^*\in L'(\calt'')$ we observe the followings.

\begin{lemma}
\begin{equation}
\sum_{l \in L, l \geq 0, l_{w'} = 0} z^{\mathcal{T}''}(l' + l) = \sum_{l \in L, l \geq 0, l_{w'} = 0, l_{w} = 0} z^{\mathcal{T}''}(l'+ l) = -3.
\end{equation}
\end{lemma}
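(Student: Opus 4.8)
The strategy is to mimic the computation carried out in Section~\ref{ss:cex}, where an analogous sum over a single glued copy of $\calt_{new}$ produced the value $-1$ via the decomposition into products of the structure constants $C_{i,0}$ and $C_{i,1}$. Here $\calt''$ is built from two copies of $\calt_{new}$ glued at a vertex which is then blown up, so I expect a three-fold analogue of that identity. First I would reduce to the blown-up vertex: since $w'$ is an end-vertex created by blowing up $w$, any $l''\in\calS'(\calt'')$ with $z^{\calt''}(l'')\neq 0$ and $l''_{w'}=0$ must also have $l''_{w}=0$ would need checking, but more directly the first equality $\sum_{l\geq 0,\,l_{w'}=0}z^{\calt''}(l'+l)=\sum_{l\geq 0,\,l_{w'}=0,\,l_w=0}z^{\calt''}(l'+l)$ follows because the $E^*$-support of any contributing cycle avoids $w'$ (as $(E^*_{w'})_{w'}$ is large relative to the relevant bound, exactly as in the $\calt_{new}$ argument), and then the support must lie in the union of the two branches, forcing the $w$-coordinate to vanish as well by a Lipman-cone inequality at $w$ analogous to the one used for $(l'',E_n)>0$ in \eqref{eq:e2}.

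Next I would set up the splitting. Writing $\calt''\setminus\{w,w'\}$ as the disjoint union of the two pieces $\calt_1,\calt_2$ (each a copy of $\calt_{new}$ minus its central vertex, i.e.\ essentially $\calt_i$ from Section~\ref{ss:cex}), the coefficient $z^{\calt''}(l'+l)$ for $l$ supported off $w,w'$ factors through the zeta functions of the two branches. Just as in Section~\ref{ss:cex} the sum $\sum_{l\geq 0,\,l_{w'}=0,\,l_w=0}z^{\calt''}(l'+l)$ decomposes using inclusion--exclusion over which branch "absorbs" the contribution at $w$; with $l'=E^*_w+E^*_{v_1}+E^*_{v_2}$ the relevant expansion of the local factor $(1-\bt^{E_w^*})^{\delta_w-2}$ at the valency-$3$ (after blow-up, suitably adjusted) vertex yields a term of the shape $C_{1,0}C_{2,1}+C_{1,1}C_{2,0}-C_{1,0}C_{2,0}$ type for each copy, but now iterated over the two legs $v_1,v_2$ attached to $w$. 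Carrying the bookkeeping through, I expect an identity of the form
\begin{equation*}
\sum_{l\geq 0,\,l_{w'}=0,\,l_w=0}z^{\calt''}(l'+l)=C_{1,1}C_{2,1}C_{3,\ast}+\cdots,
\end{equation*}
which after substituting the known values $C_{i,0}=1$ and $C_{i,1}=0$ from \eqref{eq:e1} and \eqref{eq:e2} collapses to a pure count of cross-terms; the three factors of $(-C_{i,0}C_{j,0})=-1$ appearing in the inclusion--exclusion over the three "slots" ($v_1$-leg, $v_2$-leg, and the self-intersection contribution at $w$) is what produces $-3$.

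The main obstacle will be pinning down the exact combinatorial decomposition at the vertex $w$ after the blow-up, i.e.\ correctly accounting for the valency change and the extra factor $(1-\bt^{E^*_{w'}})^{\delta_{w'}-2}=(1-\bt^{E^*_{w'}})^{-1}$ together with the shift by $E^*_w$ coming from the three terms of $l'$. Concretely, one has to verify that the three "$-C_{i,0}C_{j,0}$"-type corrections really do each evaluate to $-1$ and that no further cross-terms survive after plugging in $C_{i,1}=0$; this requires the same binomial-coefficient manipulations as in the proof of Theorem~\ref{thm:domsplice} (the vanishing of iterated finite differences of polynomials). I would handle this by first doing the single-vertex model carefully—reproving the $\calt_{new}$ computation with the coefficient $l'=E^*_v+(\text{leg terms})$ made fully explicit—and then observing that gluing two copies at a blown-up vertex with $l'=E^*_w+E^*_{v_1}+E^*_{v_2}$ simply multiplies the branch generating functions and adds one more inclusion--exclusion layer, so that the final answer is $(-1)+(-1)+(-1)=-3$ by the same cancellation that gave $-1$ in each isolated piece. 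The positivity/largeness hypotheses on $N$ and $K$ guarantee, exactly as before, that all the cycles entering the relevant sums satisfy the support and Lipman-cone constraints needed for Theorem~\ref{depth} to apply.
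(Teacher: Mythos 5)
Your handling of the first equality is salvageable but misstated: the clean argument is the Lipman-cone inequality at the \emph{end-vertex} $w'$, namely for $l''=l'+l\in\calS'(\calt'')$ with $l\geq 0$ and $l_{w'}=0$ one has $0\geq (l'',E_{w'})=(l,E_{w'})=l_w$, hence $l_w=0$. Your support-based phrasing does not work as written, since the $E^*$-support of a contributing cycle certainly may contain $w$ (already $l'$ itself contains $E^*_w$); it is the $E_w$-coordinate of $l$, not the $E^*_w$-multiplicity, that is forced to vanish. This part is a minor repair.

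The second equality is where your plan has a genuine gap. The sum does not reduce, by a three-slot inclusion--exclusion, to the two constants $C_{i,0}=1$, $C_{i,1}=0$ of (\ref{eq:e1})--(\ref{eq:e2}): because $l'$ contains $E^*_w$, whose coordinates along each branch are large (of order $M$ at the $-M$-vertices), the coordinate constraints at $w$ and $w'$ couple the two halves through a large budget that can be distributed in many ways among the four star-shaped pieces. The paper's actual decomposition is $2CD-C^2$, where $C=\sum_{l\geq 0,\,l_{new}=0}z^{\calt_{new}}(E_v^*+l)=-1$ is the quantity already computed in section \ref{ss:cex}, while $D=\sum_{l\geq 0,\,l_{new}=0,\,l_v=0}z^{\calt_{new}}\big((M+1)E_v^*-E_{n_1}^*-E_{n_2}^*+l\big)$ is a genuinely new branch constant; the whole content of the lemma is the evaluation $D=1$. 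This is proved by writing $D=\Coeff\big((\sum_{j\geq 0}F_j x^j)^2(1-x),\,x^{M+1}\big)$, where $F_j$ is a sum of coefficients of $Z^{\calt_1}$ for the shifted cycles $(3j-1)E^*_{n_1}-j\sum_{1\leq i\leq 4}E^*_{v_i}$, and then showing $F_j=1$ for all large $j$ --- and that last step is not a binomial/finite-difference manipulation of the kind used in Theorem \ref{thm:domsplice}: the paper identifies $F_j$, via $h^1_{virt}=h^1$ for the splice-quotient structure on $\calt_1$, with a difference of $h^1$'s of natural line bundles and concludes by a base-point-freeness argument on the blown-down graph. Your proposal never isolates $D$ or the $F_j$, so the agreement of your heuristic $(-1)+(-1)+(-1)$ with $2CD-C^2=-3$ rests entirely on $D=1$, which is exactly what your plan leaves unproved.
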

\begin{proof}
Indeed, one can see that 
\begin{equation*}
\sum_{l \in L, l \geq 0, l_{w'} = 0} z^{\mathcal{T}''}(l' + l) = \sum_{l \in L, l \geq 0, l_{w'} = 0, l_{w} = 0} z^{\mathcal{T}''}(l'+ l) = 2CD - C^2,
\end{equation*}
where we have $C = \sum_{l \geq 0, l_{new} = 0} z^{\calt_{new}}(E_v^* + l)$ and $D = \sum_{l \geq 0, l_{new} = 0, l_v = 0} z^{\calt_{new}}((M+1) \cdot E_v^* - E_{n_1}^* - E_{n_2}^* + l)$.

It means we only have to prove that $D = 1$ if the selfintersection number $-M$ is low enough.

We introduce  $F_j = \sum_{l \in L, l \geq 0, l_{v_{1, 5}} = 0, l_{n_1} = 0} z^{\calt_{1}}( (3j-1) E_{n_1}^* - j(\sum_{1 \leq i \leq 4} E_{v_i}^*)+ l)$ for any $j \geq 0$. Then one has

\begin{equation*}
D = \textnormal{Coeff} \Big(  \big(\sum_{0 \leq j} F_j \cdot x^j \big)^2 \cdot (1-x) ,x^{M+1} \Big)
\end{equation*}
Now the claim follows instantly if we can show that $F_j = 1$ for $j$ large enough.

Notice that $F_j = \sum_{l \in L, l \geq 0, l_{v_{1, 5}} = 0, l_{n_1} = 0} z^{\calt_{1}}( (3j-1 - 4z) E_{n_1}^* + (Nz- j) \cdot (\sum_{1 \leq i \leq 4} E_{v_i}^*)+ l)$, where $z = \left \lceil{\frac{j}{N}}\right \rceil$, since $(3j-1 - 4z) E_{n_1}^* + (Nz- j) \cdot (\sum_{1 \leq i \leq 4} E_{v_i}^*)$ is the smallest cycle in $\calS'$ which is larger than $(3j-1) E_{n_1}^* - j(\sum_{1 \leq i \leq 4} E_{v_i}^*)$.

Let us consider $l''  = (3j-1 - 4z) E_{n_1}^* + (Nz- j) \cdot (\sum_{1 \leq i \leq 4} E_{v_i}^*)$ and notice that $l'' \in \calS'$ and let $\calt_b$ be the graph resulted by blowing down the vertex $v_5$ in $\calt_1$. Then we have

\begin{equation*}
F_j = h^1_{virt}(\calO_{\tX_1}(-l''  - E_{v_5})) - h^1_{virt}(\calO_{\tX_b}(-\pi_*(l''))).
\end{equation*}

On the other hand we know that $\calt_1$ satisfies the monomial conditions, thus there is a splice quotient structure $\tX_1$ on it, which implies that

\begin{equation*}
F_j = h^1(\calO_{\tX_1}(-l''  - E_{v_5})) - h^1(\calO_{\tX_b}(-\pi_*(l''))).
\end{equation*}

Finally $F_j = 1$ comes from the fact that the line bundle $\calO_{\tX_b}(-\pi_*(l''))$ has no base point on the exceptional divisor $E_{n_1}$.
\end{proof}

Now we will use the lemma above to construct an example of its own interest.

We glue together $t$ pieces of the graph $\mathcal{T}''$ constructed above at their common  vertex $w'$.  We give to $w'$ a selfintersection number $- t \cdot C$, where $C$ is a constant and if $t$ is enough large, then the resulted resolution graph $\mathcal{T}_t$ is negative definite. One can see that such a constant $C$ exists.

Then, for the cycle $l'_t = \sum_{1 \leq i \leq t}l'_i = \sum_{1 \leq i \leq t}(  E_{w_i}^* + E_{v_{1, i}}^* + E_{v_{2, i}}^*  )$ considered on $\mathcal{T}_t$ one proves the following expression.

\begin{lemma}
\begin{equation*}
\sum_{l \in L, l \geq 0, l_{w'} = 0} z^{\calt_t}(l'_t + l) = \left(\sum_{l \in L, l \geq 0, l_{w'} = 0, l_{w} = 0} z^{\calt'}(l'+ l)  \right)^t = (-3)^t.
\end{equation*}
\end{lemma}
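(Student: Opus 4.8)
The plan is to reduce the multivariable computation on $\calt_t$ to a product of single-leg contributions via the surgery-type splitting of the topological Poincaré series around the central vertex $w'$, exactly as in the two preceding lemmas. First I would observe that $w'$ is a cut vertex: removing $w'$ from $\calt_t$ decomposes the graph into $t$ disjoint copies of $\calt''\setminus w'$, i.e. of the branch of $\mathcal{T}''$ hanging off $w'$. Since $z^{\calt_t}(l'')=\prod_{v}\binom{\delta_v-2}{\dots}$-type products coming from $Z(\bt)=\prod_v(1-\bt^{E_v^*})^{\delta_v-2}$, and since the only vertex whose valency changes when we glue is $w'$ itself (its valency becomes the sum of the valencies contributed by each piece, so $\delta_{w'}-2 = \sum_{i}(\delta_{w',i}-2) + (t-1)\cdot 1$ after accounting for the extra edges), the series $Z^{\calt_t}(\bt)$ factors over the $t$ branches once we fix the $E_{w'}$-exponent. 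Concretely, restricting to $l_{w'}=0$ and using that $E_{w_i}^*$, $E_{v_{j,i}}^*$ have zero $E_{w'}$-coordinate contributions only through the glued vertex, one gets
\begin{equation*}
\sum_{l\geq 0,\, l_{w'}=0} z^{\calt_t}(l'_t+l) \;=\; \prod_{i=1}^{t}\Big(\sum_{l^{(i)}\geq 0,\, l^{(i)}_{w'}=0} z^{\calt''}(l'_i+l^{(i)})\Big),
\end{equation*}
which by the previous lemma equals $(-3)^t$.

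The key steps, in order, are: (1) identify $w'$ as the unique vertex shared by all $t$ copies and record how its valency and self-intersection change (the self-intersection $-t\cdot C$ is chosen precisely so negative definiteness holds for large $t$; only the valency, not the self-intersection, affects $Z(\bt)$); (2) write $Z^{\calt_t}(\bt)=\big(\prod_{v\neq w'}\text{local factors}\big)\cdot(1-\bt^{E_{w'}^*})^{\delta_{w'}-2}$ and note that on the locus $l_{w'}=0$ the factor $(1-\bt^{E_{w'}^*})^{\delta_{w'}-2}$ contributes only its constant term $1$ to any coefficient with $l_{w'}=0$, since $E_{w'}^*$ has strictly positive $w'$-coordinate; (3) check that the remaining factors split as a product over the $t$ branches because the branches are vertex-disjoint after deleting $w'$, and that $l'_t=\sum_i l'_i$ with $l'_i$ supported on the $i$-th branch; (4) combine with the identity $\sum_{l\geq 0,\,l_{w'}=0,\,l_w=0} z^{\calt''}(l'+l)=-3$ from the previous lemma (and the observation there that imposing $l_{w'}=0$ automatically forces the relevant terms to have $l_w=0$ as well, since $E_w^*$ has large $w'$-coordinate once $w'$ is blown up from $w$) to conclude.

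I expect the main obstacle to be step (2)–(3): making the factorization of the coefficient sum rigorous. The subtlety is that $z^{\calt_t}(l'_t+l)$ is a coefficient of a product of power series, so a single coefficient is itself a convolution sum, and one must argue that imposing $l_{w'}=0$ decouples this convolution into an honest product of the per-branch sums rather than a convolution of them. This works because the supports live in the Lipman cone and the $E_{w'}$-coordinate of every $E_v^*$ with $v$ in branch $i$ is determined solely by branch $i$'s geometry; setting $l_{w'}=0$ together with $l\geq 0$ pins down, branch by branch, exactly the cycles appearing in the per-branch sum, with no cross terms. Once this bookkeeping is set up carefully — essentially the same bookkeeping already used implicitly in the proof of the $(-3)$ lemma via the $2CD-C^2$ decomposition — the result is immediate. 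A minor additional point to verify is that the constant $C$ (hence $-t\cdot C$) can indeed be chosen so that $\calt_t$ is negative definite for all large $t$; this follows from a standard eigenvalue/Schur-complement estimate since each branch contributes a negative definite block and only the $w'$-diagonal entry grows.
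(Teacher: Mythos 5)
Your overall route—splitting the sum over the $t$ branches at the cut vertex $w'$ and invoking the previous lemma—is exactly the paper's, but the two justifications you give for the decoupling do not work as stated. First, the locus you sum over is $l_{w'}=0$ for the \emph{correction} $l$, so the exponent actually being extracted is $l''=l'_t+l$ with $l''_{w'}=(l'_t)_{w'}>0$ (every dual cycle has strictly positive coordinates everywhere), not $0$. Hence "the factor $(1-\bt^{E_{w'}^*})^{\delta_{w'}-2}$ contributes only its constant term because $E_{w'}^*$ has positive $w'$-coordinate" is not a valid inference. The correct mechanism is the intersection pairing together with the Lipman cone: any $l''$ in the support lies in $\calS'$, so $(l'',E_{w'})\le 0$, while $(l'_t,E_{w'})=0$ and, for $l\ge 0$ with $l_{w'}=0$, $(l,E_{w'})=\sum_i l_{w_i}\ge 0$. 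Therefore $(l'',E_{w'})=0$, which simultaneously forces $l_{w_i}=0$ for all $i$ (this is the paper's first displayed reduction, which your argument skips) and kills the $E_{w'}^*$-factor, since the $E_{w'}^*$-multiplicity of any representation of $l''$ in the $E^*$-basis is $-(l'',E_{w'})=0$. For the same reason, your justification of "$l_{w'}=0$ forces $l_w=0$" in the previous lemma via "$E_w^*$ has large $w'$-coordinate" is not the actual reason; there it is again the pairing $(l'+l,E_{w'})=l_w\le 0$ combined with $l\ge 0$.

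Second, even after the $w'$-factor is disposed of, the remaining product does not "split over the branches because the branches are vertex-disjoint": every monomial $\bt^{E_v^*(\calt_t)}$ has strictly positive exponents on \emph{all} vertices of $\calt_t$, including $w'$ and the other branches, so no naive regrouping of factors is possible. Identifying $z^{\calt_t}(l'_t+l)$ with the product of the coefficients of the copies requires comparing dual cycles of $\calt_t$ with dual cycles of the copies via the projection operators $(\pi_i)_*$ (i.e. $j_i^*$, which send $E_v^*(\calt_t)$ to $E_v^*$ of the $i$-th copy when $v$ lies in that copy and to $0$ otherwise), and then checking that, under the constraints $l\ge 0$, $l_{w'}=l_{w_i}=0$, the admissible exponents of $\calt_t$ correspond bijectively to tuples of per-copy exponents satisfying exactly the constraints of the previous lemma. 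This is the content of the paper's middle identity $z^{\calt_t}(l'_t+l)=\prod_i z^{\calt''_i}((\pi_i)_*(l'_t+l))$, and it is the step your sketch leaves unproved; you correctly flag it as the main obstacle, but the reasons you offer in its place ("the $E_{w'}$-coordinate of every $E_v^*$ in branch $i$ depends only on branch $i$") are false, since coordinates of dual cycles are entries of the inverse of the full intersection matrix. With these two points repaired, your argument becomes the paper's proof.
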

\begin{proof}
We know that the neighbours of the vertex $w'$ in the resolution graph $\mathcal{T}_t$ are $w_1, \cdots, w_t$. Thus we have
\begin{equation*}
\sum_{l \in L, l \geq 0, l_{w'} = 0} z^{\calt_t}(l'_t+ l)  = \sum_{l \in L, l \geq 0, l_{w'} = 0, l_{w_i} = 0, 1 \leq i \leq t} z^{\calt_t}(l'_t + l).
\end{equation*}

\begin{equation*}
\sum_{l \in L, l \geq 0, l_{w'} = 0} z^{\calt_t}(l'_t + l)  = \sum_{l \in L, l \geq 0, l_{w'} = 0, l_{w_i} = 0} \prod_{1 \leq i \leq t}z^{\calt'_i}((\pi_i)_*( l'_t + l) ).
\end{equation*}

\begin{equation*}
\sum_{l \in L, l \geq 0, l_{w'} = 0} z^{\calt_t}(l'_t + l) = \left(\sum_{l \in L, l \geq 0, l_{w'} = 0, l_{w} = 0} z^{\calt'}(l'+ l)  \right)^t = (-3)^t.
\end{equation*}
\end{proof}

The expression from the lemma yields $h^1_{virt}(\calO_{\tX}(-l'_t)) - h^1_{virt}(\calO_{\tX}(-l'_t-E_{w'})) - \chi(l'_t) + \chi(l'_t + E_{w'}) =  (-3)^t$, so $h^1_{virt}(\calO_{\tX}(-l'_t)) - h^1_{virt}(\calO_{\tX}(-l'_t-E_{w'})) \geq  (-3)^t  + (l'_t, E_{w'}) - 1 =  (-3)^t - 1$.

It means that $\max( |h^1_{virt}(\calO_{\tX}(-l'_t)) |,  | h^1_{virt}(\calO_{\tX}(-l'_t-E_{w'}))| )  \geq \frac{|(-3)^t - 1|}{2}$.

On the other hand, one can see that all the quantities in the expressions like $(\sum_{v \in \calv_t}((Z_K)_t)_v + |\calv_t|) \cdot (1 + \sum_{v \in \calv_t}((Z_K)_t)_v)$ grow at most polynomially in $t$ which gives the desired counterexample mentioned at the beginning of the section.

Another thing which can be noticed is that when the resolution graph satisfies the monomial conditions, and we have a splice quotient analytical structure $\tX$ supported on it, for a fixed cycle $l' \in \calS'$ and a vertex $v \in \calv$  one has $\sum_{l \in L, l \geq 0, l_v = 0}z^{\mathcal{T}}(l' + l) = \dim\left(\frac{H^0(\tX, \calO_{\tX}( -l'))}{H^0(\tX, \calO_{\tX}( -l'-E_v))}\right)$,
which yields $\sum_{l \in L, l \geq 0, l_v = 0}z^{\mathcal{T}}(l' + l) \geq 0$. However, we have see that this nonnegativity is also false in the general case, because $\sum_{l \in L, l \geq 0, l_{w'} = 0} z^{\calt_t}(l'_t + l) =  (-3)^t$ is sometimes negative.

\subsection{Negativity of the periodic constant} Notice that when the monomial conditions hold for a resolution graph, then for the corresponding splice quotient singularity the geometric genus coincides with the canonical normalized Seiberg-Witten invariant, which is therefore nonnegative.

On the other hand with very similiar ideas as before one can recursively construct resolution graphs such that the periodic constants or even the canonical normalised Seiberg-Witten invariant is negative.

We are thankful to Andr\'as N\'emethi who suggested the following graph

\begin{center}
\begin{tikzpicture}[scale=.75]

\node (v0) at (5,0) {};
\draw[fill] (5,0) circle (0.1);
\node at (5.8,0) {\small $-35$};

\node (v1) at (3,2) {};
\draw[fill] (3,2) circle (0.1);
\node at (3,1.5) {\small $-1$};
\node (v11) at (1,2.25) {};
\draw[fill] (1,2.25) circle (0.1);
\node at (0.5,2.25) {\small $-2$};
\node (v12) at (1,1.75) {};
\draw[fill] (1,1.75) circle (0.1);
\node at (0.5,1.75) {\small $-1$};

\node (v2) at (3,1) {};
\draw[fill] (3,1) circle (0.1);
\node at (3,0.5) {\small $-1$};
\node (v21) at (1,1.25) {};
\draw[fill] (1,1.25) circle (0.1);
\node at (0.5,1.25) {\small $-2$};
\node (v22) at (1,0.75) {};
\draw[fill] (1,0.75) circle (0.1);
\node at (0.5,0.75) {\small $-3$};

\node (v3) at (3,0) {};
\draw[fill] (3,0) circle (0.1);
\node at (3,-0.5) {\small $-1$};
\node (v31) at (1,0.25) {};
\draw[fill] (1,0.25) circle (0.1);
\node at (0.5,0.25) {\small $-2$};
\node (v32) at (1,-0.25) {};
\draw[fill] (1,-0.25) circle (0.1);
\node at (0.5,-0.25) {\small $-3$};

\node (v4) at (3,-1) {};
\draw[fill] (3,-1) circle (0.1);
\node at (3,-1.5) {\small $-1$};
\node (v41) at (1,-0.75) {};
\draw[fill] (1,-0.75) circle (0.1);
\node at (0.5,-0.75) {\small $-2$};
\node (v42) at (1,-1.25) {};
\draw[fill] (1,-1.25) circle (0.1);
\node at (0.5,-1.25) {\small $-3$};

\node (v5) at (3,-2) {};
\draw[fill] (3,-2) circle (0.1);
\node at (3,-2.5) {\small $-1$};
\node (v51) at (1,-1.75) {};
\draw[fill] (1,-1.75) circle (0.1);
\node at (0.5,-1.75) {\small $-2$};
\node (v52) at (1,-2.25) {};
\draw[fill] (1,-2.25) circle (0.1);
\node at (0.5,-2.25) {\small $-3$};

\draw  (v0) edge (v1);
\draw  (v0) edge (v2);
\draw  (v0) edge (v3);
\draw  (v0) edge (v4);
\draw  (v0) edge (v5);

\draw  (v1) edge (v11);
\draw  (v1) edge (v12);

\draw  (v2) edge (v21);
\draw  (v2) edge (v22);

\draw  (v3) edge (v31);
\draw  (v3) edge (v32);

\draw  (v4) edge (v41);
\draw  (v4) edge (v42);

\draw  (v5) edge (v51);
\draw  (v5) edge (v52);

% \node (v2) at (1,1) {};
% \node (v4) at (2.5,0) {};
% \node (v6) at (1,-1) {};
% \node at (4,0) {};
% \node (v7) at (5.5,0) {};
% \node (v8) at (5.5,-1.5) {};
% % \node (v9) at (7,-1.5) {};
% \node (v5) at (7,0) {};
% \node (v11) at (8.5,-0.8) {};
% \node (v10) at (8.5,0.8) {};
% \node (v12) at (8.5,0) {};
% % \node (v12) at (10,1) {};
% % \node (v13) at (10,0) {};
% \draw[fill] (1,1) circle (0.1);
% 
% \draw[fill] (2.5,0) circle (0.1);
% \draw[fill] (2.5,-1.5) circle (0.1);
% \draw[fill] (4,0) circle (0.1);
% \draw[fill] (5.5,0) circle (0.1);
% \draw[fill] (5.5,-1.5) circle (0.1);
% % \draw[fill] (7,-1.5) circle (0.1);
% \draw[fill] (7,0) circle (0.1);
% \draw[fill] (8.5,-0.8) circle (0.1);
% \draw[fill] (8.5,0.8) circle (0.1);
% \draw[fill] (8.5,0) circle (0.1);
% % \draw[fill] (10,1) circle (0.1);
% % \draw[fill] (10,0) circle (0.1);
% 
% 
% \draw  (v2) edge (v4);
% \draw  (v4) edge (v5);
% \draw  (v4) edge (v6);
% \draw  (v7) edge (v8);
% % \draw  (v7) edge (v9);
% \draw  (v5) edge (v10);
% \draw  (v5) edge (v11);
% \draw  (v5) edge (v12);
% % \draw  (v10) edge (v12);
% % \draw  (v10) edge (v13);
% 
% \node at (1,0.5) {\small $-2$};
% 
% \node at (2.5,0.5) {\small $-2$};
% \node at (2.5,-2) {\small $-2$};
% \node at (4,0.5) {\small $-35$};
% \node at (5.5,0.5) {\small $-3$};
% \node at (5.5,-2) {\small $-2$};
% % \node at (7,-2) {\small $-2$};
% \node at (7,0.5) {\small $-3$};
% \node at (9,0.8) {\small $-3$};
% % \node at (10.5,1) {\small $-2$};
% \node at (9,0) {\small $-2$};
% \node at (9,-0.8) {\small $-2$};
% % \node at (8.5,1) {\small $E_2$};
% \node at (7,-0.4) {\tiny $v_1$};

\end{tikzpicture}
\end{center}
For more details on the above graph we refer to the articles \cite{BodN,LSzalg}. These also contain the appropiate methods for computing the canonical normalized Seiberg-Witten invariant, which is in fact $-6$ in this case.

\section{monomial conditions and counting functions of the topological Poincar\'e series.} \label{s:mctPs}

Note that by section \ref{s:wildprop}, if a resolution graph $\mathcal{T}$ does not satisfy the monomial conditions and $l' \in L'$ then $\sum_{l \in L, l \geq 0, l_v = 0}z^{\mathcal{T}}(l' + l)$
can be even negative or very large.

Contrary, if  $\mathcal{T}$ satisfies the monomial conditions, and we consider its associated splice quotient singularity and a fixed cycle $l'\in L'$, then by (\ref{eq:sqcount}) one has 
$$\sum_{l \in L, l \geq 0, l_v = 0}z^{\mathcal{T}}(l' + l) = \dim\left(\frac{H^0(\tX, \calO_{\tX}( -l'))}{H^0(\tX, \calO_{\tX}( -l'-E_v))}\right),$$ which gives the following bounds 
$0 \leq \sum_{l \in L, l \geq 0, l_v = 0}z^{\mathcal{T}}(l' + l)  \leq -(l', E_v) + 1$. 

Despite that we know this statement from the analytic theory, the aim of this section is to provide an alternative, purely combinatorial proof in order to shed a light on how the monomial conditions control the counting functions and the monomial cycles create the above mentioned bounds in case of their existence.

In the following we consider a resolution graph $\mathcal{T}$. Recall that for any vertex  $w$ the connected components of $\Gamma\setminus w$ are called the {\it branches} of $w$.

Assume  $\mathcal{T}$ satisfies the monomial conditions and  we fix a vertex $v \in \calv$.     
Denote the neighbours of the vertex $v$ by $u_1, \cdots, u_t$ and let $n_i$ be the first node on the chain starting from $v$ in the direction towards $u_i$.
For any vertex  $w \neq v$, its neighbour contained in the same branch as $v$ will be denoted by $s_w$, and the other neighbours  by $u_{w, j}, 1 \leq j \leq \delta_w-1$. For such a vertex $w \neq v$ we denote by $\mathcal{T}_w$ the subgraph containing $w$ and the branches of $w$ which do not contain $v$, and let $\calv_w$ be its set of vertices.

Then,  if $l' \in L'$ is a cycle, one can consider its restriction to $L'_{\mathcal{T}_w}$ and it will be denoted by $r_w(l')$.
Associated with a vertex $w$ one considers the series 
$$R^{\mathcal{T}_w}(\bt_{\calv_w}) =  Z^{\mathcal{T}_w}(\bt_{\calv_w})  \cdot ( 1 - \bt_{\calv_w}^{r_w(E_w^*)})=\sum_{l''\in L'_{\mathcal{T}_w}} r^{\mathcal{T}_w}(l'')\bt_{\calv_w}^{l''}.$$
Then, as a starting point, we prove the following proposition.

\begin{proposition}\label{segedprop}
(1) For any  node $n \neq v$ and a cycle $l' \in L'$ we have $$\sum_{0 \leq l \in L_{\calv_n}, l_n = 0} r^{\mathcal{T}_n}( r_n(l') + l ) \in \{0, 1\}$$.

(2) Let us consider a vertex $w \neq v$ such that $s_w$ is a node. Then for any $l' \in L'$ we have $$\sum_{0 \leq l \in L_{\calv_w}, l_w = 0} r^{\mathcal{T}_w}( r_w(l') + l ) \in \{0, 1\}.$$ 
Furthermore, if $\sum_{0 \leq l \in L_{\calv_w}, l_w = 0} r^{\mathcal{T}_w}( r_w(l') + l ) = 1$ then we also have $\sum_{0 \leq l \in L_{\calv_w}, l_w = 0} r^{\mathcal{T}_w}( r_w(l' + E_w) + l ) = 1$. 
\end{proposition}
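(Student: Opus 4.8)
The plan is to reduce both statements to a computation on the subgraphs $\mathcal{T}_n$ and $\mathcal{T}_w$, exploiting the fact that these subgraphs — obtained from $\mathcal{T}$ by deleting everything lying on the $v$-side of the chosen vertex — again satisfy the monomial conditions, hence carry splice quotient singularities $\tX_n$, resp. $\tX_w$, to which formula~(\ref{eq:sqcount}) applies. Writing $R^{\mathcal{T}_w}(\bt_{\calv_w}) = Z^{\mathcal{T}_w}(\bt_{\calv_w}) - \bt_{\calv_w}^{r_w(E_w^*)}\cdot Z^{\mathcal{T}_w}(\bt_{\calv_w})$, and noting that $r_w(E_w^*)$ is the subgraph class $E_w^*(\mathcal{T}_w)$ (it is the image of $E_w^*$ under the cohomological restriction, as in section~\ref{ss:surgform}), one gets
\[
\sum_{0\le l\in L_{\calv_w},\, l_w=0} r^{\mathcal{T}_w}(r_w(l')+l) = \sum_{0\le l,\,l_w=0} z^{\mathcal{T}_w}(r_w(l')+l) - \sum_{0\le l,\,l_w=0} z^{\mathcal{T}_w}(r_w(l')-E_w^*(\mathcal{T}_w)+l),
\]
and since the only nonzero contributions come from cycles in $\calS'(\mathcal{T}_w)$, formula~(\ref{eq:sqcount}) identifies the right-hand side with $\dim Q(\calL)-\dim Q\big(\calL\otimes\calO_{\tX_w}(E_w^*(\mathcal{T}_w))\big)$, where $\calL:=\calO_{\tX_w}(-r_w(l'))$ and $Q(\calM):=H^0(\tX_w,\calM)/H^0(\tX_w,\calM(-E_w))$. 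The same rewriting, with $n$ in place of $w$, handles part~(1).

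The next step is to compare the two quotients $Q(\calL)$ and $Q(\calL\otimes\calO(E_w^*(\mathcal{T}_w)))$. Since $(E_w^*(\mathcal{T}_w),E_w)=-1$ and $(E_w^*(\mathcal{T}_w),E_u)=0$ for $u\ne w$, tensoring by $\calO_{\tX_w}(E_w^*(\mathcal{T}_w))$ lowers the degree along $E_w\cong\bP^1$ by exactly one and leaves all other degrees unchanged. The decisive input — and here the monomial conditions are used — is to produce a global section $s\in H^0(\tX_w,\calO_{\tX_w}(-E_w^*(\mathcal{T}_w)))$ not vanishing identically along $E_w$: for part~(1) it comes from the monomial cycle attached by the monomial condition to the node $n$ and the branch of $n$ containing $v$ (the corresponding end-curve monomial has divisor $\ge E_n^*$ on $\tX$, and one passes to the splice quotient on $\mathcal{T}_n$); for part~(2) it comes from the monomial cycle attached to the node $s_w$ and the branch $\mathcal{T}_w$, whose cohomological restriction to $\mathcal{T}_w$ is an effective integral cycle dominating $E_w^*(\mathcal{T}_w)$. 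Because $\calO_{E_w}(-E_w^*(\mathcal{T}_w))\cong\calO_{\bP^1}(1)$ is base point free, $s$ may be chosen with divisor $D_s$ meeting $E_w$ transversally in a single point $p_0$. Multiplication by $s$ is then an injection $Q(\calL\otimes\calO(E_w^*(\mathcal{T}_w)))\hookrightarrow Q(\calL)$ whose image is contained in — and, invoking the splice/monomial structure of $D_s$, coincides with — the hyperplane of classes whose restriction to $E_w$ vanishes at $p_0$. Hence the difference $\dim Q(\calL)-\dim Q(\calL\otimes\calO(E_w^*(\mathcal{T}_w)))$ is the length of a skyscraper supported at $p_0$, so it is $0$ or $1$, which proves the dichotomy in both parts.

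For the \emph{furthermore} part of~(2), replacing $l'$ by $l'+E_w$ replaces $r_w(l')$ by $r_w(l')+E_w$, i.e. $\calL$ by $\calL(-E_w)$, so one must rule out a drop from $1$ to $0$. This follows from the same bookkeeping: when the difference is $1$ there is a section of $\calL|_{E_w}$ nonzero at $p_0$ that extends to $\tX_w$, and using $0\to\calL(-E_w)\to\calL\to\calL|_{E_w}\to 0$ together with the concentration of the obstruction at the single point $p_0$ one produces the analogous section for $\calL(-E_w)$ — equivalently, $\sum_{0\le l,\,l_w=0} z^{\mathcal{T}_w}(\cdot+l)$ is non-decreasing in the $E_w$-direction once it reaches its maximal value, a fact already visible from~(\ref{eq:sqcount}) and the standard estimate $0\le\sum\le -(\cdot,E_w)+1$. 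I expect the main obstacle to be exactly the assertion that the image of multiplication by $s$ is the \emph{full} hyperplane of classes vanishing at $p_0$ (i.e. that every section of $\calL$ vanishing at $p_0$ is, modulo $H^0(\calL(-E_w))$, divisible by $s$): this is the point genuinely controlled by the monomial conditions, through the precise shape of the divisor $D_s$ of an end-curve monomial, and it is what fails for a general vertex $w$ whose neighbour toward $v$ is not a node — which is why the hypotheses are as stated. The auxiliary claims (inheritance of the monomial conditions by $\mathcal{T}_n$ and $\mathcal{T}_w$, and the identity $r_w(E_w^*)=E_w^*(\mathcal{T}_w)$) are routine.
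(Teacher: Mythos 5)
Your proposal takes the opposite route from the paper: the paper's proof of this proposition is a purely combinatorial downgoing induction along the graph (generating functions $g_j(x)=\sum_i C(u,l'+iE_u)x^i$, a binomial identity at the nodes, and a direct use of the monomial cycle attached to $s_w$ to obtain the monotonicity), deliberately avoiding the analytic theory so that only the monomial conditions on branches not containing $v$ are used (see the Remark after the proposition, on which the final Corollary depends). You instead re-analytify the statement on the subgraphs, and as written the argument has genuine gaps. The central one you flag yourself: that the image of multiplication by $s$ in $Q(\calL)$ is the \emph{whole} hyperplane of classes vanishing at $p_0$. A section of $\calL$ whose restriction to $E_w$ vanishes at $p_0$ need not be divisible by $s$ modulo $H^0(\calL(-E_w))$; what you would need is that its restriction class, divided by $s|_{E_w}$, again extends to a section on $\tX_w$, and deciding which classes on $E_w$ extend is exactly the dimension count you are trying to establish, so this step begs the question. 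Likewise the ``furthermore'' part of (2) is not ``visible from (\ref{eq:sqcount}) and the bound $0\le \cdot\le -(\cdot,E_w)+1$'': that bound does not prevent the count from dropping from $1$ to $0$ after adding $E_w$, and this monotonicity is precisely the substantive half of (2) (it is where the paper actually uses the monomial cycle of the node $s_w$).

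Two further steps you call routine are not. First, the existence of $s\in H^0(\tX_w,\calO_{\tX_w}(-E_w^*(\calt_w)))$ without fixed component: the end-curve property of a splice quotient guarantees this only when $w$ is an end-vertex of $\calt_w$; in the induction $w$ ranges over arbitrary neighbours $u_{n,j}$ and may be a node of $\calt$, and then $E_w^*(\calt_w)\in\calS'_{an}(\tX_w)$ is not automatic (this is exactly why Theorem \ref{thm:domsplice} must assume $E_v^*\in\calS_{an}$ as a hypothesis). The monomial cycle attached to $s_w$ is a relation in $L'(\calt)$, respectively a function on $\tX$; it does not furnish a section on the different space $\tX_w$ whose divisor is a single transversal cut of $E_w$. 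Second, the identifications $r_w(E_w^*)=E_w^*(\calt_w)$ and $r_w(l')\in L'(\calt_w)$, which you need in order to invoke (\ref{eq:sqcount}) at all: with the paper's convention that $r_w$ is the restriction to $L'_{\calt_w}=\mathrm{pr}_{\calv_w}(L')$, the projection of $E_w^*(\calt)$ is \emph{not} $E_w^*(\calt_w)$ (already for a chain of three $(-2)$-vertices), and $r_w(l')$ in general does not even lie in $L'(\calt_w)$, so there is no natural line bundle $\calO_{\tX_w}(-r_w(l'))$ to apply (\ref{eq:sqcount}) to; if you mean the dual operator $j^*$ of the surgery subsection instead, you must say so and redo the bookkeeping relating $C(w,l')$ to the sums the paper manipulates. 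Finally, the inheritance of the monomial conditions by $\calt_w$ requires an argument (for a node of $\calt_w$ and its branch pointing back towards $w$, the cut edge contributes $-D_{s_w}E_w^*(\calt_w)$ under $j^*$, and one must repair this using monomial cycles at $w$ and, crucially, the monomial condition of $\calt$ on a branch \emph{containing} $v$), so besides being nontrivial it would also forfeit the weaker hypothesis that the paper's proof is designed to preserve.
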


\begin{proof}

For simplicity, associated with a vertex $w \neq v$ and $l' \in L'$ we introduce the notation $$C(w, l'):= \sum_{0 \leq l \in L_{\calv_w}, l_w = 0} r^{\mathcal{T}_w}( r_w(l') + l ).$$

We will proceed the proof by simultaneous downgoing induction on the distance of the vertex $w$ from the fixed vertex $v$. The induction step will also contain the base case.

\underline{First we prove the induction step for part (1) as follows.}

Let $n \neq v$ be a node with neighbours $u_{n, j}, s_n$ ($1 \leq j \leq \delta_n-1)$ and a cycle $l' \in L'$. For any $1 \leq j \leq \delta_n-1$ we introduce the series $g_j(x) = \sum_{i \geq 0} C(u_{n, j}, l' + i E_{u_{n, j}}) x^i$, then we have:

\begin{equation*}
C(n, l') = \textnormal{ Coeff} \left( (1- x)^{\delta_n - 2} \cdot \prod_{1 \leq j \leq \delta_n -1} g_j(x) ,   x^{-(l', E_n)} \right). 
\end{equation*} 
Indeed, for this we have to show that
\begin{equation*}
C(n, l') =    \sum_{k_j\geq 0 \atop k:= \sum k_j \leq -(l', E_n)} (-1)^{ -(l', E_n) - k}   \cdot {\delta_n - 2 \choose  -(l', E_n) - k} \cdot \prod_{ 1 \leq j \leq \delta_n-1}  \sum_{0 \leq l \in L_{\calv_{u_{n, j}}} \atop l_{u_{n, j}} = 0} r^{\mathcal{T}_{u_{n, j}}}( r_{u_{n, j}}(l') +  i E_{u_{n, j}} +  l ) . 
\end{equation*} 
For the integers $0 \leq k_j , 1 \leq j \leq \delta_n-1$ such that $k = \sum k_j \leq -(l', E_n)$ let us introduce the following
 $$C(n, l', k_1, \cdots, k_{\delta_n - 1}) = \sum_{0 \leq l \in L_{\calv_n}, l_n = 0, l_{u_{n, j}} = k_j} r^{\mathcal{T}_w}( r_w(l') + l ).$$
We know that $\sum_{0 \leq k_j , 1 \leq j \leq \delta_n-1, k = \sum k_j \leq -(l', E_n)} C(n, l', k_1, \cdots, k_{\delta_n - 1}) = C(n, l')$ and for a certain choice of the integers
$0 \leq k_j , 1 \leq j \leq \delta_n-1$ one has

\begin{equation*}
C(n, l', k_1, \cdots, k_{\delta_n - 1})=  (-1)^{ -(l', E_n) - k}   \cdot {\delta_n - 2 \choose  -(l', E_n) - k} \cdot \prod_{ 1 \leq j \leq \delta_n-1}  \sum_{0 \leq l \in L_{\calv_{u_{n, j}}} \atop l_{u_{n, j}} = 0} r^{\mathcal{T}_{u_{n, j}}}( r_{u_{n, j}}(l') +  i E_{u_{n, j}} +  l ),
\end{equation*} 
which proves the formula completely.

The induction hypothesis assumes part (2) for the vertices $u_{n, j}$, that is $C(u_{n, j}, l' + i E_{u_{n, j}})\in\{0,1\}$ and if $C(u_{n, j}, l' + i E_{u_{n, j}})=1$ then $C(u_{n, j}, l' + (i+1) E_{u_{n, j}})=1$ too.  Hence we get the expression $g_j(x) = \frac{x^{m_j}}{1-x}$ for some integer $m_j \geq 0$. If we set $m:= \sum_{1 \leq j \leq \delta_n-1} m_j$, then we have

\begin{equation*}
C(n, l') = \textnormal{ Coeff} \left(  \frac{x^{m}}{1-x} ,  x^{-(l', E_n)} \right).
\end{equation*}

This implies that $C(n, l') = 1$ if $m \leq -(l', E_n)$ and $C(n, l') = 0$ if $m > -(l', E_n)$, and the induction step is finished for part (1).

\underline{Next, we continue with the induction step for the first statement of part (2).}

Consider a vertex $w \neq v$ such that $s_w$ is a node and $l' \in L'$. In fact we will prove that if $w' \in \calv_w$ then one has $C(w', l') \in \{0, 1\}$.

First of all, note that if $w'$ is an end-vertex, then the statement is trivial. If $w'$ is a node then this follows from part (1). 

If $w'$ is a vertex with valency $2$ and its neighbours are $s_{w'}$ and $u_{w'}$ then one has the equality $C(w', l') = C(u_{w'}, l' - (l', E_{w'}) E_{u_{w'}})$.
If $u_{w'}$ is a node then we are done by part (1). If $\delta_{u_{w'}} = 2$ and the other neighbour is $u_{u_{w'}}$ then we can continue this process until we reach a node or an end-vertex $\widetilde{w}$ with $C(w, l') = C(\widetilde{w}, \widetilde{l})$ for some  $\widetilde{l}$.

\underline{Now we prove the second part of (2).}

Let $Z \geq 0$ be a monomial cycle associated with the node $s_w$, such that  $Z$ is supported on the branch of $s_w$ containing the vertex $w$,  and if $(Z, E_w) < 0$
then $w$ is an end-vertex on this branch.
Let us write $Z = \sum_{ {w'} \in \calv_w} Z_{w'} \cdot E_{w'}$ with the convention $Z_{s_{w}} = 0$. 

We will prove by downgoing induction on the distance from vertex $w$ that for any vertices $w' \in \calv_w$  and $l'$ such that $C(w', l') = 1$ one has 
$C(w', l' + Z_{w'} \cdot E_{w'} + Z_{s_{w'}} \cdot E_{s_{w'}}) = 1$. In particular, this applied for  $w':= w$ would give us the  second statement of (2).

Notice that if $w'$ is an end-vertex then the statement is trivial since in this case $(l' + Z_{w'} \cdot E_{w'} + Z_{s_{w'}} \cdot E_{s_{w'}}, E_{w'}) \geq (l', E_{w'})$. 
If $\delta_{w'} = 2$ and its neighbours are $s_{w'}$ and $w''$ then we have $C(w', l') = C(w'', l' - (l', E_{w'}) E_{w''})$.
By setting $l'' := l' + Z_{w'} \cdot E_{w'} + Z_{s_{w'}} \cdot E_{s_{w'}}$, then we have $C(w', l'') = C(w'', l'' - (l'', E_{w'}) E_{w''} )$.

Notice that we have the equation $ l'' - (l'', E_{w'}) E_{w''}  = l' - (l', E_{w'}) E_{w''} + Z_{w'} \cdot E_{w'} + Z_{s_{w'}} \cdot E_{s_{w'}} - Z_{s_{w'}} \cdot  E_{w''}  - Z_{w'} (E_{w'}, E_{w'}) \cdot  E_{w''}$.          
On the other hand, $(Z, E_{w'}) = 0$, so we have $- Z_{s_{w'}}  - Z_{w'} (E_{w'}, E_{w'}) = Z_{w''}$, which means that $C(w', l'') = C(w'', l' - (l', E_{w'}) E_{w''} + Z_{w'} \cdot E_{w'} + Z_{w''} \cdot E_{w''})$.

The induction hypothesis assumes that $C(w'', l' - (l', E_{w'}) E_{w''} + Z_{w'} \cdot E_{w'} + Z_{w''} \cdot E_{w''}) \geq C(w'', l'' - (l'', E_{w'}) E_{w''} )$. Hence we get that $C(w', l'') \geq C(w', l')$,  
which proves the statement in this case too.

Assume finally that $w'$ is a node with neighbours $s_{w'}$, $u_{w', j}, 1 \leq j \leq \delta_{w'}-1$. For $1 \leq j \leq \delta_{w'}-1$ we set $g_j(x) := \sum_{i \geq 0} C(u_{w', j}, l' + i E_{u_{w', j}}) x^i$.  As in the proof of part (1) we have

\begin{equation*}
C(w', l') = \textnormal{ Coeff} \left( (1- x)^{\delta_{w'} - 2} \cdot \prod_{1 \leq j \leq \delta_{w'} -1} g_j(x) ,   x^{-(l', E_{w'})} \right).
\end{equation*}

Then, using the induction hypothesis about part (2) for the vertices $u_{w', j}$ we get that $g_j(x) = \frac{x^{m_j}}{1-x}$ for some integers $m_j \geq 0$, so we have
$C(w', l') = 1$ if $\sum_{1 \leq j \leq \delta_{w'}-1} m_j \leq -(l', E_{w'})$ and $C(w', l') = 0$ if $\sum_{1 \leq j \leq \delta_{w'}-1} m_j > -(l', E_{w'})$.

We consider the cycle $l'' = l' + Z_{w'} \cdot E_{w'} + Z_{s_{w'}} \cdot E_{s_{w'}}$ and denote $h_j(x) = \sum_{i \geq 0} C(u_{w', j}, l'' + i E_{u_{w', j}}) x^i$. Then we have

\begin{equation*}
C(w', l'') = \textnormal{ Coeff} \left( (1- x)^{\delta_{w'} - 2} \cdot \prod_{1 \leq j \delta_{w'} -1} h_j(x) ,   x^{-(l', E_{w'})} \right).
\end{equation*}

Similarly as before, we get that $h_j(x) = \frac{x^{k_j}}{1-x}$ for some integers $k_j \geq 0$, therefore we have 
$C(w', l'') = 1$ if $\sum_{1 \leq j \leq \delta_{w'}-1} k_j \leq -(l'', E_{w'})$ and $C(w', l'') = 0$ if $\sum_{1 \leq j \leq \delta_{w'}-1} k_j > -(l'', E_{w'})$.

We have to prove that $\sum_{1 \leq j \leq \delta_{w'}-1} m_j \leq -(l', E_{w'})$ implies that $\sum_{1 \leq j \leq \delta_{w'}-1} k_j \leq -(l'', E_{w'})$.  
For this, we will show that $k_j \leq m_j + Z_{u_{w', j}}$, then the statement follows since $(Z, E_{w'}) = 0$ implies $\sum_{1 \leq j \leq \delta_{w'}-1} Z_{u_{w', j}} = -(l'', E_{w'}) + (l', E_{w'})$.

In order to prove $k_j \leq m_j + Z_{u_{w', j}}$, we have to show that if $l'$ is a cycle and $C(u_{w', j},  l') = 1$ then $C(u_{w', j}, l''  + Z_{u_{w', j}} \cdot E_{u_{w', j}}) = 1$. 
But we know that $C(u_{w', j}, l''  + Z_{u_{w', j}} \cdot E_{u_{w', j}})  = C(u_{w', j}, l' + Z_{u_{w', j}} \cdot E_{u_{w', j}} +  Z_{w'} \cdot E_{w'})$ and by our second induction
hypothesis $C(u_{w', j},  l') = 1$ follows that $C(u_{w', j}, l' + Z_{u_{w', j}} \cdot E_{u_{w', j}} +  Z_{w'} \cdot E_{w'}) = 1$. 
This indeed shows that $k_j \leq m_j + Z_{u_{w', j}}$, which finishes the proof of our main proposition.
\end{proof} 

\begin{remark}
Note that in the proof of the proposition the monomial conditions is used only for the branches which does not contain the vertex $v$.
\end{remark}

In the following, we show that if $l'$ is a cycle then for the number $C:= \sum_{0 \leq l \in L_{\calv}, l_v = 0} z^{\mathcal{T}}(l' + l )$  we have $0 \leq C \leq -(l', E_v) + 1$.

Let us denote $f_j(x) = \sum_{i \geq 0} C(u_{ j}, l' + i E_{u_{j}}) x^i$. Then we can write

\begin{equation*}
C = \textnormal{ Coeff} \left( (1- x)^{\delta_v - 2} \cdot \prod_{1 \leq j \leq t } f_j(x) ,   x^{-(l', E_v)} \right).
\end{equation*}

Assume first that $v$ is a node. Using part 2) of Proposition \ref{segedprop} for the vertices $u_{v, j}$, we get that $f_j(x) = \frac{x^{p_j}}{1-x}$ for some integer $p_j \geq 0$, thus we have:

\begin{equation*}
C = \textnormal{ Coeff} \left( (1- x)^{\delta_v - 2} \cdot \prod_{1 \leq j \leq t } \frac{x^{p_j}}{1-x} ,  x^{-(l', E_v)} \right).
\end{equation*}
In other words, if $p = \sum_{1 \leq j \leq t} p_j$  then 

\begin{equation*}
C = \textnormal{ Coeff} \left(  \frac{x^{p}}{(1-x)^2} ,  x^{-(l', E_v)} \right).
\end{equation*}
This means that $C = 0$ if $p > -(l', E_v)$ and $C = p + (l', E_v) + 1$ if $p \leq  -(l', E_v)$, so we are done in this case.

Assume next that $v$ is an end-vertex. Then

\begin{equation*}
C = \textnormal{ Coeff} \left( \frac{f_1(x)}{1-x} ,   x^{-(l', E_v)} \right).
\end{equation*}
This means that $C = \sum_{0 \leq i \leq -(l', E_v)} C(u_{1}, l' + i E_{u_{1}})$ and we know by Proposition \ref{segedprop} that $C(u_{ 1}, l' + i E_{u_{1}}) \in \{0, 1\}$ for all $0 \leq i \leq -(l', E_v)$. Therefore we get $0 \leq C \leq -(l', E_v) + 1$ also in this case.

Finally, if $\delta_v = 2$ then we have

\begin{equation*}
C = \textnormal{ Coeff} \left( f_1 \cdot f_2 ,   x^{-(l', E_v)} \right), 
\end{equation*}
which means that $C = \sum_{0 \leq i \leq -(l', E_v)} C(u_{1}, l' + i E_{u_{1}}) \cdot C(u_{ 2}, l' + (-(l', E_v)  - i) E_{u_{2}})$.

 By Proposition \ref{segedprop} one has that $C(u_{ 1}, l' + i E_{u_{1}}) \in \{0, 1\}$ and $C(u_{ 2}, l' + i E_{u_{1}}) \in \{0, 1\}$ for all $0 \leq i \leq -(l', E_v)$, hence the inequality $0 \leq C \leq -(l', E_v) + 1$ follows.
\vspace{0.5cm}

Recall that when the monomial conditions holds for a resolution graph, then the geometric genus of a corresponding splice quotient analytic structure coincides with the canonical normalized Seiberg-Witten invariant of the link, and so it is nonnegative. Using the results of this section, we can prove a small improvement about this nonnegativity as follows. 

\begin{corollary}
Let $\mathcal{T}$ be a resolution graph with a rational homology sphere link and assume that the monomial conditions holds for branches of nodes which
do not contain a fixed vertex $v$. Then the canonical normalized Seiberg-Witten invariant is nonnegative.
\end{corollary}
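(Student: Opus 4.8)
The plan is to deduce this corollary directly from the arithmetic established in Proposition \ref{segedprop} and the bounds derived from it, combined with the combinatorial formula for the canonical normalized Seiberg--Witten invariant via a path-lattice/computation sequence. First I would recall that $\mathfrak{sw}^{norm}_0(\calt)$ equals the periodic constant $\mathrm{pc}^{\calS'_{\mathbb{R}}}(Z_0(\bt))$ by Theorem \ref{th:NJEMSThm}, and that by the path-lattice bound philosophy (the inequality displayed at the start of section \ref{s:wildprop}) one has
\begin{equation*}
\mathfrak{sw}^{norm}_0(\calt) = \mathrm{pc}(Z_0(\bt)) \leq \sum_{1\leq i\leq N-1}\max\bigl(0,-1+(x_i,E_{v_i})\bigr)
\end{equation*}
for a suitable computation sequence $0=x_0,\dots,x_N=\lfloor Z_K\rfloor$; but what we need is a \emph{lower} bound, so the relevant tool is rather the expression of the periodic constant as a sum of coefficients $\sum_{l\geq 0,\,l_v=0}z^{\calt}(l'+l)$-type quantities. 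Concretely, the strategy is to realise $\mathfrak{sw}^{norm}_0(\calt)$ (equivalently $p_g$ in the splice quotient case, but here only topologically) as a telescoping sum built from the quantities $C=\sum_{0\leq l\in L_\calv,\,l_v=0}z^{\calt}(l'+l)$ over an appropriate sequence of cycles $l'$, peeling off the coordinate at $v$ one unit at a time.

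The key step is the following: use the surgery-type decomposition with respect to the vertex $v$. Let $\calt\setminus v$ have connected components supported on the branches of $v$; since the monomial conditions hold on all branches not containing $v$ — and every branch of $v$ trivially ``does not contain $v$'' — Proposition \ref{segedprop} applies verbatim to each $\calt_{u_j}$, giving $f_j(x)=x^{p_j}/(1-x)$ with $p_j\geq 0$. Then the computation in the closing pages of section \ref{s:mctPs} shows that for every cycle $l'$ one has $0\leq \sum_{0\leq l\in L_\calv,\,l_v=0}z^{\calt}(l'+l)\leq -(l',E_v)+1$. The plan is to combine these nonnegativity statements along a computation sequence that reduces $\mathfrak{sw}^{norm}_0(\calt)$ to a sum of such nonnegative terms. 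More precisely, I would write $\mathfrak{sw}^{norm}_0(\calt)$ using the counting function $Q_0(l')$ evaluated at a cycle with large coordinates, and then express $Q_0$ as an iterated sum: fixing all coordinates except the $v$-th and summing over the $v$-coordinate from $0$ upward produces exactly the quantities $\sum_{l\geq 0,\,l_v=0}z^{\calt}(l'+l)$, each of which is $\geq 0$ by the proposition. Since $\mathfrak{sw}^{norm}_0(\calt)$ is the constant term of the resulting quasipolynomial and all the building blocks are nonnegative, the conclusion $\mathfrak{sw}^{norm}_0(\calt)\geq 0$ follows.

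The main obstacle I anticipate is bookkeeping the passage from the periodic constant / quasipolynomial constant term to an honest nonnegative sum of the $C$-quantities. The subtlety is that $\mathfrak{sw}^{norm}_0(\calt)=\mathfrak{Q}_0(0)$ is the \emph{constant term} of a quasipolynomial, not literally a partial sum of coefficients $z^{\calt}(l')$; one must therefore choose the computation sequence and the truncation so that the periodic/polynomial corrections cancel and only the genuinely nonnegative pieces survive. The cleanest route is probably to mimic the surgery formula of Theorem \ref{surgery} at vertex $v$: write $\mathfrak{sw}^{norm}_0(\calt) = \mathrm{pc}_{0,\{v\}} + \sum_i \mathfrak{sw}^{norm}_0(\calt_i)$ where $\calt_i$ are the branches of $v$; by induction on the size of the graph (the branches are smaller and still satisfy the monomial conditions on \emph{their} relevant branches) the terms $\mathfrak{sw}^{norm}_0(\calt_i)$ are nonnegative, so it remains to show $\mathrm{pc}_{0,\{v\}}\geq 0$. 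But $\mathrm{pc}_{0,\{v\}}$ is precisely the periodic constant of the one-variable reduced series $Z_{0,v}(t_v)$, whose coefficients' partial sums are, up to the quasipolynomial correction, controlled by the $C$-bounds $0\leq C\leq -(l',E_v)+1$ established above. So the induction reduces everything to the single-vertex statement, and the $C$-inequalities from section \ref{s:mctPs} close the argument; the remaining work is checking that the reduced series $Z_{0,v}(t_v)$ has nonnegative periodic constant given that all its ``slice sums'' are nonnegative, which is a routine one-variable periodic-constant estimate.
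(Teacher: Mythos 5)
Your skeleton --- induction on $|\calv|$, splitting off the branches of $v$ via the surgery formula of Theorem \ref{surgery}, observing that each branch again satisfies the hypothesis so its canonical normalized Seiberg--Witten invariant is nonnegative by induction, and thereby reducing everything to $\mathrm{pc}_{0,\{v\}}\geq 0$ --- is exactly the paper's reduction. The gap is the very last step, which you call a ``routine one-variable periodic-constant estimate'': nonnegativity of the coefficients (slice sums) of a one-variable series does \emph{not} imply nonnegativity of its periodic constant. For instance $t^5/(1-t)=\sum_{l\geq 5}t^l$ has all coefficients in $\{0,1\}$, counting function $n-5$ for $n\geq 5$, hence periodic constant $-5$. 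So knowing $0\leq\sum_{0\leq l,\,l_v=0}z^{\calt}(l'+l)\leq -(l',E_v)+1$ for every $l'$ does not by itself force $\mathrm{pc}_{0,\{v\}}\geq 0$; the periodic constant is the constant term of a quasipolynomial, and some extra input is needed to turn it into an honest sum of coefficients. Your first paragraph runs into the same problem, and your closing paragraph acknowledges the subtlety but does not resolve it.

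The missing ingredient is the duality of Theorem \ref{dualcount}: $\mathrm{pc}_{0,\{v\}}=Q_{[Z_K],v}(Z_K)=\sum_{[l']=[Z_K],\,l'_v<(Z_K)_v}z^{\calt}(l')$, i.e.\ the periodic constant of the reduced $h=0$ series equals a concrete \emph{finite} sum of coefficients of the $[Z_K]$-part of $Z(\bt)$. This finite sum is then sliced by the $E_v$-coordinate: choosing $A$ with $A_v=0$, all other coordinates very negative and $Z_K-(Z_K)_vE_v-A\in L$, it equals $\sum_{0\leq k<(Z_K)_v}\;\sum_{0\leq l,\,l_v=0}z^{\calt}(A+kE_v+l)$, and each inner sum is nonnegative by the discussion following Proposition \ref{segedprop} (which, as the Remark there notes, uses the monomial conditions only on branches not containing $v$, so it applies under the corollary's hypothesis). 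With this substitution your argument becomes the paper's proof; without it, the final step is not routine and, as stated, fails.
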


\begin{proof}

As always, $\calv$ denotes the set of vertices of $\calt$. We prove the statement by induction on the number of vertices $|\calv|$. If $|\calv| \in \{1, 2\}$ then the graph is rational and the statement follows, see eg. \ref{ss:sqat}.

Let $\mathcal{T}_1, \cdots, \mathcal{T}_t$ be the connected components of $\mathcal{T} \setminus v$ and let's denote the corresponding neighbours of the vertex $v$ by $v_1, \cdots, v_t$. Note that for the graphs $\mathcal{T}_j , 1 \leq j \leq t$,  the monomial conditions hold for branches of nodes which do not contain the vertex $v_j$, so by the induction hypothesis the  
canonical normalized Seiberg-Witten invariant of $\mathcal{T}_j$ is nonnegative for any $1 \leq j \leq t$.

On the other hand, using Theorem \ref{surgery} and Theorem \ref{dualcount} one has the following identity

\begin{equation*}
\mathfrak{sw}^{norm}_{0}(\mathcal{T}) = \sum_{ 1 \leq j \leq t} \mathfrak{sw}^{norm}_{0}(\mathcal{T}_j) + \sum_{[l'] = [Z_K], l'_v < (Z_K)_v } z^{\mathcal{T}}(l').
\end{equation*}
This implies that we need to prove the positivity $\sum_{[l'] = [Z_K], l'_v < (Z_K)_v } z^{\mathcal{T}}(l') \geq 0$.

Consider a cycle $A$ which has $E_v$-coordinate $0$ and all the other coordinates are very negative, and $Z_K -  (Z_K)_v E_v- A  \in L$. Then we have

\begin{equation*}
 \sum_{[l'] = [Z_K], l'_v < (Z_K)_v } z^{\mathcal{T}}(l') = \sum_{0 \leq k <  (Z_K)_v \atop (Z_K)_v - k \in \bZ}  \sum_{l \geq 0, l_v = 0 } z^{\mathcal{T}}(A + k E_v + l ).
\end{equation*}

Finally, by the discussion after Proposition \ref{segedprop} we know that under the conditions of this statement one has $\sum_{l \geq 0, l_v = 0 } z^{\mathcal{T}}(A + k E_v + l ) \geq 0$, 
which finishes the proof.
\end{proof}

\end{document}